\documentclass[12pt]{amsart}

\usepackage{amssymb}
\usepackage[all]{xy}
\usepackage[colorlinks=true, urlcolor=rltblue, citecolor=drkgreen, linkcolor=drkred] {hyperref}
\usepackage {hyperref}
\usepackage{color}
\usepackage{mathdots}
\definecolor{rltblue}{rgb}{0,0,0.4}
\definecolor{drkgreen}{rgb}{0,0.4,0}
\definecolor{drkred}{rgb}{0.5,0,0}
\usepackage{pdfsync}







\addtolength{\voffset}{-10mm}
\addtolength{\textheight}{20mm}
\addtolength{\hoffset}{-15mm}
\addtolength{\textwidth}{30mm}

\newtheorem{thm}{Theorem}[section]
\newtheorem{lemma}[thm]{Lemma}
\newtheorem{proposition}[thm]{Proposition}
\newtheorem{cor}[thm]{Corollary}
\newtheorem{theorem}[thm]{Theorem}
\newtheorem{corollary}[thm]{Corollary}

\newtheorem{fact}[thm]{Fact}

\theoremstyle{definition}
\newtheorem{definition}[thm]{Definition}

\theoremstyle{remark}
\newtheorem{observation}[thm]{Observation}
\newtheorem{obs}[thm]{Observation}
\newtheorem{remark}[thm]{Remark}
\newtheorem{example}[thm]{Example}
\newtheorem{notation}[thm]{Notation}
\newtheorem{historic}[thm]{Historic Remark}

\theoremstyle{plain}



\newcounter{contenumi}


\def\leqt{\leq_T}
\def\geqt{\geq_T}

\def\D{{\mathcal D}}

\def\upto{\mathop{\upharpoonright}}

\def\and{\mathrel{\&}}

\def\Si{\Sigma}

\newcommand\rightdate[1]{\footnotetext{  Saved: #1 \\ Compiled: \today}}
\def\A{\mathcal{A}}
\def\B{\mathcal{B}}
\def\C{\mathcal{C}}

\def\U{\mathcal{U}}
\def\F{\mathcal{F}}

\def\Q{\mathcal{Q}}
\def\om{\omega}

\def\si{\sigma}
\def\b{\beta}







 







\def\a{\alpha}
\def\b{\beta}


\def\A{\mathcal A}
\def\V{\mathcal V}

\def\concat{\fr}


\def\Si{\Sigma}

\def\X{{\mathcal X}}

\def\om{\omega}

\def\J{\mathcal J}

\def\implies{\Rightarrow}

\newcommand{\fr}{{}^{\smallfrown}}

\newcommand{\tree}{{\rm Tree}}
\newcommand{\forest}{{}^{\sqcup}{\tree}}
\newcommand{\mindchange}{{{}^\to}}
\newcommand{\mnskip}{{}^{{\sf p}}}
\newcommand{\treeleq}{\trianglelefteq}
\newcommand{\PSigma}{\Sigma}

\newcommand{\segment}{\subseteq}
\newcommand{\ssegment}{\subset}

\def\pass{{\sf pass}}






\begin{document}

\title[The Wadge degrees of Borel functions]{On the structure of the Wadge degrees of BQO-valued Borel functions}

\author{Takayuki Kihara}
\thanks{The first-named author was partially supported by a Grant-in-Aid for JSPS fellows.}

\address[Takayuki Kihara]{Graduate School of Informatics, Nagoya University, Japan}
\email{kihara@i.nagoya-u.ac.jp}
\urladdr{\href{http://math.mi.i.nagoya-u.ac.jp/~kihara/index.html}{math.mi.i.nagoya-u.ac.jp/$\sim$kihara}}

\author{Antonio Montalb\'an}
\thanks{The second-named author was partially supported by NSF grant DMS-0901169 and the Packard Fellowship.
}

\address[Antonio Montalb\'an]{Department of Mathematics, University of California, Berkeley, United States}
\email{antonio@math.berkeley.edu}
\urladdr{\href{http://www.math.berkeley.edu/~antonio/index.html}{www.math.berkeley.edu/$\sim$antonio}}

\rightdate{May 20, 2017 Last modified by T.}
\maketitle


%

\begin{abstract}
In this article, we give a full description of the Wadge degrees of Borel functions from $\om^\om$ to a better quasi ordering $\Q$.
More precisely, for any countable ordinal $\xi$, we show that the Wadge degrees of $\mathbf{\Delta}^0_{1+\xi}$-measurable functions $\om^\om\to\mathcal{Q}$ can be represented by countable joins of the $\xi$-th transfinite nests of $\mathcal{Q}$-labeled well-founded trees.
\end{abstract}

\tableofcontents

\section{Introduction}

In his doctorate thesis \cite{Wadge83}, Wadge proposed a notion of reducibility between sets of reals that is not only natural, but also surprisingly well behaved, as opposed to most computability theoretic reducibilities which have a rather messy structure.

\begin{definition}[Wadge \cite{Wadge83}]
Given $\A,\B\subseteq \om^\om$, we say that $\A$ is {\em Wadge reducible} to $\B$, and write $\A\leq_w\B$, if there is a continuous function $f\colon\om^\om\to\om^\om$ such that $X\in \A\iff f(X)\in \B$ for all $X\in \om^\om$.
\end{definition} 

The relation $\leq_w$ is a pre-ordering, and, as usual, it induces an equivalence $\equiv_w$ and a degree structure.
Wadge showed the Wadge degrees are semi-linearly-ordered in the sense that all anti-chains have size at most 2.
Then, Martin and Monk showed they are well-founded.
(This is all assuming $\Gamma$-determinacy when dealing with sets in a pointclass $\Gamma$.)
Furthermore, each Wadge degree is in a sense {\em natural}, and can be assigned a {\em name} using an ordinal less than $\Theta$ and a symbol from $\{\Delta,\Si,\Pi\}$ (\cite{Wesep78}; see also the Cabal volume \cite{Cabal12}), a name from which we can understand the nature of that Wadge degree.
Based on this perspective, Duparc \cite{Dup01,Dupxx} gave an explicit description of each Borel Wadge degree of a subset of $\om^\om$.

The Wadge degrees were later extended in various directions.
We can encapsulate all those extensions within the following framework:

\begin{definition}\label{def: wadge Q}
Let $(\mathcal{Q};\leq_\mathcal{Q})$ be a partial ordering.
For $\mathcal{Q}$-valued functions $\A,\B\colon\om^\om\to\mathcal{Q}$, we say that {\em $\A$ is $\Q$-Wadge reducible to $\B$} (written $\A\leq_w \B$) if there is a continuous function $\theta\colon\om^\om\to\om^\om$ such that 
\[
(\forall X\in\om^\om)\;\A(X)\leq_\mathcal{Q}\B(\theta(X)).
\]
\end{definition}

The original Wadge degrees are the case $\Q=2$ in the definition above, coding sets by their characteristic functions $\om^\om\to 2$ and viewing $2$ as the partial ordering with two incomparable elements 0 and 1.

The first extension already considered by Wadge \cite[Section 1.E]{Wadge83}, was to {\em partial} functions  $\om^\om\to \{0,1\}$, or equivalently, total functions $\om^\om\to \{\perp,0,1\}$, where $\perp$ is thought of as being below both $0$ and $1$, which are incomparable with each other.
The degree structure we obtain is also semi-well-ordered, but slightly different than the structure of the Wadge degrees. 
These degrees are connected to recent work of Day, Downey, and Westrick \cite{DDW}, as observed by Kihara \cite{Kih}.

Shortly after, Steel studied the Wadge degrees of ordinal-valued functions with domain $\om^\om$, and showed they are well-ordered (see \cite[Theorem 1]{Dup03}).
Later, together with van Engelen and Miller \cite{EMS}, they employed bqo theory to unify these results, and showed that if $\mathcal{Q}$ is better-quasi-ordered (bqo, see Definition \ref{def:bqo}), then so is the poset of the Wadge degrees of $\mathcal{Q}$-valued Borel functions.
We delay the definition of better-quasi-ordering until Definition \ref{def:bqo}, and for now let us just say that better-quasi-orderings are well-founded, have no infinite antichains, and have very good closure properties.
van Engelen, Miller and Steel's results is even more surprising than Wadge--Martin--Monk's semi-well-orderness of the 2-Wadge degrees.
Naturally defined better-quasi-orders always have a nice structure.

For a bqo $\mathcal{Q}$, the $\mathcal{Q}$-Wadge degrees are recently found to play an important role in computability theory.
The authors \cite{KMta} showed that there is a natural isomorphism between the structure of $\mathcal{Q}$-Wadge degrees and that of the ``natural'' many-one degrees of $\mathcal{Q}$-valued problems.
Hence, exploring $\mathcal{Q}$-Wadge degrees is the same thing as exploring natural $\mathcal{Q}$-many-one degrees.
The objective of this paper is to describe the structure of the $\Q$-Wadge degrees by showing that it is isomorphic to another partial ordering that is easier to visualize and understand.

In the last decade, Selivanov \cite{Sel07, Sel11} started studying the case of {\em $k$-partitions}, that is, the case when $\Q=k$, the poset with $k$ incomparable elements for finite $k$. 
Selivanov \cite{Sel07} gave a full description of the Wadge degrees of $\mathbf{\Delta}^0_2$ $k$-partitions, naming each such degree by a  $k$-labeled well-founded forest, in a way that the name describes the nature of the $k$-Wadge degree.
What he does is essentially a generalization of the Hausdroff-Kuratowski hierarchy from $k=2$ to larger $k$'s, where the structure becomes much richer.
More precisely, for a set $\mathcal{Q}$, let $\tree(\mathcal{Q})$ be the set of all $\mathcal{Q}$-labeled well-founded countable trees, and let $\forest(\mathcal{Q})$ be the set of all $\mathcal{Q}$-labeled well-founded countable forests.
Note that every such a forest $F$ can be thought of as a collection (or a disjoint union) of countably many $\mathcal{Q}$-labeled well-founded countable trees. 
Selivanov introduced a quasi-order $\treeleq$ on $\forest(k)$, where given by $S\treeleq T$ if there is a homomorphism from $S$ to $T$ which preserves inclusion of strings ($\subseteq$) and preserves labels (as defined in \ref{def: treeleq}).

\begin{theorem}[Selivanov \cite{Sel07}]
Let $k\in\om$.
The Wadge quasi-ordering on the $\mathbf{\Delta}^0_2$-measurable $k$-valued functions is isomorphic to the quasi-ordering $(\forest(k); \treeleq)$ on well-founded $k$-labeled forests.
\end{theorem}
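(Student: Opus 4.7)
The plan is to establish the isomorphism by constructing a canonical assignment $F \mapsto A_F$ from $\forest(k)$ to $\mathbf{\Delta}^0_2$-measurable $k$-valued functions and then showing it is well-defined, order-preserving, order-reflecting, and essentially surjective.

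First I would define $A_F : \om^\om \to k$ by recursion on $F$. For a singleton tree consisting of a single node labeled $q \in k$, put $A_F \equiv q$. For a tree $T$ whose root carries label $q$ and whose immediate subtrees are $T_0,T_1,\dots$, fix once and for all a partition of $\om^\om$ into a dense open set $U$ together with countably many pairwise disjoint nowhere dense perfect sets $C_0,C_1,\dots$ (each canonically homeomorphic to $\om^\om$), and set $A_T$ equal to $q$ on $U$ and equal to $A_{T_i}$ on $C_i$ through the fixed homeomorphism. For a forest $F$, take a clopen disjoint union of the trees it comprises. A straightforward induction shows that every level of the recursion introduces at most $\mathbf{\Delta}^0_2$ data, so $A_F$ is $\mathbf{\Delta}^0_2$-measurable.

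Next I would handle the easy direction, namely that $F \treeleq G$ implies $A_F \leq_w A_G$. Given a homomorphism $h : F \to G$ preserving $\subseteq$ and labels, I would build a continuous reduction $\theta$ by recursion along $h$: on the ``root region'' of each tree of $F$, send everything into the matching root region of the tree $h(\mathrm{root})$ of $G$ (legitimate because $h$ preserves labels); on the $i$-th pocket send things recursively via the reduction furnished for the subtree $h$ maps $T_i$ into. The recursion bottoms out because $F$ is well-founded, and continuity is preserved because the pockets form a clopen decomposition.

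For essential surjectivity I would appeal to the Hausdorff--Kuratowski difference hierarchy, lifted from sets to $k$-partitions. Given a $\mathbf{\Delta}^0_2$-measurable $A : \om^\om \to k$, iteratively peel off the ``dominant'' label on a maximal dense open subregion and record the exceptional behaviour on the complement as labeled subtrees, repeating inside each relatively open piece of the residual $F_\sigma$. The process terminates on a well-founded tree because the original set is $\mathbf{\Delta}^0_2$, and the resulting $A_F$ is Wadge-equivalent to $A$ via the obvious continuous back-and-forth reductions between the peeling representation and the canonical representative.

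The hard direction, and where I would expect to spend the bulk of the argument, is showing $A_F \leq_w A_G \Rightarrow F \treeleq G$. My plan is a Wadge-game analysis: given a continuous reduction $\theta$ witnessing $A_F \leq_w A_G$, one reads off a homomorphism $h$ by following how $\theta$ routes the pockets of $A_F$ into the pockets of $A_G$. Concretely, for each node $\sigma$ of $F$ one plays an input that stabilises inside the pocket indexed by $\sigma$; since $\theta$ must eventually match the label $A_F(\sigma)$, and since $G$ is well-founded so that $\theta$ cannot defer forever, $\theta$'s output stabilises inside some pocket of $G$ indexed by a node $h(\sigma)$ with the same label. Checking that $h$ preserves $\subseteq$ reduces to observing that an extension of the input inside $F$ produces an extension of the output inside $G$, again using well-foundedness to rule out infinite wavering. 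The delicate part is making this ``stabilisation'' precise for all nodes of $F$ simultaneously; this is handled by a simultaneous recursion along $F$, with a König-style compactness argument at limit ranks. Combining the two directions yields the claimed order isomorphism.
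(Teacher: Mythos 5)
Your construction of $A_F$ inverts the intended roles of open and closed sets, and this is fatal. In the mind-change semantics that $\treeleq$ is designed to match (and that Selivanov and this paper use), the root label of a tree is the \emph{default} guess, held on the \emph{closed} set of inputs for which no mind-change ever occurs, while each child is entered on an \emph{open} condition; see Definition \ref{def: Sigma T}(3), where $\A\upto(\om^\om\setminus\V)$ carries the root and $\A\upto\V$ the children. You do the opposite: root label on a dense open set, children on nowhere dense closed pockets. Already for $F=\langle 0\rangle\mindchange\langle 1\rangle$ your $A_F$ is the characteristic function of a closed nowhere dense set, i.e.\ $\mathbf{\Pi}^0_1$-complete, whereas this tree must name the $\mathbf{\Sigma}^0_1$-complete degree. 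Worse, the map fails to be order-reflecting. Take $k=3$, $S=\langle 0\rangle\mindchange(\langle 1\rangle\sqcup\langle 2\rangle)$ and $T=(\langle 0\rangle\mindchange\langle 1\rangle)\sqcup(\langle 0\rangle\mindchange\langle 2\rangle)$. One checks from Definition \ref{def: treeleq} that $S\not\treeleq T$ (indeed $S$ is $\sqcup$-irreducible, corresponding to a non-self-dual degree). But in your construction $A_S^{-1}(1)$ and $A_S^{-1}(2)$ are disjoint \emph{closed} sets, hence separated by a clopen set $E$; reducing $A_S\upto E$ into the first clopen component of $A_T$ and $A_S\upto E^c$ into the second gives a continuous reduction $A_S\leq_w A_T$. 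So $A_S\leq_w A_T$ while $S\not\treeleq T$. The underlying point is that a tree must be sent to a $\sigma$-join-irreducible (equivalently, non-self-dual) degree, and this forces the children's regions to be open sets accumulating at every point of the root's closed region, so that no clopen set can pre-decide which child will be entered. With closed pockets your $A_T$ is always a finite or countable clopen join of strictly smaller functions, i.e.\ self-dual, whenever $T$ has more than one child.

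The remaining parts of your plan inherit this problem and are also too thin where the real work lies. The correct complete representatives are built as conciliatory functions on $\hat{\om}^\om$ via the mind-change coding $Y\mindchange Z$ (so that join-irreducibility of tree-degrees falls out of ${\sf pass}^\om\in\F(\Omega_T)$, Observation \ref{obs:skip-legal}), and the "hard direction" $\Omega_S\leq_w\Omega_T\implies S\treeleq T$ hinges on the case split in $\treeleq$ between comparable and incomparable root labels, using initializability of the root part to restart reductions after a mind-change --- not on a stabilisation/K\"onig argument, which has no purchase here since $\om^\om$ is not compact and a reduction need never "stabilise inside a pocket" on any finite stage. Likewise, surjectivity is not obtained by peeling dense open regions: one inducts on the (well-founded, by bqo-ness) Wadge ordering, and for a non-self-dual, non-initializable $\A$ one splits along the \emph{closed} set $\F(\A)=\{X:(\forall n)\,\A\upto[X\upto n]\equiv_w\A\}$, taking $\A\upto\F(\A)$ as the root and $\bigoplus_n\A\upto[\sigma_n]$ over the minimal strings $\sigma_n$ leaving $\F(\A)$ as the (open) children.
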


We have recently learned that Selivanov has extended his result to the class of $\Delta^0_3$ $k$-partitions, using forests labeled with labeled trees \cite{Sel16,Sel17}.
His techniques are very different from ours.




The objective of this paper is to give a description of the Wadge degrees of Borel functions $\om^\om\to \Q$, where $\Q$ is any better-quasi-ordering (bqo), generalizing Selivanov results from $\Delta^0_3$ to all Borel functions and from finite $k$ to all bqos $\Q$.

To name the Wadge degrees of $\mathbf{\Delta}^0_{n+1}$-measurable $\Q$-valued functions, we will use trees labeled by trees labeled by trees ... labeled by $\Q$.
That is, we will define $\tree^n(\mathcal{Q})$ as $\tree(\tree(\cdots \tree(\mathcal{Q})\cdots))$ iterated $n$ times, then define $\forest^n(\Q)$ as the disjoint unions of these trees (see Section \ref{sec:lang-terms}).
We think of each forest $T\in\forest^n(\mathcal{Q})$ as a process of mind-changes which captures a natural class $\PSigma_T$ of $\mathbf{\Delta}^0_{n+1}$-measurable functions.
Based on this viewpoint, we will then define a quasi-order $\treeleq$ on $\forest^n(\Q)$ that matches Wadge reducibility on the classes of functions described by these forests.

\begin{theorem}\label{thm:main-finite-Borel-rank}
Let $\mathcal{Q}$ be a bqo.
Then, the Wadge quasi-ordering on the $\mathbf{\Delta}^0_{n+1}$-measurable $\Q$-valued functions is isomorphic to $(\forest^n(\Q),\treeleq)$.
\end{theorem}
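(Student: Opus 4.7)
My plan is to proceed by induction on $n$. For the base case $n=0$, the $\mathbf{\Delta}^0_1$-measurable $\mathcal{Q}$-valued functions are the continuous ones, and Wadge equivalence between two such functions reduces to comparing their underlying clopen partition together with the $\mathcal{Q}$-labels attached, which matches the structure of $\forest^0(\mathcal{Q})$ under $\treeleq$ once we quotient by $\treeleq$-equivalence.

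For the inductive step, I would first give a \emph{semantic interpretation} of each forest $T\in\forest^n(\mathcal{Q})$: a node of $T$ near the root carries a label in $\tree^{n-1}(\mathcal{Q})$, which by induction codes a $\mathbf{\Delta}^0_n$-measurable function, and the branching of $T$ encodes countably many $\mathbf{\Sigma}^0_{n+1}$-triggered revisions of the currently-active $\mathbf{\Delta}^0_n$-guess. This assigns to each $T$ a canonical $\mathbf{\Delta}^0_{n+1}$-measurable function $f_T$ and a class $\PSigma_T$ of functions Wadge-reducible to $f_T$.

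The easy direction — if $T\treeleq T'$ then $f_T\leq_w f_{T'}$ — is then obtained by translating the homomorphism witnessing $T\treeleq T'$ into a continuous reduction: on each pair of matched nodes the inductive hypothesis produces a Wadge reduction between the $\mathbf{\Delta}^0_n$-functions coded by the labels, and the nested $\mathbf{\Sigma}^0_{n+1}$-events supply the glue that coordinates these piecewise reductions. For the other direction one needs a \emph{normal form theorem}: every $\mathbf{\Delta}^0_{n+1}$-measurable $A\colon\om^\om\to\mathcal{Q}$ is Wadge-equivalent to some $f_T$. Here I would use a Hausdorff--Kuratowski style decomposition at level $n+1$, writing $\om^\om$ as a well-founded transfinite union of pieces on which $A$ is $\mathbf{\Delta}^0_n$-measurable, then applying the inductive hypothesis to each piece to produce the labels attached to $T$.

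The main obstacle is the \emph{rigidity} half of the classification: if $f_T\leq_w f_{T'}$, then $T\treeleq T'$. In Selivanov's finite-$k$ setting this is handled by explicit combinatorics on finitely many labels and by a Wadge game. For an arbitrary bqo $\mathcal{Q}$ the labels are themselves infinite labeled trees, and finite combinatorics no longer suffice. I would play a Wadge/Borel game (relying on Borel determinacy) to extract, branch by branch, local $\treeleq$-comparisons at level $n-1$ between the labels of $T$ and those of $T'$, and then invoke a Nash-Williams style bqo argument — using the inductively established fact that $\tree^{n-1}(\mathcal{Q})$ is bqo whenever $\mathcal{Q}$ is — to choose compatible local comparisons simultaneously along \emph{all} infinite branches. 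A König/compactness gluing then assembles these choices into a genuine forest homomorphism $T\to T'$. The preservation of bqo through the $\tree$-operator, a Laver/Nash-Williams type theorem for $\mathcal{Q}$-labeled well-founded trees, is precisely what makes this global gluing possible and constitutes the technical heart of the argument.
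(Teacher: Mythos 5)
Your outline reproduces the easy half of the paper's architecture (assign to each $T\in\forest^n(\Q)$ a pointclass $\PSigma_T$ and a complete function, show $S\treeleq T$ gives a reduction), but the two hard steps are not actually carried out by the mechanisms you name, and the central device of the paper is missing entirely. For the ontoness/normal-form step, a ``Hausdorff--Kuratowski style decomposition into pieces on which $\A$ is $\mathbf{\Delta}^0_n$-measurable'' does not work: the pieces of such a decomposition are themselves $\mathbf{\Sigma}^0_n$/$\mathbf{\Pi}^0_n$-level sets, so restricting $\A$ to a piece and re-coding the piece as a copy of $\om^\om$ does not lower the complexity of the data, and there is no spatial decomposition of $\om^\om$ that turns a $\mathbf{\Delta}^0_{n+1}$ function into $\mathbf{\Delta}^0_n$ information plus open-triggered glue. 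The genuine level-lowering step in the paper is an operator on functions, not on the space: one shows that an \emph{initializable} $\A$ satisfies $\A\equiv_w\B\circ\U$ for a $\mathbf{\Sigma}^0_2$-universal conciliatory $\U$ and a strictly Wadge-smaller $\B=\A^{\not\sim}$, where $\not\sim$ is built from the true-stage jump $\J$ and a uniform Friedberg jump inversion. Moreover the induction is not on $n$ but on the well-founded Wadge ordering (well-foundedness coming from the bqo hypothesis via van Engelen--Miller--Steel), with a four-way case split (constant / self-dual / initializable / non-self-dual non-initializable) driven by the equivalence of self-duality, $\sigma$-join-reducibility and $\F(\A)=\emptyset$; the last case decomposes $\A$ as $(\A\upto\F(\A))\mindchange\bigoplus_n\A\upto[\sigma_n]$. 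None of this is recoverable from the decomposition you propose.

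The rigidity direction as you sketch it also does not fit the objects. The forests in $\forest^n(\Q)$ are well-founded, so there are no infinite branches along which to glue local comparisons, and no K\"onig-type compactness is available or needed; Laver's theorem and bqo-ness enter only to guarantee well-foundedness (and hence that the inductions and the minimization defining $\A^{\not\sim}$ terminate), not to assemble a homomorphism. The actual difficulty in proving $\Omega_S\leq_w\Omega_T\implies S\treeleq T$ is descending one nesting level: from a continuous reduction $\Omega_{\langle U\rangle}\leq_w\Omega_{\langle V\rangle}$ one must extract $\Omega_U\leq_w\Omega_V$, and this is done by applying $\not\sim$ and using that $(\B\circ\U)^{\not\sim}\equiv_w\B$ together with the fact that $\not\sim$ is order-reflecting on non-self-dual degrees; the mind-change case is then a direct continuity argument locating the first input on which the reduction ``changes its mind.'' Without the universal function $\U$, the jump $\J$, and the inversion $\not\sim$, neither the normal form nor the rigidity half goes through, so as it stands the proposal has a genuine gap at the heart of the argument.
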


To extend this result through the Borel hierarchy, we will introduce the $\xi$-th iterated version $\forest^\xi(\mathcal{Q})$ for each countable ordinal $\xi$, and show the following transfinite version:

\begin{theorem}\label{thm:main-infinite-Borel-rank}
Let $\mathcal{Q}$ be a bqo, and $\xi$ be a countable ordinal.
Then, the Wadge quasi-ordering on the $\mathbf{\Delta}^0_{1+\xi}$-measurable $\Q$-valued functions is isomorphic to $(\forest^\xi(\Q),\treeleq)$.
\end{theorem}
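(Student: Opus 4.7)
The plan is to prove Theorem~\ref{thm:main-infinite-Borel-rank} by transfinite induction on $\xi$, with all finite ordinals already handled by Theorem~\ref{thm:main-finite-Borel-rank}. At each stage I will establish three components: (i) a canonical assignment $f\mapsto T_f$ sending each $\mathbf{\Delta}^0_{1+\xi}$-measurable $f\colon\om^\om\to\Q$ to some $T_f\in\forest^\xi(\Q)$ with $f\equiv_w f_{T_f}$; (ii) monotonicity, $S\treeleq T\Rightarrow f_S\leq_w f_T$; and (iii) faithfulness, $f_S\leq_w f_T\Rightarrow S\treeleq T$. Running in parallel, I need a bqo-preservation lemma: whenever $\Q$ is bqo, so is $(\forest^\xi(\Q),\treeleq)$ for every countable $\xi$. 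This should follow by a parallel transfinite induction using the standard closure of bqos under countable labeled well-founded trees and countable joins, and it is crucial for the infinitary combinatorics used at limit stages.

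For the successor step $\xi\to\xi+1$, any $\mathbf{\Delta}^0_{2+\xi}$-measurable $f$ admits a mind-change analysis relative to the $\mathbf{\Sigma}^0_{1+\xi}$-hierarchy: picture an observer producing successive guesses for $f(X)$, each revision justified by a $\mathbf{\Sigma}^0_{1+\xi}$-event. Along every branch the function stabilizes into a $\mathbf{\Delta}^0_{1+\xi}$-measurable restriction, which by the inductive hypothesis is named by an element of $\forest^\xi(\Q)$. The whole process is thereby encoded by a labeled tree in $\tree^{\xi+1}(\Q)=\tree(\forest^\xi(\Q))$. Monotonicity and faithfulness are then verified by adapting the Wadge-game arguments used one level down in the proof of Theorem~\ref{thm:main-finite-Borel-rank}, replacing the role of $\mathbf{\Sigma}^0_{1+n}$-events by $\mathbf{\Sigma}^0_{1+\xi}$-events throughout.

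For a countable limit $\lambda$, the fibers of a $\mathbf{\Delta}^0_{1+\lambda}$-measurable function are $\mathbf{\Sigma}^0_{1+\lambda}$, hence countable unions $\bigcup_n B_n$ with $B_n\in\mathbf{\Pi}^0_{1+\alpha_n}$ for choices of $\alpha_n<\lambda$ that need not be bounded. Accordingly, $\forest^\lambda(\Q)$ should be defined so that its elements are countable joins of trees whose labels are drawn from $\bigsqcup_{\alpha<\lambda}\forest^\alpha(\Q)$, with $\treeleq$ extending the inductively given orders coherently. The canonical representation $f\mapsto T_f$ is then obtained by applying the inductive hypothesis piecewise to the $B_n$, and monotonicity follows from the successor-step reductions together with the limit-closure of continuous reductions. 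The main obstacle is faithfulness at limits: a continuous Wadge reduction $\theta$ between $f_S$ and $f_T$ may decode different regions of $\om^\om$ at very different levels $\alpha<\lambda$, with no uniform upper bound. The plan is contrapositive: assuming $S\not\treeleq T$, extract from any putative $\theta$ a bad sequence inside the bqo $\bigsqcup_{\alpha<\lambda}\forest^\alpha(\Q)$, contradicting the preservation lemma. This is where Nash--Williams / Laver-style minimal-bad-array machinery is indispensable, and it is the most delicate part of the entire induction.
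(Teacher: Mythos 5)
Your plan reproduces the easy scaffolding (assign a name, prove monotonicity, prove faithfulness) but the two steps that carry the actual content of the theorem are missing. First, the representation step. At successors you assert that every $\mathbf{\Delta}^0_{2+\xi}$-measurable function ``admits a mind-change analysis relative to the $\mathbf{\Sigma}^0_{1+\xi}$-hierarchy'' stabilizing along each branch to a $\mathbf{\Delta}^0_{1+\xi}$-measurable restriction; this is a transfinite Hausdorff--Kuratowski-type decomposition for $\Q$-valued functions and is essentially the statement being proved, not an available tool. At limits the situation is worse: writing a fiber as $\bigcup_n B_n$ with $B_n\in\mathbf{\Pi}^0_{1+\alpha_n}$ does not let you ``apply the inductive hypothesis piecewise,'' because the $B_n$ are not a clopen partition, they vary with the fiber, and the restriction of $f$ to a $\mathbf{\Pi}^0_{1+\alpha_n}$ set is not a lower-rank object in any sense the induction can use. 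The paper does not induct on $\xi$ at all: it inducts on the Wadge degree itself (well-founded by the van Engelen--Miller--Steel bqo theorem), and the device that lowers Borel rank is the transfinite jump-inversion operator $\A\mapsto\A^{\not\sim\om^\alpha}$ built from the true-stage jump $\J^{\om^\alpha,\C}$. The case analysis (constant / self-dual / $\alpha$-stable / non-self-dual non-initializable) and the key Lemma \ref{lem:stable-NSD} --- that $\alpha$-stability forces $\A^{\not\sim\om^\alpha}$ to be $\sigma$-join-irreducible, proved by a density-of-forcing argument along $\om$-iterates of the jump --- are what make the induction close; nothing in your outline substitutes for them. Note also that your identification $\tree^{\xi+1}(\Q)=\tree(\forest^\xi(\Q))$ is not the paper's definition for infinite $\xi$ (the nesting is organized by additively indecomposable ordinals $\om^\alpha$ via the operators $\langle\cdot\rangle^{\om^\alpha}$), so you would additionally owe a proof that your naming structure is order-isomorphic to the intended one.

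Second, faithfulness. Your contrapositive plan --- extract a bad array in $\bigsqcup_{\alpha<\lambda}\forest^\alpha(\Q)$ from a putative continuous reduction --- is not a workable substitute for what is needed. A continuous reduction between the complete functions is an analytic object, and there is no mechanism in your sketch for converting its failure to respect $\treeleq$ into a bad sequence of labels; minimal-bad-array arguments prove that a quasi-order is bqo, not that a particular map into the Wadge degrees is order-reflecting. The paper instead proves $\Omega_S\leq_w\Omega_T\Rightarrow S\treeleq T$ by peeling off the outermost constructor: the jump-inversion operator cancels $\langle\cdot\rangle^{\om^\alpha}$ (Corollary \ref{lem:transfinite-jump-inversion}), and Lemma \ref{lem:jump-inv}(\ref{part: sim 1-1}) lets one descend only because the relevant inverted functions are non-self-dual --- which is again exactly Lemma \ref{lem:stable-NSD}. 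In short, the proposal identifies the right surface-level targets but lacks the jump/jump-inversion machinery that the proof actually runs on, and the steps offered in its place would fail at the limit stage and in the faithfulness direction.
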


We deal with functions of finite Borel rank and prove Theorem \ref{thm:main-finite-Borel-rank} in Sections \ref{section:qo-nestedtrees}--\ref{sec:proof-1-finite}.
We will then extend those ideas to infinite Borel rank and prove Theorem \ref{thm:main-infinite-Borel-rank} in Section \ref{sec:Omega-infinite-Borel-rank}.

The main steps for the proof are as follows.
First, we need to formally define $\forest^\xi(\Q)$ and the ordering $\treeleq$.
Then, as suggested above, in Section \ref{sec:natural-pointclass}, we will assign a pointclass $\Sigma_T$ of $\Q$-valued functions to each forest $T\in\forest^\xi(\mathcal{Q})$.
For instance, $\Sigma_{\langle 0\rangle\mindchange\langle 1\rangle}$ is the class of characteristic functions of $\bf\Si^0_1$ sets, and $\Sigma_{\langle 0\rangle\mindchange\langle 1\rangle\mindchange\langle 0\rangle}$ is the class of characteristic functions of sets which are differences of two open sets.

\begin{proposition}\label{prop: measurability of Sigma T}
For every $T\in\forest^\xi(\Q)$, every function in $\Si_T$ is $\mathbf{\Delta}^0_{1+\xi}$-measurable.
\end{proposition}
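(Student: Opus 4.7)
The plan is to prove this by transfinite induction on $\xi$, with a secondary structural induction on the nested labeled forest $T \in \forest^\xi(\Q)$. The key idea is that the recursive definition of $\Si_T$ in Section \ref{sec:natural-pointclass} mirrors the natural recursive construction of $\mathbf{\Delta}^0_{1+\xi}$-measurable functions via iterated mind-changes along sets of the appropriate Borel class.

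For the base case $\xi=0$, an element of $\forest^0(\Q)$ is a $\Q$-labeled forest whose "depth" in the iterated tree construction has collapsed to $\Q$, and $\Si_T$ consists of functions that are constant on each piece of a clopen decomposition of $\om^\om$. Such functions are continuous, hence $\mathbf{\Delta}^0_1$-measurable. For the inductive step, assume the proposition holds for all $\eta < \xi$ and all $T' \in \forest^\eta(\Q)$. I would then argue by cases on the form of $T$:
\begin{itemize}
\item If $T = \bigsqcup_{i\in\om} T_i$ is a countable disjoint union of trees in $\tree^\xi(\Q)$, the definition of $\Si_T$ should present each $f \in \Si_T$ as a piecewise combination of functions $f_i \in \Si_{T_i}$ on a $\mathbf{\Delta}^0_1$-partition. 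By the structural sub-induction, each $f_i$ is $\mathbf{\Delta}^0_{1+\xi}$-measurable, hence so is $f$.
\item If $T$ is a single tree with root carrying a label $r$ at the previous iterate and subforests $T_0, T_1, \ldots \in \forest^\xi(\Q)$, then $f \in \Si_T$ is obtained from a "default" function in $\Si_r$ (which is $\mathbf{\Delta}^0_{1+\eta}$-measurable for some $\eta<\xi$, by the outer induction) by modifying it on an effective family of $\mathbf{\Sigma}^0_{1+\xi}$ (or $\mathbf{\Pi}^0_{1+\xi}$) mind-change regions, where on each region $f$ agrees with some function in $\Si_{T_i}$. One level of mind-change along $\mathbf{\Sigma}^0_{1+\xi}$ sets is exactly what is allowed inside $\mathbf{\Delta}^0_{1+\xi}$, so $f^{-1}(U)$ remains $\mathbf{\Delta}^0_{1+\xi}$ for every open $U$.
\end{itemize}

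At a limit ordinal $\xi = \sup_n \xi_n$, the iterated forest class $\forest^\xi(\Q)$ is constructed by allowing labels drawn from the lower stages $\forest^{\xi_n}(\Q)$ with increasing $\xi_n$, and each mind-change region sits inside a $\mathbf{\Sigma}^0_{1+\xi_n}$ set with $\xi_n < \xi$. Since $\bigcup_{\eta < \xi} \mathbf{\Sigma}^0_{1+\eta} \subseteq \mathbf{\Sigma}^0_{1+\xi}$, and the root default is $\mathbf{\Delta}^0_{1+\eta}$-measurable for an $\eta<\xi$ by induction, the resulting $f$ is again $\mathbf{\Delta}^0_{1+\xi}$-measurable.

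The main obstacle is the bookkeeping for Borel rank across the two nested inductions: one must verify that the class of sets used to separate the mind-change regions associated with a tree $T \in \tree^\xi(\Q)$ really lies in $\mathbf{\Sigma}^0_{1+\xi}$, and that taking countable joins at the forest level does not inflate the complexity beyond $\mathbf{\Delta}^0_{1+\xi}$. This amounts to carefully matching the nesting depth of the "tree-of-trees" construction with the corresponding level of the Borel hierarchy, and is essentially where the choice of $1+\xi$ (as opposed to, say, $\xi$ itself) pays off, since it absorbs the extra continuous-reduction layer coming from the outermost forest join.
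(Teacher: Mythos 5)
There is a genuine gap: your proof is an induction on a hierarchy that is not the one the paper actually defines, and it misses the one step where the Borel rank really increases. In Definition \ref{def: Sigma T}, the mind-change clause $\PSigma_{T\mindchange S}$ always splits $\om^\om$ along an \emph{open} set $\V$ (never along a $\mathbf{\Sigma}^0_{1+\xi}$ set), so mind-changes by themselves never raise the rank; the corresponding part of the paper's argument (Lemma \ref{lem:Borel-rank-calc}(1)) just writes $\A^{-1}[q]$ and its complement as Boolean combinations of $\mathbf{\Delta}^0_{2+\xi}$ sets with an open set. The rank increase comes entirely from the labeling clause: $\A\in\PSigma_{\langle T\rangle}$ means $\A=\B\circ\D$ for a $\mathbf{\Sigma}^0_2$-measurable $\D$ and $\B\in\PSigma_T$ (and $\A\in\PSigma_{\langle T\rangle^{\om^\alpha}}$ means the same with $\D$ being $\mathbf{\Sigma}^0_{1+\om^\alpha}$-measurable). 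Your proposal never addresses this composition clause; instead you attribute the rank increase to "mind-change regions" sitting in $\mathbf{\Sigma}^0_{1+\xi}$, which describes a different (Hausdorff-difference-style) presentation of the pointclasses that the paper does not use and that you would separately have to prove equivalent.

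Concretely, the missing key lemma is the composition fact underlying Lemmas \ref{lem:Borel-rank-calc}(2) and \ref{lem:Borel-rank-calc2}: if $\B$ is $\mathbf{\Delta}^0_{1+\xi}$-measurable and $\D$ is $\mathbf{\Sigma}^0_{1+\eta}$-measurable, then $\B\circ\D$ is $\mathbf{\Delta}^0_{1+\eta+\xi}$-measurable, proved by showing by induction on rank that $\D$-preimages of $\mathbf{\Sigma}^0_{1+\zeta}$ sets are $\mathbf{\Sigma}^0_{1+\eta+\zeta}$. This is also where the ordinal bookkeeping you worry about actually happens: one needs the left-addition identities $1+1+\xi=2+\xi$ and $1+\om^\alpha+\xi$ to line up the nesting depth of $\langle\cdot\rangle$ and $\langle\cdot\rangle^{\om^\alpha}$ with the Borel rank, and this is not the same issue as "absorbing the outermost forest join" (the $\sqcup$ clause uses a clopen partition and is rank-neutral, exactly as you say). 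Without the composition lemma and without engaging the actual clause (4) of Definition \ref{def: Sigma T}, the inductive step of your argument does not go through for the class $\Si_T$ as defined.
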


These pointclasses will match the ordering $\treeleq$ on forests in the following sense:

\begin{proposition}\label{prop:order-on-trees}
For $S,T\in\forest^\xi(\Q)$, $S\treeleq T$ if and only if every $\Sigma_S$ function is Wadge reducible to some $\Sigma_T$ function.
\end{proposition}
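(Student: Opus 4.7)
The plan is to handle the two implications separately, inducting simultaneously on the ordinal $\xi$ and on the rank of the trees and forests involved.

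For the forward direction, suppose $h\colon S\to T$ witnesses $S\treeleq T$, i.e., $h$ preserves $\subseteq$ and labels. I would argue that any mind-change process described by $S$ is simultaneously a mind-change process described by $T$ via relabelling by $h$; more precisely, if $f\in\Sigma_S$ is built recursively from local data attached to the nodes of $S$, then the same data placed at the $h$-images of those nodes witnesses $f\in\Sigma_T$, using that the recursive recipe for $\Sigma_T$ (as set up in Section \ref{sec:natural-pointclass}) is monotone along inclusion-and-label-preserving relabellings. The identity continuous function then gives $f\leq_w f$ as a $\Sigma_T$-function. Countable joins in $\forest^\xi(\Q)$ correspond to countable disjoint unions of trees, and $h$ restricts to homomorphisms on components, so the forest case reduces to the tree case via the inductive hypothesis.

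For the backward direction, my plan is to build, for each $T\in\forest^\xi(\Q)$, a canonical function $\A_T\in\Sigma_T$ that is complete in the sense that every $\Sigma_T$ function is Wadge-reducible to $\A_T$. Such a universal test function is assembled by amalgamating, on disjoint clopen copies of $\om^\om$, one copy of every possible witness of the recursive recipe for $\Sigma_T$; the same construction shows that the statement ``every $\Sigma_S$ function Wadge-reduces to some $\Sigma_T$ function'' is equivalent to $\A_S\leq_w\A_T$. Given a continuous $\theta$ satisfying $\A_S(X)\leq_\Q\A_T(\theta(X))$ for all $X$, I would then define a node map $h\colon S\to T$ by tracking, for each $s\in S$, the node of $T$ at which $\theta$ sends the clopen piece of $\om^\om$ on which $\A_S$ displays the subtree below $s$. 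Continuity of $\theta$ forces the image to stabilise on a single node, the pointwise inequality $\leq_\Q$ forces label-preservation, and the nesting of the clopen pieces, together with the inductive hypothesis on subtrees of strictly smaller rank, yields $\subseteq$-preservation.

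The main obstacle will be the transfinite stage together with the fact that nodes at one level of $\forest^\xi(\Q)$ are themselves labelled by elements of $\tree^\eta(\Q)$ for smaller $\eta$. Recovering a global homomorphism from $\theta$ requires simultaneously running the extraction at the outer level and applying the induction hypothesis at each inner labelled tree, and then gluing the resulting partial homomorphisms coherently. Being a bqo is what makes the gluing possible: countable antichains of labels are ruled out, so the locally made choices cannot proliferate into incompatible homomorphisms. Well-foundedness of $S$, guaranteed because $S\in\forest^\xi(\Q)$ is well-founded, then allows the induction to terminate and delivers the required homomorphism $h\colon S\to T$.
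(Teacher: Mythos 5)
Your overall architecture (reduce to a single complete function $\A_T$ per forest, prove the forward direction by following a homomorphism, prove the backward direction by extracting one from a reduction) matches the paper's skeleton, but two of your key steps fail, and the failure points are precisely where the paper's main technical machinery lives.

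First, the complete function cannot be built by ``amalgamating, on disjoint clopen copies of $\om^\om$, one copy of every possible witness.'' A clopen amalgamation $\bigoplus_n\A_n$ is exactly the construction used for $\sqcup$-type forests; applied to a tree-type $T$ it produces a self-dual function, whereas the complete element of $\Sigma_T$ for a tree $T$ must be $\sigma$-join-irreducible. Worse, for $T=\langle S\rangle$ the class $\Sigma_T$ consists of compositions $\B\circ\D$ with $\D$ ranging over $\mathbf{\Sigma}^0_2$-measurable functions, and the paper points out at the start of Section \ref{sec:conciliatory} that there is \emph{no} universal total $\mathbf{\Sigma}^0_2$-measurable function on $\om^\om$ --- this is the whole reason the paper passes to conciliatory functions on $\hat{\om}^\om$ and builds $\Omega_{\langle S\rangle}=\Omega_S\circ\U$ from a $\mathbf{\Sigma}^0_2$-universal \emph{initializable conciliatory} $\U$ (Proposition \ref{prop:universal}, Definition \ref{def:universal-function}, Lemma \ref{lemma:omega-complete}). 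Your proposal has no substitute for this.

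Second, your extraction of the homomorphism in the backward direction breaks at every $\langle\cdot\rangle$ node. After one application of the labeling operation, ``the subtree below $s$'' is no longer displayed on a clopen piece of the input space: it sits behind a $\mathbf{\Sigma}^0_2$-measurable decoding, so a continuous $\theta$ with $\Omega_{\langle U\rangle}\leq_w\Omega_{\langle V\rangle}$ gives no stabilising node-tracking and no direct reduction $\Omega_U\leq_w\Omega_V$. The paper gets this implication only via the jump inversion operator $\A\mapsto\A^{\not\sim}$ built from the true-stage jump $\J$ and the Friedberg jump inversion theorem: Corollary \ref{cor:jump-cancel} gives $(\A\circ\U)^{\not\sim}\equiv_w\A$, and Lemma \ref{lem:jump-inv}(3) transfers the reduction back up. The paper explicitly defers the backward direction of Lemma \ref{lem: ordering ->}'s converse until this operator is available. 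Your appeal to ``bqo rules out incompatible homomorphisms'' does not supply this inversion; bqo-ness is used in the paper only for well-foundedness (to run the induction and to pick a minimal oracle for $\not\sim$). Finally, even your forward direction is too optimistic: when the input changes its mind into a component $S_i$ with $S_i\treeleq T$, the output has already committed to an initial segment inside the domain of $\Omega_{\langle V\rangle}$, and one needs the \emph{initializability} of $\Omega_{\langle V\rangle}$ to restart the reduction to all of $\Omega_T$; a pure relabelling along $h$ does not account for this.
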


For pointclass $\Sigma_T$, we will define a $\Sigma_T$-complete function $\Omega_T$, that is, $\Omega_T$ is in $\Sigma_T$ and any other function in $\Sigma_T$ is Wadge reducible to $\Omega_T$.

\begin{proposition}\label{prop:sublemma1}
For each $T\in\forest^\xi(\Q)$, there is a $\Sigma_T$-complete function $\Omega_T$.
\end{proposition}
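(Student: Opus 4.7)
The plan is to build $\Omega_T$ by transfinite recursion, following the same inductive structure used to construct $\forest^\xi(\Q)$ and to define the pointclasses $\Sigma_T$ in Section~\ref{sec:natural-pointclass}. I would induct on $\xi$ first, and, for each fixed $\xi$, on the tree-theoretic rank of $T$. At each step the shape of $\Omega_T$ will mirror the description of $T$, and Wadge-completeness will come from unwinding the corresponding clause in the inductive definition of $\Sigma_T$, together with Proposition~\ref{prop:order-on-trees}.

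For the base case, when $T$ is a single leaf labeled by $q\in\Q$, I expect $\Sigma_T$ to be the class of functions pointwise $\leq_\Q q$, and I would take $\Omega_T$ to be the constant function $X\mapsto q$. For a forest $T=\bigsqcup_{i\in\om} T_i$, I would split $\om^\om$ into the clopen pieces $U_i=\{i\}\fr\om^\om$ via the first coordinate, and set $\Omega_T\upto U_i$ equal to $\Omega_{T_i}$ transported along the canonical homeomorphism $U_i\cong\om^\om$. Given any $f\in\Sigma_T$, the disjoint-sum clause in the definition of $\Sigma_T$ yields a clopen partition of $\om^\om$ on which $f$ looks like a $\Sigma_{T_i}$-function; feeding each piece into the appropriate $U_i$ via the inductively supplied reductions produces a global continuous reduction of $f$ to $\Omega_T$.

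The heart of the argument is the tree case. A tree $T$ at level $\xi$ carries a root together with countably many child forests $T_0,T_1,\ldots$ encoding mind-changes, and $\Sigma_T$ should describe a process that starts with the root-labelled behaviour and possibly switches to a child behaviour, with the switch witnessed by a $\mathbf{\Sigma}^0_{1+\xi}$-event. I would construct $\Omega_T$ by placing $\Omega_{\mathrm{root}(T)}$ as the default behaviour and then, on a suitably coded family of slices, overriding the default by $\Omega_{T_i}$ on the $i$-th slice whenever a fixed universal $\mathbf{\Sigma}^0_{1+\xi}$-set fires. To reduce any $f\in\Sigma_T$, I would use the mind-change data witnessing $f\in\Sigma_T$ to continuously select the correct slice index and to glue in the inductive Wadge reductions of the post-change behaviours; here Proposition~\ref{prop: measurability of Sigma T} guarantees that the resulting object stays within the target pointclass.

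The principal obstacle I foresee is the limit-$\xi$ case. There $\mathbf{\Sigma}^0_{1+\xi}$ is a union of lower pointclasses, and the construction of $\forest^\xi(\Q)$ is presumably threaded through some cofinal sequence $\xi_n\nearrow\xi$. Matching the inductive hypothesis to this threading---so that the universal set used at the tree step has exactly the right Borel rank, and so that the reductions for subtrees of potentially unbounded lower ranks can be glued continuously---will require a uniform version of the inductive hypothesis (producing $\Omega_S$ continuously in a code for $S$) together with careful coding. I expect this to be the only place where any nontrivial work beyond ``mirror the definition of $\Sigma_T$'' is needed.
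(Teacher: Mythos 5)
Your skeleton (induction on the term structure, constant functions at leaves, clopen sums for $\sqcup$, mind-changes along the branches) matches the paper's Definition \ref{def:universal-function} and Lemma \ref{lemma:omega-complete}. But there is a genuine gap in how you treat the step that raises the Borel rank. In the paper's Definition \ref{def: Sigma T}, the switch from root-behaviour to child-behaviour in $\Sigma_{T\mindchange S}$ is triggered by an \emph{open} set, not by a $\mathbf{\Sigma}^0_{1+\xi}$ event; the rank $1+\xi$ enters only through the separate labeling clause, where $\A\in\Sigma_{\langle T\rangle^{\om^\alpha}}$ means $\A=\B\circ\D$ for a $\mathbf{\Sigma}^0_{1+\om^\alpha}$-measurable function $\D$ and $\B\in\Sigma_T$. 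To get a complete function for that clause you must realize it as $\Omega_{\langle T\rangle^{\om^\alpha}}=\Omega_T\circ\U_{\om^\alpha}$ for a single $\mathbf{\Sigma}^0_{1+\om^\alpha}$-universal \emph{function} $\U_{\om^\alpha}$ --- a universal set will not do. And such a universal function cannot be total on $\om^\om$: a total $\mathbf{\Sigma}^0_2$-measurable function is $\mathbf{\Delta}^0_2$-measurable, and there is no greatest $\mathbf{\Delta}^0_2$ Wadge degree. This is exactly why the paper works with conciliatory functions on $\hat{\om}^\om$ (inputs and outputs may ${\sf pass}$): the universal function $\U_{\om^\alpha}$ exists only in that setting, and the completeness argument for $\langle T\rangle$ needs $\Omega_T$ to be conciliatory so that replacing $\theta\circ\D$ by $\U\circ\Psi$, which agree only modulo passes, does not change the value. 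Your proposal never introduces this mechanism, so as written the tree case does not go through.

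On the limit case: your worry about threading a cofinal sequence $\xi_n\nearrow\xi$ and needing the inductive hypothesis uniformly in codes is largely dissolved by the paper's setup. The terms of rank below $\om^\alpha$ are generated by the operations $\langle\cdot\rangle^{\om^\beta}$ for $\beta<\alpha$, the induction is purely on the well-founded term structure, and the only genuinely new ingredient at limit levels is the existence of the $\mathbf{\Sigma}^0_{1+\om^\alpha}$-universal initializable conciliatory function (Proposition \ref{prop:universal-inf}), built from the transfinite true-stage jump operators. Finally, invoking Proposition \ref{prop:order-on-trees} to obtain completeness is circular relative to the paper's architecture: the paper proves completeness of $\Omega_T$ first (Lemma \ref{lemma:omega-complete}) and only then uses it to establish Proposition \ref{prop:order-on-trees}.
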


We can then restate Proposition \ref{prop:order-on-trees} as $S\treeleq T\iff \Omega_S\leq_w\Omega_T$.
This gives us an embedding of $\forest^\xi(\Q)$ into the $\Delta_{1+\xi}$ $\Q$-Wadge degrees.
The last step is to show that this embedding is onto.

\begin{proposition}\label{prop:sublemma1(2)->(1)}
Every $\mathbf{\Delta}^0_{1+\xi}$-measurable function $\om^\om\to\Q$ is Wadge equivalent to a $\Sigma_T$-complete function for some $T\in\tree^\xi(\Q)$.
\end{proposition}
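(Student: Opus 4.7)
The plan is to proceed by transfinite induction on $\xi$, showing that every $\mathbf{\Delta}^0_{1+\xi}$-measurable function $f\colon\om^\om\to\Q$ is Wadge equivalent to $\Omega_T$ for some $T\in\tree^\xi(\Q)$. By Propositions \ref{prop:order-on-trees} and \ref{prop:sublemma1}, the map $T\mapsto [\Omega_T]_w$ is already an order-preserving embedding into the $\mathbf{\Delta}^0_{1+\xi}$ Wadge degrees, so the real content is surjectivity. Throughout, we use that the upper bound $f\leq_w\Omega_T$ follows once $f$ is shown to belong to $\Sigma_T$ (by completeness of $\Omega_T$), while the matching lower bound $\Omega_T\leq_w f$ is obtained by a continuous simulation that plays back the mind-change process encoded by $T$ inside $f$ using a suitable $\Q$-preserving coding on $\om^\om$.

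For the base case $\xi=0$, a continuous $f\colon\om^\om\to\Q$ (with $\Q$ discrete) is locally constant, so by zero-dimensionality of $\om^\om$ there is a clopen partition $\{U_i\}_{i\in\om}$ with $f\upto U_i\equiv q_i\in\Q$. The element of $\forest^0(\Q)$ consisting of nodes labeled by the $q_i$'s, read off the canonical tree of basic neighborhoods implementing the partition, serves as $T$, and the equivalence $\Omega_T\equiv_w f$ is immediate.

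For the successor step $\xi\mapsto\xi+1$, the essential tool is a decomposition lemma in the spirit of Jayne--Rogers and its transfinite extensions by Solecki, Semmes, and Motto~Ros: given a $\mathbf{\Delta}^0_{2+\xi}$-measurable $f$, one produces a partition $\om^\om=\bigsqcup_{i\in\om}A_i$ with each $A_i$ of level $1+\xi$ such that each restriction $f\upto A_i$ is $\mathbf{\Delta}^0_{1+\xi}$-measurable, and such that the $\sigma$-algebraic structure of the partition itself is of ``mind-change'' character at level $1+\xi$. The inductive hypothesis applied to each $f\upto A_i$ yields trees $T_i\in\tree^\xi(\Q)$ with $f\upto A_i\equiv_w\Omega_{T_i}$; these are then hung as children of a root node (with an appropriate default label) to produce $T\in\tree(\tree^\xi(\Q))=\tree^{\xi+1}(\Q)$. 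The verification $\Omega_T\equiv_w f$ follows from the design of $\Sigma_T$ so that its pointclass is exactly the ``$\Sigma^0_{1+\xi}$-countable-combination'' of the pointclasses $\Sigma_{T_i}$.

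The limit step handles $\xi$ by applying the same kind of decomposition, but now the factor trees $T_i$ come from levels $\eta<\xi$ of possibly varying heights: one exploits the definition of $\forest^\xi(\Q)$ at limits to assemble these into a single object in $\tree^\xi(\Q)$, using the cofinal sequence implicit in the decomposition of $\Sigma^0_{1+\xi}$ as $\bigcup_{\eta<\xi}\Pi^0_{1+\eta}$-$\sigma$. The main obstacle, and the technical heart of the proof, is the successor-step decomposition lemma: establishing the correct transfinite Jayne--Rogers-type theorem for $\Q$-valued rather than $2$-valued functions, and carrying out the bookkeeping so that the indexing of the pieces is itself of the right complexity to be coded as a single layer of $\tree(-)$ rather than something more complex. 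Once that decomposition is secured, the translation between $\Sigma_T$-membership and Wadge equivalence is a routine continuous-reduction computation matching the constructions already used in Propositions \ref{prop: measurability of Sigma T}--\ref{prop:sublemma1}.
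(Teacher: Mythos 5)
There is a genuine gap, and it occurs at the heart of your argument: the successor-step decomposition lemma you posit does not exist in the form you need, and even if it did, it could not produce the right class of names. Concretely, take $\xi=0$, $\Q=2$, and $f=\chi_A$ for a $\mathbf{\Sigma}^0_2$-complete set $A$. If $\om^\om=\bigsqcup_i A_i$ with each $A_i\in\mathbf{\Delta}^0_2$ and each $f\upto A_i$ continuous (hence locally constant), then $A$ and $A^c$ would each be countable unions of $\mathbf{\Delta}^0_2$ sets, forcing $A\in\mathbf{\Delta}^0_2$, a contradiction; taking the pieces at level $2+\xi$ instead makes the decomposition trivially the preimage partition and carries no information. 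Moreover, hanging the trees $T_i$ as children of a single root can only produce terms of depth two over $\tree^\xi(\Q)$, whereas $\tree^{\xi+1}(\Q)$ must contain trees of arbitrary countable well-founded depth: already for $\Q=2$ and $\xi=0$ the $\alpha$-th level of the Hausdorff--Kuratowski difference hierarchy requires a chain of length $\alpha$. No single countable partition into strictly simpler pieces can separate the $\om_1$-many Wadge degrees sitting inside one Borel level, so an induction on the Borel rank $\xi$ alone cannot close.

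The paper's proof inducts instead on the Wadge ordering itself (well-founded since the $\Q$-Wadge degrees form a bqo) and splits by structural properties of $\A$ rather than by a decomposition of its domain. If $\A$ is self-dual, Proposition \ref{prop:sji-deg-pres} gives $\A\equiv_w\bigoplus_i\A_i$ with each $\A_i<_w\A$ non-self-dual, yielding a $\sqcup$-type term. If $\A$ is initializable, the true-stage jump inversion gives $\A^{\not\sim}<_w\A$ (this, not a domain partition, is the only place the Borel rank drops), and Corollary \ref{cor: U and sim inverses} together with Lemma \ref{lem:jump-inv}(3) recovers $\A\equiv_w\Omega_{\langle T\rangle}$ from $\A^{\not\sim}\equiv_w\Omega_T$. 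If $\A$ is non-self-dual but not initializable, one shows $\A\equiv_w(\A\upto\F(\A))\mindchange\bigoplus_n(\A\upto[\sigma_n])$ where the $\sigma_n$ are the minimal strings leaving $\F(\A)$; both factors stay at the same Borel level but have strictly smaller Wadge degree, which is why the induction must run on $\leq_w$. Your proposal omits the jump-inversion operator $\not\sim$ entirely, and this machinery (Lemmas \ref{lem:init-join}, \ref{lem:strategy-remains}, and for infinite ranks the $\alpha$-stability analysis of Lemma \ref{lem:stable-NSD}) is precisely what replaces the Jayne--Rogers-type decomposition you were hoping for.
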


\section{The $\Q$-Wadge degrees}\label{sec:Q-wadge}

Let us start by describing what we knew about the structure of the $\Q$-Wadge degrees.

\subsection{The Borel hierarchy of functions}

We should be careful here, as that there are several different definitions of the Borel hierarchy (specifically, at limit ranks).
We adopt the following definition:
For $\alpha>0$, a set $\mathcal{S}\subseteq\om^\om$ is $\mathbf{\Sigma}^0_\alpha$ if $\mathcal{S}$ can be written as $\mathcal{S}=\bigcup_{n\in\om}\mathcal{S}_n$ where each $\mathcal{S}_n$ is $\mathbf{\Pi}^0_{\beta_n}$ for some $\beta_n<\alpha$.
Then, we define $\mathbf{\Pi}^0_\alpha$ and $\mathbf{\Delta}^0_\alpha$ in a usual manner.
For a countable ordinal $\xi$, and a topological space $\X$, a function $\A\colon \om^\om\to\mathcal{\X}$ is {\em $\mathbf{\Sigma}^0_\xi$-measurable} if $\A^{-1}[U]$ is $\mathbf{\Sigma}^0_\xi$ for each open set $U\subseteq\mathcal{X}$.
In particular: $\A\colon \om^\om\to\om^\om$ is $\mathbf{\Sigma}^0_\xi$-measurable if $\A^{-1}[\sigma]$ is $\mathbf{\Sigma}^0_\xi$ for each $\si\in \om^{<\om}$, where $[\si]=\{X\in\om^\om: \si\subseteq X\}$;
$\A\colon \om^\om\to\mathcal{Q}$ is $\mathbf{\Sigma}^0_\xi$-measurable if it is with respect to the discrete topology on $\mathcal{Q}$.
If $\Q$ is a discrete space, the class of total $\mathbf{\Si}^0_\xi$-measurable functions $\om^\om\to\mathcal{Q}$ is the same as that of $\mathbf{\Delta}^0_\xi$-measurable functions.
Note that the range of a Borel function from $\om^\om$ to a discrete space is countable; otherwise ZFC would prove the existence of $2^{\aleph_1}$ many pairwise different Borel subsets of $\om^\om$.
Thus, $\A:\om^\om\to\Q$ is $\mathbf{\Delta}^0_\xi$-measurable if and only if the range of $\A$ is countable, and $\A^{-1}[\{q\}]$ is $\mathbf{\Delta}^0_\xi$ for any $q\in\mathcal{Q}$.
Since we will be dealing with Borel functions $\A\colon \om^\om\to\mathcal{Q}$, they will always have countable range. 
We can thus assume from the rest of the paper that $\Q$ is actually countable, even though all the results will extend to uncountable $\Q$ for functions with countable range.


Continuous functions are exactly the $\mathbf{\Si}^0_1$-measurable functions. 
For each continuous function $G$ there is a partial computable operator $\Phi_e\colon\om^{\leq\om}\to\om^{\leq\om}$ and an oracle $C\in \om^\om$ such that $G(X)=\Phi_e(C\oplus X)$ for all $X\in \om^\om$.
Also, we will often identify a continuous function $\om^\om\to\om^\om$ with its corresponding approximation function $\om^{\leq\om}\to\om^{\leq\om}$.

For functions $\A,\B\colon\om^\om\to\om^\om$, we say that $\A$ is {\em Wadge reducible} to $\B$ if there is a continuous function $\theta$ such that $\A=\B\circ \theta$.
Note that this matches Definition \ref{def: wadge Q} if we think of $\Q$ as $\om^\om$ where every two reals are incomparable under $\leq_Q$.

%

\subsection{Wadge degrees and games}\label{sec:Wadge-game}

Wadge \cite[Theorem B8]{Wadge83} introduced a perfect-information, infinite, two-player game, known as the {\em Wadge game}, which can be used to define  Wadge reducibility.
For $\mathcal{Q}$-valued functions $\A,\B \colon \om^\om\to\mathcal{Q}$, here is the $\mathcal{Q}$-valued version $G_w(\mathcal{A},\mathcal{B})$ of the Wadge game:
At $n$-th round of the game, Player I chooses $x_n\in\om$ and II chooses $y_n\in\om\cup\{{\sf pass}\}$  (where ${\sf pass}\not\in\om$).
Eventually Players I and II produce infinite sequences $X=(x_n)_{n\in\omega}$ and $Y=(y_n)_{n\in\omega}$, respectively.
Let $Y^{{\sf p}}$ denote the result dropping all {\sf pass}es from $Y$.
We say that {\em Player II wins the game $G_w(\mathcal{A},\mathcal{B})$} if
\[\mbox{$Y^{{\sf p}}$ is an infinite sequence, and }\mathcal{A}(X)\leq_\mathcal{Q}\mathcal{B}(Y^{{\sf p}}).\]
As in Wadge \cite[Theorem B8]{Wadge83}, one can easily check that $\mathcal{A}\leq_{w}\mathcal{B}$ holds if and only if Player II wins the game $G_w(\mathcal{A},\mathcal{B})$.
We will often identify a winning strategy with a continuous function generated by it.


\subsection{Better quasi orderings}

To define bqos, we need to introduce some notation.
Let $[\om]^\om$ be the set of all strictly increasing sequences on $\om$, whose topology is inherited from $\om^\om$.
We also assume that a quasi-order $\mathcal{Q}$ is equipped with the discrete topology.
Given $X\in[\om]^\om$, let $X^-$ denote the result of dropping the first entry from $X$ (or equivalently, $X^-=X\setminus\{\min X\}$, if we  think of $X\in[\om]^\om$ as an infinite subset of $\om$).

\begin{definition}[Nash-Williams \cite{NW68}]\label{def:bqo}
A quasi-order $\mathcal{Q}$ is a {\em better-quasi-order} (abbreviated as bqo) if, for any continuous function $f\colon[\om]^\om\to\mathcal{Q}$,  there is $X\in[\om]^\om$ such that $f(X)\leq_\mathcal{Q}f(X^-)$.
\end{definition}

The formulation of the definition above is due to Simpson \cite{Sim85}.
He also show that one can use Borel functions $f$ in the definition and obtain the same notion.

\begin{example}
For a natural number $k$, the discrete order $\mathcal{Q}=(k;=)$, denoted by $k$, is a bqo.
More generally, every finite partial ordering is a bqo.
\end{example}

Every bqo is also a {\em well-quasi-order} (often abbreviated as wqo), that is, that it is well-founded and has no infinite antichain.
Bqo's where introduced by Nash-Williams to prove wqo results, as bqo's have better closure properties than wqo's under infinitary operations.
For instance, Laver \cite{Lav78} showed that if $\Q$ is a bqo, then so are $\tree(\Q)$ ordered by the $\treeleq$, and the class of scattered $Q$-labeled linear orderings ordered by $\leq_Q$-preserving embeddabillty.
The most relevant such result for us is the following:

\begin{theorem}[{van Engelen--Miller--Steel \cite[Theorem 3.2]{EMS}}] \label{thm:Wadge-bqo}
If $\mathcal{Q}$ is a bqo, then the Wadge degrees of $\mathcal{Q}$-valued Borel functions on $\om^\om$ form a bqo too.
\end{theorem}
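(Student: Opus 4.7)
The plan is to argue by contradiction via Borel determinacy, converting a supposed counterexample in the Wadge degrees into a counterexample to the bqo property of $\mathcal{Q}$ itself. Suppose $F\colon[\om]^\om\to\mathcal{W}_\mathcal{Q}$ (the poset of Wadge degrees of Borel $\mathcal{Q}$-valued functions) is Borel, and hence may be taken continuous by Simpson's Borel-measurable strengthening of Definition \ref{def:bqo}, with $F(X)\nleq_w F(X^-)$ for every $X\in[\om]^\om$. Fix a Borel-uniform representation $X\mapsto A_X$ so that each $A_X\colon\om^\om\to\mathcal{Q}$ is Borel and $F(X)$ is the Wadge degree of $A_X$.

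I would next set up a single master Wadge game $G$: Player I produces $X\in\om^\om$ interleaved with a challenger $Y\in\om^\om$, while Player II produces $Z\in\om^{\leq\om}$ with passes as in Section \ref{sec:Wadge-game}; II wins iff $X$ is strictly increasing, $Z^{{\sf p}}$ is infinite, and $A_X(Y)\leq_\mathcal{Q} A_{X^-}(Z^{{\sf p}})$. Because $X\mapsto A_X$ is Borel-uniform, this is a Borel game, so Borel determinacy applies. If Player II wins $G$ with some strategy $\tau$, then for each fixed $X\in[\om]^\om$, restricting $\tau$ to plays where I commits to that $X$ on his first coordinate gives a winning strategy in $G_w(A_X,A_{X^-})$, so $A_X\leq_w A_{X^-}$ for every $X$, directly contradicting the assumption. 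So Player I has a winning strategy $\sigma$. Reading $\sigma$ off against an arbitrary Player II play $Z$ yields continuous maps $Z\mapsto X(Z)\in[\om]^\om$ and $Z\mapsto Y(Z)\in\om^\om$ with
\begin{equation*}
A_{X(Z)}(Y(Z))\,\nleq_\mathcal{Q}\, A_{X(Z)^-}(Z)\qquad\text{for every }Z\in\om^\om.
\end{equation*}

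The main obstacle is the final step: converting $\sigma$ into a continuous $h\colon[\om]^\om\to\mathcal{Q}$ with $h(W)\nleq_\mathcal{Q} h(W^-)$ for every $W$, contradicting bqo-ness of $\mathcal{Q}$ via Definition \ref{def:bqo}. The naive recipe is $h(W):=A_{X(Z_W)}(Y(Z_W))$ for some $Z_W$ chosen so that $Z_W=Y(Z_{W^-})$ and $X(Z_W)^-=X(Z_{W^-})$; the displayed failure of reducibility for $\sigma$ then gives $h(W)\nleq_\mathcal{Q} h(W^-)$ immediately. The difficulty is that the chain $W\supset W^-\supset W^{--}\supset\cdots$ has no bottom, so the recurrence for $Z_W$ has no base case and is not obviously continuous in $W$. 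My plan here is a finite-stage truncation: define approximations $Z_W^{(n)}$ depending only on the first $n{+}1$ entries of $W$ by running an $n$-deep unrolling of $\sigma$ in which Player II's feeds are themselves supplied by the previous-level unrolling on $W^-$, then show these approximations cohere and converge continuously in $W$, while the ``$X$-coordinates'' line up along the chain. The subtle bookkeeping in this diagonal construction is where I expect the full strength of bqo (rather than mere wqo) to be needed, likely through an iterated-game or Nash-Williams-style combinatorial argument on $\mathcal{Q}$.
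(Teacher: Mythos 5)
The paper does not actually prove this theorem; it is quoted from van Engelen--Miller--Steel \cite[Theorem 3.2]{EMS} and used as a black box, so I am comparing your sketch with the standard argument there rather than with anything internal to the paper.

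Your overall plan (bad array in the Wadge degrees $\Rightarrow$ Borel determinacy $\Rightarrow$ Player I's strategies $\Rightarrow$ bad array in $\Q$) is the right one, but the single ``master game'' is where it breaks. Because you let Player I \emph{play} $X$, the index $X(Z)$ is an output of I's winning strategy $\sigma$, computed adaptively from II's play $Z$. Your final step needs two coherence conditions, $Z_W=Y(Z_{W^-})$ and $X(Z_W)^-=X(Z_{W^-})$; the first is a definition, but nothing forces the second. The strategy $\sigma$ is applied to the two unrelated inputs $Z_W$ and $Z_{W^-}$ and may answer with completely different $X$-coordinates, so $h(W^-)=A_{X(Z_{W^-})}(Y(Z_{W^-}))$ need not coincide with $A_{X(Z_W)^-}(Z_W)$, and $h(W)\not\leq_\Q h(W^-)$ does not follow. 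Your finite-stage truncation addresses the real but routine ``no base case'' issue (the look-ahead is finite precisely because Player I moves first), but it cannot repair the $X$-coherence, which is a property of $\sigma$ itself, not of the unrolling.

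The repair, and what \cite{EMS} actually does, is a Martin--Monk-style diagonalization with $X$ as an external parameter rather than a move: for each $W\in[\om]^\om$ consider the $\om$-indexed family of games $G_w(A_{W^{(n)}},A_{W^{(n+1)}})$ along the chain $W\supset W^-\supset\cdots$, each won by Player I; select winning strategies $\sigma_X$ uniformly in $X$ (this definable selection is the genuine technical point that your master game was presumably designed to avoid), and compose them diagonally, with II in game $n$ copying I's output from game $n+1$. Shift-coherence $y_n(W^-)=y_{n+1}(W)$ is then automatic, because the game array for $W^-$ is literally the tail of the array for $W$, and $h(W)=A_W(y_0(W))$ is the desired bad array for $\Q$. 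One last small point: the full strength of bqo (versus wqo) of $\Q$ enters only at the very end, when $h$ is exhibited as a continuous bad array; the diagonal bookkeeping itself is determinacy plus continuity, not Nash--Williams combinatorics.
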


\subsection{Self-duality and join-reducibility}\label{sec:Wadge-key-facts}

Two important notions when trying to understand the notion of the $\Q$-Wadge degrees is that of $\sigma$-join-reducibility and self-duality.

\begin{definition}
We say that a $\mathcal{Q}$-Wadge degree $\mathbf{a}$ is {\em $\sigma$-join-reducible} if $\mathbf{a}$ is the least upper bound of a countable collection $(\mathbf{b}_i)_{i\in\om}$ of $\mathcal{Q}$-Wadge degrees such that $\mathbf{b}_i<_w\mathbf{a}$.
Otherwise, we say that $\mathbf{a}$ is {\em $\sigma$-join-irreducible}.
\end{definition}

\begin{definition}[Louveau and Saint-Raymond \cite{LS90}]
We say that a function $\mathcal{A}\colon\om^\om\to\mathcal{Q}$ is {\em self-dual} if there is a continuous function $\theta\colon\om^\om\to\om^\om$ such that $\mathcal{A}(\theta(X))\not\leq_\mathcal{Q}\mathcal{A}(X)$ for all $X\in\om^\om$.
\end{definition}

For example, in the case $\Q=2$, the $\Delta$ Wadge degrees are the self-dual ones, and the $\Si$'s and the $\Pi$'s are not.
Also, each $\Delta$ degree is the least upper bound of the $\Sigma$ degree and the $\Pi$ degree immediately below it.

Before stating the equivalence of these two notions, the following definition gives us a useful tool to study the Wadge degree of a function $\A\colon\om^\om\to\Q$.
For $\si\in\om^{<\om}$, define the function $\A\upto[\si]$ by $(\A\upto[\si])(X)=\A(\sigma\fr X)$ for any $X\in\om^\om$ (see also Observations \ref{obs:total-wadge} and \ref{obs:total-wadge2}), where $\sigma\fr X$ is the concatenation of $\sigma$ and $X$.
Notice that for each $\si\in \om^{<\om}$, $\A\upto[\si]\leq_w\A$ by essentially the identity operation. 
For some of these $\si$ we will have $\A\upto[\si]\equiv_w\A$ and for some $\A\upto[\si]<_w\A$.
Define
\[
\F(\A)=\{X:(\forall n)\;\A\upto[X\upto n]\equiv_w\A\}.
\]

One more definition, given $\A_n\colon\om^\om\to\mathcal{Q}$, $\bigoplus_{n\in\om}\A_n$ is defined by 
\[
(\bigoplus_{n\in\om}\A_n)(n\fr X)=\A_n(X).
\]

\begin{proposition}\label{prop:sji-deg-pres}
Let $\mathcal{Q}$ be a bqo and  $\A\colon\om^\om\to\mathcal{Q}$ a Borel function.
The following are equivalent  
\begin{enumerate}
\item $\A$ is $\sigma$-join-reducible.    	\label{part: ji}
\item $\A\equiv_w\bigoplus_{n\in\om}\A_n$, for some $\A_n$ which are $\si$-join-irreducible and $\A_n<_w\A$.   \label{part: sqsum}
\item $\F(\A)$ is empty.				\label{part: FA}
\item $\A$ is self-dual				\label{part: self-dual}
\end{enumerate}
\end{proposition}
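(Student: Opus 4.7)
The plan is to establish the equivalences through the cycle $(2)\Rightarrow(1)\Rightarrow(3)\Rightarrow(4)\Rightarrow(2)$, using Borel determinacy together with the bqo structure of the $\Q$-Wadge degrees. The implication $(2)\Rightarrow(1)$ is essentially definitional once one verifies that $\bigoplus_n\A_n$ is a least upper bound of the $\A_n$'s in the Wadge quasi-order: any common upper bound $\B\geq_w\A_n$ for all $n$ (via $f_n$) admits the patched reduction $n\fr X\mapsto f_n(X)$, which is continuous on the clopen cover $\{[n]:n\in\om\}$. For $(1)\Rightarrow(3)$, suppose $\A\leq_w\bigoplus_n\A_n$ via continuous $f(Y)=n(Y)\fr g(Y)$; since $n\colon\om^\om\to\om$ is continuous into discrete $\om$, every $Y$ admits a basic neighborhood $[\si]$ on which $n\equiv k$, and restricting and shifting $g$ gives $\A\upto[\si]\leq_w\A_k<_w\A$. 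Hence every $Y$ has a prefix $\si$ with $\A\upto[\si]<_w\A$, yielding $\F(\A)=\emptyset$.

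The key technical step is $(3)\Rightarrow(4)$, which invokes Borel determinacy. Assuming $\F(\A)=\emptyset$, for each $Y$ let $\si(Y)$ be the shortest prefix with $\A\upto[\si(Y)]<_w\A$; the set $T$ of these minimal prefixes is an antichain in $\om^{<\om}$ inducing a clopen partition $\om^\om=\bigsqcup_{\si\in T}[\si]$. For each $\si\in T$, the condition $\A\not\leq_w\A\upto[\si]$ says Player II has no winning strategy in $G_w(\A,\A\upto[\si])$; since $\A$ is Borel, the payoff set is Borel, and Borel determinacy produces a winning strategy $\tau_\si$ for Player I, that is, a continuous map with $\A(\tau_\si(Y))\not\leq_\Q\A(\si\fr Y)$ for every $Y$. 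Defining $\theta(\si\fr Y):=\tau_\si(Y)$ for $\si\in T$ yields a continuous function (continuous on each clopen cylinder of the partition) that witnesses self-duality.

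For $(4)\Rightarrow(2)$, the plan is to first establish $(4)\Rightarrow(3)$ by contrapositive. Given $X_0\in\F(\A)$ with continuous reductions $h_n$ witnessing $\A\leq_w\A\upto[X_0\upto n]$ (which exist because $\A\upto[X_0\upto n]\equiv_w\A$) and a candidate self-duality witness $\theta$, the goal is to produce $(X,Z)$ satisfying $X=\theta(X_0\upto n\fr Z)$ and $h_n(X)=Z$ for some $n$, i.e., a fixed point of $Z\mapsto h_n(\theta(X_0\upto n\fr Z))$, by a bit-by-bit construction that exploits how the prefix $X_0\upto n$ absorbs the continuity modulus of $\theta$ for large $n$. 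Such an $(X,Z)$ would yield $\A(X)\leq_\Q\A(X_0\upto n\fr Z)$ by the reduction but $\A(X)\not\leq_\Q\A(X_0\upto n\fr Z)$ by self-duality at $X_0\upto n\fr Z$, a contradiction. With $\F(\A)=\emptyset$ in hand, the partition $T$ from the previous step gives $\A\equiv_w\bigoplus_{\si\in T}\A\upto[\si]$ with each summand $<_w\A$; recursively decomposing any $\sigma$-join-reducible summand, with termination guaranteed by the bqo well-foundedness of the $\Q$-Wadge degrees (Theorem~\ref{thm:Wadge-bqo}), produces the required decomposition into $\sigma$-join-irreducible pieces.

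The main obstacle will be the fixed-point construction in $(4)\Rightarrow(3)$, since the continuity moduli of $\theta$ and of the $h_n$ need to mesh so that the implicit equation becomes solvable; the structure of Wadge reductions along the self-similar branch $X_0$ must be exploited carefully, perhaps by choosing the $h_n$ coherently as $n$ varies so that the associated composed map genuinely has a fixed point. Borelness of $\A$ is essential for Borel determinacy in $(3)\Rightarrow(4)$, and the bqo structure of the $\Q$-Wadge degrees underpins both the termination of the final decomposition in $(4)\Rightarrow(2)$ and the availability of well-behaved representatives for each degree along the recursion.
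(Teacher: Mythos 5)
Your cycle $(2)\Rightarrow(1)\Rightarrow(3)\Rightarrow(4)$ is sound. The steps $(2)\Rightarrow(1)$ and $(1)\Rightarrow(3)$ are essentially the paper's arguments for $(1)\Rightarrow(2)$ and $(2)\Rightarrow(3)$, and your $(3)\Rightarrow(4)$ --- pasting together Player I's winning strategies, obtained from Borel determinacy of the games $G_w(\A,\A\upto[\si])$, along the clopen partition given by the minimal strings $\si$ with $\A\upto[\si]<_w\A$ --- is a correct direct proof of a direction the paper simply outsources: the paper cites Block's Proposition 3.5.4 (the $\Q$-valued Steel--van Wesep theorem) for the whole equivalence $(1)\Leftrightarrow(4)$ and then closes the cycle $(3)\Rightarrow(1)\Rightarrow(2)\Rightarrow(3)$ with the same elementary arguments you give.

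The genuine gap is in $(4)\Rightarrow(3)$, i.e.\ the claim that $\F(\A)\neq\emptyset$ rules out a self-duality witness. Your scheme asks for a fixed point $Z=h_n(\theta(X_0\upto n\fr Z))$. A continuous self-map of $\om^\om$ need not have a fixed point (consider $Z\mapsto (Z(0)+1)\fr Z$); a bit-by-bit construction succeeds only when the map is expansive, i.e.\ $F(Z)\upto (k+1)$ is determined by $Z\upto k$. Here neither factor is under control: $\theta$ is an arbitrary continuous function, so it may consume unboundedly many symbols of $X_0\upto n\fr Z$ beyond the fixed prefix before emitting its first output symbol, and $h_n$, coming from a winning strategy for Player II in a Wadge game, may pass arbitrarily long before producing output. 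Taking $n$ large does not help, since the head start $X_0\upto n$ sits in the input of $\theta$ rather than in the output of the composed map, and ``choosing the $h_n$ coherently'' does not address the uncontrolled modulus of continuity of $\theta$. This obstacle is precisely the hard content of the Steel--van Wesep/Block theorem: the diagonal fixed-point argument does work at the level of Lipschitz reductions (where strategies emit output at least as fast as they read input), and the substance of the theorem lies in transferring the conclusion from Lipschitz to Wadge reducibility. As written, your $(4)\Rightarrow(2)$ is therefore not established; you would need either to import the Steel--van Wesep machinery (as the paper does by citation) or to supply a genuinely different argument for $(4)\Rightarrow(3)$ or $(4)\Rightarrow(1)$.
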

\begin{proof}
The equivalence between (\ref{part: ji}) and (\ref{part: self-dual}) was proved by Block \cite[Proposition 3.5.4]{Blo14}, and is a generalization of Steel--van Wesep's theorem \cite{Wesep78} from $\Q=2$ to general $\Q$.

Let us prove (\ref{part: FA})$\implies$(\ref{part: ji}).
Suppose $\F(\A)$ is empty, and let $V$ be the set of minimal stings in $\om^{<\om}$ such that $\A\upto[\si]<_w\A$.
Then $\{[\si]: \si\in V\}$ is a clopen partition of $\om^\om$.
It is not hard to see that $\A\equiv_w \bigoplus_{\si\in V}\A\upto[\si]$, and hence that $\A$ is $\sigma$-join-reducible.

For the direction (\ref{part: ji})$\implies$(\ref{part: sqsum}), suppose that $\A$ is $\sigma$-join-reducible, and that its Wadge degree is the least upper bound of $\B_i$, for $i\in\om$, with $\B_i<_w\A$.
Since $\B_j\leq_w\bigoplus_{i\in\om}\B_i$ for all $j\in\om$, we get that $\A\leq_w\bigoplus_{i\in\om}\B_i$.
Furthermore, since $\mathcal{Q}$-Wadge degrees are bqo, we can use transfinite induction and assume that each $\B_i$ is either $\sigma$-join-irreducible or a sum of  $\sigma$-join-irreducibles. 
We would then get that $\A$ is itself equivalent to a sum of $\sigma$-join-irreducibles.

For (\ref{part: sqsum})$\implies$(\ref{part: FA}), let $\theta$ witness that $\A\leq_w\bigoplus_{i\in\om}\A_i$.
For each $X\in \om^\om$, there exists $n$ such that $\theta(X\upto n)$ is non-empty.
If $i$ is the first entry of $\theta(X\upto n)\mnskip$, we then get that $\theta$ witnesses that $\A\upto[X\upto n]\leq_w\A_i<_w\A$.
It follows that $X\not\in \F(\A)$ and hence that $\F(\A)$ is empty.
\end{proof}

\subsection{Conciliatory functions}\label{ss: conciliatory}

There is another way of characterizing non-self-dual functions, and it is using conciliatory functions. 
Essentially, these are functions whose domain is $\om^{\leq \om}$ instead of just $\om^\om$.
For a Borel function $\A\colon\om^\om\to\mathcal{Q}$, it will follow from our results that $\A$ is non-self-dual if and only if it can be extended to a function $\hat{\A}\colon\om^{\leq \om}\to\mathcal{Q}$ that is Wadge equivalent to $\A$ (as defined below).
This was proved by Duparc \cite{Dup01} for $\Q=2$ --- he actually introduced the notion of a conciliatory set.
We generalize the notion of a conciliatory set in the $\Q$-valued setting and prove this result as a consequence of Proposition \ref{prop:sublemma1(2)->(1)} and Observation \ref{obs:tree-is-conciliatory}.

To be able to deal with Wadge reducibility and with complexity pointclasses, we will use the following representation of conciliatory functions. 
Fix a symbol `${\sf pass}$' and define 
\[
\hat{\om}=\om\cup\{{\sf pass}\}.
\]
Given $X\in\hat{\om}^\om$, we use the notation $X\mnskip\in\om^{\leq\om}$ to denote the string obtained by removing all ${\sf pass}$'s from $X$ (see also the definition of the Wadge game; Section \ref{sec:Wadge-game}).

\begin{definition}
A function $\A\colon \hat{\om}^\om\to\Q$ is {\em conciliatory} if
\[(\forall X,Y\in{\hat{\om}}^\om)\;[X\mnskip=Y\mnskip\;\Longrightarrow\;\A(X)=\A(Y)].\]
A function $\Psi\colon \hat{\om}^\om\to\hat{\om}^\om$ is {\em conciliatory} if
\[(\forall X,Y\in{\hat{\om}}^\om)\;[X\mnskip=Y\mnskip\;\Longrightarrow\;\Psi(X)\mnskip=\Psi(Y)\mnskip].\]
\end{definition}

Conciliatory functions are in one-to-one correspondence with functions $\om^{\leq\om}\to\Q$ and  $\om^{\leq\om}\to\om^{\leq\om}$ respectively.
However, when we think of their Wadge degrees and of their complexity, it is better to think of them as maps defined on $\hat{\om}^\om$.
The obvious topology to give to $\hat{\om}^\om$ is the product topology of the discrete space $\hat{\om}$, which is homeomorphic to $\om^\om$ (just because there is a bijection between $\hat{\om}$ and $\om$).
We will thus treat $\hat{\om}^\om$ exactly as we treat $\om^\om$ when we define complexity classes of sets and functions. 
For instance, a Wadge reduction between conciliatory functions $\A\colon \hat{\om}^\om\to\Q$ and $\B\colon \hat{\om}^\om\to\Q$, would be continuous function $\theta\colon \hat{\om}^\om\to\hat{\om}^\om$ which is not necessarily conciliatory.
Thus, this function $\theta$ is not necessarily well-defined as a function on $\om^{\leq\om}$.

Via the identification between $\hat{\om}^\om$ and $\om^\om$, conciliatory function are just a special class of regular functions. 
Then, for instance, we can then transform a conciliatory function $\A\colon \hat{\om}^\om\to\Q$ into a function $\underline{\A}\colon \om^\om\to\Q$ which is Wadge equivalent to $\A$.
Thus, the conciliatory Wadge degrees are just a subset of the standard Wadge degrees of functions on $\om^\om$.
However, they will be very useful to us when we define the $\Sigma_T$-complete functions $\Omega_T$.

\begin{observation}\label{obs:skip-legal}
Every conciliatory function is $\sigma$-join-irreducible.
\end{observation}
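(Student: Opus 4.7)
The plan is to use the characterization of $\sigma$-join-irreducibility provided by Proposition \ref{prop:sji-deg-pres}: it suffices to exhibit some $X \in \F(\A)$, i.e., some $X \in \hat\om^\om$ such that $\A\upto[X\upto n] \equiv_w \A$ for every $n \in \om$. The conciliatory hypothesis makes one canonical choice of $X$ obvious, namely the constant sequence of passes.

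More precisely, let $\A \colon \hat\om^\om \to \Q$ be conciliatory and set $P = ({\sf pass}, {\sf pass}, {\sf pass}, \ldots) \in \hat\om^\om$. For any $n \in \om$, the finite prefix $P\upto n$ consists entirely of passes, so for every $Y \in \hat\om^\om$ we have $(P\upto n \fr Y)\mnskip = Y\mnskip$. By the defining property of conciliatory functions, this yields $\A(P\upto n \fr Y) = \A(Y)$, i.e., $\A\upto[P\upto n] = \A$ as functions, and in particular $\A\upto[P\upto n] \equiv_w \A$. Hence $P \in \F(\A)$, so $\F(\A) \neq \emptyset$.

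Under the identification of $\hat\om^\om$ with $\om^\om$ noted in Section \ref{ss: conciliatory}, $\A$ is a Borel $\Q$-valued function on $\om^\om$, so Proposition \ref{prop:sji-deg-pres} applies. The implication (\ref{part: ji})$\Rightarrow$(\ref{part: FA}) there gives that $\F(\A) = \emptyset$ whenever $\A$ is $\sigma$-join-reducible; contrapositively, $\F(\A) \neq \emptyset$ forces $\A$ to be $\sigma$-join-irreducible. This completes the proof.

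The argument is essentially a one-liner once the right invariant is identified; the only thing to be slightly careful about is translating the statement across the identification $\hat\om^\om \cong \om^\om$ and verifying that the prefix-restriction operation $\A\upto[\sigma]$ used in $\F(\A)$ matches the concatenation $\sigma \fr Y$ at the level of $\hat\om$-strings. No new construction or induction is required.
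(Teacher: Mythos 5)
Your proof is correct and follows exactly the paper's argument: the paper also observes that ${\sf pass}^\om\in\F(\A)$ and then invokes Proposition \ref{prop:sji-deg-pres}. You have merely spelled out the "easy to see" step, which is fine.
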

\begin{proof}
If $\A$ is conciliatory, it is easy to see that ${\sf pass}^\om\in\F(\A)$, where ${\sf pass}^\om$ is the infinite sequence consisting only of ${\sf pass}$.
Thus, by Proposition \ref{prop:sji-deg-pres}, $\A$ is $\sigma$-join-irreducible.
\end{proof}

It is the converse direction of this observation that is hard to prove.

The following lemmas and observations will help us get gain some intuition on conciliatory functions, even though they will not be used in the rest of the paper.

\begin{obs}
Every partial computable operator $\Phi_e$ can be viewed as a conciliatory function.
Essentially, it just outputs {\pass}es while it is waiting either for a new value of the oracle, or a new computation to converge. 
By the same reason, every continuous function $\om^\om\to\om^\om$ can be extended to a conciliatory function as we mentioned at the end of Section \ref{sec:Q-wadge}.
\end{obs}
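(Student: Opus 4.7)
My plan is to make precise the informal description in the observation by exhibiting, for each partial computable operator $\Phi_e$, a specific conciliatory function $\tilde{\Phi}_e\colon\hat{\om}^\om\to\hat{\om}^\om$ induced by it. Recall from Section~\ref{sec:Q-wadge} that we identify a continuous function with the monotone partial map $\Phi_e\colon\om^{<\om}\to\om^{<\om}$ realizing its finite approximations. The construction of $\tilde{\Phi}_e$ is a ``with-delays'' simulation: at stage $s$ we have read the finite input $X\upto s\in\hat{\om}^{<\om}$; let $\rho_s=(X\upto s)\mnskip\in\om^{<\om}$ be its ${\sf pass}$-stripped version and let $\tau_s\in\om^{<\om}$ be the value of $\Phi_e(\rho_s)$ computed within $s$ stages (of length $0$ if nothing converges). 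Setting $\tau_{-1}=\langle\rangle$, output at stage $s$ either the next symbol $\tau_s(|\tau_{s-1}|)$ if $|\tau_s|>|\tau_{s-1}|$, or the symbol ${\sf pass}$ otherwise. This defines $\tilde{\Phi}_e(X)\in\hat{\om}^\om$ for every $X\in\hat{\om}^\om$.

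Next I verify the three required properties. Continuity is clear since the $s$-th output symbol depends only on $X\upto s$. To see that $\tilde{\Phi}_e$ is conciliatory, suppose $X\mnskip=Y\mnskip$. Then the sequence $(\rho_s^X)_{s\in\om}$ of ${\sf pass}$-stripped prefixes of $X$ and the analogous sequence $(\rho_s^Y)_{s\in\om}$ for $Y$ are two cofinal chains in the same set of prefixes of $X\mnskip=Y\mnskip$; by monotonicity of $\Phi_e$, the two associated sequences $(\tau_s^X)$ and $(\tau_s^Y)$ are cofinal chains converging to the same element of $\om^{\leq\om}$. Hence the non-${\sf pass}$ symbols emitted by $\tilde{\Phi}_e(X)$ and $\tilde{\Phi}_e(Y)$ coincide in the same order, i.e.\ $\tilde{\Phi}_e(X)\mnskip=\tilde{\Phi}_e(Y)\mnskip$. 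Finally, when $\rho\in\om^{<\om}$ is regarded as a ${\sf pass}$-free element of $\hat{\om}^{\leq\om}$, the output eventually contains every symbol of $\Phi_e(\rho)$, so $\tilde{\Phi}_e$ extends $\Phi_e$ in the stated sense.

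For the second sentence of the observation, invoke the characterization recalled in Section~\ref{sec:Q-wadge}: every continuous $G\colon\om^\om\to\om^\om$ has the form $G(X)=\Phi_e(C\oplus X)$ for some oracle $C$ and index $e$. Apply the construction above to the operator $X\mapsto\Phi_e(C\oplus X)$ (equivalently, hard-code $C$ into the oracle tape and run $\tilde{\Phi}_e$) to obtain a conciliatory $\tilde{G}\colon\hat{\om}^\om\to\hat{\om}^\om$ which agrees with $G$ on ${\om}^\om\subseteq\hat{\om}^\om$.

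The only non-routine point is the conciliatory condition, and the potential trap is a timing mismatch between when $X$ and $Y$ have emitted enough genuine symbols to trigger a given output; the stage-parameter $s$ is exactly the device that hides this asynchrony by allowing arbitrarily many ${\sf pass}$es to accumulate on the output side. No new ideas beyond monotonicity of $\Phi_e$ are required, which is why the paper treats it as a remark rather than a numbered lemma.
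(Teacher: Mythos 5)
Your construction is exactly the formalization of the paper's own (purely informal) justification --- the observation is stated in the paper without a formal proof, and your ``emit the next converged output symbol or else emit ${\sf pass}$'' simulation is precisely what the sentence ``it just outputs passes while it is waiting'' means --- so the approach matches. One bookkeeping slip, though: you emit at stage $s$ the symbol at position $|\tau_{s-1}|$ whenever $|\tau_s|>|\tau_{s-1}|$, but if the computation converges on several new output symbols at once (say $|\tau_s|=|\tau_{s-1}|+5$ and then $\tau_{s+1}=\tau_s$), your rule emits only position $|\tau_{s-1}|$ and then falls back to ${\sf pass}$, permanently skipping positions $|\tau_{s-1}|+1,\dots,|\tau_s|-1$; the quantity you should compare against is not $|\tau_{s-1}|$ but the number $k_s$ of non-${\sf pass}$ symbols already emitted (equivalently, buffer the newly converged symbols and release one per stage). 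With that one-line repair the rest of your argument --- continuity from dependence on $X\upto s$ only, the conciliatory property from monotonicity of $\Phi_e$ plus the fact that $(\tau^X_s)$ and $(\tau^Y_s)$ are increasing chains with the same limit $\Phi_e(X\mnskip)$, and the relativization $G(X)=\Phi_e(C\oplus X)$ for the second sentence --- goes through verbatim.
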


\begin{lemma}
A function $G\colon\om^{\leq\om}\to\om^{\leq\om}$ can be represented as a continuous conciliatory function $\hat{\om}^\om\to\hat{\om}^\om$ if and only if $\si\subseteq \tau$ implies $G(\si)\subseteq G(\tau)$ for every $\si,\tau\in\om^{<\om}$, and $G(X)=\bigcup_n G(X\upto n)$ for every $X\in\om^\om$.
\end{lemma}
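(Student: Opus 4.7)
The plan is to handle the two directions separately. Throughout, we say $G$ is \emph{represented} by a continuous conciliatory $\hat G\colon\hat{\om}^\om\to\hat{\om}^\om$ to mean that $G(X\mnskip)=\hat G(X)\mnskip$ for every $X\in\hat{\om}^\om$, which is well-defined precisely because $\hat G$ is conciliatory.

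For the forward direction, fix $\sigma\subseteq\tau$ in $\om^{<\om}$ and write $\tau=\sigma\fr\rho$. The sequence $X_n=\sigma\fr\pass^n\fr\rho\fr\pass^\om$ converges to $Y=\sigma\fr\pass^\om$ in $\hat{\om}^\om$, with $X_n\mnskip=\tau$ and $Y\mnskip=\sigma$. Continuity of $\hat G$ gives $\hat G(X_n)\to\hat G(Y)$, so every finite prefix of $\hat G(Y)$ is a prefix of $\hat G(X_n)$ for large $n$; passing to the $\mnskip$-reduction of sufficiently long prefixes shows each initial segment of $G(\sigma)=\hat G(Y)\mnskip$ is an initial segment of $G(\tau)=\hat G(X_n)\mnskip$, hence $G(\sigma)\subseteq G(\tau)$. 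For $\om$-continuity, take $X\in\om^\om$ and use the standard description of $\hat G$ via its associated monotone map on $\hat{\om}^{<\om}$, giving $\hat G(X)=\bigcup_n\hat G(X\upto n)$. Since $X\upto n$ is a prefix of $(X\upto n)\fr\pass^\om$, monotonicity yields $\hat G(X\upto n)\mnskip\subseteq G(X\upto n)$, so $G(X)\subseteq\bigcup_n G(X\upto n)$. The reverse containment follows by another bridge-sequence argument: writing $X=(X\upto n)\fr W$, the sequence $Z_k=(X\upto n)\fr\pass^k\fr W$ satisfies $Z_k\mnskip=X$ and converges to $(X\upto n)\fr\pass^\om$ as $k\to\infty$, and continuity combined with $\hat G(Z_k)\mnskip=G(X)$ forces every finite prefix of $G(X\upto n)=\hat G((X\upto n)\fr\pass^\om)\mnskip$ to appear in $G(X)$.

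For the backward direction, assume $G$ is monotone on $\om^{<\om}$ and $\om$-continuous. We construct $\hat G$ by a streaming recursion: set $\hat G(\langle\rangle)=\langle\rangle$, and for $\sigma\in\hat{\om}^{<\om}$ and $a\in\hat{\om}$,
\[
\hat G(\sigma\fr a)=\hat G(\sigma)\fr b,
\]
where $b$ is the $(|\hat G(\sigma)\mnskip|+1)$-th character of $G((\sigma\fr a)\mnskip)$ if that character exists, and $b=\pass$ otherwise. This is monotone and length-preserving, so it extends to a continuous total function $\hat G\colon\hat{\om}^\om\to\hat{\om}^\om$ via $\hat G(X)=\bigcup_n\hat G(X\upto n)$.

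It remains to verify $\hat G(X)\mnskip=G(X\mnskip)$ for every $X$, which yields both the conciliatory property and the representation of $G$. The inclusion $\hat G(X)\mnskip\subseteq G(X\mnskip)$ is routine, since each non-pass output is drawn from some $G((X\upto n)\mnskip)$, which lies in $G(X\mnskip)$ by monotonicity on finite strings together with $\om$-continuity at the limit. The main obstacle is the reverse inclusion: one must show that every character of $G(X\mnskip)$ eventually appears in $\hat G(X)\mnskip$. Given the $k$-th character of $G(X\mnskip)$, $\om$-continuity (or simply monotonicity, if $X\mnskip$ is finite) ensures that for all sufficiently large $n$, $G((X\upto n)\mnskip)$ has length at least $k$ and its first $k$ entries agree with those of $G(X\mnskip)$. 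Since the recursion outputs a fresh non-pass character at every stage where one is available, $|\hat G(X\upto n)\mnskip|$ strictly increases at such stages, so the $k$-th character must eventually be produced.
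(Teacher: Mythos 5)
Your proof is correct, and the forward direction uses the same pass-insertion/continuity argument that the paper sketches. The backward direction, which the paper leaves to the reader, is handled by the expected delayed-streaming construction (emit the next unemitted character of $G((\sigma\fr a)\mnskip)$ or a $\pass$), and your catching-up argument for why no character is lost is exactly the detail that needs checking; nothing is missing.
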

\begin{proof}[Sketch of the proof]
For the left-to-right implication, suppose $\hat{G}$ is continuous conciliatory function such that $\hat{G}(X)\mnskip=G(X\mnskip)$ for all $X\in \hat{\om}^\om$.
Suppose $\tau=\si\concat\gamma$.
Every initial segment of $G(\si)$ must be an initial segment of $G(\tau)$ because every initial segment of $G(\si)\mnskip$ is contained in $\hat{G}(\si\concat\pass\ \pass\cdots \pass)\mnskip$ for some number of passes.
Then, 
\[
G(\tau)= \hat{G}(\si\concat\pass\ \pass\cdots \pass\concat \gamma\concat \pass^\om)\mnskip.
\]
It follows that $G(\si)\subseteq G(\tau)$.
By the same argument, if $\si\subseteq X$, then $G(\si)\subseteq G(X)$.
We leave the remaining details to the reader. 
\end{proof}

One can show that a function $G\colon\om^{\leq\om}\to \Q$ can be represented as a continuous conciliatory function $\hat{G}:\hat{\om}^\om\to\Q$ if and only if it is constant. 
(Just think of $Q$ as $\om$, being the first entry of the output of a function as in the lemma.)
The case of $\Si^0_2$ function gets more interesting. 

\begin{lemma}
A function $G\colon\om^{\leq\om}\to\om^{\leq\om}$ can be represented as a ${\bf\Sigma}^0_2$ conciliatory function if and only if for every $X\in \om^\om$, $G(X)$ is the pointwise limit of $G(X\upto m)$ in the following sense: for every $\si\in \om^{<\om}$, 
\[
\sigma\subseteq G(X)\iff \exists n\forall m>n\ (\sigma\subseteq G(X\upto m)).
\]
\end{lemma}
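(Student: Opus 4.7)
This lemma is the $\mathbf{\Sigma}^0_2$-analogue of the preceding lemma for continuous conciliatory functions: where continuity corresponded to $G$ being monotone on $\om^{<\om}$ with $G(X)=\bigcup_m G(X\upto m)$, $\mathbf{\Sigma}^0_2$-measurability should correspond to the stated pointwise limit. The main tool is the coordinate-wise limit characterization of $\mathbf{\Sigma}^0_2$-measurability: a function $f\colon\hat{\om}^\om\to\hat{\om}$ is $\mathbf{\Sigma}^0_2$-measurable iff there is a guess $h\colon\hat{\om}^{<\om}\to\hat{\om}$ with $f(Y)=\lim_s h(Y\upto s)$. Applied coordinate-wise to $\hat{G}$, this yields guesses $h_k\colon\hat{\om}^{<\om}\to\hat{\om}$ with $\hat{G}(Y)(k)=\lim_s h_k(Y\upto s)$ for each $k\in\om$.

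For the forward direction, assume $\hat{G}$ is a $\mathbf{\Sigma}^0_2$-measurable conciliatory representation of $G$, with coordinate guesses $h_k$ as above. Fix $X\in\om^\om$ and $\sigma\in\om^{<\om}$. For $(\Rightarrow)$, assume $\sigma\subseteq G(X)=\hat{G}(X)\mnskip$, pick $K$ with $\sigma\subseteq(\hat{G}(X)\upto K)\mnskip$, and pick $S$ so that $h_k(X\upto s)=\hat{G}(X)(k)$ for all $k<K$ and $s\geq S$. For $m\geq S$ consider the approximants $Y_m=X\upto m\concat\pass^\om$, which converge to $X$ in $\hat{\om}^\om$; conciliatory-ness gives $\hat{G}(Y_m)\mnskip=G(X\upto m)$, and combining the coordinate-wise limit structure with the stabilization of $h_k$ on prefixes of $X$ forces $\sigma\subseteq G(X\upto m)$ for $m$ large. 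The reverse direction $(\Leftarrow)$ is symmetric, using that eventual containment of $\sigma$ in $G(X\upto m)$ propagates through the coordinate guesses back to the value $\hat{G}(X)\mnskip=G(X)$.

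For the backward direction, assume the convergence condition and construct $\hat{G}$ as a pointwise limit of continuous conciliatory approximations $\hat{G}_s$. Define $\hat{G}_s(Y)$ to encode $G((Y\upto s)\mnskip)$ in $\hat{\om}^\om$ via a pass-pattern that commits to the $k$-th non-pass output coordinate only once $G((Y\upto s')\mnskip)(k)$ has remained constant for a stretch of stages $s'\leq s$. Continuity of each $\hat{G}_s$ is immediate since it depends only on $Y\upto s$, and the convergence condition ensures coordinate-wise stabilization, so $\hat{G}:=\lim_s\hat{G}_s$ is Baire class $1$, hence $\mathbf{\Sigma}^0_2$-measurable, with $\hat{G}(Y)\mnskip=G(Y\mnskip)$ by construction. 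The main obstacle is the design of this pass-pattern: the naive padding $\hat{G}(Y)=G(Y\mnskip)\concat\pass^\om$ fails to be $\mathbf{\Sigma}^0_2$-measurable because preimages of opens with trailing $\pass$'s impose the $\mathbf{\Pi}^0_2$ side-condition $|G(Y\mnskip)|\leq\ell$, pushing preimages into $\mathbf{\Delta}^0_3$; the stability-check scheme is designed to sidestep this, though verifying coordinate-wise convergence for $Y$ with $Y\mnskip$ finite (where $G(Y\mnskip)$ may have finite length not matched by $|Y\mnskip|$) remains the delicate verification.
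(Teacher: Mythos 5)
The forward direction of your argument breaks down at its key step. From the coordinate-wise limit representation $\hat{G}(Y)(k)=\lim_s h_k(Y\upto s)$ you fix $S$ so that $h_k(X\upto s)=\hat{G}(X)(k)$ for $k<K$ and $s\geq S$, and then assert that this stabilization ``forces'' $\si\subseteq G(X\upto m)$ for large $m$. But $G(X\upto m)=\hat{G}(Y_m)\mnskip$ with $Y_m=X\upto m\fr\pass^\om$, and $\hat{G}(Y_m)(k)=\lim_j h_k(X\upto m\fr\pass^j)$ is a limit taken along prefixes of $Y_m$, not of $X$. The limit-lemma representation gives no uniformity across inputs: stabilization of $h_k$ along prefixes of $X$ says nothing about the values $h_k(X\upto m\fr\pass^j)$, and a Baire class~$1$ function need not send the convergent sequence $Y_m\to X$ to a convergent sequence of values. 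So the conclusion does not follow, and the ``symmetric'' reverse implication fails for the same reason. The missing idea is the diagonalization the paper uses: put the $\mathbf{\Sigma}^0_2$ predicate $\si\subseteq\hat{G}(\cdot)\mnskip$ in normal form $\exists n\forall m>n\;R(\si,n,m,\cdot\upto m)$, and, assuming $\si\subseteq G(X)$ while $\si\not\subseteq G(X\upto k_n)$ for infinitely many $k_n$, build a \emph{single} $Y$ with $Y\mnskip=X\mnskip$ by inserting at stage $n$ a block of $\pass$es long enough to witness $\neg R(\si,n,m_n,Y\upto m_n)$; this kills every existential witness, so $\si\not\subseteq\hat{G}(Y)\mnskip=G(X)$, contradicting conciliatory-ness. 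Without constructing such a $Y$, no argument of your shape can close.

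For the backward direction the paper defers to the reader, and your stability-check scheme for placing $\pass$es is a plausible outline; but you yourself flag that convergence of the approximants on inputs $Y$ with $Y\mnskip$ finite is the delicate point, and you do not carry out that verification. As written, the proposal establishes neither implication.
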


In particular, a function $G\colon\om^{\leq\om}\to\Q$ is $\bf\Si^0_2$ conciliatory if and only if $G(X)=\lim_n G(X\upto n)$ for every $X\in\om^\om$. 

\begin{proof}[Sketch of the proof]
For the left-to-right implication, suppose $\hat{G}$ is $\bf\Si^0_2$ conciliatory function such that $\hat{G}(X)\mnskip=G(X\mnskip)$ for all $X\in \hat{\om}^\om$.
By definition, the predicate $\tau\subseteq\hat{G}(X)$ is $\Sigma^0_2$-definable with parameters.
For $\sigma\in\om^{<\om}$ and $X\in\om^\om$, note that the predicate $\sigma\subset\hat{G}(X)\mnskip$ is equivalent to the existence of $\tau\in\hat{\om}^{<\om}$ such that $\tau\mnskip=\sigma$ and $\tau\subseteq\hat{G}(X)$.
The latter condition is also $\Sigma^0_2$-definable with parameters.
Thus, there is an $R$ such that 
\[
\si\subseteq \hat{G}(X)\mnskip\iff \exists n\forall m>n \ R(\si, n,m,X\upto m)
\]
for $\si\in \om^{<\om}$ and $X\in \hat{\om}^\om$.
Suppose, toward a contradiction that $\si\subseteq G(X)$, but there exits $k_0<k_1<\cdots$ such that $\si\not\subseteq G(X\upto k_n)$.
We will then define $Y\in\hat{\om}^\om$ with $Y\mnskip=X\mnskip$ such that $\si\not\subseteq G(Y)=G(X)$.
We define $Y$ by finite approximations $Y_0\subseteq Y_1\subseteq Y_2\cdots$ so that $Y_n\mnskip = X\upto k_n\mnskip$.
At each stage $n$, since $\si\not\subseteq G(X\upto k_n)$, there is an $m_n$ such that $\neg R(\si, n,m_n,Y_n\fr {\sf pass}^k)$, where $k$ is so that $|Y_n\fr {\sf pass}^k|=m_n$. 
Define $Y_{n+1}$ to be $Y_n\fr {\sf pass}^k\fr X\upto [k_{n}+1,k_{n+1}]$, so that $Y_{n+1}\mnskip = X\upto k_{n+1}\mnskip$.
We then have $\forall n\neg R(\si, n,m_n,Y\upto m_n)$, and hence that $\si\not\subseteq G(Y)$.

We leave the converse direction to the reader.
It is a standard argument in computability theory.
\end{proof}

The following lemma is also quite standard. 
It is just a uniform version of the limit lemma.

\begin{lemma}
Every partial $\bf\Si^0_2$ function $G\colon\om^\om\to\om^\om$ can be extended to a $\bf\Si^0_2$ conciliatory function $\hat{G}\colon\hat{\om}^\om\to\hat{\om}^\om$, so that $\hat{G}(X)=G(X)$ for all $X\in \om^\om$.
\end{lemma}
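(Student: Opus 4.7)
The plan is to reduce the lemma to the preceding one. The preceding lemma characterizes exactly which functions $G^*\colon\om^{\leq\om}\to\om^{\leq\om}$ are representable as $\bf\Sigma^0_2$ conciliatory maps, so it suffices to construct an extension $G^*$ of $G$ that satisfies the pointwise-limit hypothesis of that characterization.

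First I would extract from the $\bf\Sigma^0_2$ complexity of $G$ a uniform approximation. Since the predicate ``$\sigma\subseteq G(X)$'' is $\bf\Sigma^0_2$ in $X$, uniformly in $\sigma$, a standard application of the limit lemma (writing each $\bf\Sigma^0_2$ set as a countable union of closed sets and dovetailing against initial segments of $X$) yields a Borel relation $R\subseteq\om^{<\om}\times\om^{<\om}$ such that, for every $\sigma\in\om^{<\om}$ and every $X\in\om^\om$,
\[
\sigma\subseteq G(X)\iff\exists n\,\forall m\geq n\ R(\sigma,X\upto m).
\]
Replacing $R(\sigma,\rho)$ with $\bigwedge_{\sigma'\subseteq\sigma}R(\sigma',\rho)\wedge (|\sigma|\leq|\rho|)$, I may assume $R$ is downward closed in $\sigma$ and that $R(\sigma,\rho)$ forces $|\sigma|\leq|\rho|$. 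Let $g(\rho)$ be the longest $\sigma$ with $R(\sigma,\rho)$, breaking ties by some canonical rule.

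Next I would define $G^*\colon\om^{\leq\om}\to\om^{\leq\om}$ by $G^*(\rho)=g(\rho)$ for finite $\rho$ and, for $X\in\om^\om$, by
\[
G^*(X)=\bigcup\{\sigma\in\om^{<\om}:\exists n\,\forall m\geq n\ \sigma\subseteq g(X\upto m)\}.
\]
The downward closure of $R$ gives $G^*(X)=G(X)$ whenever $X\in\mathrm{dom}(G)$, and by construction $\sigma\subseteq G^*(X)\iff\exists n\,\forall m>n\ \sigma\subseteq G^*(X\upto m)$ for every $\sigma$ and every $X\in\om^\om$. Hence $G^*$ satisfies the hypothesis of the preceding lemma. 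Applying that lemma produces a continuous $\bf\Sigma^0_2$ conciliatory function $\hat G\colon\hat\om^\om\to\hat\om^\om$ representing $G^*$, so that $\hat G(Y)\mnskip=G^*(Y\mnskip)$ for every $Y$; in particular $\hat G(X)\mnskip=G(X)$ for $X\in\mathrm{dom}(G)\cap\om^\om$, which is the required extension.

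The main obstacle is the first step, namely producing the single Borel relation $R$ that uniformly approximates $G$ across all $\sigma$ in a way that is compatible with restriction to prefixes of $\sigma$. This is the uniform limit-lemma content, and the downward-closure trick on $R$ is what makes the finite-string values $g(\rho)$ cohere into a function whose pointwise limit recovers $G$; once this is in hand, both the verification of the limit condition and the invocation of the preceding lemma are routine.
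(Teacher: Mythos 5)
The paper states this lemma without proof (it is flagged as ``a uniform version of the limit lemma''), so I am judging your argument against the standard one. Your overall plan --- build a stage-by-stage approximation $g(\rho)$ on finite strings, take pointwise limits to get $G^*$, and feed $G^*$ into the preceding lemma --- has the right shape, and your verification that $G^*$ satisfies the pointwise-limit hypothesis is fine. The gap is in the claim that $G^*(X)=G(X)$ on $\mathrm{dom}(G)$: defining $g(\rho)$ as \emph{the longest $\si$ with $R(\si,\rho)$} does not work, and downward closure of $R$ in $\si$ does not rescue it. The problem is that $\si\not\subseteq G(X)$ only guarantees $\neg R(\si,X\upto m)$ for \emph{infinitely many} $m$; at each individual stage $m$ there may be a long spurious string $\tau_m$ (varying with $m$) satisfying $R(\tau_m,X\upto m)$, and these can outrank the true prefixes at every stage. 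Concretely, take $G\equiv 0^\om$ and let $R(\si,\rho)$ hold iff either $\si\in\{0\}^{\leq\lfloor|\rho|/2\rfloor}$, or $|\rho|$ is even and $\si$ is an initial segment of $1\fr 0^{|\rho|-2}$. This $R$ is downward closed in $\si$, forces $|\si|\leq|\rho|$, and satisfies your displayed equivalence; yet $g(X\upto m)=1\fr 0^{m-2}$ at all large even stages and $g(X\upto m)=0^{\lfloor m/2\rfloor}$ at odd stages, so the only string that stabilizes is $\emptyset$ and $G^*(X)=\emptyset\neq G(X)$. So the sentence ``the downward closure of $R$ gives $G^*(X)=G(X)$'' is false as stated.

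What is missing is exactly the bookkeeping content of the limit lemma, which your writeup identifies as the main obstacle but then elides. One must track, for each candidate $\si$, its current witness (the least $n$ such that $R(\si,X\upto k)$ holds for all $k$ with $n\leq k\leq m$) and define $g(\rho)$ greedily, coordinate by coordinate, preferring the extension with the least surviving witness; true prefixes have witnesses that stabilize, while every false candidate is injured infinitely often and so is eventually outranked. Alternatively --- and more in the spirit of this paper --- one can sidestep the bookkeeping entirely: by universality of $\J^C$ from the right (Section \ref{sec:Turing-jump}), $G\equiv_{\sf p}\theta\circ\J^C$ for some computable $\theta$ and oracle $C$, and since $\J^C$ is itself a $\Sigma^0_2$ conciliatory function and $\theta$ extends to a continuous conciliatory function, $\hat G=\theta\circ\J^C$ is the desired extension. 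Either route closes the gap; the rest of your argument (the limit property of $G^*$ and the appeal to the preceding lemma) is correct once $G^*$ genuinely extends $G$.
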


\subsection{Universal $\mathbf{\Sigma}^0_2$ conciliatory functions}\label{sec:conciliatory}

First, let us observe that there is no universal total $\mathbf{\Sigma}^0_2$-measurable function on $\om^\om$, as it would be $\mathbf{\Delta}^0_2$, and there is no greatest $\mathbf{\Delta}^0_2$ Wadge degree.
This is the main reason we need to deal with conciliatory functions in this paper.
Hereafter, for functions $\A,\B:\mathcal{X}\to\hat{\om}^\om$ for $\mathcal{X}\in\{\om^\om,\hat{\om}^\om\}$, we write $\A\equiv_{\sf p}\B$ if $\A(X)\mnskip=\B(X)\mnskip$ for all $X\in\mathcal{X}$.

\begin{definition}
Let $\U\colon \hat{\om}^\om\to\hat{\om}^\om$ be a conciliatory function.
We say that $\U$ is {\em $\mathbf{\Sigma}^0_2$-universal} if it is $\mathbf{\Sigma}^0_2$-measurable, if for every $\mathbf{\Sigma}^0_2$-measurable conciliatory function $G\colon \hat{\om}^\om\to\hat{\om}^\om$, there exits a continuous function $\theta\colon\hat{\om}^\om\to\hat{\om}^\om$ such that $G\equiv_{\sf p}\U\circ\theta$.
\end{definition}

Let us define a $\mathbf{\Sigma}^0_2$-universal function $\U$.
Let $\{\si_n:n\in\om\}$ be an effective enumeration of $\om^{<\om}$.
Think of an input $Y$ to $\U$ as a code for a sequence of strings $\si_{Y(0)},\si_{Y(1)},\si_{Y(2)},...$ and $\U(Y)$ as the pointwise limit of these strings. 
That is, we would like to define $\U(Y)(j)=\lim_{i\to\infty}\si_{Y(i)}(j)$ if the limit exists, and let it be undefined otherwise, except that be have to be a bit careful to get $\U$ to be of the right form.
The actual definition is as follows. For $\si\in \om^{<\om}$, $\si\neq\emptyset$,
\[
\si\subseteq \U(Y)
	\iff
\exists n\left( \si\subseteq \si_{Y\mnskip(n)} \and \forall m>n \left(Y\mnskip(m)\downarrow\ \rightarrow \sigma\subseteq \sigma_{Y\mnskip(m)}\right)\right),
\]
where $Y\mnskip(m)\downarrow$ means that $|Y\mnskip|>m$.
It is not hard to see that if $\tau_0\subseteq \U(Y)$ and $\tau_1\subseteq \U(Y)$, then $\tau_0$ and $\tau_1$ must be compatible. 
We let $\U(Y)$ be the union of all $\si$ such that $\si\subseteq \U(Y)$.
We let the reader verify that $\U$ is a $\mathbf{\Sigma}^0_2$-universal conciliatory function, as it is a standard computability theoretic argument.

$\U$ has a particular property that will be quite important: the value of $\U(Y)$ does not depend on initial segments of $Y$, and only depends on the tail of $Y$.

\begin{definition}\label{def: initiazable}
A function $\A\colon\hat{\om}^{\om}\to \hat{\om}^{\om}$ is  {\em initializable} if for every $\tau\in \hat{\om}^{<\om}$, there is a continuous function $\theta_\tau\colon \hat{\om}^\om\to[\tau]$ such that $\A\equiv_{\sf p}\A\circ\theta_\tau$.
\end{definition}

To see that our function $\U$ is initializable, suppose $0$ is the code for the empty string (i.e., $\si_0=\emptyset$), then let $\theta_\tau(Y)= \tau\fr 0\fr Y$.
It is not hard to see that $\U(Y)=\U(\tau\fr 0\fr Y)$.

We have proved the following proposition.

\begin{proposition}\label{prop:universal}
There is a $\mathbf{\Sigma}^0_2$-universal initializable conciliatory function.
\end{proposition}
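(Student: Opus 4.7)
The construction of $\U$ has already been laid out in the paragraphs preceding the proposition, so the plan is essentially to verify in order the four properties claimed: well-definedness, $\mathbf{\Sigma}^0_2$-measurability together with the conciliatory property, universality among $\mathbf{\Sigma}^0_2$ conciliatory functions, and initializability. Recall the defining relation
\[
\si\subseteq \U(Y) \iff \exists n\,\bigl(\si\subseteq \si_{Y\mnskip(n)}\ \and\ \forall m>n\,(Y\mnskip(m)\converges\,\rightarrow\, \si\subseteq \si_{Y\mnskip(m)})\bigr).
\]

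First, I would check that $\U(Y)$ is well-defined as an element of $\hat{\om}^{\leq\om}$: if both $\tau_0,\tau_1\subseteq \U(Y)$ via witnesses $n_0,n_1$, then any $m$ beyond $\max(n_0,n_1)$ with $Y\mnskip(m)\converges$ forces both $\tau_0,\tau_1\subseteq \si_{Y\mnskip(m)}$, so $\tau_0$ and $\tau_1$ are compatible. This also makes it immediate that $\U$ is conciliatory, since the right-hand side depends only on $Y\mnskip$. For measurability, observe that the predicate $\si\subseteq\U(Y)$ is literally of the form $\exists n\forall m\, R(n,m,Y)$ with $R$ clopen in $Y$, so it is $\mathbf{\Sigma}^0_2$; as $\U(Y)$ is the union of all such $\si$, each basic open condition on $\U(Y)$ pulls back to a $\mathbf{\Sigma}^0_2$ set, hence $\U$ is $\mathbf{\Sigma}^0_2$-measurable.

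The main step is universality. Given a $\mathbf{\Sigma}^0_2$-measurable conciliatory $G\colon \hat{\om}^\om\to\hat{\om}^\om$, the plan is to apply the uniform limit lemma (cf.\ the third lemma of Section \ref{ss: conciliatory}) to obtain a continuous $H\colon \hat{\om}^\om\to \om^\om$ such that for every $X$ and every $\si\in\om^{<\om}$,
\[
\si\subseteq G(X)\mnskip \iff \exists n\,\forall m>n\ \bigl(\si\subseteq \si_{H(X)(m)}\bigr).
\]
Having $H$, I would define the reduction $\theta\colon \hat{\om}^\om\to\hat{\om}^\om$ by letting $\theta(X)$ be the string obtained by writing out $H(X)$ with no \pass es interleaved (so $\theta(X)\mnskip=H(X)$), and note that $\theta$ is continuous because $H$ is. Then $\U(\theta(X))\mnskip=G(X)\mnskip$ directly from the defining formula of $\U$ applied to $Y=\theta(X)$, which gives $G\equiv_{\sf p}\U\circ\theta$. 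The only delicate point is producing $H$ continuously from a $\mathbf{\Sigma}^0_2$-predicate that describes $G$; this is the familiar computability-theoretic ``limit approximation'' construction, and is where I expect essentially all of the work to sit, though after the preparatory lemmas in Section \ref{ss: conciliatory} it is standard.

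Finally, for initializability, fix $\tau\in\hat{\om}^{<\om}$ and put $\theta_\tau(Y)=\tau\fr 0\fr Y$, which is continuous with range in $[\tau]$. Since $\si_0=\emptyset$, prepending $\tau\fr 0$ to $Y$ changes $Y\mnskip$ only by a finite prefix whose entries code the empty string; the existential ``$\exists n\forall m>n$'' in the definition of $\U$ is tail-insensitive to such a change, so $\U(\theta_\tau(Y))\mnskip = \U(Y)\mnskip$, i.e.\ $\U\equiv_{\sf p}\U\circ\theta_\tau$. Combining all four steps yields Proposition \ref{prop:universal}.
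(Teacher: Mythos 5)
Your proposal is correct and follows essentially the same route as the paper: the paper defines the same $\U$, explicitly checks initializability via $\theta_\tau(Y)=\tau\fr 0\fr Y$ with $\si_0=\emptyset$, and leaves the well-definedness, measurability, and universality verifications to the reader as ``a standard computability theoretic argument,'' which is exactly what you fill in. The only nitpick is in the initializability step: the entries of the prefix $\tau\mnskip$ need not code the empty string---what matters is that the single inserted $0$ does, which (for nonempty $\si$) forces any witness $n$ for $\si\subseteq\U(\tau\fr 0\fr Y)$ to lie past that position, so the two values agree; your conclusion is right.
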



\section{Nested labeled trees}\label{section:qo-nestedtrees}

In this section we give formal definitions of  $\forest^n(\Q)$, $\treeleq$, $\Sigma_T$, and the $\Sigma_T$-complete function $\Omega_T$.
We end the section by extending these ideas to all infinite Borel ranks.

\subsection{Nested Trees}\label{sec:finite-Borel-ranks-treeorder}

Let us first give some intuition for the connection between nested labeled trees and Borel functions.
First consider the characteristic function $\chi_U$ of an open set $U\subseteq\om^\om$.
Since the predicate $x\in U$ can be described by an existential formula, we have an approximation procedure which starts by guessing $\chi_U(x)=0$ until $x\in U$ is witnessed, and then changes the guess to $\chi_U(x)=1$ after seeing such a witness.
We denote the collection of all such guessing procedures, namely the pointclass $\mathbf{\Sigma}^0_1$, by the term $\langle 0\rangle\mindchange \langle 1\rangle$.
We think of the term $\langle 0\rangle\mindchange \langle 1\rangle$ as representing a tree with two nodes whose root is labeled by $0$, and leaf is labeled by $1$.
Similarly, we use the tree $\langle 1\rangle\mindchange\langle 0\rangle$ (with a root note labeled $1$, and a leaf node labeled $0$) to name the pointclass $\mathbf{\Pi}^0_1$, and we use trees of the form $\langle 0\rangle\mindchange\langle 1\rangle\mindchange\dots\mindchange\langle 0\rangle \mindchange\langle 1\rangle$ to name the finite levels of the Hausdorff-Kuratowski difference hierarchy.

To represent self-dual pointclasses such as $\mathbf{\Delta}^0_1$, we will need to consider forests rather than trees.
Given a clopen set $C\subseteq\om^\om$, one decides whether $\chi_C(x)=0$ or $\chi_C(x)=1$ at once and there is no change of mind afterwords. 
We represent this procedure by the term $\langle 0\rangle\sqcup\langle 1\rangle$, which is identified with a forest consisting of two roots labeled by $0$ and $1$, respectively.
All levels of the Hausdorff-Kuratowski difference hierarchy (hence all Wadge degrees of $\mathbf{\Delta}^0_2$ subsets of $\om^\om$) are named by terms obtained from the operations $\mindchange$ and $\sqcup$ (that is, well-founded $\{0,1\}$-labeled trees and their disjoint unions).
For instance, a term of the form $I_n \mindchange\bigsqcup_k I_k$, (where $I_\ell$ is the chain of the form $\langle 0\rangle\mindchange\langle 1\rangle\mindchange\dots\mindchange\langle 0\rangle \mindchange\langle 1\rangle$ of length $\ell$) names the $(\om+n)^{th}$-level of the difference hierarchy. 
To represent $\Delta^0_2$ 3-partitions, Selivanov used forests labeled with $\{0,1,2\}$ instead.
The idea is the same: A $\{0,1,2\}$-labeled tree guides the mind changes allowed when defining a $\Delta^0_2$ 3-partitions; since the tree is well-founded, the guessing process eventually stops.

If we want to move on to $\Delta^0_3$ functions, that is when we need to start nesting trees.
For instance, the tree $\langle T\rangle$ consisting only of a root labeled by a tree $T$, is thought of as the {\em jump} of the pointclass named by $T$.
Thus, $\langle \langle 0\rangle\mindchange\langle 1\rangle\rangle$ is the jump of $\mathbf{\Sigma}^0_1$ --- namely $\mathbf{\Sigma}^0_2$.
By using nesting of trees in this way, we will be able to climb up the Borel hierarchy.

\subsubsection{Language and terms}\label{sec:lang-terms}

All $\Q$-valued Borel functions of {\em finite} rank will be described using terms (identified with forests) in the language consisting of constant symbols (corresponding to elements in $\Q$), and three function symbols: $\mindchange$ (concatenation), $\sqcup$ (disjoint union), and $\langle\cdot\rangle$ (labeling).
To represent $\Q$-valued Borel functions of {\em infinite} rank, we will need to add symbols representing transfinite jump operations $\langle\cdot\rangle^{\om^\alpha}$.

We formally describe the collections $\tree(\Q)$ and $\forest(\Q)$ of countable well-founded $\Q$-labeled trees and their countable disjoint unions (i.e., forests) in the following inductive manner:
\begin{enumerate}
\item If $T\in\tree(\Q)$, then $T\in\forest(\Q)$.
\item For each $q\in\Q$, the term $\langle q\rangle$ is in $\tree(\Q)$. It represents the tree with only one node labeled $q$. 
\item For any countable collection $\{T_i\}_{i\in\om}$ in $\tree(\Q)$, the term $\sqcup_iT_i$ is in $\forest(\Q)$.
Terms of the form $\sqcup_iT_i$ will be called {\em $\sqcup$-type terms}, and represent forests obtained as the disjoint union of trees $T_i$.
\item For any $q\in\Q$ and $\sqcup$-type term $T\in\forest(\Q)$, the term $\langle q\rangle\mindchange T$ is in $\tree(\Q)$.
It represents the tree obtained by joining to a root labeled $q$ all the components of the forest $T$.
\end{enumerate}
Note that $\tree(\Q)$ consist of the non-$\sqcup$-type terms in $\forest(\Q)$.
Then, define $\tree^0(\Q)=\Q$, $\tree^{n+1}(\Q)=\tree(\tree^n(\Q))$, and $\forest^{n+1}(\Q)=\forest(\tree^n(\Q))$.

The way they are defined,  $\tree^m(\Q)$ and $\tree^n(\Q)$ are disjoint whenever $m<n$.
However, we will later see that every tree is  $\tree^m(\Q)$ is equivalent to one in $\tree^n(\Q)$ (Observation \ref{obs:embeddingtreeleq}).

\subsubsection{Quasi-ordering nested trees}\label{sec:def-quasi-order-nested-trees}

In this section, we introduce a quasi-order $\treeleq$ on $\forest^{<\om}(\Q)$, which we will show is isomorphic to the Wadge quasi-ordering of $\Q$-valued functions of finite Borel rank.
To simplify our notation, we always identify $\langle T\rangle$ with $\langle T\rangle\mindchange\sqcup_i\mathbf{O}$, where $\mathbf{O}$ is the empty forest, which we think of as an imaginary least element with respect to the quasi-order $\treeleq$, that is, $\mathbf{O}\treeleq T$ for any $T\in\forest^n(\Q)$.

\begin{definition} \label{def: treeleq}
We inductively define a quasi-order $\treeleq$ on $\bigcup_n\tree^{n}(\Q)$ as follows, where the symbols $p$ and $q$ range over $\Q$, and $U$, $V$, $S$, and $T$ range over $\bigcup_n\tree^{n}(\Q)$:
\begin{align*}
p\treeleq q &\iff p\leq_\Q q,\\
\langle U\rangle\treeleq \langle V\rangle & \iff U\treeleq V,
\end{align*}
and if $S$ and $T$ are of the form $\langle U\rangle\mindchange\sqcup_iS_i$ and $\langle V\rangle\mindchange\sqcup_jT_j$, respectively, then
\[
S\treeleq T \iff
\begin{cases}
\mbox{either }  U\treeleq  V   &\mbox{and } (\forall i)\;S_i\treeleq T,\\
\mbox{or }\hspace{7mm}  U\not\treeleq  V   &\mbox{and } (\exists j)\;S\treeleq T_j.
\end{cases}
\]
This pre-ordering induces an equivalence as usual: let $S\equiv T$ if $S\treeleq T$ and $T\treeleq S$.
For $p\in\Q$, we let $p\equiv \langle p\rangle \equiv \langle\langle p\rangle\rangle\equiv\cdots$, allowing us to compare trees of different levels. 

Finally, $\treeleq$ is uniquely extended to a quasi-order on $\bigcup_n\forest^n(\Q)$ by interpreting $\sqcup$ as a countable supremum operation:
\[
\sqcup_iS_i\treeleq \sqcup_jT_j \iff (\forall i)(\exists j)\ S_i\treeleq T_j.
\]
\end{definition}

\begin{obs}\label{obs:embeddingtreeleq}
For every $T\in\forest^{\leq n}(\Q)$, there is $S\in\forest^n(\Q)$ such that $S\equiv T$.
\end{obs}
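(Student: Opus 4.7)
The plan is to induct on $n-k$ and reduce to the base step of showing that every $T \in \forest^k(\Q)$ admits an equivalent $S \in \forest^{k+1}(\Q)$; iterating this single-level lift $n-k$ times then produces $S \in \forest^n(\Q)$ with $S \equiv T$.

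For the $\sqcup$-type case $T = \sqcup_i T_i$ with each $T_i \in \tree^k(\Q)$, I would lift each tree component individually via the tree case below, obtaining $S_i \in \tree^{k+1}(\Q)$ with $S_i \equiv T_i$, and then set $S = \sqcup_i S_i \in \forest^{k+1}(\Q)$. The equivalence $S \equiv T$ is immediate from the supremum rule $\sqcup_i A_i \treeleq \sqcup_j B_j \iff (\forall i)(\exists j)\;A_i \treeleq B_j$ applied in both directions.

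For the tree case $T \in \tree^k(\Q)$, take $S = \langle T \rangle \in \tree^{k+1}(\Q)$, the single-node tree at level $k+1$ whose root carries the label $T \in \tree^k(\Q)$ (the label set at level $k+1$). The substance of the proof lies in verifying the cross-level equivalence $\langle T \rangle \equiv T$. I would do this by structural induction on $T$. The base case $T = \langle p \rangle$ with $p \in \Q$ is covered directly by the stated convention $p \equiv \langle p \rangle \equiv \langle\langle p \rangle\rangle \equiv \cdots$. For the inductive step, writing $T = \langle \ell \rangle \mindchange \sqcup_i T_i$ with $\ell \in \tree^{k-1}(\Q)$ and $T_i \in \tree^k(\Q)$, the recursive definition of $\treeleq$ on trees of the form $\langle U \rangle \mindchange \sqcup_i S_i$ reduces each of the comparisons $\langle T \rangle \treeleq T$ and $T \treeleq \langle T \rangle$ to comparisons between the root label $T$ of $\langle T \rangle$ and the label $\ell$ and subtrees of $T$; these in turn feed back into the structural induction on $T$ and on $\ell$.

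The main obstacle will be making the cross-level $\treeleq$-comparisons rigorous, since the recursive clauses for $\treeleq$ are stated within a single level. The convention $p \equiv \langle p \rangle^m$ for $p \in \Q$ must be read as the seed that propagates upward via the rule $\langle U \rangle \treeleq \langle V \rangle \iff U \treeleq V$, coherently extending $\treeleq$ across $\bigcup_n \forest^n(\Q)$. Once this extension is pinned down, the structural induction described above closes the proof uneventfully.
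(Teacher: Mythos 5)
Your reduction to a single-level lift and your handling of the $\sqcup$-case are fine in principle, but the tree case is wrong: for a general tree $T\in\tree^k(\Q)$ the term $\langle T\rangle$ is \emph{not} equivalent to $T$. The operation $\langle\cdot\rangle$ is the jump: $\Omega_{\langle T\rangle}=\Omega_T\circ\U$ with $\U$ a $\mathbf{\Sigma}^0_2$-universal function, so for instance $\Sigma_{\langle 0\rangle\mindchange\langle 1\rangle}=\mathbf{\Sigma}^0_1$ while $\Sigma_{\langle\langle 0\rangle\mindchange\langle 1\rangle\rangle}=\mathbf{\Sigma}^0_2$, and correspondingly $\langle T\rangle\not\treeleq T$. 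You can see this purely combinatorially: take $T=\langle 0\rangle\mindchange\langle 1\rangle$ and try to verify $\langle T\rangle\treeleq T$. Writing $\langle T\rangle$ as $\langle T\rangle\mindchange\sqcup_i\mathbf{O}$ and $T$ as $\langle 0\rangle\mindchange\langle 1\rangle$, the first disjunct of the recursive clause requires $T\treeleq 0$, which fails (it would need $1\leq_\Q 0$), so one is forced into the second disjunct, which requires $\langle T\rangle\treeleq\langle 1\rangle$; that in turn requires $T\treeleq 1$, which also fails. The stated convention $p\equiv\langle p\rangle\equiv\langle\langle p\rangle\rangle\equiv\cdots$ applies only to single elements $p\in\Q$ (one-node trees), and it does not propagate to $\langle T\rangle\equiv T$ for compound $T$; your proposed structural induction breaks down at exactly the cross-level root-label comparison computed above.

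The correct lift does not wrap the whole tree; it deepens the labels at the leaves. Given $T\in\forest^m(\Q)$ with $m\leq n$, set $\iota(T)=T[\langle q\rangle^{n-m}/q]_{q\in\Q}$, i.e., substitute every occurrence of a constant $q\in\Q$ by the $(n-m)$-fold wrap $\langle q\rangle^{n-m}$. This keeps the shape of the term intact, raises its level to $n$, and changes labels only at the innermost positions, where the convention $q\equiv\langle q\rangle^{k}$ genuinely applies; the recursive clauses of $\treeleq$ then propagate $T\equiv\iota(T)$ through the whole term. Your outer induction on $n-k$ and your componentwise treatment of $\sqcup$-type terms can be kept, but the single-level step must be this leaf substitution rather than $T\mapsto\langle T\rangle$.
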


\begin{proof}
Assume that $m\leq n$ and $T\in\forest^m(\Q)$.
Then, consider the term $\iota(T)=T[\langle q\rangle^{n-m}/q]_{q\in\Q}$ obtained by substituting all occurrences of $q\in\Q$ by $\langle q\rangle^{n-m}$, where $\langle q\rangle^0=q$ and $\langle q\rangle^{k+1}=\langle \langle q\rangle^k\rangle$.
Note that $\iota(T)\in\forest^n(\Q)$, and it is clear that $T\equiv\iota(T)$.
\end{proof}

\begin{obs}
Consider $S,T\in\forest^n(\Q)$, and use $\subseteq$ to denote the ordering among the nodes of $S$ and $T$, the roots being the $\subseteq$-least elements. 
It is not hard to see that $S\treeleq T$ if and only if there exists a map $f\colon S\to T$ which is order preserving, in the sense for $\si,\tau$ nodes in $S$, $\si\subseteq\tau \implies f(\si)\subseteq f(\tau)$, and $\leq_\Q$-increasing in the sense that the label$(\si) \treeleq$ label$(f(\si))$ for every $\si\in S$.
Such $f$ does not need to be one-to-one.
\end{obs}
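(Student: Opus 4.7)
The plan is an induction on $|S|+|T|$, the total number of nodes, where the labels — themselves trees at a lower nesting level — are treated as atoms for the purposes of the $\treeleq$-comparison appearing in the label condition on $f$. I handle forests via their components, and trees by case analysis on the image of the root under $f$.

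For forests, suppose $S=\sqcup_iS_i$ and $T=\sqcup_jT_j$. By Definition~\ref{def: treeleq}, $S\treeleq T$ iff for every $i$ there is $j$ with $S_i\treeleq T_j$, which by IH is equivalent to the existence of homomorphisms $h_i\colon S_i\to T_{j(i)}$ for some choice function $j(\cdot)$. These assemble to a single homomorphism $f\colon S\to T$, and conversely any homomorphism $S\to T$ must send each connected component $S_i$ entirely into one component $T_{j(i)}$ (the components are precisely the $\subseteq$-convex classes of nodes, and $f$ preserves $\subseteq$).

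For trees $S=\langle U\rangle\mindchange\sqcup_iS_i$ and $T=\langle V\rangle\mindchange\sqcup_jT_j$, the forward direction is a direct construction. In case~1 of Definition~\ref{def: treeleq} (where $U\treeleq V$ and $S_i\treeleq T$ for all $i$), define $f$ by $f(r_S)=r_T$ and extend it on each $S_i$ using the IH applied to $S_i\treeleq T$ (valid since $|S_i|<|S|$); the label condition at the root is exactly $U\treeleq V$. In case~2 ($U\not\treeleq V$ and $S\treeleq T_j$ for some $j$), IH yields a homomorphism $S\to T_j$, which viewed inside $T$ is a homomorphism $S\to T$.

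For the reverse direction, let $f\colon S\to T$ be a homomorphism. If $f(r_S)=r_T$, the label condition yields $U\treeleq V$ and each restriction $f|_{S_i}\colon S_i\to T$ is again a homomorphism, so IH gives $S_i\treeleq T$, placing us in case~1. If $f(r_S)\neq r_T$, then $f(r_S)$ lies in some proper subtree $T_j$; since every $\sigma\in S$ satisfies $r_S\subseteq\sigma$, hence $f(r_S)\subseteq f(\sigma)$, the whole image of $f$ lies in $T_j$, so $f\colon S\to T_j$ is a homomorphism and IH gives $S\treeleq T_j$. If additionally $U\not\treeleq V$ we are in case~2 of the definition; if $U\treeleq V$, the restrictions $f|_{S_i}\colon S_i\to T$ are still homomorphisms, so by IH $S_i\treeleq T$ for every $i$, placing us in case~1. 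No genuine obstacle arises — as the name ``observation'' suggests, the argument is a routine unpacking of definitions once one does the case split on where $f$ sends the root.
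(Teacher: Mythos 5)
Your argument is essentially correct, and it is the routine verification the paper has in mind: the paper states this as an Observation with no proof at all (``It is not hard to see\dots''), so there is no official argument to compare against. Your case analysis is complete and the key points are right: a homomorphism must send each component of a forest into a single component of the target (since every node of a component lies $\subseteq$-above its root), and the dichotomy on whether $f$ sends the root of $S$ to the root of $T$ lines up exactly with the two clauses of Definition~\ref{def: treeleq} --- including the slightly delicate point that when $f(r_S)\neq r_T$ but $U\treeleq V$ one must fall back to clause~1 via the restrictions $f|_{S_i}$, which you handle.

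The one thing to repair is the induction measure. The trees in $\tree^n(\Q)$ are countable well-founded trees and may be infinite (a node may have infinitely many children, and a single branch may be arbitrarily long as long as it terminates), so ``$|S|+|T|$, the total number of nodes'' is not a natural number and cannot serve as the induction variable. What actually makes your recursion terminate is that every appeal to the inductive hypothesis replaces at least one of the two terms by a proper subterm (either some $S_i$ for $S$, or some $T_j$ for $T$) while leaving the other unchanged; since the terms are well-founded, the induced relation on pairs $(S,T)$ is well-founded, and you should phrase the argument as well-founded (structural) induction on the pair of terms, or equivalently on the sum of their ordinal ranks. With that substitution the proof goes through verbatim.
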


\begin{theorem}[Laver \cite{Lav78}]
For $n\in \om$, if $Q$ is better-quasi-ordered, then so is $\forest^{\leq n}(\Q)$.
\end{theorem}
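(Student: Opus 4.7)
The plan is to induct on $n$, combining Laver's bqo theorem for $\tree(\Q')$ (cited in the text immediately above) with a short reduction that extends it to $\forest(\Q')$. The base case $n = 0$ is immediate: via the convention $q \equiv \langle q \rangle \equiv \langle\langle q \rangle\rangle \equiv \cdots$, $\forest^{\leq 0}(\Q)$ collapses to $\Q$ itself, which is bqo by hypothesis.

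For the inductive step, assume $\forest^{\leq n}(\Q)$ is bqo. As a sub-quasi-order, $\tree^n(\Q)$ is then bqo, and Laver's theorem immediately yields that $\tree^{n+1}(\Q) = \tree(\tree^n(\Q))$ is bqo. To handle $\forest^{n+1}(\Q) = \forest(\tree^n(\Q))$, I would adjoin to $\Q' := \tree^n(\Q)$ a fresh element $\bot$ that is incomparable with every element of $\Q'$. Clearly $\Q' \cup \{\bot\}$ remains bqo, so Laver's theorem applies again to give that $\tree(\Q' \cup \{\bot\})$ is bqo. Define $\Phi \colon \forest(\Q') \to \tree(\Q' \cup \{\bot\})$ by
\[
\Phi\bigl(\sqcup_i T_i\bigr) \;=\; \langle \bot \rangle \mindchange \sqcup_i T_i.
\]

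To see that $\Phi$ is a quasi-order embedding, unfold Definition \ref{def: treeleq}. Comparing $\Phi(\sqcup_i S_i) \treeleq \Phi(\sqcup_j T_j)$, both roots carry the label $\bot$, so the first clause of tree comparison ($\bot \treeleq \bot$) reduces the inequality to $\forall i\ S_i \treeleq \Phi(\sqcup_j T_j)$. Each $S_i$ is a tree whose root is labeled by some element of $\Q'$, which is not $\treeleq \bot$; hence the first clause fails at that level, and the second clause takes over to give $\exists j\ S_i \treeleq T_j$. This is precisely the forest ordering, so $\Phi$ both preserves and reflects $\treeleq$. Thus $\forest^{n+1}(\Q)$ embeds as a sub-quasi-order of the bqo $\tree(\Q' \cup \{\bot\})$, and so is itself bqo. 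Combined with Observation \ref{obs:embeddingtreeleq}, which says every $T \in \forest^{\leq n+1}(\Q)$ is $\equiv$-equivalent to some element of $\forest^{n+1}(\Q)$, we conclude that $\forest^{\leq n+1}(\Q)$ is bqo.

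The main delicate point is the small case analysis in Definition \ref{def: treeleq} verifying that $\Phi$ is faithful with respect to $\treeleq$ — one must track exactly which of the two clauses fires at the root level (labels $\bot$) versus one level below (labels in $\Q'$ compared against $\bot$). Everything else is a repeated invocation of Laver's tree theorem plus the hereditary nature of bqo under sub-quasi-orders and adjunction of isolated points, which is why the statement is attributed directly to \cite{Lav78}.
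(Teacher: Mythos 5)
Your proof is correct. The paper itself offers no argument here --- it attributes the statement to Laver and only remarks that Laver proved the bqo-ness of $\tree(\Q)$ under an even stronger reducibility --- so any complete derivation is necessarily "different from the paper's." Your route (induct on $n$, apply Laver's tree theorem at each level, and handle the forest step by rooting $\sqcup_i T_i$ at a fresh incomparable label $\bot$) is a clean and fully explicit way to fill that gap; your unfolding of Definition~\ref{def: treeleq} is right, since $\bot\treeleq\bot$ forces the first clause at the root while $U_i\not\treeleq\bot$ forces the second clause one level down, recovering exactly $\forall i\,\exists j\; S_i\treeleq T_j$. The implicit route the citation suggests is slightly different for the forest step: $(\forest(\Q');\treeleq)$ is precisely the set of countable subsets of $\tree(\Q')$ under the domination quasi-order, so one can instead invoke the classical Nash-Williams fact that passing to subsets under domination preserves bqo. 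Your $\bot$-rooting trick buys a self-contained argument that uses only Laver's theorem plus the elementary closure of bqo under order-reflecting images and adjunction of an isolated point, at the cost of the small case analysis you carried out; the domination route is shorter but imports one more classical theorem. Both are valid, and your handling of $\forest^{\leq n}$ versus $\forest^{n}$ via Observation~\ref{obs:embeddingtreeleq} at the end is also correct.
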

Laver showed that if $Q$ is a bqo, so is $\tree(\Q)$ for an even stronger notion of reducibility.

\subsection{The associated pointclasses}\label{sec:natural-pointclass}

As we mentioned before, each forest $T\in\forest^n(\Q)$  defines a pointclass $\Sigma_T$.
For instance, if $\Q=2$, then 
\[
\Sigma_{\langle 0\rangle\sqcup\langle 1\rangle}=\mathbf{\Delta}^0_1,
\quad\Sigma_{\langle 0\rangle\mindchange\langle 1\rangle}=\mathbf{\Sigma}^0_1,
\quad\Sigma_{\langle 1\rangle\mindchange\langle 0\rangle}=\mathbf{\Pi}^0_1,
\quad\Sigma_{\langle\langle 0\rangle\mindchange\langle 1\rangle\rangle}=\mathbf{\Sigma}^0_2, 
\quad \mbox{ and so on.}
\]

The following observations will simplify our definitions.

\begin{observation}\label{obs:total-wadge}
Let $\F$ be a nonempty closed subset of $\om^\om$.
Then, for every function $\A\colon \F\to\mathcal{Q}$ there is a function $\widehat{\A}\colon \om^\om\to\mathcal{Q}$ which is Wadge equivalent to $\A$. 
\end{observation}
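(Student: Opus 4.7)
The plan is to exploit the standard descriptive-set-theoretic fact that every nonempty closed subset of $\om^\om$ is a continuous retract of $\om^\om$, and then define $\widehat{\A}:=\A\circ r$ for such a retraction $r$.

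First, I would construct a continuous retraction $r\colon\om^\om\to\F$. Let $T_\F=\{x\upto n: x\in\F,\ n\in\om\}\subseteq\om^{<\om}$ be the tree of initial segments of elements of $\F$; since $\F$ is nonempty closed, $T_\F$ is pruned in the sense that every node has a proper extension in $T_\F$. Given $x\in\om^\om$, define $r(x)$ stage by stage: set $\si_0$ to be the empty string, and at stage $n$ let $\si_{n+1}=\si_n\fr x(n)$ if $\si_n\fr x(n)\in T_\F$, and otherwise let $\si_{n+1}=\si_n\fr k$ for the least $k$ with $\si_n\fr k\in T_\F$ (such a $k$ exists by pruning). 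Put $r(x)=\bigcup_n\si_n\in\F$. Since $r(x)\upto n$ depends only on $x\upto n$, the map $r$ is continuous, and whenever $x\in\F$ the first clause always triggers, so $r$ fixes $\F$ pointwise.

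Then I would verify that $\widehat{\A}:=\A\circ r\colon\om^\om\to\Q$ is Wadge-equivalent to $\A$, interpreting Wadge reducibility between the partial function $\A$ and the total function $\widehat{\A}$ in the natural way: a continuous $\theta\colon\om^\om\to\om^\om$ sending the domain of the source into the domain of the target, with values pointwise $\leq_\Q$-comparable. The retraction $r$ itself witnesses $\widehat{\A}\leq_w\A$, since $r(\om^\om)\subseteq\F=\dom\A$ and $\widehat{\A}(x)=\A(r(x))$ by definition. Conversely, the identity $\om^\om\to\om^\om$ witnesses $\A\leq_w\widehat{\A}$, because for $x\in\F$ we have $\widehat{\A}(x)=\A(r(x))=\A(x)$.

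There is essentially no genuine obstacle here; this is a packaging observation ensuring that partial functions on closed sets may be replaced by total functions for the purposes of Wadge analysis. The only mild care required is in the stagewise construction of $r$, where continuity rests on each $\si_n$ depending only on $x\upto n$, and in fixing a natural convention for Wadge reductions when the source is partial.
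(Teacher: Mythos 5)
Your proof is correct and follows essentially the same route as the paper: build a continuous retraction $\rho_\F\colon\om^\om\to\F$ from the pruned tree of $\F$, set $\widehat{\A}=\A\circ\rho_\F$, and observe that $\rho_\F$ and the identity witness the two Wadge reductions. The only (immaterial here) difference is that the paper redirects to the child of $T_\F$ \emph{closest} to $x(n)$ rather than the least one, a choice it exploits later to argue that the retraction preserves initializability.
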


\begin{proof}
By zero-dimensionality of $\om^\om$, there is a retraction $\rho_\F\colon \om^\om\to\F$ (that is, $\rho_\F$ is continuous and $\rho_\F\upto\F$ is identity).
Define $\widehat{\A}=\A\circ\rho_\F$.
Then, we have $\widehat{\A}\leq_w\A$ via $\rho_\F$, and $\A\leq_w\widehat{A}$ via the identity map.

The definition of this retraction is quite standard:
Let $T\subseteq \om^{<\om}$ be a tree without dead end such that $\F=[T]$.
We define $\rho_\F\colon\om^{<\om}\to T$ by induction:
$\rho_\F(\si\fr n)=\rho_\F(\si)\fr m$ where $m\in\om$ is the closest to $n$ such that $\rho_\F(\si)\fr m\in T$.
(By closest we mean such that $|m+\frac{1}{3}-n|$ is least, for instance.)
We then extend $\rho_\F$ to $\om^\om$ to $\F$ by continuity. 
\end{proof}

\begin{obs}\label{obs:total-wadge2}
Let $\V$ be a nonempty open subset of $\om^\om$.
Then, for every function $\A\colon \V\to\mathcal{Q}$ there is a function $\widehat{\A}\colon \om^\om\to\mathcal{Q}$ which is Wadge equivalent to $\A$. 
\end{obs}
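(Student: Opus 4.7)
The plan is to mirror the construction in Observation~\ref{obs:total-wadge}, using the canonical decomposition of an open set into disjoint basic clopen cells. First I would write $\V = \bigsqcup_{n \in I}[\sigma_n]$ for some antichain $\{\sigma_n\}_{n \in I} \subseteq \om^{<\om}$ with $I \subseteq \om$, which is always possible for a nonempty open subset of $\om^\om$. For each $n \in I$, define a total function $\A_n \colon \om^\om \to \mathcal{Q}$ by $\A_n(Y) := \A(\sigma_n \fr Y)$. Since each $[\sigma_n]$ is clopen and hence closed, Observation~\ref{obs:total-wadge} guarantees that $\A_n$ is Wadge equivalent to the restriction $\A \upto [\sigma_n]$.

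I would then take $\widehat{\A} := \bigoplus_{n \in I}\A_n \colon \om^\om \to \mathcal{Q}$, so that $\widehat{\A}(n \fr Y) = \A(\sigma_n \fr Y)$, serving as the candidate total function Wadge equivalent to $\A$. The direction $\widehat{\A} \leq_w \A$ is witnessed by the continuous map $\theta \colon \om^\om \to \om^\om$ defined by $\theta(n \fr Y) := \sigma_n \fr Y$, whose image lies inside $\V$ and which satisfies $\A \circ \theta = \widehat{\A}$ pointwise.

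For the reverse direction $\A \leq_w \widehat{\A}$, the natural reduction sends each $\sigma_n \fr Y \in \V$ to $n \fr Y$. I would realize this via the Wadge-game formulation of Section~\ref{sec:Wadge-game}: Player II reads Player I's input $X$, passes until identifying a prefix of the form $\sigma_n$, and then outputs $n$ followed by copies of Player I's remaining moves. For any $X \in \V$, such a prefix is eventually seen, so Player II's output is an infinite sequence witnessing $\A(X) = \widehat{\A}(\theta(X))$.

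The main obstacle I foresee is verifying that the above Wadge-game strategy corresponds to a continuous function on all of $\om^\om$ rather than merely a conciliatory function, especially when $\V$ has limit points in $\F = \om^\om \setminus \V$: the natural reduction $\sigma_n \fr Y \mapsto n \fr Y$ need not extend continuously to such limit points. Since the Wadge reduction condition only requires $\A(X) \leq_\mathcal{Q} \widehat{\A}(\theta(X))$ to hold on the domain $\V$ of $\A$, there is substantial freedom in defining $\theta$ on $\F$; I would exploit this flexibility, combined with the tree structure of the antichain $\{\sigma_n\}_{n \in I}$, to obtain a continuous total extension.
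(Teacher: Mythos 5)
Your construction is exactly the paper's: decompose $\V$ into a clopen partition $\{[\sigma_n]\}_{n\in I}$ given by an antichain and let $\widehat{\A}$ be induced by the bijection $n\fr Y\mapsto\sigma_n\fr Y$. The obstacle in your last paragraph is not a real one: a Wadge reduction out of a partial function only needs to be continuous on that function's domain (compare the proof of Observation \ref{obs:total-wadge}, where the reduction $\A\leq_w\widehat{\A}$ is just the identity on $\F$), so the map $\sigma_n\fr Y\mapsto n\fr Y$, which is continuous on $\V$, already suffices — and in fact the total continuous extension you promise need not exist (e.g.\ for $\V=\om^\om\setminus\{0^\om\}$ the first coordinate of the natural reduction cannot be decided continuously at $0^\om$), so it is fortunate that it is not needed.
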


\begin{proof}
Let $V=\{\tau_0,\tau_1,...\} \subseteq\om^{<\om}$ be a generator of $\V$.
That is, $V$ is so that $\{[\tau]: \tau\in V\}$ is a partition on $\V$ in clopen sets.
Then the bijection $n\fr X \mapsto \tau_n\fr X\colon \om^\om \to \V$ induces a function $\widehat{\A}\colon \om^\om\to\mathcal{Q}$ Wadge equivalent to $\A$
\end{proof}

From now one, whenever we encounter a $\Q$-valued function whose domain is an either open or closed subsets of $\om^\om$, we identify it with the corresponding function of domain $\om^\om$.

\begin{definition}\label{def: Sigma T}
For each $T\in\bigcup_n\forest^n(\Q)$, we inductively define the class $\PSigma_T$ of $\Q$-valued functions on $\om^\om$ as follows:
\begin{enumerate}
\item $\PSigma_q$ consists only of the constant function $X\mapsto q\colon \om^\om\to\Q$.
\item If $T$ is of the form $\sqcup_iS_i$, then $\A\in\PSigma_T$ if and only if there are clopen partition $(\C_i)_{i\in\om}$ of $\om^\om$ such that $\A\upto\C_i\in\PSigma_{S_i}$ for each $i\in\om$.
\item $\A\in\PSigma_{T\mindchange S}$ if and only if there is an open set $\V\subseteq\om^\om$ such that $\A\upto(\om^\om\setminus\V)$ is in $\PSigma_T$ and $\A\upto\V$ is in $\PSigma_S$.
\item $\A\in\PSigma_{\langle T\rangle}$ if and only if there is a $\mathbf{\Sigma}^0_2$-measurable function $\D\colon \om^\om\to\om^\om$ and a $\PSigma_T$-function $\B\colon \om^\om\to\Q$ such that $\A=\B\circ\D$.
\end{enumerate}

We say that a function $\A\colon \om^\om\to\Q$ is {\em $\PSigma_T$-complete} if $\A\in\PSigma_T$ and every $\PSigma_T$-function $\B$ is Wadge reducible to $\A$.
\end{definition}

\begin{obs}\label{obs:sigma-continuous}
Let $T\in\forest^n(\Q)$ be a forest, and $\theta\colon \om^\om\to\om^\om$ be a continuous function.
If $\A\colon \om^\om\to\Q$ is in $\PSigma_T$, then so is $\A\circ\theta$.
This can be easily shown by induction on $T$ as a term.
\end{obs}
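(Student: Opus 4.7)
The plan is a straightforward structural induction on the term $T$, matching the four clauses of Definition \ref{def: Sigma T}. Throughout, I will use the identifications from Observations \ref{obs:total-wadge} and \ref{obs:total-wadge2} so that restrictions of functions to clopen or open subsets of $\om^\om$ are regarded as total functions, and so that restrictions of the continuous $\theta$ to clopen/open preimages can still play the role of a continuous function on all of $\om^\om$ for the purposes of invoking the induction hypothesis.

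For the base case $T=q\in\Q$, the function $\A$ is the constant map $X\mapsto q$, so $\A\circ\theta$ is also the constant map with value $q$ and therefore in $\PSigma_q$. For the $\sqcup$-case $T=\sqcup_i S_i$, take a clopen partition $(\C_i)_{i\in\om}$ witnessing $\A\in\PSigma_T$. Since $\theta$ is continuous, the preimages $\D_i=\theta^{-1}(\C_i)$ form a clopen partition of $\om^\om$, and each restriction $(\A\circ\theta)\upto\D_i$ is naturally identified with $(\A\upto\C_i)\circ\theta_i$ for a continuous $\theta_i$ (obtained by composing $\theta\upto\D_i$ with the homeomorphisms supplied by Observation \ref{obs:total-wadge2}). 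By induction, each such composition lies in $\PSigma_{S_i}$, so $\A\circ\theta\in\PSigma_T$.

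For the $\mindchange$-case $T=U\mindchange\sqcup_j T_j$, pick an open set $\V$ witnessing $\A\in\PSigma_T$: $\A\upto(\om^\om\setminus\V)\in\PSigma_U$ and $\A\upto\V\in\PSigma_{\sqcup_j T_j}$. Then $\V'=\theta^{-1}(\V)$ is open, its complement is closed, and the two restrictions of $\A\circ\theta$ are (after the standard identifications of Observations \ref{obs:total-wadge} and \ref{obs:total-wadge2}) compositions of $\PSigma_U$- resp. $\PSigma_{\sqcup_j T_j}$-functions with continuous maps, so the induction hypothesis places them in $\PSigma_U$ and $\PSigma_{\sqcup_j T_j}$ respectively, whence $\A\circ\theta\in\PSigma_T$. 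For the jump case $T=\langle S\rangle$, write $\A=\B\circ\D$ with $\B\in\PSigma_S$ and $\D$ being $\mathbf{\Sigma}^0_2$-measurable. Then $\A\circ\theta=\B\circ(\D\circ\theta)$, and $\D\circ\theta$ is still $\mathbf{\Sigma}^0_2$-measurable, because for any open $U$ we have $(\D\circ\theta)^{-1}[U]=\theta^{-1}(\D^{-1}[U])$, which is $\mathbf{\Sigma}^0_2$ since continuous preimages of $\mathbf{\Sigma}^0_2$ sets are $\mathbf{\Sigma}^0_2$. Hence $\A\circ\theta\in\PSigma_{\langle S\rangle}$.

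There is no real obstacle here; the only point requiring a small amount of care is the bookkeeping around restrictions to clopen or open sets, which is why we use Observations \ref{obs:total-wadge} and \ref{obs:total-wadge2} to identify those restrictions with total functions on $\om^\om$ before quoting the inductive hypothesis. Every other step is immediate from the definitions.
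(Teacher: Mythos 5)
Your proof is correct and is precisely the structural induction on the term $T$ that the paper itself only gestures at ("easily shown by induction on $T$ as a term"), with the right observations in each of the four cases of Definition \ref{def: Sigma T}. The only (negligible) point left implicit is that some preimages $\theta^{-1}(\C_i)$ or $\theta^{-1}(\V)$ may be empty, so one must allow empty pieces in the clopen partition and in the open set of the $\mindchange$-clause — a convention the paper tacitly adopts as well.
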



To prove Proposition \ref{prop: measurability of Sigma T} for functions of finite Borel rank, we check measurability of $\PSigma_T$-functions.
We denote by $\mathbf{\Delta}^0_\xi$ the set of all $\mathbf{\Delta}^0_\xi$-measurable functions.

\begin{lemma}\label{lem:Borel-rank-calc}
Let $S,T$ be terms and $\xi$ be a countable ordinal.
\begin{enumerate}
\item $\PSigma_T,\PSigma_S\subseteq\mathbf{\Delta}^0_{2+\xi}$ implies $\PSigma_{T\mindchange S}\subseteq\mathbf{\Delta}^0_{2+\xi}$.
\item $\PSigma_T\subseteq\mathbf{\Delta}^0_{1+\xi}$ implies $\PSigma_{\langle T\rangle}\subseteq\mathbf{\Delta}^0_{2+\xi}$.
\end{enumerate}
\end{lemma}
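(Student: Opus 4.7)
The plan is to unpack the inductive definitions of $\PSigma_{T\mindchange S}$ and $\PSigma_{\langle T\rangle}$ and combine the hypothesis with standard Borel-rank arithmetic.

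For part (1), I fix $\A\in\PSigma_{T\mindchange S}$ together with a witnessing open set $\V\subseteq\om^\om$. Via the convention adopted after Observations~\ref{obs:total-wadge} and~\ref{obs:total-wadge2}, the restrictions $\A\upto(\om^\om\setminus\V)\in\PSigma_T$ and $\A\upto\V\in\PSigma_S$ are identified with functions on all of $\om^\om$ which, by hypothesis, are $\mathbf{\Delta}^0_{2+\xi}$-measurable. For each $q\in\Q$ I then decompose
\[
\A^{-1}(q)\;=\;\bigl(\A^{-1}(q)\cap(\om^\om\setminus\V)\bigr)\;\cup\;\bigl(\A^{-1}(q)\cap\V\bigr).
\]
The first piece is the intersection of a $\mathbf{\Delta}^0_{2+\xi}$ set with the $\mathbf{\Pi}^0_1$ set $\om^\om\setminus\V$; the second is the image, under the homeomorphism $n\fr X\mapsto\tau_n\fr X$ used in Observation~\ref{obs:total-wadge2}, of a $\mathbf{\Delta}^0_{2+\xi}$ set, hence a $\mathbf{\Delta}^0_{2+\xi}$ set in the open subspace $\V$. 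Because $\mathbf{\Sigma}^0_1\cup\mathbf{\Pi}^0_1\subseteq\mathbf{\Delta}^0_{2+\xi}$, each piece and therefore the union is $\mathbf{\Delta}^0_{2+\xi}$.

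For part (2), I write $\A=\B\circ\D$ with $\D$ a $\mathbf{\Sigma}^0_2$-measurable operator and $\B\in\PSigma_T$, so by hypothesis $\B$ is $\mathbf{\Delta}^0_{1+\xi}$-measurable. The one auxiliary tool I will verify is the following standard Borel-pullback rule: \emph{for every countable ordinal $\alpha\geq 1$, the preimage under a $\mathbf{\Sigma}^0_2$-measurable function of any $\mathbf{\Sigma}^0_\alpha$ (resp.\ $\mathbf{\Pi}^0_\alpha$) set is $\mathbf{\Sigma}^0_{1+\alpha}$ (resp.\ $\mathbf{\Pi}^0_{1+\alpha}$).} The proof is a transfinite induction on $\alpha$: the base $\alpha=1$ is the very definition of $\mathbf{\Sigma}^0_2$-measurability, and for $\alpha\geq 2$ one decomposes a $\mathbf{\Sigma}^0_\alpha$ set as $\bigcup_n C_n$ with $C_n\in\mathbf{\Pi}^0_{\alpha_n}$, $\alpha_n<\alpha$, applies the $\mathbf{\Pi}$-version of the inductive hypothesis (yielding $\D^{-1}(C_n)\in\mathbf{\Pi}^0_{1+\alpha_n}$), and uses that $\alpha_n<\alpha$ implies $1+\alpha_n<1+\alpha$. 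Applying the rule to $\B^{-1}(q)\in\mathbf{\Delta}^0_{1+\xi}$ and using the ordinal identity $1+(1+\xi)=2+\xi$ gives
\[
\A^{-1}(q)\;=\;\D^{-1}\bigl(\B^{-1}(q)\bigr)\;\in\;\mathbf{\Delta}^0_{2+\xi},
\]
as required.

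I expect no real obstacle; the argument is essentially bookkeeping once the pull-back rule is in hand. The only mild care needed is to check in part (1) that the identifications supplied by Observations~\ref{obs:total-wadge} and~\ref{obs:total-wadge2} genuinely transport Borel measurability between $\V$, $\om^\om\setminus\V$, and the ambient space $\om^\om$, which is immediate because those identifications are homeomorphisms onto their images.
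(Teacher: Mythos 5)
Your proof is correct and follows essentially the same route as the paper's: for (1) the same decomposition of $\A^{-1}[q]$ along the open set $\V$ into two pieces handled via Observations \ref{obs:total-wadge} and \ref{obs:total-wadge2} (the paper writes out both $\A^{-1}[q]$ and its complement as $\mathbf{\Sigma}^0_{2+\xi}$ unions), and for (2) the same pullback fact for compositions with a $\mathbf{\Sigma}^0_2$-measurable $\D$, which the paper invokes with ``by induction on Borel rank'' and you spell out in full.
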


\begin{proof}
To see (1), let $\A$ be a $\PSigma_{T\mindchange S}$-function.
Then, there is an open set $\V$ such that $\A\upto(\om^\om\setminus\V)$ can be extended to a total $\Sigma_T$-function $\A_0$, and $\A\upto\V$ can be extended to a total $\Sigma_S$-function $\A_1$ as in Observations \ref{obs:total-wadge} and \ref{obs:total-wadge2}.
Thus, for any $q\in\Q$, we have
\begin{eqnarray*}
\A^{-1}[q]&=&(\A_0^{-1}[q]\setminus\V)\cup(\A_1^{-1}[q]\cap\mathcal{V}) \\
\A^{-1}[q]^c&=&(\A_0^{-1}[q]^c\setminus\V)\cup(\A_1^{-1}[q]^c\cap\mathcal{V}) 
\end{eqnarray*}
This gives $\mathbf{\Sigma}^0_{2+\xi}$-definitions of $\A^{-1}[q]$ and $\A^{-1}[q]^c$ since $\A_0$ and $\A_1$ are $\mathbf{\Delta}^0_{2+\xi}$-measurable, and $\V$ is open.
Consequently, $\A$ is $\mathbf{\Delta}^0_{2+\xi}$-measurable.

For (2), note that the composition $\B\circ\mathcal{D}$ of a $\mathbf{\Delta}^0_{1+\xi}$-measurable function $\B$ and a $\mathbf{\Sigma}^0_{1+\eta}$-measurable function $\mathcal{D}$ is always $\mathbf{\Delta}^0_{1+\eta+\xi}$-measurable.
This is because, one can see that if $\mathcal{S}\subseteq\om^\om$ is $\mathbf{\Sigma}^0_{1+\zeta}$ then $\mathcal{D}^{-1}[\mathcal{S}]$ is $\mathbf{\Sigma}^0_{1+\zeta+\xi}$ by induction on Borel rank.
Now, every $\A\in\PSigma_{\langle T\rangle}$ is given by the composition of the $\Sigma_T$-function $\B$ and the $\mathbf{\Sigma}^0_{2}$-measurable function $\D$.
This implies that $\B\circ\D$ is $\mathbf{\Delta}^0_{2+\xi}$-measurable since $\B$ is $\mathbf{\Delta}^0_{1+\xi}$-measurable by our assumption.
\end{proof}

In particular, if $T\in\forest^n(\Q)$, then every $\PSigma_T$-function is $\mathbf{\Delta}^0_{n+1}$-measurable.
As a consequence, this verifies Proposition \ref{prop: measurability of Sigma T} for functions of finite Borel rank.

\subsection{$\Sigma_T$-complete functions}\label{sec:universal-function}

For $\Q=2$, Duparc \cite{Dup01} defined a complete sets $\Omega_\nu\subseteq\om^{\leq\om}$ for the different levels of the 2-Wadge hierarchy.
For $Q=k\in\om$, Selivanov \cite{Sel07} defined complete $\Delta^0_2$ functions $\mu_T\colon \om^\om\to k$ for each forest $T\in\forest(k)$ based on the similar ideas.
In this section we extend Duparc and Selivanov's definition to all bqos $\mathcal{Q}$ and nested forests $T\in\forest^n(\mathcal{Q})$, and later on throughout the Borel hierarchy.

The complete functions we will define are conciliatory; see Section \ref{ss: conciliatory}.

\subsubsection{Difference hierarchy and mind-change operation}
The Hausdorff-Kuratowski difference hierarchy (and the Ershov hierarchy) can be understood using the notion of a {\em mind-change}.
That is, a subset $\A$ of $\om^\om$ is in the $n^{th}$-level of the difference hierarchy if and only if the characteristic function of $\A$ is approximated by a continuous function with $n$ mind-changes.

We want to define an operation $\A\mindchange\B$ for $\A,\B\colon \hat{\om}^{\om}\to\Q$ which represents a function that could act as $\A$, but at any time could change its mind and act as $\B$.
To make it easier to describe such a process, we introduce notations representing this kind of approximation procedure.
Suppose we first want to output a sequence and after $\ell$ steps, after having defined a sequence $Y\in\om^{\ell}$, we change our mind and we want to output a new sequence $Z\in\om^{\leq\om}$
We will encode this by the following real:
\[
Y\mindchange Z:=\langle 2Y(0),2Y(1),\dots,2Y(\ell-1),2Z(0)+1,Z(1),Z(2),\dots\rangle.
\]
We want to define this procedure on $\hat{\om}^\om$ as follows, where we will require that the first entry of the second sequence $Z$ is not {\sf pass}, that is, $Z\in\om\times\hat{\om}^\om$.

\begin{notation}
Given $Y\in\hat{\om}^{\ell}$ of length $\ell\in\om\cup\{\om\}$ and $Z\in\om\times\hat{\om}^{\om}$, we define $Y\mindchange Z$ as folllows:
\[
Y\mindchange Z(n)=
\begin{cases}
2Y(n),&\mbox{ if }n<\ell\mbox{ and }Y(n)\not={\sf pass},\\
{\sf pass},&\mbox{ if }n<\ell\mbox{ and }Y(n)={\sf pass},\\
2Z(0)+1,&\mbox{ if }n=\ell,\\
Z(n-\ell+1),&\mbox{ if }n>\ell.
\end{cases}
\]
\end{notation}

\begin{obs}\label{obs:change-bijec}
The map $(Y,Z)\mapsto Y\mindchange Z$ admits a conciliatory inverse.
Indeed, there uniquely exist conciliatory continuous functions $\pi_0$ and $\pi_1$ such that for any $X\in\hat{\om}^\om$, 
\[
X=\pi_0(X)\mindchange\pi_1(X).
\]
\end{obs}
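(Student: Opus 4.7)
The plan is to construct the inverse explicitly by a ``scan for the first odd entry'' algorithm and then verify its properties. Concretely, given $X \in \hat{\om}^\om$, I would define $\pi_0(X)$ to emit ${\sf pass}$ whenever $X$ emits ${\sf pass}$, emit $X(n)/2$ whenever $X(n)$ is an even natural number, and switch to outputting ${\sf pass}$ forever as soon as $X$ presents its first odd entry. Symmetrically, I would define $\pi_1(X)$ to emit ${\sf pass}$ for every entry of $X$ preceding the first odd one; at the first odd entry $X(\ell)$ it emits $(X(\ell)-1)/2$; and for every subsequent position $n > \ell$ it copies $X(n)$ verbatim. Both functions are built from a monotone, finitary reading of $X$, hence are continuous, and since each ${\sf pass}$ of $X$ produces only ${\sf pass}$es on both outputs, the stripped values $\pi_i(X)\mnskip$ depend only on $X\mnskip$, which is exactly the conciliatory property.

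Next I would verify the identity $X \equiv_{\sf p} \pi_0(X) \mindchange \pi_1(X)$ by direct unfolding. Letting $\ell$ be the position of the first odd entry in $X\mnskip$, the stripped sequence $\pi_0(X)\mnskip$ has length $\ell$ with values $X\mnskip(k)/2$ for $k<\ell$, so the initial segment of $\pi_0(X) \mindchange \pi_1(X)$ reproduces $X\mnskip$ up to position $\ell-1$. At position $\ell$ the $\mindchange$ operator emits $2\cdot((X\mnskip(\ell)-1)/2) + 1 = X\mnskip(\ell)$, matching the odd entry. From position $\ell+1$ onwards, the output copies the tail of $\pi_1(X)\mnskip$, which by construction copies the tail of $X\mnskip$ verbatim. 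Uniqueness then follows because any other decomposition $X \equiv_{\sf p} Y \mindchange Z$ would force $Y\mnskip$ to be the prefix of $X\mnskip$ preceding its first odd entry (divided by $2$), and $Z\mnskip$ to consist of the decoded odd entry followed by the tail of $X\mnskip$; both are pinned down uniquely, and since conciliatory functions are determined by their stripped values, so are $\pi_0$ and $\pi_1$.

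The main subtlety to handle carefully is the boundary case where $X\mnskip$ contains no odd entry at all, i.e.\ $\ell$ does not exist, in which case $\pi_1(X)\mnskip$ is empty. This requires interpreting $\mindchange$ with a possibly empty second factor, which in the conciliatory setting is transparent: the identity $X \equiv_{\sf p} \pi_0(X) \mindchange \pi_1(X)$ degenerates to $X\mnskip = 2\pi_0(X)\mnskip$, which is exactly what the definition of $\pi_0$ provides. No other obstacle is expected, as all remaining steps are mechanical verifications from the definitions of $\mindchange$, $\mnskip$, and conciliatoriness.
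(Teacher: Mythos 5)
Your construction is the intended one --- the paper states this Observation without proof, and the only sensible inverse is precisely the ``decode at the first odd entry'' map you describe --- so the substance of your argument is correct. One bookkeeping point deserves attention, because the Observation asserts the \emph{exact} identity $X=\pi_0(X)\mindchange\pi_1(X)$ rather than $X\equiv_{\sf p}\pi_0(X)\mindchange\pi_1(X)$. As you have packaged them, $\pi_0(X)$ is an \emph{infinite} element of $\hat{\om}^\om$ (it keeps emitting ${\sf pass}$ forever after the first odd entry) and $\pi_1(X)$ carries a block of leading ${\sf pass}$es, so the pair $(\pi_0(X),\pi_1(X))$ does not lie in the domain $\hat{\om}^{\leq\om}\times(\om\times\hat{\om}^\om)$ of the $\mindchange$ notation: with an infinite first coordinate the switch clause $n=\ell$ is never reached, and the second coordinate is required to begin with an element of $\om$. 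To get the identity on the nose, let $\pi_0(X)$ be the \emph{finite} string of length $\ell$, where $\ell$ is the position of the first odd entry of $X$ itself (not of $X\mnskip$), and let $\pi_1(X)$ begin immediately with the decoded value $(X(\ell)-1)/2$ followed by $X(\ell+1),X(\ell+2),\dots$; continuity then forces $\pi_1(X)$ to be empty when $X$ has no odd entry, which is also what settles uniqueness in that degenerate case (the identity alone puts no constraint on $\pi_1(X)$ there). Your verification in the second paragraph, carried out at the level of $X\mnskip$, is exactly the computation needed once this adjustment is made, and the $\mnskip$-level identity is in any case all the paper ever uses, since $\pi_0,\pi_1$ are only ever composed with conciliatory functions. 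Finally, note that your tail-copying follows the indexing $Z(n-\ell)$ from the paper's informal display rather than the $Z(n-\ell+1)$ appearing in the formal definition; the latter is evidently a typo (it would make $\mindchange$ forget $Z(1)$ and hence fail to be invertible at all), so you are inverting the intended map.
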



We hereafter fix such functions $\pi_0,\pi_1$.
Note that $\pi_1(X)\mnskip$ is nonempty if and only if $X$ has changed his mind at some point.
In other words, if $\pi_1(X)\mnskip$ is empty, then the sequence given by $X$ is $\pi_0(X)$, and if $\pi_1(X)\mnskip$ is nonempty, $X$ has already deleted the former sequence $\pi_0(X)$, and now proposes $\pi_1(X)$.

\begin{notation}[see also {\cite[Definition 6]{Dup01}} for $\Q=2$]
Let $\A$ and $\B$ be functions whose domains are subsets of $\hat{\om}^{\om}$.
We define a function $\A\mindchange\B\colon \hat{\om}^{\om}\to\Q$ as follows:
\[(\A\mindchange\B)(X)=
\begin{cases}
\A(\pi_0(X))&\mbox{ if }\pi_1(X)\mbox{ is empty},\\
\B(\pi_1(X))&\mbox{ otherwise.}
\end{cases}
\]
\end{notation}

It is easy to check that the operation $\to$ can also be seen as an operation on the Wadge degrees:

\begin{observation}\label{obs:mind-change}
Let $\A,\B,\C,\D$ be functions whose domains are subsets of $\hat{\om}^{\om}$.
If $\A\leq_w\C$ and $\B\leq_w\D$ then $\A\mindchange\B\leq_w\C\mindchange\D$.
\end{observation}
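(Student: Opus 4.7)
The plan is to construct, from given continuous reductions $\theta_\A$ witnessing $\A\leq_w\C$ and $\theta_\B$ witnessing $\B\leq_w\D$, a single continuous function $\Theta\colon\hat{\om}^\om\to\hat{\om}^\om$ that witnesses $\A\mindchange\B\leq_w\C\mindchange\D$. The guiding idea is simple: process the input $X$ by simulating $\theta_\A$ on its pre-mind-change portion, and switch to simulating $\theta_\B$ on its post-mind-change portion exactly when (and if) the input emits its mind-change marker.

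Concretely, I would invoke Observation \ref{obs:change-bijec} to decompose each $X\in\hat{\om}^\om$ uniquely as $X=\pi_0(X)\mindchange\pi_1(X)$, and define $\Theta$ so as to satisfy
\[
\pi_0(\Theta(X))=\theta_\A(\pi_0(X)),\qquad \pi_1(\Theta(X))=\theta_\B(\pi_1(X)).
\]
Operationally, as $\Theta$ reads the symbols of $X$ one at a time, it echoes the outputs of $\theta_\A$ applied to the pre-marker stream, coded as even numbers (with passes preserved); the moment the input emits its odd-coded mind-change marker, $\Theta$ outputs its own odd marker and thereafter feeds the remaining symbols of the input into $\theta_\B$, echoing its outputs verbatim. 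Continuity of $\Theta$ follows from continuity of $\pi_0,\pi_1,\theta_\A,\theta_\B$, since only finitely many input symbols are consulted to produce each output symbol, and in case $\theta_\A$ or $\theta_\B$ is temporarily silent one simply outputs a pass.

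For verification, fix $X\in\hat{\om}^\om$. If $\pi_1(X)$ is empty, then by construction $\pi_1(\Theta(X))$ is also empty, so
\[
(\C\mindchange\D)(\Theta(X))=\C(\pi_0(\Theta(X)))=\C(\theta_\A(\pi_0(X)))\geq_\Q\A(\pi_0(X))=(\A\mindchange\B)(X).
\]
Otherwise $\pi_1(\Theta(X))=\theta_\B(\pi_1(X))$ is nonempty and
\[
(\C\mindchange\D)(\Theta(X))=\D(\pi_1(\Theta(X)))=\D(\theta_\B(\pi_1(X)))\geq_\Q\B(\pi_1(X))=(\A\mindchange\B)(X).
\]
The only point requiring care is the pass bookkeeping needed to keep the two simulations synchronized with the input clock while $\Theta$ waits for either the next symbol of $X$ or for $\theta_\A$ (respectively $\theta_\B$) to commit its next non-pass output; this is a standard feature of Wadge reductions on $\hat{\om}^\om$ and presents no real obstacle.
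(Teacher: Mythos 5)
The paper offers no argument for this observation (it is introduced with ``it is easy to check''), and your overall plan --- run $\theta_\A$ on the pre-marker stream and switch to $\theta_\B$ at the marker --- is surely the intended one. The first half of your verification is fine: the first component of a mind-change code tolerates passes everywhere and has length $\om$ when no marker occurs, so $\pi_0(\Theta(X))=\theta_\A(\pi_0(X))$ can be realized exactly.

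The gap is in the second component. By the definition of $Y\mindchange Z$, the second component must begin with a non-pass symbol: the marker at position $\ell$ \emph{is} the code $2Z(0)+1$ of the first entry of $Z$. So $\Theta$ cannot ``output its own odd marker'' the moment $X$ emits its marker --- at that moment $\Theta$ does not yet know $\theta_\B(\pi_1(X))(0)$, and $\theta_\B(\pi_1(X))$ may begin with passes or even consist entirely of passes. The best $\Theta$ can do is emit passes (which land in the \emph{first} component) until $\theta_\B$ commits a first non-pass value $m$, then emit $2m+1$ and echo the rest; this gives $\pi_1(\Theta(X))$ equal to $\theta_\B(\pi_1(X))$ with its leading passes stripped, and if $\theta_\B(\pi_1(X))\mnskip$ is empty no marker is ever emitted, so $(\C\mindchange\D)(\Theta(X))$ is computed from $\C$ rather than $\D$. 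This is not cosmetic: for arbitrary $\C,\D$ the statement can actually fail. Take $\Q=\{0,1\}$ discrete, $\A=\C\equiv 0$, $\B\equiv 1$, and $\D(W)=1$ exactly when $W(0)={\pass}$; then $\A\leq_w\C$ and $\B\leq_w\D$ (via $V\mapsto{\pass}^\om$), yet $\C\mindchange\D$ is constantly $0$ because $\pi_1(W)$, when nonempty, never begins with a pass, while $\A\mindchange\B$ attains the value $1$. So your argument needs, and should state, additional hypotheses at exactly this step --- e.g.\ that $\D$ is conciliatory and that $\theta_\B$ emits at least one non-pass on every input --- which do hold for the functions $\Omega_T$ to which the paper actually applies the operation, but not for arbitrary $\C,\D$ as in the statement.
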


\subsubsection{$\Sigma_T$-complete functions}

We now inductively assign a function $\Omega_T$ to each forest $T\in\forest^n(\mathcal{Q})$, and we will show that $\Omega_T$ is $\Sigma_T$-complete.
Recall that $T$ is a tree if and only if the outermost function symbol is not the disjoint union $\sqcup$, and thus, $\Omega_T$ is defined by the construction in (1), (3), or (4) of Definition \ref{def: Sigma T}.
If $T$ is a tree, $\Omega_T$ will be a conciliatory function from $\hat{\om}^{\om}$ to $\mathcal{Q}$.
If $T$ is not a tree, $\Omega_T$ will be a function from $\om\times\hat{\om}^{\om}$ to $\mathcal{Q}$, which is almost conciliatory, that is, $X\mnskip=Y\mnskip$ implies $\Omega_T(n\fr X)=\Omega_T(n\fr Y)$ for any $n\in\om$.
(Think of almost conciliatory functions as having domain $\om^{\leq\om}\setminus\{\emptyset\}$.)

\begin{definition}\label{def:universal-function}
Let $T\in\forest^n(\mathcal{Q})$.
We inductively define $\Omega_T$ as follows:
\begin{enumerate}
\item Suppose that $T$ is of the form $\langle q\rangle$ for some $q\in\mathcal{Q}$.
Then define $\Omega_{\langle q\rangle}\colon \hat{\om}^{\om}\to\mathcal{Q}$ as the constant function $X\mapsto q$, that is,
\[(\forall X\in\hat{\om}^{\om})\ \ \ \Omega_{\langle q\rangle}(X)=q.\]
We sometimes abbreviate $\Omega_{\langle q\rangle}$ to $\Omega_q$.
\item Suppose that $T$ is of the form $\bigsqcup_nT_n$, where each $T_n$ is a tree.
Then define $\Omega_T\colon \om\times\hat{\om}^{\om}\to\mathcal{Q}$ as follows:
\[
\Omega_{\bigsqcup_nT_n}(X)=\bigoplus_{n\in\om}\Omega_{T_n}(X)
\]
\item Suppose that $T$ is of the form $\langle S\rangle{}^{\rightarrow}F$, where $S$ is the label on the root of $T$ (thus $S\in{\rm Tree}^{n-1}(\Q)$), and $F$ is a forest.
Then,
\[\Omega_{\langle S\rangle{}^{\rightarrow}F}=\Omega_{\langle S\rangle}\mindchange\Omega_F.\]
\item Suppose that $T$ is of the form $\langle S\rangle$ for some tree $S$.
Then define $\Omega_{T}\colon \hat{\om}^{\om}\to\mathcal{Q}$ as follows:
\[
\Omega_{\langle S\rangle}=\Omega_S\circ\U,
\]
where $\U$ is a fixed $\mathbf{\Sigma}^0_2$-universal initializable conciliatory function as in Proposition \ref{prop:universal}.
\end{enumerate}
\end{definition}

\begin{observation}\label{obs:tree-is-conciliatory}
If $T\in\tree^n(\mathcal{Q})$, $\Omega_T$ is conciliatory, and if $T$ is a $\sqcup$-type term, $\Omega_T$ is almost conciliatory.
The proof is an easy induction on the term $T$.
\end{observation}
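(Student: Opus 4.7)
The plan is to prove the observation by structural induction on the term $T$, according to the four clauses of Definition \ref{def:universal-function}. The three ``syntactic'' cases are essentially forced by the inductive hypothesis; the only case requiring care is the mindchange clause.

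For the base case $T=\langle q\rangle$, $\Omega_T$ is the constant function $q$ on $\hat{\om}^{\om}$, hence trivially conciliatory. For the $\sqcup$-case $T=\bigsqcup_n T_n$ (each $T_n$ a tree), given $X\mnskip=Y\mnskip$ and $n\in\om$, one computes $\Omega_T(n\fr X)=\Omega_{T_n}(X)=\Omega_{T_n}(Y)=\Omega_T(n\fr Y)$, invoking the inductive hypothesis (conciliatoriness) on each tree $T_n$; this gives almost conciliatoriness of $\Omega_T$. For the labeling case $T=\langle S\rangle$, we have $\Omega_T=\Omega_S\circ\U$; Proposition \ref{prop:universal} gives that $\U$ is conciliatory, so $X\mnskip=Y\mnskip$ yields $\U(X)\mnskip=\U(Y)\mnskip$, and the inductive hypothesis on the tree $S$ then forces $\Omega_T(X)=\Omega_S(\U(X))=\Omega_S(\U(Y))=\Omega_T(Y)$.

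The main (if still modest) obstacle is the mindchange case $T=\langle S\rangle\mindchange F$ with $F$ a $\sqcup$-type forest. Here $\Omega_T(X)=\Omega_{\langle S\rangle}(\pi_0(X))$ when $\pi_1(X)\mnskip$ is empty, and $\Omega_T(X)=\Omega_F(\pi_1(X))$ otherwise. The key tool is Observation \ref{obs:change-bijec}, which provides conciliatoriness of $\pi_0$ and $\pi_1$; this immediately gives that $X\mnskip=Y\mnskip$ forces $\pi_i(X)\mnskip=\pi_i(Y)\mnskip$ for $i=0,1$, and in particular that the case split in the definition of $\mindchange$ evaluates the same way for $X$ and for $Y$. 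The first branch is then handled by the inductive hypothesis (conciliatoriness of $\Omega_{\langle S\rangle}$), and the second by the inductive hypothesis on $F$ (almost conciliatoriness of $\Omega_F$): the first non-pass entry of $\pi_1(X)$ coincides with that of $\pi_1(Y)$, being the first letter of the common string $\pi_1(X)\mnskip=\pi_1(Y)\mnskip$, after which the remaining tails have matching $\mnskip$-strips. The only delicate point is keeping track of the interface between the output shape of $\pi_1$ and the domain $\om\times\hat{\om}^{\om}$ of $\Omega_F$, and this is precisely what the definition of almost conciliatoriness is designed to absorb.
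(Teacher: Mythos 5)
Your proof is correct and is exactly the induction the paper has in mind (the paper merely asserts ``an easy induction on the term $T$'' without writing it out). Your treatment of the mindchange case — using the conciliatoriness of $\pi_0,\pi_1$ from Observation \ref{obs:change-bijec} to see that the case split depends only on $X\mnskip$, and then feeding $\pi_1(X)=k\fr W$ into the almost-conciliatory $\Omega_F$ — is precisely the right way to fill in the one nontrivial step.
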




\begin{obs}\label{obs:omega in right pointclass}
For every $T\in\forest^n(\Q)$, the function $\Omega_T$ is in $\Sigma_T$.
\end{obs}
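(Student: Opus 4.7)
The plan is to prove the statement by structural induction on the term $T$, matching each clause of Definition \ref{def:universal-function} against the corresponding clause of Definition \ref{def: Sigma T}.

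I would dispatch three of the four cases quickly. When $T = \langle q\rangle$, the function $\Omega_T$ is the constant $q$, which is by clause (1) the unique element of $\Sigma_q$. When $T = \bigsqcup_n T_n$, the clopen partition $\C_n = \{n\} \times \hat{\om}^\om$ of the domain exhibits $\Omega_T$ as a function with $\Omega_T \upto \C_n = \Omega_{T_n}$, so induction together with clause (2) gives $\Omega_T \in \Sigma_T$. When $T = \langle S\rangle$ in the sense of clause (4) of Definition \ref{def:universal-function}, we have $\Omega_T = \Omega_S \circ \U$ by construction; since $\U$ is $\mathbf{\Sigma}^0_2$-measurable by Proposition \ref{prop:universal} and $\Omega_S \in \Sigma_S$ by induction, clause (4) of Definition \ref{def: Sigma T} delivers $\Omega_T \in \Sigma_{\langle S\rangle}$.

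The remaining case $T = \langle S\rangle \mindchange F$ is the substantive one. I would set
\[
\V = \{X \in \hat{\om}^\om : \pi_1(X)\mnskip \neq \emptyset\}.
\]
Since $\pi_1(X)\mnskip$ becomes nonempty exactly when the encoding of $X$ first exhibits an odd non-$\pass$ entry, membership in $\V$ is decided by a finite initial segment, so $\V$ is open. On the closed complement $\hat{\om}^\om \setminus \V$, $\Omega_T$ coincides with $\Omega_{\langle S\rangle} \circ \pi_0$, which belongs to $\Sigma_{\langle S\rangle}$ by Observation \ref{obs:sigma-continuous} and the inductive hypothesis; on $\V$, $\Omega_T$ coincides with $\Omega_F \circ \pi_1$, which belongs to $\Sigma_F$ for the same reason, noting that $\pi_1$ maps $\V$ into the domain $\om \times \hat{\om}^\om$ of $\Omega_F$. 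Clause (3) of Definition \ref{def: Sigma T} then yields $\Omega_T \in \Sigma_T$.

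The main obstacle is not conceptual but notational: the functions $\Omega_T$ live variously on $\hat{\om}^\om$ or $\om \times \hat{\om}^\om$, whereas $\Sigma_T$ is declared as a class of functions on $\om^\om$. I would fix once and for all canonical homeomorphic identifications, and invoke Observations \ref{obs:total-wadge} and \ref{obs:total-wadge2} to pass between restrictions to open or closed subsets and total functions on $\om^\om$. Given those identifications, the applications of Observation \ref{obs:sigma-continuous} in the mind-change case go through without issue.
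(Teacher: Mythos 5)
Your proposal is correct and follows essentially the same route as the paper's proof: the paper likewise dispatches the $\langle q\rangle$, $\sqcup$, and $\langle\cdot\rangle$ cases as immediate, and for the mind-change case uses exactly your open set $\V=\{X:\pi_1(X)\mnskip\neq\emptyset\}$, the decomposition $\Omega_{T\mindchange S}\upto(\hat\om^\om\setminus\V)=\Omega_T\circ\pi_0$ and $\Omega_{T\mindchange S}\upto\V=\Omega_S\circ\pi_1$, together with Observation \ref{obs:sigma-continuous} and the inductive hypothesis. Your extra care about identifying domains via Observations \ref{obs:total-wadge} and \ref{obs:total-wadge2} matches the convention the paper fixes just before Definition \ref{def: Sigma T}.
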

\begin{proof}
This  is obvious if $T$ if constructed from (1), (2) or (4).
Thus, it suffices to show that $\Omega_{T\mindchange S}\in\PSigma_{T\mindchange S}$.
Recall that every $X\in{\hat{\om}}^\om$ is of the form $\pi_0(X)\mindchange\pi_1(X)$ by Observation \ref{obs:change-bijec}.
Let $\V$ be an open set consisting of all sequences $X$ such that $\pi_1(X)$ is nonempty (which indicates that $X$ has changed his mind at some point).
It is clear that $\Omega_{T\mindchange S}\upto(\om^\om\setminus\V)=\Omega_T\circ\pi_0\upto(\om^\om\setminus\V)$, and $\Omega_{T\mindchange S}\upto\V=\Omega_S\circ\pi_1\upto\V$.
By induction hypothesis and by Observation \ref{obs:sigma-continuous}, the former function is in $\PSigma_T$ and the latter function is in $\PSigma_S$.
This concludes that $\Omega_{T\mindchange S}\in\PSigma_{T\mindchange S}$.
\end{proof}

\begin{lemma}\label{lemma:omega-complete}
For every $T\in\forest^n(\Q)$, the function $\Omega_T$ is $\Sigma_T$-complete.
\end{lemma}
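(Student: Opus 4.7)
The plan is to proceed by induction on $T$ as a term, matching the four clauses of Definition \ref{def: Sigma T}. The base case $T=\langle q\rangle$ is immediate, since $\Sigma_T$ contains only the constant function $q$, which is trivially Wadge reducible to $\Omega_q$.

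For the forest case $T=\bigsqcup_n T_n$, I would extract the witnessing clopen partition $(\C_n)_n$, extend each piece $\A\upto\C_n$ to a total $\Sigma_{T_n}$-function (Observations \ref{obs:total-wadge} and \ref{obs:total-wadge2}), invoke the inductive hypothesis to get continuous reductions $\theta_n$ into $\Omega_{T_n}$, and glue them into $\theta(X)=n\fr\theta_n(X)$ whenever $X\in\C_n$; this is continuous by clopenness of the $\C_n$'s. For the mind-change case $T=\langle S\rangle\mindchange F$ with witnessing open set $\V$, I would again totalize $\A\upto(\om^\om\setminus\V)$ and $\A\upto\V$, apply induction to obtain continuous reductions $\theta_0,\theta_1$, and define $\theta$ as a continuous winning strategy in $G_w(\A,\Omega_T)$ which feeds bits of $\theta_0(X)$ into the $\pi_0$-slot of the output while scanning $X$; as soon as some initial segment of $X$ is detected to lie in a generator of $\V$, it emits a mind-change marker and switches to feeding $\theta_1(X)$ into the $\pi_1$-slot. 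The semantics of $\mindchange$ guarantees that the abandoned $\pi_0$-part is irrelevant once the mind-change fires, so in either case $\Omega_T(\theta(X))=\A(X)$.

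The main obstacle is the jump case $T=\langle S\rangle$, where $\A=\B\circ\D$ for some $\mathbf{\Sigma}^0_2$-measurable $\D\colon\om^\om\to\om^\om$ and some $\B\in\Sigma_S$. By induction I can pick a continuous reduction $\theta_0$ with $\B=\Omega_S\circ\theta_0$ on $\om^\om$; the composition $\theta_0\circ\D$ is then $\mathbf{\Sigma}^0_2$-measurable but not continuous and not yet conciliatory. The idea is to first extend $\theta_0\circ\D$ to a $\mathbf{\Sigma}^0_2$-measurable conciliatory function $G\colon\hat{\om}^\om\to\hat{\om}^\om$, using the extension lemma at the end of Section \ref{ss: conciliatory} (up to the identification $\hat{\om}^\om\cong\om^\om$), and then apply the universality of $\U$ (Proposition \ref{prop:universal}) to obtain a continuous $\theta'$ with $G\equiv_{\sf p}\U\circ\theta'$. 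Conciliatoriness of $\Omega_S$ (Observation \ref{obs:tree-is-conciliatory}) lets me absorb the $\equiv_{\sf p}$ equivalence into genuine equality, yielding the chain
\[
\Omega_{\langle S\rangle}(\theta'(X))=\Omega_S(\U(\theta'(X)))=\Omega_S(G(X))=\Omega_S(\theta_0(\D(X)))=\B(\D(X))=\A(X)
\]
for every $X\in\om^\om$, so $\A\leq_w\Omega_{\langle S\rangle}$ via the restriction of $\theta'$ to $\om^\om$. The delicate point is exactly this interplay between the extension-to-conciliatory step and the mod-pass identification supplied by conciliatoriness of $\Omega_S$, which is what allows universality of $\U$ to collapse a $\mathbf{\Sigma}^0_2$-measurable composition into a purely continuous reduction.
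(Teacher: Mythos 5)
Your proposal is correct and follows essentially the same route as the paper: induction on the term structure of $T$, gluing over the clopen partition in the $\sqcup$ case, a mind-change strategy in the $\mindchange$ case, and $\mathbf{\Sigma}^0_2$-universality of $\U$ combined with conciliatoriness of $\Omega_S$ in the jump case (where you are in fact slightly more careful than the paper in first extending $\theta_0\circ\D$ to a conciliatory function before invoking universality). The only blemish is writing $\B=\Omega_S\circ\theta_0$ where the inductive hypothesis only yields $\B(X)\leq_\Q\Omega_S(\theta_0(X))$; replacing the corresponding equality in your final chain by $\leq_\Q$ fixes this without affecting the argument.
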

\begin{proof}
First assume that $T$ is of the form $\sqcup_iT_i$, and let $\A$ be a $\PSigma_T$-function.
Then there is a clopen partition $(\C_i)_{i\in\om}$ such that $\A\upto\C_i$ is in $\PSigma_{T_i}$ for any $i\in\om$.
By induction hypothesis, we have a continuous function $\theta_i$ witnessing $\A\upto\C_i\leq_w\Omega_{S_i}$ for every $i\in\om$.
Thus, to see $\A\leq_w\Omega_T$, given $X\in\om^\om$ one can computably find $i_X\in\om$ such that $X\in\C_{i_X}$, and then we have $\A(X)\leq_\Q\Omega_T({i_X}\fr\theta_{i_X}(X))$.

Next, let $\A$ be a function in $\PSigma_{T\mindchange S}$ .
Then, there is an open set $\V$ such that $\A\upto\V$ is in $\PSigma_S$ and $\A\upto(\om^\om\setminus\V)$ is in $\PSigma_T$.
Recall that the former condition means that there is a generator $V$ of $\V$ such that $\A\upto[\sigma]\in\PSigma_S$ for any $\sigma\in V$.
By induction hypothesis, we have continuous functions $\theta$ witnessing $\A\upto(\om^\om\setminus\V)\leq_w\Omega_T$, and $\gamma_\sigma$ witnessing $\A\upto[\sigma]\leq_w\Omega_S$ for every $\sigma\in V$.
To see $\A\leq_w\Omega_{T\mindchange S}$, given $X\in\om^\om$, we first follow $\theta$ until we see $X\upto s\in V$ for some $s$ (if ever).
If we see $X\upto s\in V$, then we change our mind (that is, delete the former sequence $\theta(X\upto s-1)$), and now follow $\gamma_{X\upto s}$.
Recall that, in the latter case, this process is coded as $\theta(X\upto s-1)\mindchange\gamma_{X\upto s}(X)$.
This witnesses $\A\leq_w\Omega_{T\mindchange S}$.

Let $\A$ be a $\PSigma_{\langle T\rangle}$-function.
Then, there are $\mathbf{\Delta}^0_2$-function $\D$ and a $\PSigma_T$-function $\B$ such that $\A=\B\circ\D$.
By induction hypothesis, we have a continuous function $\theta\colon\om^\om\to\hat{\om}^\om$ witnessing $\B\leq_w\Omega_T$.
Thus, $\A(X)\leq_\Q\Omega_T\circ\theta\circ\D(X)$.
Since $\theta\circ\D$ is ${\bf \Sigma}^0_2$, and $\U$ is  universal, there is  a continuous function $\Psi\colon\om^\om\to\hat{\om}^\om$ such that $\theta\circ\D\equiv_{\sf p}\U\circ \Psi(X)$.
Then, since $\Omega_T$ is conciliatory by Observation \ref{obs:tree-is-conciliatory},
\[
\A(X)=\B(\D(X)) \leq_{\Q} \Omega_T(\theta\circ\D(X))=\Omega_T(\U\circ\Psi( X))=\Omega_{\langle T\rangle}(\Psi(X)).
\]
Consequently, the continuous function $\Psi$ witnesses $\A\leq_w\Omega_{\langle T\rangle}$.
\end{proof}

\begin{lemma}\label{lem: ordering ->}
For $S,T\in \forest^n(\Q)$, if $S\treeleq T$, then $\Omega_S\leq_w \Omega_T$.
\end{lemma}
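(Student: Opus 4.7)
My plan is induction on the pair $(S,T)$, well-founded because $S$ and $T$ are well-founded trees; the case analysis follows the recursive definition of $\treeleq$. Several cases are routine: when $S = p \in \Q$, $\Omega_p$ is a constant function and any continuous map witnesses the reduction; when $S = \sqcup_i S_i$ or $T = \sqcup_j T_j$, I patch component reductions together using the $\bigoplus$-structure of $\Omega_{\sqcup_n T_n}$ and the choice map supplied by $\treeleq$ on forests. I will omit the details of these.

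The first substantive case is $S = \langle U\rangle$, $T = \langle V\rangle$ with $U \treeleq V$. The IH yields continuous $\theta$ with $\Omega_U \leq_w \Omega_V$. Since $\theta\circ\U$ is $\mathbf{\Sigma}^0_2$-measurable, the universality of $\U$ (Proposition \ref{prop:universal}) supplies continuous $\psi$ with $\U\circ\psi \equiv_{\sf p} \theta\circ\U$, and the conciliatoryness of $\Omega_V$ (Observation \ref{obs:tree-is-conciliatory}) gives $\Omega_{\langle V\rangle}(\psi(X)) = \Omega_V(\U(\psi(X))) = \Omega_V(\theta(\U(X))) \geq_\Q \Omega_U(\U(X)) = \Omega_{\langle U\rangle}(X)$, so $\psi$ witnesses $\Omega_{\langle U\rangle} \leq_w \Omega_{\langle V\rangle}$. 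This is the same maneuver already used inside the proof of Lemma \ref{lemma:omega-complete}.

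The main case is $S = \langle U\rangle \mindchange \sqcup_i S_i$ versus $T = \langle V\rangle \mindchange \sqcup_j T_j$. The definition of $\treeleq$ splits into two sub-branches. The easy sub-branch is $U \not\treeleq V$ with $S \treeleq T_j$ for some $j$: by IH there is $\theta\colon \Omega_S \leq_w \Omega_{T_j}$, and in the game $G_w(\Omega_S,\Omega_T)$ Player II wins by playing an immediate mind-change marker carrying index $j$ and then following $\theta$, since the result evaluates to $\Omega_{\sqcup_j T_j}(j\fr\theta(X)) = \Omega_{T_j}(\theta(X)) \geq_\Q \Omega_S(X)$.

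The hard sub-branch, where $U \treeleq V$ and every $S_i \treeleq T$, is the main obstacle and requires a recursive strategy. Player II first plays the reduction $\Omega_{\langle U\rangle} \leq_w \Omega_{\langle V\rangle}$ just constructed, outputting only in ``V-mode'' (each symbol wrapped in the ``no mind change'' encoding given by $\pi_0$). When Player I performs a mind change revealing first index $i$, Player II invokes the IH to get $\theta_i\colon \Omega_{S_i} \leq_w \Omega_T$ and adapts recursively: if $\theta_i$ calls for an immediate mind change on the output side (i.e.\ $S_i \treeleq T$ resolves via the easy sub-branch), Player II also does a mind change and continues in $\Omega_{\sqcup_j T_j}$-mode; if $\theta_i$ stays in V-mode (the hard sub-branch recurses), Player II stays in V-mode as well, using the initializability of $\U$ (Proposition \ref{prop:universal}) to reconcile the already-committed V-mode prefix $\tau$ with the desired continuation --- the initializer $\theta_\tau\colon \hat{\om}^\om \to [\tau]$ satisfies $\U\circ\theta_\tau \equiv_{\sf p} \U$, so Player II chooses a V-mode tail $\sigma$ with $\tau\fr\sigma = \theta_\tau(Z)$, where $Z$ encodes the V-mode continuation prescribed by $\theta_i$. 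Each mind change by Player I passes to a strict sub-term of the current tree, ensuring that the recursion terminates and that the resulting strategy is continuous. The hard part is precisely this bookkeeping across nested mind changes while Player II remains in V-mode, and initializability is tailor-made to handle it.
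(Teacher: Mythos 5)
Your proposal is correct and follows essentially the same route as the paper's proof: the same induction on terms, the same use of the $\mathbf{\Sigma}^0_2$-universality of $\U$ together with conciliatoriness of $\Omega_V$ in the $\langle U\rangle\treeleq\langle V\rangle$ case, and the same use of initializability to restart the reduction into $\Omega_T$ after Player I's mind change in the hard sub-branch. The only difference is presentational: where you unfold a case analysis on whether $\theta_i$'s output mind-changes, the paper packages the restart uniformly as a map $\hat{\eta}_\sigma$ witnessing $\Omega_T\leq_w\Omega_T\upto[\sigma\mindchange\emptyset]$ and simply composes it with $\theta_k$.
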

\begin{proof}
We show the assertion by induction on the definition of $\treeleq$ in  \ref{def: treeleq}.
First, it is clear that $\Omega_p\leq_w\Omega_q$ if and only if $p\treeleq q$.
Next suppose that we have $\langle U\rangle\treeleq\langle V\rangle$, which is equivalent to $U\treeleq V$ by definition.
By the induction hypothesis, we have a $\Q$-Wadge reduction $\theta\colon\hat{\om}^\om\to\hat{\om}^\om$ witnessing $\Omega_U\leq_w\Omega_V$.
By the $\bf\Sigma^0_2$ universality of $\U$, there exists a continuous $\Phi\colon \hat{\om}^\om\to\hat{\om}^\om$ such that $\theta\circ\U\equiv_{\sf p}\U\circ \Phi$.
Since $\Omega_V$ is conciliatory by Observation \ref{obs:tree-is-conciliatory}, we have that for every $X\in \hat{\om}$,
\[
\Omega_{\langle U\rangle}(X)=\Omega_U(\U(X))\leq_\Q \Omega_V(\theta(\U(X)))=\Omega_V(\U(\Phi(X)))=\Omega_{\langle V\rangle}(\Phi(X)).
\]
Thus $\Phi$ witnesses that $\Omega_{\langle U\rangle}\leq_w \Omega_{\langle V\rangle}$.

Now, consider $S=\langle U\rangle\mindchange\bigsqcup_iS_i$ and $T=\langle V\rangle\mindchange\bigsqcup_jT_j$.
First consider the case that $\langle U\rangle\treeleq\langle V\rangle$.
In this case, by the definition of $\treeleq$ under the assumption that $\langle U\rangle\treeleq\langle V\rangle$, we have that $S_i\treeleq T$ for any $i\in\om$.
By the induction hypothesis, $\Omega_{S_i}\leq_w\Omega_T$ via a continuous function $\theta_i$ for any $i$, and $\Omega_{\langle U\rangle}\leq_w\Omega_{\langle V\rangle}$ via $\tau$.
The idea to define the reduction is as follows: 
Given $X\in\om^\om$, while $X$ does not change its mind, use the reduction $\tau\colon\Omega_{\langle U\rangle}\leq_w\Omega_{\langle V\rangle}$.
If $X$ changes its mind and moves into some $S_k$, then we need to use the reduction $\theta_k\colon\Omega_{S_k}\leq_w\Omega_T$.
Since we have already taken some steps within the domain of $\langle V\rangle$, we need to use the initilizability of $\Omega_{\langle V\rangle}$ to start over with the reduction to $\Omega_T$.

More formally:
By the initializability of $\Omega_{\langle V\rangle}$ (see Definition \ref{def: initiazable}), for any $\sigma$, there is a continuous function $\eta_\sigma$ witnessing $\Omega_{\langle V\rangle}\leq_w\Omega_{\langle V\rangle}\upto[\sigma]$.
We can then extend this map to a Wadge reduction $\Omega_T\leq_w \Omega_T\upto[\sigma\mindchange\emptyset]$, where $\sigma\mindchange\emptyset$ represents the string in the domain of $\Omega_T$ for which we haven't changed our mind yet, and we are still computing $\Omega_{\langle V\rangle}$.
For each $\si$, let $\hat{\eta}_\sigma$ be such that $\Omega_T(Y) \leq_\Q \Omega_T(\sigma \mindchange \hat{\eta}_\sigma(Y))$ witnessing such reduction.
Given $X$, it is of the form $\pi_0(X)\mindchange\pi_1(X)$ by Observation \ref{obs:change-bijec}.
If $\pi_1(X)$ is empty (that is, $X$ never changes his mind), then note that $\Omega_S(X)=\Omega_{\langle U\rangle}(\pi_0(X))$.
In this case, return $\tau(\pi_0(X))\mindchange\emptyset$.
Let $X$ be a sequence such that $\pi_1(X)$ is nonempty (that is, $X$ has changed his mind at some point).
Put $k=\pi_1(X)(0)$.
Note that $\Omega_S(X)=\Omega_{\bigsqcup_iS_i}(\pi_1(X))=\Omega_{S_k}(\pi_1(X)^-)$, where $\pi_1(X)=k\fr \pi_1(X)^-$.
Then change our guess to $\hat{\eta}_{\tau(\pi_0(X))}\circ\theta_k(\pi_1(X)^-)$.
We thus get
\begin{multline*}
\Omega_S(X) = \Omega_{S_k}(\pi_1(X)^-)\leq_\Q   \\
 \Omega_T(\theta_k(\pi_1(X)^-)) \leq_\Q \Omega_T(\tau(\pi_0(X))\ \ \mindchange\ \ \hat{\eta}_{\tau(\pi_0(X))}(\theta_k(\pi_1(X)^-))).
\end{multline*}
Putting these two cases together, we have a Wadge reduction from $\Omega_{S}$ to $\Omega_T$.

We now assume that $\langle U\rangle\not\treeleq\langle V\rangle$.
In this case, by definition, $S\treeleq T$ if and only if $S\treeleq T_j$ for some $j\in\om$.
By induction hypothesis $\Omega_S\leq_w\Omega_{T_j}$ for some $j$.
Clearly, this condition implies $\Omega_S\leq_w\Omega_T$.
\end{proof}

We will prove the reverse direction, that $\Omega_T\leq_w\Omega_S\implies T\treeleq S$, in Subsection \ref{sec: proof or ordering embedding}.
We need to wait until then, because we need the jump inversion operator for the proof.


\subsection{Infinite Borel ranks}\label{sec:infinite-Borel-ranks-treeorder}

We now extend our ideas from Section \ref{sec:finite-Borel-ranks-treeorder} to infinite Borel ranks.
The reader who is only interested in Borel functions of finite rank can skip Section \ref{sec:infinite-Borel-ranks-treeorder}.

\subsubsection{Language and terms (infinite Borel rank)}

Given a set $\mathcal{Q}$, let $\mathcal{L}(\mathcal{Q})$ be the language consisting of constant symbols $q$ for each $q\in\mathcal{Q}$, an $\om$-ary function symbol $\sqcup$, a two-ary function symbol ${}^\rightarrow$, and a unary function symbol $\langle\cdot\rangle^{\om^\alpha}$ for every countable ordinal $\alpha<\om_1$.

We define $\forest^{\om^\alpha}(\mathcal{Q})$ as the set of all $\mathcal{L}(\mathcal{Q})$-terms of rank below $\om^\alpha$ as follows:

\begin{definition}[Terms of Rank below $\om^\alpha$]
We inductively define the sets $\tree^{\om^\alpha}(\mathcal{Q})$ and $\forest^{\om^\alpha}(\mathcal{Q})$ consisting of $\mathcal{L}(\mathcal{Q})$-terms as follows:
\begin{enumerate}
\item If $\beta\leq\alpha$ and $T\in\tree^{\om^\beta}(\mathcal{Q})$ then $T\in\tree^{\om^\alpha}(\mathcal{Q})$ and $T\in\forest^{\om^\alpha}(\mathcal{Q})$.
\item If $q\in\mathcal{Q}$ then $q\in\tree^{1}(\mathcal{Q})$ (where note that $\om^0=1$), and call it {\em $\langle\rangle$-type}.
\item If $\beta<\alpha$ and $T\in\tree^{\om^\alpha}(\mathcal{Q})$ then $\langle T\rangle^{\om^\beta}\in\tree^{\om^\alpha}(\mathcal{Q})$,  and call it {\em $\langle\rangle$-type}.
\item If $T_i\in\tree^{\om^\alpha}(\mathcal{Q})$ for every $i\in\om$, then $\sqcup_{i}T_i\in\forest^{\om^\alpha}(\mathcal{Q})$, and call it {\em $\sqcup$-type}.
\item For any $\langle\rangle$-type term $T\in\tree^{\om^\alpha}(\mathcal{Q})$ and $\sqcup$-type term $S\in\forest^{\om^\alpha}(\mathcal{Q})$, the term $T\mindchange S$ is in $\tree^{\om^\alpha}(\mathcal{Q})$.
\end{enumerate}
\end{definition}

We define $\tree^{\om_1}(\mathcal{Q})=\bigcup_\alpha\tree^\alpha(\mathcal{Q})$ and $\forest^{\om_1}(\mathcal{Q})=\bigcup_\alpha\forest^\alpha(\mathcal{Q})$.

For instance, $\forest^\om(\Q)$ can be viewed as the closure of $\Q$ of the operations $\langle\cdot\rangle$, $\sqcup$, and $\mindchange$.
Notice that this is far larger than $\bigcup_n\forest^n(\Q)$ (even with respect to $\treeleq$) because a term in $\forest^\om(\Q)$ can contain unbounded applications of the labeling function $\langle\cdot\rangle$, e.g., $\sqcup_n\langle 0\mindchange 1\rangle^n$.
This reflects the fact that the pointclass $\mathbf{\Delta}^0_\om$ is strictly larger than $\bigcup_{n<\om}\mathbf{\Delta}^0_n$.
On the other hand, the function $\langle\cdot\rangle^\om$ would take out of $\forest^\om(\Q)$, reflecting that the conciliatory $\Si^0_\om$ universal function is not $\mathbf{\Delta}^0_\om$.

For a set $\mathcal{R}$ of $\mathcal{L}(\mathcal{Q})$-terms and an ordinal $\alpha$, we define:
\[
\langle\mathcal{R}\rangle^{\om^\alpha}=\{\langle T\rangle^{\om^\alpha}:T\in\mathcal{R}\}.
\]

We then inductively define the set $\tree^{\om^\alpha\cdot k}(\mathcal{Q})\subseteq\tree^{\om^{\alpha+1}}(\mathcal{Q})$ of $\mathcal{L}(\mathcal{Q})$-terms (of rank $<\om^\alpha\cdot k$) as follows:
\[\tree^{\om^\alpha\cdot 1}(\mathcal{Q})=\tree^{\om^\alpha}(\mathcal{Q}),\qquad\tree^{\om^\alpha\cdot k+1}(\mathcal{Q})=\tree^{\om^\alpha}(\langle\tree^{\om^\alpha\cdot k}(\mathcal{Q})\rangle^{\om^\alpha}).\]

In general, recall that every countable ordinal $\xi$ can be written as $\om^\alpha+\beta$ for some $\beta<\om^{\alpha+1}$.
Then we define $\tree^\xi(\mathcal{Q})$ and $\forest^\xi(\mathcal{Q})$ as follows:
\[
\tree^{\om^\alpha+\beta}(\mathcal{Q})=\tree^{\om^\alpha}(\langle\tree^\beta(\Q)\rangle^{\om^\alpha}),\qquad\forest^{\om^\alpha+\beta}(\mathcal{Q})=\forest^{\om^\alpha}(\langle\tree^\beta(\Q)\rangle^{\om^\alpha}).
\]

\subsubsection{Quasi-ordering nested trees (infinite Borel rank)}

In this section, we extend the domain of the quasi-order $\treeleq$ to $\forest^{\om_1}(\Q)$.
As in Section \ref{sec:def-quasi-order-nested-trees}, we first inductively define a quasi-order $\treeleq$ on $\tree^{\om_1}(\Q)$, and then, $\treeleq$ is uniquely extended to a quasi-order on $\forest^{\om_1}(\Q)$ by interpreting $\sqcup$ as a countable supremum operation.
Recall the convention from Section \ref{sec:def-quasi-order-nested-trees} that we always identify $p\in\Q$ with $\langle p\rangle$, and $\langle T\rangle$ with $\langle T\rangle\mindchange\sqcup_i\mathbf{O}$, where $\mathbf{O}$ is the empty forest, viewed as an imaginary least element w.r.t.\ the quasi-order $\treeleq$, that is, $\mathbf{O}\treeleq T$ for any $T\in\tree^{\om_1}(\Q)$.

\begin{definition}
We inductively define a quasi-order $\treeleq$ on $\bigcup_n\tree^{\om_1}(\Q)$ as follows, where the symbols $p$ and $q$ range over $\Q$, and $U$, $V$, $S$, and $T$ range over  range over $\tree^{\om_1}(\Q)$:
\begin{align*}
p\treeleq q &\iff p\leq_\Q q,\\
\langle U\rangle^{\om^\alpha}\treeleq \langle V\rangle^{\om^\beta} & \iff
\begin{cases}
U \treeleq V & \mbox{ if }\alpha=\beta,\\
\langle U\rangle^{\om^\alpha}\treeleq V & \mbox{ if }\alpha>\beta,\\
U \treeleq \langle V\rangle^{\om^\beta} & \mbox{ if }\alpha<\beta.
\end{cases}
\end{align*}
and if $S$ and $T$ are of the form $\langle U\rangle^{\om^\alpha}\mindchange\sqcup_iS_i$ and $\langle V\rangle^{\om^\beta}\mindchange\sqcup_jT_j$, respectively, then
\[
S\treeleq T \iff
\begin{cases}
\mbox{either }  \langle U\rangle^{\om^\alpha}\treeleq \langle V\rangle^{\om^\beta}     &\mbox{and } (\forall i)\;S_i\treeleq T,\\
\mbox{or }\hspace{7mm}      \langle U\rangle^{\om^\alpha}\not\treeleq \langle V\rangle^{\om^\beta}    &\mbox{and }     (\exists j)\;S \treeleq T_j.
\end{cases}
\]
\end{definition}

We now assign a class $\Sigma_T$ to each forest $T\in\forest^{\om_1}(\Q)$ as follows:

\begin{definition}
For $\alpha>0$, $\A\in\PSigma_{\langle T\rangle^{\om^\alpha}}$ if and only if there is a $\mathbf{\Sigma}^0_{1+\om^\alpha}$-measurable function $\D\colon \om^\om\to\om^\om$ and a $\PSigma_T$-function $\B\colon \om^\om\to\Q$ such that $\A=\B\circ\D$.
The other cases are as in Definition \ref{def: Sigma T}.
\end{definition}

We check measurability of $\Sigma_T$-functions.

\begin{lemma}\label{lem:Borel-rank-calc2}
Let $T$ be an $\mathcal{L}(\Q)$-term and $\xi$ be a countable ordinal.
Then, $\PSigma_T\subseteq\mathbf{\Delta}^0_{1+\xi}$ implies $\PSigma_{\langle T\rangle^{\om^\alpha}}\subseteq\mathbf{\Delta}^0_{1+\om^\alpha+\xi}$.
\end{lemma}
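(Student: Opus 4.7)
The plan is to mimic the proof of Lemma \ref{lem:Borel-rank-calc}(2), replacing the finite ordinal arithmetic by transfinite ordinal arithmetic. By the definition of $\PSigma_{\langle T\rangle^{\om^\alpha}}$, any $\A\in\PSigma_{\langle T\rangle^{\om^\alpha}}$ factors as $\A=\B\circ\mathcal{D}$, where $\B\in\PSigma_T$ and $\mathcal{D}\colon\om^\om\to\om^\om$ is $\mathbf{\Sigma}^0_{1+\om^\alpha}$-measurable. By the hypothesis, $\B$ is $\mathbf{\Delta}^0_{1+\xi}$-measurable, so for each $q\in\Q$ the set $\B^{-1}[q]$ lies in $\mathbf{\Delta}^0_{1+\xi}$. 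It then suffices to show that $\mathcal{D}^{-1}[\B^{-1}[q]]\in\mathbf{\Delta}^0_{1+\om^\alpha+\xi}$, since the range of $\A$ is countable.

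The key auxiliary lemma I would prove (by induction on $\zeta$, as in the proof of Lemma \ref{lem:Borel-rank-calc}(2)) is the following transfinite pullback principle: if $f\colon\om^\om\to\om^\om$ is $\mathbf{\Sigma}^0_{1+\eta}$-measurable, then $f^{-1}[\mathcal{S}]\in\mathbf{\Sigma}^0_{1+\eta+\zeta}$ for every $\mathcal{S}\in\mathbf{\Sigma}^0_{1+\zeta}$, and symmetrically for the $\mathbf{\Pi}$ classes. The base case $\zeta=0$ is the definition of $\mathbf{\Sigma}^0_{1+\eta}$-measurability. For the inductive step, write $\mathcal{S}=\bigcup_n\mathcal{S}_n$ with $\mathcal{S}_n\in\mathbf{\Pi}^0_{1+\zeta_n}$ and $\zeta_n<\zeta$; then $f^{-1}[\mathcal{S}]=\bigcup_nf^{-1}[\mathcal{S}_n]$, where each $f^{-1}[\mathcal{S}_n]\in\mathbf{\Pi}^0_{1+\eta+\zeta_n}$ by induction. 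Strict monotonicity of ordinal addition on the right gives $1+\eta+\zeta_n<1+\eta+\zeta$, so the countable union lies in $\mathbf{\Sigma}^0_{1+\eta+\zeta}$, as required.

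Applying this with $\eta=\om^\alpha$ and $\zeta=\xi$, both $\mathcal{D}^{-1}[\B^{-1}[q]]$ and its complement $\mathcal{D}^{-1}[\om^\om\setminus\B^{-1}[q]]$ lie in $\mathbf{\Sigma}^0_{1+\om^\alpha+\xi}$, giving $\mathcal{D}^{-1}[\B^{-1}[q]]\in\mathbf{\Delta}^0_{1+\om^\alpha+\xi}$. This yields $\A\in\mathbf{\Delta}^0_{1+\om^\alpha+\xi}$.

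The only delicate point is the ordinal arithmetic in the inductive step of the auxiliary lemma: one must be sure that $\eta+\zeta_n<\eta+\zeta$ whenever $\zeta_n<\zeta$, which is exactly strict monotonicity of ordinal addition in the right argument. This is where the finite-rank proof's simple ``$2+\xi$'' bookkeeping is upgraded to genuine transfinite bookkeeping; no new conceptual idea beyond that is needed.
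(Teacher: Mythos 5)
Your proof is correct and takes essentially the same route as the paper, whose proof of this lemma simply defers to the argument of Lemma \ref{lem:Borel-rank-calc}(2) --- that is, exactly the transfinite pullback principle you establish by induction on Borel rank. The only content you add is the explicit ordinal bookkeeping (writing each $\mathbf{\Pi}^0_{\beta_n}$ piece as $\mathbf{\Pi}^0_{1+\zeta_n}$ and invoking strict monotonicity of addition in the right argument to get $1+\eta+\zeta_n<1+\eta+\zeta$), which the paper leaves implicit.
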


\begin{proof}
One can use a similar argument as in Lemma \ref{lem:Borel-rank-calc} (2).
\end{proof}

By combining Lemmas \ref{lem:Borel-rank-calc} and \ref{lem:Borel-rank-calc2}, we obtain the direction from (2) to (1) in Proposition \ref{prop:sublemma1} for infinite Borel rank, that is, that if $T\in \forest^\xi(\Q)$, then $\Sigma_T\subseteq {\bf\Delta}^0_{1+\xi}$.

\subsubsection{$\Sigma_T$-complete functions (infinite Borel rank)}

Now we introduce a $\Sigma_T$-complete function $\Omega_T$ for each forest $T\in\forest^{\om_1}(\Q)$.
To achieve this, we need a universal function at transfinite Borel ranks.
Again, recall that every $\mathbf{\Sigma}^0_{1+\xi}$-measurable function is coded by a real (for instance, we can use Fact \ref{lem:lim-universal}).

\begin{definition}
We say that $\U_\xi\colon \hat{\om}^\om\to\hat{\om}^\om$ is {\em $\mathbf{\Sigma}^0_{\xi}$-universal} if it is $\mathbf{\Sigma}^0_{\xi}$-measurable, and 
for every conciliatory $\mathbf{\Sigma}^0_{\xi}$-measurable function $\Psi\colon \hat{\om}^\om\to\hat{\om}^\om$,
there is a continuous function $\theta$ such that $\Psi=\U_\xi\circ \theta$.
\end{definition}

To show the existence of a $\Sigma_T$-complete function, we need to extend Proposition \ref{prop:universal} to infinite Borel ranks.

\begin{proposition}\label{prop:universal-inf}
For any countable ordinal $\alpha$, there is a $\mathbf{\Sigma}^0_{1+\om^\alpha}$-universal initializable conciliatory function.
\end{proposition}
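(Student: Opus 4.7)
The plan is to construct $\U_{1+\om^\alpha}$ by transfinite recursion on $\alpha$, extending the pointwise-limit construction of Section \ref{sec:conciliatory}. The base case $\alpha=0$ is exactly Proposition \ref{prop:universal} since $1+\om^0=2$.

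Assume $\U_{1+\om^\gamma}$ has been constructed with all the required properties for every $\gamma<\alpha$. Choose an increasing sequence $(\delta_k)_{k\in\om}$ of ordinals with $\sup_k(1+\delta_k)=1+\om^\alpha$: take $\delta_k=\om^\beta\cdot k$ if $\alpha=\beta+1$, or $\delta_k=\om^{\alpha_k}$ for a fixed cofinal $(\alpha_k)$ in $\alpha$ if $\alpha$ is a limit. In the successor case the intermediate universal functions $\U_{1+\om^\beta\cdot k}$ are not directly provided by the inductive hypothesis, so they must be produced by a finite subsidiary induction: $\U_{1+\om^\beta\cdot(k+1)}$ is obtained as a suitable composition of $\U_{1+\om^\beta}$ with $\U_{1+\om^\beta\cdot k}$, whose Borel rank is $1+\om^\beta\cdot(k+1)$ by the rank arithmetic implicit in Lemma \ref{lem:Borel-rank-calc}. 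Universality at each intermediate rank follows from a standard factorization of $\mathbf{\Sigma}^0_{1+\om^\beta\cdot(k+1)}$-measurable conciliatory functions as a composition of a $\mathbf{\Sigma}^0_{1+\om^\beta}$-measurable and a $\mathbf{\Sigma}^0_{1+\om^\beta\cdot k}$-measurable one.

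With the functions $\U_{1+\delta_k}$ in hand, define $\U_{1+\om^\alpha}$ in parallel with the construction of $\U$ in Section \ref{sec:conciliatory}: interpret $Y\mnskip$ via a standard pairing as coding a sequence of pairs $(k_n,Y_n)_{n\in\om}$, and declare, for each nonempty $\sigma\in\om^{<\om}$,
\[
\sigma\subseteq\U_{1+\om^\alpha}(Y)\iff\exists n\,\forall m\geq n\,\bigl(Y\mnskip(m)\converges\;\rightarrow\;\sigma\subseteq\U_{1+\delta_{k_m}}(Y_m)\bigr),
\]
letting $\U_{1+\om^\alpha}(Y)$ be the union of all such $\sigma$. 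Conciliatoriness is immediate from the use of $Y\mnskip$, and initializability follows from the same padding trick as for $\U_2$: reserve a distinguished code whose prepending does not affect the output. The defining formula has complexity $\mathbf{\Sigma}^0_{1+\om^\alpha}$ once one rewrites its body as a countable union of $\mathbf{\Pi}^0_{1+\delta_k}$-predicates, indexed by the choice of $n$ together with a bound on the values $k_m$ for $m\leq n$.

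The main obstacle is verifying universality. Given an arbitrary $\mathbf{\Sigma}^0_{1+\om^\alpha}$-measurable conciliatory $\Psi\colon\hat{\om}^\om\to\hat{\om}^\om$, one must produce a continuous $\theta$ with $\Psi\equiv_{\sf p}\U_{1+\om^\alpha}\circ\theta$. This reduces to a Lebesgue--Hausdorff-style representation of $\Psi$ as a uniformly given pointwise limit $\Psi=\lim_k\Psi_k$, where each $\Psi_k$ is a $\mathbf{\Sigma}^0_{1+\delta_k}$-measurable conciliatory function; by the inductive hypothesis each $\Psi_k$ factors as $\U_{1+\delta_k}\circ\theta_k$ for a continuous $\theta_k$, and these are then glued into a single continuous $\theta$ producing the right codes $(k_n,Y_n)$ for $\U_{1+\om^\alpha}$, while continuously tracking passes so that conciliatoriness is preserved throughout. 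Producing this uniform limit representation together with a gluing that behaves correctly with respect to the pass structure is the technical heart of the argument, and is where the limit-ordinal case is the most delicate.
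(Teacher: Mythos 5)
Your overall strategy --- a direct transfinite recursion generalizing the pointwise-limit construction of Section \ref{sec:conciliatory} --- is genuinely different from the paper's, which instead gets the proposition almost for free from the true-stage jump operator: by Fact \ref{lem:lim-universal} the map $e\fr C\oplus X\mapsto\Phi_e\circ\J^{\om^\alpha,C}(X)$ is already $\mathbf{\Sigma}^0_{1+\om^\alpha}$-universal from the left, and the padding trick of Section \ref{sec:proof-universal} makes it initializable and conciliatory. Unfortunately your route has a gap that I do not think can be repaired without essentially importing that machinery.

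The problem is the complexity of the limit construction at limit ranks, and $1+\om^\alpha$ is a limit ordinal for every $\alpha\geq 1$. Your $\U_{1+\om^\alpha}$ is a pointwise limit of functions $\U_{1+\delta_{k_m}}(Y_m)$ whose ranks are cofinal in $1+\om^\alpha$. By the Lebesgue--Hausdorff--Banach correspondence between Baire classes and measurability, the pointwise limits of functions of rank $<\lambda$ (for $\lambda$ limit) form exactly the class of $\mathbf{\Sigma}^0_{\lambda+1}$-measurable functions; the class of $\mathbf{\Sigma}^0_{\lambda}$-measurable functions is skipped entirely. Concretely, in your defining formula the matrix $\forall m\geq n(\cdots)$ is a countable intersection of sets whose ranks are unbounded below $1+\om^\alpha$ (and which moreover depend on $Y$ through $k_m$, so each conjunct is already an unbounded countable union once $k_m$ is decoded), hence it is only $\mathbf{\Pi}^0_{1+\om^\alpha}$ at best, and the outer $\exists n$ pushes the predicate up to $\mathbf{\Sigma}^0_{1+\om^\alpha+1}$. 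The rewriting you propose, indexed by $n$ together with a bound on $k_m$ for $m\leq n$, does not help because the unbounded ranks occur in the tail $m>n$. The same overshoot appears on the universality side: what the gluing argument would establish is universality for all pointwise limits of lower-rank conciliatory functions, i.e., for the strictly larger class of $\mathbf{\Sigma}^0_{1+\om^\alpha+1}$-measurable functions, and a function universal for that class cannot itself be $\mathbf{\Sigma}^0_{1+\om^\alpha}$-measurable. Pinning the rank at exactly $1+\om^\alpha$ is precisely what the Marcone--Montalb\'an operator $\J^{\om^\alpha,\C}$ accomplishes, by recording only one bit of each iterated jump in such a way that the entire sequence of jumps remains recoverable; this is the ingredient your construction is missing. (The ``standard factorization'' you invoke in the successor case also secretly relies on this machinery, but it is the limit-stage complexity count that is the fatal issue.)
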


We prove this proposition in Section \ref{sec:Omega-infinite-Borel-rank}.

We now introduce $\Omega_T$ for each term $T\in\forest^{\om_1}(\Q)$.
It suffices to describe how to define $\Omega_{\langle T\rangle^{\om^\alpha}}$, as the rest is as in Definition \ref{def:universal-function}.

\begin{definition}[Complete Function]
Let $\alpha$ be a countable ordinal, and let $T$ be a tree in $\tree^{\om_1}(\Q)$.
Then we define the conciliatory function $\Omega_{\langle T\rangle^{\om^\alpha}}\colon \om^{\leq\om}\to\mathcal{Q}$ as follows:
\[
\Omega_{\langle T\rangle^{\om^\alpha}}=\Omega_T \circ \U_{\om^\alpha},
\]
where $\U_{\om^\alpha}$ is a fixed $\mathbf{\Sigma}^0_{1+\om^\alpha}$-universal initializable conciliatory function as in Proposition \ref{prop:universal-inf}.
\end{definition}

It is not hard to prove the transfinite versions of \ref{obs:tree-is-conciliatory}, \ref{obs:omega in right pointclass}, \ref{lemma:omega-complete}, and \ref{lem: ordering ->} namely that $\Omega_T$ is $\Sigma_T$-complete, and that if $S\treeleq T$, then $\Omega_S\leq_w\Omega_T$.


\section{The jump operator and its inversion} \label{sec:technical-lemmas}

The goal of this section is to define an inverse of the operation $\B\mapsto \B\circ\U$.
(Recall that $\Omega_{\langle T\rangle}$ was defined as $\Omega_T\circ \U$.)
This operation will be denoted by $\A^{\not\sim}$, and we will prove that $(\A\circ\U)^{\not\sim}\equiv_w \A$ for every $\Q$-valued function $\A$.
Furthermore, we will get that $(A^{\not\sim})\circ \U \equiv_w \A$ if we also assume $\A$ is initializable. 
The key technical notions are the {\em Turing jump operator via true stages} from computability theory and an uniform version of the Friedberg jump inversion theorem.
The use of this jump operator is one of the aspects of our proof that makes it easier than Duparc's work for $\Q=2$.


\subsection{Turing jump operator}\label{sec:Turing-jump}

The Turing jump operator is one of the most basic notions in computability theory.
For our proof, we need a version of this operator with nicer properties than the standard jump operator $X\mapsto X'$.
The jump operator {\em via  true stages} $\J\colon\om^\om\to\om^\om$ introduced by Marcone and Montalb\'an \cite{MMVeblen, Mon14} is exactly what we need.
Marcone and Montalb\'an also defined its approximation on finite strings $J\colon\om^{<\om}\to\om^{<\om}$.
Putting these together, what they defined was a $\Sigma^0_2$ conciliatory function $\J\colon \om^{\leq \om}\to\om^{\leq\om}$.
The properties we need are the following:
\begin{enumerate}
\item ($\Sigma^0_2$-universality from the right.) For every $\Sigma^0_2$ operator $G\colon \om^\om\to \hat{\om}^{\om}$, there is a computable $\theta\colon\J[\om^\om]\to \hat{\om}^{\om}$ such that $G\equiv_{\sf p}\theta\circ \J$ (recall the definition of $\equiv_{\sf p}$ in Section \ref{sec:conciliatory}).
Furthermore, if $G$ is $\Sigma^0_2$ relative to an oracle $C$, then we can still find $\theta$ computable so that,  for every $X\in \om^\om$, $G(X)\mnskip=\theta(\J(C\oplus X))\mnskip$.
\item The image $\om^\om$ under $\J$, namely $\J[\om^\om]$, is a closed subset of $\om^\om$.
Furthermore,  $\J$ is one-to-one, and its inverse $\J^{-1}\colon\J[\om^\om]\to\om^\om$ is continuous. 
\item (Denseness of forcing.) For every string $\gamma\in \om^{<\om}$, there is a string $\si\supseteq \gamma, \si\in \om^{<\om}$ which forces the jump in the following sense: 
We say that $\si\in \om^{<\om}$ {\em forces the jump}, if for every $\tau\supseteq \si$, $\J(\tau)\supseteq \J(\si)$. 
\end{enumerate}

For (1), note that if the range of $G$ is contained in $\om^\om$, one can find $\theta:\J[\om^\om]\to\om^\om$ such that $G=\theta\circ \J$.
The relativized version also holds.
These properties are immediate from the definitions in Marcone and Montalb\'an \cite{MMVeblen, Mon14}.
The last property (3) requires a minute of thought, though it is quite standard.

Notice that for the usual Turing jump operator $X\mapsto X'$, the image is not closed.
Another advantage of $\J$ is that its finite approximation can be easily iterated, allowing us to keep the denseness of forcing when we consider transfinite iterates of the jump.

We use $\J^C$ to denote the operator $X\mapsto \J(C\oplus X)$.
It satisfy the same properties  of $\J$ we mentioned above.
Let $\mathcal{J}^{n,C}$ be the $n$-th iterate of the jump operator relative to $C$, that is, put $\mathcal{J}^{1,C}=\mathcal{J}^C$ and $\mathcal{J}^{n+1,C}=\mathcal{J}^C\circ\mathcal{J}^{n,C}$.
We also use the symbol $\mathcal{J}^{-1,C}$ to denote $(\mathcal{J}^C)^{-1}$.
The $\Si^0_2$ universality of $\J$ can be iterated through the finite levels of the Borel hierarchy.

\begin{fact}\label{fact:limit-lemma}
For every $\mathbf{\Sigma}^0_{n+1}$-measurable function $\A\colon \om^\om\to\hat{\om}^{\om}$, there are $C\in\om^\om$ and a computable $\Phi\colon\om^\om\to\hat{\om}^{\om}$ such that $\A\equiv_{\sf p}\Phi\circ\mathcal{J}^{n,C}$.
\end{fact}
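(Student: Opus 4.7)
The plan is to argue by induction on $n\geq 1$, bootstrapping from the $\Sigma^0_2$-universality of the true-stages jump $\J$ (property (1) in Section~\ref{sec:Turing-jump}). For the base case $n=1$, given a $\Sigma^0_2$-measurable $\A\colon\om^\om\to\hat{\om}^\om$, property (1) directly furnishes an oracle $C$ and a computable $\theta$ on the closed set $\J^C[\om^\om]$ with $\A(X)\mnskip=\theta(\J^C(X))\mnskip$ for all $X$. Extending $\theta$ to all of $\om^\om$ via the retraction in Observation~\ref{obs:total-wadge} yields the required $\Phi$ with $\A\equiv_{\sf p}\Phi\circ\J^{1,C}$.

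For the inductive step, suppose the fact holds at level $n$ and let $\A$ be $\Sigma^0_{n+2}$-measurable. The strategy is to peel one jump off $\A$ and apply the inductive hypothesis. The shift principle I would exploit is the following: since computable-in-$\J^{C_0}(X)$ facts are by property (1) exactly the $\Sigma^0_2$-in-$C_0\oplus X$ facts, by iteration $\Sigma^0_{k+1}$-measurability in $C_0\oplus X$ translates into $\Sigma^0_k$-measurability in $Y=\J^{C_0}(X)$. Applied with $k=n+1$, this produces an oracle $C_0$ such that the function $Y\mapsto\A(\J^{-1,C_0}(Y))$ on $\J^{C_0}[\om^\om]$ is $\Sigma^0_{n+1}$-measurable; extending it to $\B\colon\om^\om\to\hat{\om}^\om$ via Observation~\ref{obs:total-wadge} preserves $\Sigma^0_{n+1}$-measurability and gives $\A\equiv_{\sf p}\B\circ\J^{C_0}$. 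The inductive hypothesis applied to $\B$, together with the enlargement $C=C_0\oplus C_1$ (which is harmless since $\J^{n,C}$ continuously computes $\J^{n,C_1}$ by a uniform reindexing of $\Sigma^0_2$-indices), delivers a computable $\Psi$ with $\A\equiv_{\sf p}\Psi\circ\J^{n,C}\circ\J^{C_0}$.

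The main obstacle I expect is the concluding bookkeeping: rewriting $\J^{n,C}\circ\J^{C_0}$ as $\J^{n+1,C}=\J^{n,C}\circ\J^{C}$, up to continuous post-processing absorbable into $\Psi$. The key observation is that, since $C\supseteq C_0$, every $\Sigma^0_2$-in-$(C_0\oplus X)$ fact is uniformly a $\Sigma^0_2$-in-$(C\oplus X)$ fact under a computable reindexing, so that $\J^{C_0}(X)$ is continuously extractable from $\J^{C}(X)$ via a continuous $\beta$. An induction on $k$ then shows the existence of continuous $\gamma_k$ with $\J^{k,C}\circ\J^{C_0}\equiv_{\sf p}\gamma_k\circ\J^{k+1,C}$: at the inductive step, applying $\J^C$ to $\J^{k,C}(\J^{C_0}(X))=\gamma_k(\J^{k+1,C}(X))$, the right-hand side is $\Sigma^0_2$ in $\J^{k+1,C}(X)$, and property (1) (relativized to $C$) realizes it continuously in $\J^C(\J^{k+1,C}(X))=\J^{k+2,C}(X)$. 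Taking $k=n$ and folding $\gamma_n$ into $\Psi$ produces the desired $\Phi$ with $\A\equiv_{\sf p}\Phi\circ\J^{n+1,C}$, closing the induction.
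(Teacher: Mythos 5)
Your proof is correct and follows the route the paper intends: Fact \ref{fact:limit-lemma} is stated without proof as the iteration of the $\Sigma^0_2$-universality of $\J$ from the right (the one-step version of the quantifier-shift you use is spelled out in the proof of Lemma \ref{lem:jump-inv}(1)), and your induction on $n$ --- peeling off one jump by converting $\Sigma^0_{n+2}$-in-$(C_0\oplus X)$ facts into $\Sigma^0_{n+1}$-in-$\J^{C_0}(X)$ facts, then recombining oracles and commuting $\J^{C_0}$ past $\J^{k,C}$ --- is exactly that iteration made explicit. The only cosmetic point is that extending $\theta$ off the closed set $\J^{C}[\om^\om]$ via the retraction of Observation \ref{obs:total-wadge} gives a $C$-computable rather than computable map; this is harmless, since $\theta$ can be taken as a partial computable monotone string operator, hence already defined on all of $\om^\om$, and only its values on the range of the jump matter for $\equiv_{\sf p}$.
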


Duparc \cite[Definition 25]{Dup01} introduced an operation $\not\sim$ on subsets of $\om^\om$.
We extend Duparc's operation $\not\sim$ to $\Q$-valued functions, but our definition is quite different from Duparc's, which is rather hard to understand.

\begin{definition}
For any $\A\colon \om^\om\to\mathcal{Q}$ and any oracle $Z\in\om^\om$ we introduce the {\em $Z$-jump inversion} of $\A$, $\A^{\not\sim Z}\colon \mathcal{J}^Z[\om^\om]\to\mathcal{Q}$, as follows:
\[
\A^{\not\sim Z}(Y)=\A(\J^{-1,Z}(Y)),
\]
\end{definition}

Note that the domain of $\A^{\not\sim Z}$ is the range of $\J^Z$, which is closed as mentioned above, and therefore one can think of $\A^{\not\sim Z}$ as a function on $\om^\om$ by Observation \ref{obs:total-wadge}, where we composed $\A^{\not\sim Z}$ with a continuous retraction $\eta\colon\om^\om\to \mathcal{J}^Z[\om^\om]$.

\begin{remark}\label{remark:conciliatory-inversion}
We also apply the operator $\not\sim$ to a conciliatory function $\A:\hat{\om}^\om\to\Q$.
In this case, via a computable homeomorphism $I:\hat{\om}^\om\to\om^\om$, we identify $\A$ with $\A\circ I^{-1}:\om^\om\to\Q$.
Clearly, $\A$ is Wadge equivalent to $\A\circ I^{-1}$.
Then, if the domain of $\A$ is $\hat{\om}^\om$, the actual definition of $\A^{\not\sim Z}$ is $\A\circ I^{-1}\circ\mathcal{J}^{-1,Z}$, and the domain of $\A^{\not\sim Z}$ is $\J^Z[\om^\om]$.
Note that $\A=\A^{\not\sim Z}\circ\mathcal{J}^Z\circ I$.
We should be careful that $\A^{\not\sim}$ is not necessarily conciliatory even if $\A$ is conciliatory.
\end{remark}

\begin{observation}\label{lem:jump-UOP}
 Let $\A$ be any partial $\mathcal{Q}$-valued function, and $Y,Z\in\om^\om$ be oracles.
Then 
\[
Y\leq_TZ
\quad\implies\quad
\A^{\not\sim Y}\geq_w\A^{\not\sim Z}.
\]
\end{observation}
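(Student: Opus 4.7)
The plan is to reduce the claim to an application of the relativized $\Sigma^0_2$-universality of the true-stage jump operator $\J$, listed as property (1) in Section \ref{sec:Turing-jump}. Unfolding the definitions, witnessing $\A^{\not\sim Z} \leq_w \A^{\not\sim Y}$ amounts to producing a continuous $\theta \colon \om^\om \to \om^\om$ such that
\[
\A\bigl(\J^{-1,Z}(W)\bigr) = \A\bigl(\J^{-1,Y}(\theta(W))\bigr)
\]
for every $W \in \J^Z[\om^\om]$. Writing $W = \J^Z(X)$, this reduces to finding a continuous $\theta$ satisfying $\theta \circ \J^Z = \J^Y$ as functions on $\om^\om$.

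The key step is to observe that, since $Y \leq_T Z$, the operator $G(X) := \J^Y(X) = \J(Y \oplus X)$ is $\Sigma^0_2$ relative to $Z$: from $Z$ one continuously computes $Y$ (and hence $Y \oplus X$) and then applies the unrelativized $\Sigma^0_2$-measurable function $\J$. Moreover the range of $G$ lies in $\om^\om$, so by the relativized version of property (1), there is a computable $\theta \colon \J^Z[\om^\om] \to \om^\om$ with $G = \theta \circ \J^Z$, i.e., $\J^Y(X) = \theta(\J^Z(X))$ for every $X \in \om^\om$.

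Since $\J^Z[\om^\om]$ is closed in $\om^\om$, the retraction construction of Observation \ref{obs:total-wadge} extends $\theta$ to a continuous function $\hat{\theta} \colon \om^\om \to \om^\om$. For each $W = \J^Z(X)$ we then have $\hat{\theta}(W) = \J^Y(X) \in \J^Y[\om^\om]$, so the retraction that turns $\A^{\not\sim Y}$ into a total function on $\om^\om$ fixes $\hat{\theta}(W)$, and we compute
\[
\A^{\not\sim Z}(W) = \A(X) = \A\bigl(\J^{-1,Y}(\J^Y(X))\bigr) = \A^{\not\sim Y}(\hat{\theta}(W)),
\]
so $\hat{\theta}$ is the desired Wadge reduction. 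I do not anticipate any genuine obstacle: the substantive content is already packaged in property (1) of $\J$, and the remaining work is only routine bookkeeping about the retractions used to make $\A^{\not\sim Y}$ and $\A^{\not\sim Z}$ total.
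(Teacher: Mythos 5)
Your proposal is correct and follows essentially the same route as the paper: both arguments observe that $\J^Y$ is $\Sigma^0_2$ relative to $Z$ (since $Y\leq_T Z$) and invoke the relativized universality of $\J^Z$ from the right to obtain a continuous $\theta$ with $\theta\circ\J^Z=\J^Y$, then conclude by the same chain of equalities. The extra care you take with the retraction making $\A^{\not\sim Y}$ total is correct bookkeeping that the paper leaves implicit.
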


\begin{proof}
The map $X\mapsto\mathcal{J}^Y(X)$ is $\Sigma^0_2$ relative to $Y$, and hence in particular $\Sigma^0_2$ relative to $Z$.
Therefore, there is a computable function $\Phi$ so that $\Phi\circ\mathcal{J}^Z= \mathcal{J}^Y$.
This witnesses that $\A^{\not\sim Z}\leq_w\A^{\not\sim Y}$ as follows: 
For $\J^Z(X)\in \J^Z[\om^\om]$, 
\[
\A^{\not\sim Z}(\J^Z(X))= \A(X) = \A^{\not\sim Y}(\J^Y(X)) = \A^{\not\sim Y}(\Phi(\mathcal{J}^Z(X))).		 \qedhere
\]
\end{proof}

Recall that if $\mathcal{Q}$ is bqo, then so are the Wadge degrees of $\mathcal{Q}$-valued functions (Theorem \ref{thm:Wadge-bqo}).
In particular, there is no infinite decreasing chain of the Wadge degrees.
Thus, Observation  \ref{lem:jump-UOP} implies that, for any sufficiently powerful oracle $Z\in\om^\om$, we have $\A^{\not\sim Z}\leq_w\A^{\not\sim Y}$ for any other oracle $Y\in\om^\om$.

\begin{notation}
If $\A$ is a $\mathcal{Q}$-valued function for a bqo $\mathcal{Q}$, we hereafter use the notation $\A^{\not\sim}$ to denote a representative of the minimum one among Wadge degrees of $\{\A^{\not\sim Z}:Z\in\om^\om\}$, that is,
\begin{align*}
\A^{\not\sim}\equiv_w\A^{\not\sim Z}\mbox{ for some $Z\in\om^\om$}\mbox{, and }\A^{\not\sim}\leq_w\A^{\not\sim Y}\mbox{ for all $Y\in\om^\om$}.
\end{align*}
\end{notation}

%
%

Here are some basic properties of the jump inversion operator.
In particular, (2) of the next lemma shows that $\not\sim$ is well-defined on $\Q$-Wadge degrees.

\begin{lemma}\label{lem:jump-inv}
For any $\A,\B\colon \om^\om\to\mathcal{Q}$, the following holds.
\begin{enumerate}
\item If $\A$ is $\mathbf{\Sigma}^0_{n+1}$-measurable, then $\A^{\not\sim}$ is $\mathbf{\Sigma}^0_n$-measurable.
\item If $\A\leq_w\B$, then $\A^{\not\sim}\leq_w\B^{\not\sim}$.
\item If either $\A^{\not\sim}$ or $\B^{\not\sim}$ is non-self-dual, then $\A^{\not\sim}\leq_w\B^{\not\sim}$ implies $\A\leq_w\B$.   \label{part: sim 1-1}
\end{enumerate}
\end{lemma}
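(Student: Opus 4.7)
The plan is to address the three parts in sequence, using the three properties of $\J$ from Section \ref{sec:Turing-jump} and Fact \ref{fact:limit-lemma}. For Part~(1), suppose $\A$ is $\mathbf{\Sigma}^0_{n+1}$-measurable. Fact \ref{fact:limit-lemma} supplies an oracle $C\in\om^\om$ and a computable $\Phi$ with $\A=\Phi\circ\J^{n,C}$. Taking $Z=C$, on $\J^C[\om^\om]$ we have $\J^{-1,C}$ continuous (property~2) and the cancellation $\J^{n,C}\circ\J^{-1,C}=\J^{n-1,C}$, so for $Y\in\J^C[\om^\om]$,
\[
\A^{\not\sim C}(Y)=\A(\J^{-1,C}(Y))=\Phi(\J^{n-1,C}(Y)).
\]
Since $\J^{n-1,C}$ is an $(n-1)$-fold composition of the $\mathbf{\Sigma}^0_2$-measurable operator $\J^C$ and $\Phi$ is continuous, this is $\mathbf{\Sigma}^0_n$-measurable, and extending to $\om^\om$ via the continuous retraction from Observation \ref{obs:total-wadge} preserves the bound. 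Running the same argument with $C$ replaced by any larger oracle $C\oplus Z$ yields $\A^{\not\sim(C\oplus Z)}$ also $\mathbf{\Sigma}^0_n$-measurable; choosing $Z$ so that $\A^{\not\sim(C\oplus Z)}$ is Wadge-minimal gives a $\mathbf{\Sigma}^0_n$-measurable representative of $\A^{\not\sim}$.

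For Part~(2), let $\theta$ be a continuous reduction witnessing $\A\leq_w\B$, computable from an oracle $W$. For any $Z'$, the map $X\mapsto\J^{Z'}(\theta(X))$ is $\mathbf{\Sigma}^0_2$ relative to $Z'\oplus W$; by the relativized form of property~(1) there is a computable conciliatory operator $\Psi$ with $\J^{Z'}(\theta(X))=\Psi(\J^{Z'\oplus W}(X))\mnskip$ for all $X$. Writing $Y=\J^{Z'\oplus W}(X)$,
\[
\A^{\not\sim(Z'\oplus W)}(Y)=\A(X)\leq_\Q\B(\theta(X))=\B^{\not\sim Z'}(\Psi(Y)\mnskip),
\]
so $\Psi$ witnesses $\A^{\not\sim(Z'\oplus W)}\leq_w\B^{\not\sim Z'}$. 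Choosing $Z'$ so that $\B^{\not\sim Z'}\equiv_w\B^{\not\sim}$ and invoking the minimality of $\A^{\not\sim}$ gives $\A^{\not\sim}\leq_w\A^{\not\sim(Z'\oplus W)}\leq_w\B^{\not\sim Z'}\equiv_w\B^{\not\sim}$.

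Part~(3) is a uniform Friedberg jump inversion carried out within the Wadge game, and is the main obstacle. Fix oracles $Z,Z'$ realizing the Wadge-minima for $\A^{\not\sim}$ and $\B^{\not\sim}$ together with a continuous $\psi$ witnessing $\A^{\not\sim Z}\leq_w\B^{\not\sim Z'}$. The goal is a strategy for Player~II in $G_w(\A,\B)$ whose output real $\eta(X)\in\om^\om$ satisfies $\J^{Z'}(\eta(X))=\rho(\psi(\J^Z(X)))$, with $\rho$ a continuous retraction onto $\J^{Z'}[\om^\om]$; this would give $\B(\eta(X))\geq_\Q\A(X)$. Player~II maintains a finite commitment $\tau$ of non-pass moves and, using the finitary approximation $J^Z$ to compute prefixes of $\J^Z(X)$ at true stages, feeds these through $\psi$ and $\rho$ to obtain prefixes of the target. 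By property~(3), Player~II can always extend $\tau$ to some $\tau^*\supseteq\tau$ that forces the $Z'$-jump and whose image $J^{Z'}(\tau^*)$ extends the current target prefix; Player~II outputs passes while searching and then commits to the new bits of $\tau^*$. The non-self-duality hypothesis is what makes this pass-and-commit strategy a valid Wadge reduction: by Proposition \ref{prop:sji-deg-pres}, it ensures that either $\A^{\not\sim}$ or $\B^{\not\sim}$ is $\sigma$-join-irreducible and hence admits a conciliatory representation (in the spirit of Section \ref{ss: conciliatory}) that absorbs the delays and finite prefix adjustments introduced by the forcing search without splitting the reduction into incompatible components. The hardest step is coordinating this forcing search with the true-stage approximation so that Player~II commits infinitely often in alignment with the target.
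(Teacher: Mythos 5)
Parts (1) and (2) of your proposal are essentially correct and close to the paper's argument. For (1) the paper counts quantifiers syntactically rather than invoking Fact \ref{fact:limit-lemma} plus the cancellation $\J^{n,C}\circ\J^{-1,C}=\J^{n-1,C}$, but your route works (modulo the small point that Fact \ref{fact:limit-lemma} is stated for $\hat{\om}^\om$-valued functions, so you should say a word about coding $\Q$ into $\om$). For (2) your argument is the same diagram chase as the paper's, using universality of the jump from the right; the paper just fixes a single oracle $D$ realizing both minima instead of invoking minimality of $\A^{\not\sim}$ at the end.

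Part (3) has a genuine gap, and it is exactly at the step you flag as ``the hardest.'' You are trying to build Player II's reduction $\eta$ for $G_w(\A,\B)$ directly by inverting the jump on the output side, i.e.\ arranging $\J^{Z'}(\eta(X))=\rho(\psi(\J^Z(X)))$ with $\eta$ continuous. Taking $\J^{-1,Z'}$ of both sides shows that the only candidate is $\eta=\J^{-1,Z'}\circ\rho\circ\psi\circ\J^Z$, which is properly $\mathbf{\Sigma}^0_2$ and not continuous in general: $\J^{Z'}$ is universal from the \emph{right}, not from the left, so a $\mathbf{\Sigma}^0_2$ target cannot in general be realized as $\J^{Z'}\circ(\text{continuous})$. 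Your pass-and-commit mechanism does not repair this. The true-stage approximation $J^Z(X\upto s)$ is not monotone in $s$; apparent prefixes of $\J^Z(X)$ get injured, and Player II cannot continuously recognize true stages. Once Player II commits to a string $\tau^*$ forcing the $Z'$-jump whose image $J^{Z'}(\tau^*)$ extends the $\psi$-image of an apparent (but wrong) prefix of $\J^Z(X)$, the forcing property guarantees that $\J^{Z'}(\eta(X))$ extends that wrong prefix forever; passes cannot erase non-pass moves already played. The appeal to non-self-duality to ``absorb the delays'' is not a real mechanism here: it is never used in a load-bearing way in your construction, and the conciliatory-representation facts you gesture at are proved later and depend on this lemma.

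The idea you are missing is that the paper proves (3) by contraposition together with determinacy of the Wadge game, so that the jump only ever needs to be pushed in the easy (forward) direction. Assume $\A\not\leq_w\B$. Then Player I wins $G_w(\A,\B)$, giving a $C$-computable Lipschitz $\theta$ with $\A(\theta(X))\not\leq_\Q\B(X)$ for all $X$. Exactly as in part (2), this $\theta$ transfers to a continuous $\theta'$ with $\A^{\not\sim D}(\theta'(Y))\not\leq_\Q\B^{\not\sim D}(Y)$ on $\J^D[\om^\om]$ for a suitable $D$. If in addition $\A^{\not\sim D}\leq_w\B^{\not\sim D}$ via some continuous $\eta$, then $\theta'\circ\eta$ witnesses self-duality of $\A^{\not\sim D}$ and $\eta\circ\theta'$ witnesses self-duality of $\B^{\not\sim D}$, contradicting the hypothesis that one of $\A^{\not\sim},\B^{\not\sim}$ is non-self-dual. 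You should replace your construction in (3) with this argument.
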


\begin{proof}
(1)
Since $\A$ is $\mathbf{\Sigma}^0_{n+1}$, there is a $\Delta^0_0$ formula $\varphi$ in the language of second-order arithmetic and a $Z\in\om^\om$ such that
\[\A(X)=q\;\iff\;(\exists a_0)(\forall a_1)\dots(\check{\mathsf{Q}}a_{n-1})(\mathsf{Q}a_n)\;\varphi(q,a_0,\dots,a_n,X,Z),\]
where $\mathsf{Q}$ is the existential quantifier if $n$ is even; and $\mathsf{Q}$ is the universal quantifier if $n$ is odd, and $\check{\mathsf{Q}}$ is the other way around.
Clearly, there is a $\Delta^0_0$ formula $\psi$ such that
\[
\A(X)=q\;\iff\;(\exists a_0)(\forall a_1)\dots(\check{\mathsf{Q}}a_{n-1})\;\psi(q,a_0,\dots,a_{n-1},J^Z(X)).
\]
Consequently, $\A^{\not\sim Z}$ is $\mathbf{\Sigma}^0_{n}$.
The same argument works for any $C\geqt C$, and thus  $\A^{\not\sim }$ is $\mathbf{\Sigma}^0_{n}$ too.

(2)
Assume that $\A\leq_w\B$ via a continuous function $\theta$.
Then $\theta$ is $C$-computable for some oracle $C$.
Let $D$ be such that $\A^{\not\sim}\equiv_w\A^{\not\sim D}$ and $\B^{\not\sim}\equiv_w\B^{\not\sim D}$.
Then, by the universality of $\J^D$ from the right, there is a continuous function $\theta'$ such that $\theta'\circ \mathcal{J}^D=\mathcal{J}^D\circ\theta$ since $D\geq_TC$.
This $\theta'$ witnesses that $\A^{\not\sim}\leq_w\B^{\not\sim}$ since for any $Y\in\J^D[\om^\om]$, and $X=\J^{-1,D}(Y)$,
\begin{multline*}
\A^{\not\sim D}(Y)=\A(X)\leq_\mathcal{Q} \\ 
 \B(\theta(X))
	=\B^{\not\sim D}(\mathcal{J}^D(\theta(X)))=\B^{\not\sim D}(\theta'(\mathcal{J}^D(X)))=\B^{\not\sim D}(\theta'(Y)).
\end{multline*}

(3)
If $\A\not\leq_w\B$, then Player I has a winning strategy in the game $G_w(\A,\B)$, that is, there is a $C$-computable Lipschitz function $\theta$ such that $\A(\theta(X))\not\leq_\mathcal{Q}\B(X)$.
By the same argument as above, let $D$ be such that $\A^{\not\sim}\equiv_w\A^{\not\sim D}$ and $\B^{\not\sim}\equiv_w\B^{\not\sim D}$, and then, there is a continuous function $\theta'$ such that $\A^{\not\sim D}(\theta'(Y))\not\leq_\mathcal{Q}\B^{\not\sim D}(Y)$ for all $Y\in\J^D[\om^\om]$.
However, if $\A^{\not\sim D}\leq_w\B^{\not\sim D}$ via a continuous function $\eta$, we would have the following:
\begin{align*}
\A^{\not\sim D}(Z)&\leq_\mathcal{Q}\B^{\not\sim D}(\eta(Z))\not\geq_\mathcal{Q}\A^{\not\sim D}(\theta'\circ\eta(Z)),\\
\B^{\not\sim D}(Z)&\not\geq_\mathcal{Q}\A^{\not\sim D}(\theta'(Z))\leq_\mathcal{Q}\B^{\not\sim D}(\eta\circ\theta'(Z)).
\end{align*}
Thus both $\A^{\not\sim D}$ and $\B^{\not\sim D}$ are self-dual, which contradicts our assumption since $\A^{\not\sim}\equiv_w\A^{\not\sim D}$ and $\B^{\not\sim}\equiv_w\B^{\not\sim D}$.
\end{proof}


\subsection{The operation $\not\sim$ inverts the jump}\label{sec:jump-inversion}

We now prove a key result which is that $\not\sim$ is the inverse of $\B\mapsto \B\circ \U$.
This is somewhat an analogue of \cite[Propositions 29 and 30]{Dup01}, which roughly says that the jump inversion operator $\not\sim$ bridges $\tree^n(\Q)$ and $\tree^{n+1}(\Q)$: namely that $\Omega_{\langle T\rangle}^{\not\sim}\equiv_w\Omega_T$.

Recall from Remark \ref{remark:conciliatory-inversion} that $\Omega_{\langle T\rangle}^{\not\sim}=\Omega_T\circ\U\circ I^{-1}\circ \J^{-1,C}$, where $I:\hat{\om}^\om\to\om^\om$ is a computable homeomorphism.
The following lemma shows how $\U$ and $\J^C$ interact.
It states that $\U\circ I^{-1}\circ \J^{-1,C}$ is Wadge-equivalent to the identity function.

\begin{lemma}\label{lem:UJ-1-continuous}
The map $\U\circ I^{-1}\circ \J^{-1,C}\colon \J^C[\om^\om]\to\hat{\om}^{\om}$ is $\equiv_{\sf p}$-equivalent to a continuous map, and the identity map on $\hat{\om}^\om$ continuously reduces to it, that is, there is a continuous $\theta:\hat{\om}^\om\to\mathcal{J}^C[\om^\om]$ such that $\U\circ I^{-1}\circ \J^{-1,C}\circ \theta\equiv_{\sf p}id$.
\end{lemma}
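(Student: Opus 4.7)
\emph{Continuous representative.} The map $\U\circ I^{-1}\colon\om^\om\to\hat{\om}^\om$ is $\mathbf{\Sigma}^0_2$-measurable, since $\U$ is $\mathbf{\Sigma}^0_2$ and $I^{-1}$ is a computable homeomorphism. Applying the right-universality of $\J^C$ (property~(1) of Section~\ref{sec:Turing-jump}) yields a computable (hence continuous) $\Phi\colon\om^\om\to\hat{\om}^\om$ with $\U\circ I^{-1}\equiv_{\sf p}\Phi\circ\J^C$. Restricting the domain to $\J^C[\om^\om]$ and precomposing with $\J^{-1,C}$ gives $\U\circ I^{-1}\circ\J^{-1,C}\equiv_{\sf p}\Phi\upto\J^C[\om^\om]$, a continuous map, which settles the first half of the lemma.

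\emph{Continuous right-inverse.} For the second half I plan to construct, continuously in $X\in\hat{\om}^\om$, a real $Z\in\om^\om$ such that (i) $\J^C(Z)$ is produced continuously alongside $Z$, and (ii) $\U(I^{-1}(Z))\mnskip=X\mnskip$; then $\theta(X):=\J^C(Z)\in\J^C[\om^\om]$ will satisfy $\U\circ I^{-1}\circ\J^{-1,C}\circ\theta\equiv_{\sf p}id$. For (i), I build $Z=\bigcup_n\rho_n$ as an increasing union of finite strings each \emph{forcing the jump} in the sense of Section~\ref{sec:Turing-jump}: by denseness of forcing (property~(3)), every finite string can be so extended, and then $\J^C(\rho_n)$ is a stable initial segment of $\J^C(\rho_m)$ for all $m\geq n$, so $\J^C(Z)=\bigcup_n\J^C(\rho_n)$ can be emitted stage by stage from the $\rho_n$. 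For (ii), the $\mathbf{\Sigma}^0_2$-universality of $\U$ applied to the identity on $\hat{\om}^\om$ yields a continuous $\psi\colon\hat{\om}^\om\to\hat{\om}^\om$ with $\U(\psi(X))\mnskip=X\mnskip$; the naive choice $Z=I(\psi(X))$ typically fails~(i), but initializability of $\U$ permits me to interleave the content of $\psi(X)$ with jump-forcing padding without disturbing the long-run $\U$-output. Concretely, at stage~$n$, with $\rho_n$ forcing the jump and $\alpha_n:=I^{-1}(\rho_n)\in\hat{\om}^{<\om}$, I append (through $I$) a block of the continuously defined $\theta_{\alpha_n}$-reinitialization of $\psi(X)$ supplied by Definition~\ref{def: initiazable} (which guarantees $\U\equiv_{\sf p}\U\circ\theta_{\alpha_n}$ with $\theta_{\alpha_n}$-images in $[\alpha_n]$) to advance the $\U$-output, and then pad via denseness of forcing to obtain $\rho_{n+1}$ again forcing the jump.

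\emph{Main obstacle.} The technical heart is the synchronization of these two simultaneous processes: at each stage I must reconcile jump-forcing padding with advancing the $\U$-input enough to push its pointwise-limit output toward $X\mnskip$, and then verify that in the limit $\U(I^{-1}(Z))\mnskip$ stabilizes to $X\mnskip$ and not to some proper initial segment or divergent value. Initializability of $\U$ is the feature that allows the padding between encoding blocks without spoiling the output, while denseness of forcing for $\J^C$ is what makes $\J^C(Z)$ continuously computable from $Z$. Granted this coordination, continuity of $\theta$ follows because each $\rho_n$ and $\J^C(\rho_n)$ is determined from finitely many symbols of $X$, and the desired identity $\U\circ I^{-1}\circ\J^{-1,C}\circ\theta\equiv_{\sf p}id$ follows from the pointwise-limit definition of $\U$.
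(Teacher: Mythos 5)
The first half of your argument (the continuous representative via right-universality of $\J^C$) is exactly the paper's proof and is fine. The second half, however, has a genuine gap at precisely the point you flag as the ``main obstacle,'' and I do not think it can be closed with the tools you invoke. Your plan makes the $\U$-value of the forcing-constructed real itself equal to $X\mnskip$, i.e.\ $\U(I^{-1}(Z))\mnskip=X\mnskip$ where $Z=\bigcup_n\rho_n$ interleaves encoding chunks with jump-forcing padding. But $\U$ is a $\mathbf{\Sigma}^0_2$ (limit) operator: no finite initial segment of an input ever commits it to any output. At each stage you may only append a \emph{finite} chunk of the re-initialized sequence $\theta_{\alpha_n}(\psi(X))$ before you must pad again, and initializability only guarantees the $\U$-value of the \emph{entire infinite} sequence $\theta_{\alpha_n}(\psi(X))$, not of any finite piece of it followed by arbitrary material. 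Since the jump-forcing padding recurs cofinally in $Z$ and its content is dictated by a search (denseness of forcing gives you \emph{some} extension, not one of your choosing), the limit $\U(I^{-1}(Z))$ is not controlled. For the paper's second construction of $\U$ this failure is concrete: a $Z$ with infinitely many re-initialization markers falls into the case where $\U$ outputs $\emptyset$. So property (ii) is not established, and the ``coordination'' you defer is the entire difficulty.

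The paper avoids this by routing through the uniform Friedberg jump inversion theorem rather than feeding the forcing-constructed real to $\U$. One builds a $C'$-computable $\psi$ and a computable $\theta_0$ with $\theta_0\circ\J^C\circ\psi=id$ and $\theta_1:=\J^C\circ\psi$ continuous; the key move is that $\theta_0\circ\J^C$ is a $\mathbf{\Sigma}^0_2$ conciliatory map, so universality of $\U$ (from the left) gives a continuous $\Phi_2$ with $\theta_0\circ\J^C\equiv_{\sf p}\U\circ\Phi_2$, whence $id\equiv_{\sf p}\U\circ\Phi_2\circ\psi$. Inserting $I^{-1}\circ\J^{-1,C}\circ\J^C\circ I$ and using right-universality of $\J^C$ once more to replace $\J^C\circ I\circ\Phi_2$ by $\theta_2\circ\J^C$ yields $\theta=\theta_2\circ\theta_1$, which is continuous with range in $\J^C[\om^\om]$. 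Note that the real whose $\U$-value is $X\mnskip$ is $\Phi_2(\psi(X))$, supplied by universality of $\U$, not $\psi(X)$ itself; this is what dissolves the synchronization problem you ran into. If you want to keep a hands-on construction, you should prove (a uniform version of) Friedberg jump inversion and then follow this compositional route, rather than trying to control $\U$ along the forcing construction directly.
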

\begin{proof}
We first see that $\U\circ I^{-1}\circ\J^{-1,C}$ is continuous.
Since $\U\circ I^{-1}:\om^\om\to\hat{\om}^\om$ is ${\Si}^0_2$-measurable, there is a computable $\Phi\colon\om^\om\to\hat{\om}^{\om}$ such that $\U\circ I^{-1}\equiv_{\sf p}\Phi\circ\mathcal{J}^C$.
Then, $\U\circ I^{-1}\circ \J^{-1,C}\equiv_{\sf p}\Phi$ on its domain.

To prove the other direction we need the Friedberg jump inversion theorem \cite{Fr57}.
The standard proof of the Friedberg jump inversion theorem (relative to $C$) gives a $C'$-computable function $\psi$ such that for any $X$,
\[X\oplus C'\equiv_T\psi(X)\oplus C'\equiv_T(\psi(X)\oplus C)'\equiv_T\mathcal{J}^C(\psi(X)).\]
Note that by identifying $\hat{\om}^\om$ and $\om^\om$ via an effective homeomorphism, we can assume that the domain of $\psi$ is $\hat{\om}^\om$ and the range of $\psi$ is included in $\om^\om$ since an effective homeomorphism preserves Turing degrees in a uniform manner.
By carefully checking the standard proof of the Friedberg jump inversion theorem, one can see the following uniform version:
There are $C'$-computable function $\psi\colon \hat{\om}^\om\to\om^\om$ such that, for every $X\in\hat{\om}^{\om}$, $\mathcal{J}^C(\psi(X))$ is uniformly Turing equivalent to $X\oplus C'$.
That is, there is a computable operator $\theta_0\colon\om^\om\to\hat{\om}^\om$ and a $C'$-computable operator $\theta_1\colon\om^\om\to\hat{\om}^\om$ such that, for every $X\in\hat{\om}^{\om}$,
\[
\theta_0\circ\mathcal{J}^C\circ \psi (X) = X
\quad	\mbox{ and }  \quad
\theta_1 (X)=\mathcal{J}^C\circ \psi (X).
\]

Then, on $\hat{\om}^\om$,
\begin{align*}
id	&= 	 \theta_0\circ\mathcal{J}^{C}\circ\psi   \\
			&\equiv_{\sf p}  \U\circ\Phi_2\circ \psi 			\quad\quad &&\mbox{where $\Phi_2$ comes from the universality of $\U$} \\
			&= \U\circ I^{-1}\circ\J^{-1,C}\circ\mathcal{J}^C\circ I\circ\Phi_2\circ \psi \\
			&= \U\circ I^{-1}\circ\J^{-1,C}\circ \theta_2\circ\mathcal{J}^C\circ\psi  \quad\quad &&\mbox{where $\theta_2$ comes from the universality of $\J^C$}\\
			&= \U\circ I^{-1}\circ\J^{-1,C}\circ \theta_2\circ\theta_1  \quad\quad &&\mbox{using that $\mathcal{J}^{C}\circ\psi=\theta_1$}.
\end{align*}
Note that the range of $\Phi_e$ is included in $\hat{\om}^\om$, and therefore, that of $\mathcal{J}^C\circ I\circ\Phi_2$ is in $\mathcal{J}^C[\om^\om]$.
Hence, the range of $\theta_2\circ\theta_1$ is included in $\mathcal{J}^C[\om^\om]$.
Since $\theta_2$ and $\theta_1$ are continuous, $\theta=\theta_2\circ\theta_1$ is the desired continuous function.
\end{proof}

\begin{corollary}\label{cor:jump-cancel} \label{cor:jump-cancel}
Let $\A\colon\hat{\om}^\om\to \Q$ be conciliatory.
Then, 
\[
(\A\circ \U)^{\not\sim}\equiv_w\A.
\]

In particular, if $T\in\tree^n(\mathcal{Q})$, then $\Omega_{\langle T\rangle}^{\not\sim}\equiv_w\Omega_T$.
\end{corollary}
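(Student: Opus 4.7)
The plan is to use Lemma \ref{lem:UJ-1-continuous} twice, which gives us precisely the two continuous maps needed for the two directions of the Wadge equivalence. Throughout, fix an oracle $C$ powerful enough that $(\A\circ\U)^{\not\sim}\equiv_w(\A\circ\U)^{\not\sim C}$ and such that the computable maps from Lemma \ref{lem:UJ-1-continuous} are obtained from $C$-computable data; this exists by the minimization defining $\not\sim$ together with Observation \ref{lem:jump-UOP}. By definition and Remark \ref{remark:conciliatory-inversion},
\[
(\A\circ\U)^{\not\sim C}=\A\circ\U\circ I^{-1}\circ\J^{-1,C}
\]
on the closed set $\J^C[\om^\om]$, which we extend to $\om^\om$ using a continuous retraction $\eta\colon\om^\om\to\J^C[\om^\om]$ as in Observation \ref{obs:total-wadge}.

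For the direction $(\A\circ\U)^{\not\sim}\leq_w\A$, the first half of Lemma \ref{lem:UJ-1-continuous} yields a continuous map $\Phi\colon\J^C[\om^\om]\to\hat{\om}^\om$ with $\U\circ I^{-1}\circ\J^{-1,C}\equiv_{\sf p}\Phi$. Because $\A$ is conciliatory, $\A(X)=\A(Y)$ whenever $X\mnskip=Y\mnskip$, so the relation $\equiv_{\sf p}$ can be absorbed by $\A$, giving
\[
(\A\circ\U)^{\not\sim C}(Z)=\A(\Phi(\eta(Z))) \qquad \text{for all }Z\in\om^\om.
\]
The continuous map $\Phi\circ\eta$ is therefore a Wadge reduction from $(\A\circ\U)^{\not\sim C}$ to $\A$ (identified on $\om^\om$ via $I$).

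For the reverse direction $\A\leq_w(\A\circ\U)^{\not\sim}$, the second half of Lemma \ref{lem:UJ-1-continuous} supplies a continuous map $\theta\colon\hat{\om}^\om\to\J^C[\om^\om]$ with $\U\circ I^{-1}\circ\J^{-1,C}\circ\theta\equiv_{\sf p}\mathrm{id}_{\hat{\om}^\om}$. Again using that $\A$ is conciliatory and hence invariant under $\equiv_{\sf p}$, we get
\[
(\A\circ\U)^{\not\sim C}(\theta(X))=\A(X)\qquad\text{for every }X\in\hat{\om}^\om,
\]
so $\theta$ (composed with $I$ on the domain side) is a Wadge reduction from $\A$ to $(\A\circ\U)^{\not\sim C}$. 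Combining both reductions with the choice of $C$ gives $(\A\circ\U)^{\not\sim}\equiv_w\A$.

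Finally, the ``In particular'' clause is immediate: for $T\in\tree^n(\mathcal{Q})$, Observation \ref{obs:tree-is-conciliatory} says that $\Omega_T$ is conciliatory, and Definition \ref{def:universal-function}(4) gives $\Omega_{\langle T\rangle}=\Omega_T\circ\U$; applying what we just proved with $\A=\Omega_T$ yields $\Omega_{\langle T\rangle}^{\not\sim}\equiv_w\Omega_T$. The only real subtlety in this plan is the bookkeeping between the different ambient spaces ($\hat{\om}^\om$, $\om^\om$, and the closed set $\J^C[\om^\om]$) and the careful use of conciliatority to swallow the $\equiv_{\sf p}$ equivalences; neither of these is a deep obstacle, since Lemma \ref{lem:UJ-1-continuous} already did the hard Friedberg-inversion work.
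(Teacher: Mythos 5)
Your proposal is correct and follows essentially the same route as the paper: both directions are read off directly from the two halves of Lemma \ref{lem:UJ-1-continuous}, with conciliatority of $\A$ absorbing the $\equiv_{\sf p}$ equivalences. Your version just spells out the oracle choice and the $I$/retraction bookkeeping that the paper leaves implicit.
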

\begin{proof}
All one needs to observe is that $(\A\circ \U)^{\not\sim}= \A\circ\U\circ I^{-1}\circ\J^{-1,C}$.
Then $(\A\circ \U)^{\not\sim}\leq_w\A$ because $\U\circ \J^{-1,C}$ is continuous by Lemma \ref{lem:UJ-1-continuous} and $\A$ is conciliatory.
Moreover, we have $\A\leq_w (\A\circ \U)^{\not\sim}$ via $\theta$ of the previous lemma.

For the second part just notice that $\Omega_{\langle T\rangle}^{\not\sim}=(\Omega_T\circ\U)^{\not\sim}\equiv_w\Omega_T$.
\end{proof}

The operation $\not\sim$ is not one-to-one on $\Q$-Wadge degrees.
But it is if we restrict it to initializable degrees.
We first need to prove the following lemma, which is where the denseness of forcing of $\J$ is needed.

\begin{definition}
For a  function $\A\colon \om^\om\to\Q$, we say that $\A$ is {\em initializable} if for every $\si\in {\om}^{<\om}$, $\A\leq_w \A\upto[\sigma]$.
\end{definition}

That is, $\A$ is initializable if and only if $\F(\A)=\om^\om$.
Recall the definition of $\F$ from Section \ref{sec:Wadge-key-facts}.
Also recall from Proposition \ref{prop:sji-deg-pres} that $\A$ is non-self-dual if and only if $\F(\A)$ is non-empty.

Notice that this definition matches Definition \ref{def: initiazable} for the case $\Q=\om^\om$ were all elements are $\leq_Q$-incomparable.

\begin{lemma}\label{lem:init-join}
If $\A$ is  initializable, then $\A^{\not\sim}$ is non-self-dual.
\end{lemma}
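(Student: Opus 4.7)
By Proposition \ref{prop:sji-deg-pres}, it suffices to exhibit some $Y \in \F(\A^{\not\sim})$. The plan is to construct $Y$ as the $\J^C$-image of a carefully chosen $X$, using the denseness of forcing for $\J^C$ (property (3) of Section \ref{sec:Turing-jump}) together with the initializability of $\A$.

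First, I would recursively build an increasing chain $\tau_0 \subsetneq \tau_1 \subsetneq \cdots$ in $\om^{<\om}$ such that each $\tau_n$ forces the jump (relative to a fixed oracle $C$ with $\A^{\not\sim}\equiv_w\A^{\not\sim C}$) and $|\J^C(\tau_n)|\to\infty$; this is possible by property (3), together with the fact that once $\tau_n$ forces the jump, any sufficiently long extension further extends $\J^C(\tau_n)$. Set $X=\bigcup_n\tau_n$ and $Y=\J^C(X)$. I claim $Y\in\F(\A^{\not\sim})$. Fix $n$ and write $\si=Y\upto n$; the inequality $\A^{\not\sim}\upto[\si]\leq_w\A^{\not\sim}$ is immediate via $Z\mapsto\si\fr Z$. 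For the reverse, pick $k$ with $\J^C(\tau_k)\supseteq\si$; by initializability of $\A$ applied to $\tau_k$, there is a continuous $\theta$ with $\A(X')\leq_\Q\A(\tau_k\fr\theta(X'))$ for all $X'$. Player II's strategy in $G_w(\A^{\not\sim},\A^{\not\sim}\upto[\si])$ is then: continuously recover $X'=\J^{-1,C}(\eta(W))$ from I's play $W$ (where $\eta$ retracts onto $\J^C[\om^\om]$), form $X''=\tau_k\fr\theta(X')$, and output the suffix of $\J^C(X'')$ past $\si$. Since $\tau_k$ forces the jump, $\J^C(X'')\supseteq\J^C(\tau_k)\supseteq\si$, so the output $Z$ is well-defined, and $\A^{\not\sim}\upto[\si](Z)=\A(X'')\geq_\Q\A(X')=\A^{\not\sim}(W)$ as required.

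The main obstacle is that $X''\mapsto\J^C(X'')$ is only $\mathbf{\Sigma}^0_2$-measurable, so Player II cannot compute the output by a single continuous function. The workaround exploits the passes in the Wadge game: II commits to a new digit of its output only after seeing a jump-forcing initial segment of $X''$ that pins that digit down; between such segments, II plays passes. To guarantee that such initial segments appear cofinally in $X''$ (so that II's total output is infinite and II wins the game), I would strengthen the construction of $\theta$: iterating initializability of $\A$ along a chain of jump-forcing extensions $\tau_k=\rho_0\subsetneq\rho_1\subsetneq\cdots$ (which exist by denseness) yields, by a routine composition, a continuous strategy producing an $X''$ whose initial segments $\rho_i$ force the jump cofinally, ensuring that Player II progresses through the output $\J^C(X'')$ at every stage.
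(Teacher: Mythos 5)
Your overall skeleton matches the paper's: you reduce the claim to exhibiting a point of $\F(\A^{\not\sim})$, you take that point to be $\J^C(X)$ for an $X$ with cofinally many jump-forcing initial segments $\tau_k$ (built by denseness of forcing), and for each prefix $\si\subseteq\J^C(X)$ you choose a jump-forcing $\tau_k$ with $\J^C(\tau_k)\supseteq\si$ and use the initializability reduction $\theta$ at $\tau_k$ so that $\J^C(\tau_k\fr\theta(X'))\supseteq\J^C(\tau_k)\supseteq\si$. Up to that point you and the paper agree. The divergence — and the gap — is in how Player II's strategy is made continuous. You correctly identify that $X''\mapsto\J^C(X'')$ is only $\mathbf{\Sigma}^0_2$, but your workaround does not close the hole. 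The paper's resolution is property (1) of $\J$ (universality from the right): the map $X'\mapsto\J^C(\tau_k\fr\theta(X'))$ is $\Sigma^0_2$ relative to $C$, hence factors as $\theta'\circ\J^C$ for a \emph{computable} $\theta'$ on $\J^C[\om^\om]$, and $\theta'$ is exactly II's strategy. No passes and no extra structure on $X''$ are needed; only the single string $\tau_k$ has to force the jump.

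Your alternative requires $X''=\tau_k\fr\theta(X')$ to pass through jump-forcing strings cofinally, so that II can certify digits of $\J^C(X'')$, and you propose to arrange this by ``iterating initializability along a chain $\rho_0\subsetneq\rho_1\subsetneq\cdots$ by a routine composition.'' This composition is not routine, and as described it fails. Each application of initializability at $\rho_i$ yields a reduction $Z\mapsto\rho_i\fr\theta_i(Z)$ with $\A(Z)\leq_\Q\A(\rho_i\fr\theta_i(Z))$; if you switch from $\theta_i$ to $\theta_{i+1}$ infinitely often (which you must, to thread through infinitely many forcing strings $\rho_i$), the limiting output $\bigcup_i\rho_i$ is not the value of \emph{any single} reduction $\rho_i\fr\theta_i(X')$, so the inequality $\A(X')\leq_\Q\A(X'')$ is no longer justified. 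If instead you switch only finitely often, you eventually follow one $\theta_i$ forever and have no control over whether its tail meets further forcing strings, so II's certified output stays finite and II loses the game. Initializability gives you no reduction of $\A$ into the prescribed comeager set of reals with cofinally many jump-forcing prefixes. So the argument genuinely needs the universality-from-the-right property of the true-stage jump (or an equivalent uniformity statement); once you invoke it, the pass-based machinery becomes unnecessary and the proof collapses to the paper's.
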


\begin{proof}
Let $C$ be an oracle which computes  Wadge reductions $\A\leq_w\A\upto[\tau]$ for all $\tau$.
We say that {\em $\sigma$ forces its jump relative to $C$} if for any $\tau\supseteq\sigma$, $J^C(\tau)\supseteq J^C(\sigma)$ holds.
We claim that for such $\si$, we have  
\[
\A^{\not\sim C}\leq_w \A^{\not\sim C}\upto [J^C(\sigma)].
\]
To prove this, let $\theta$ be $C$-computable such that $X\mapsto \si\fr \theta(X)$ is a Wadge reduction $\A\leq_w\A\upto[\si]$.
Then, using the universality of $\J^C$ from the right, we have a computable function $\theta'$ such that $\theta'\circ\mathcal{J}^C(X)=\mathcal{J}^C(\sigma\fr\theta(X))$ for all $X\in\om^\om$.
Since $\sigma$ forces its jump relative to $C$, we have 
\[
\J^C(\sigma)\segment\mathcal{J}^C(\sigma\fr\theta(X))=\theta'(\mathcal{J}^C(X)).
\]
Thus $\theta'$ gives a Wadge reduction from $\A^{\not\sim C}$ to $\A^{\not\sim C}\upto [J^C(\sigma)]$ as follows:
Given $Y\in \J^C[\om^\om]$ and $X=\J^{-1,C}(Y)$,
\begin{multline*}
\A^{\not\sim C}(Y)=\A(X)\leq_\mathcal{Q} \\
	 \A(\sigma\fr\theta(X))=\A^{\not\sim C}(\mathcal{J}^C(\sigma\fr\theta(X)))=\A(\theta'(\mathcal{J}^C(X))) =\A(\theta'(Y)).
\end{multline*}

Now, we say that {\em $X\in\om^\om$ forces its jump relative to $C$} if there are infinitely many $n\in\om$ such that $X\upto n$ forces its jump relative to $C$.
Using the density of forcing, one can easily construct such an $X$.
Let $t_0<t_1<t_2<\cdots$ be such that $X\upto t_i$ forces its jump relative to $C$.
We then get that $\J^C(X\upto t_i)\subseteq \J^C(X)$ for all $i$, and furthermore, every initial segment of $\J^C(X)$ is an initial segment of some $\J^C(X\upto t_i)$.
It follows that for every initial segment $\tau\subseteq \J^C(X)$, $\A^{\not\sim C}\leq_w\A^{\not\sim C}\upto [\tau]$.
Therefore, $\mathcal{J}^C(X)\in\F(\A^{\not\sim C})$ as desired.
\end{proof}

\begin{corollary}\label{cor: U and sim inverses}
If $\A$ is conciliatory and initializable, then $\A^{\not\sim}\circ \U\equiv_w \A$.
\end{corollary}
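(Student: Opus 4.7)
The plan is to reduce the desired equivalence $\A^{\not\sim}\circ\U\equiv_w\A$ to an equivalence of jump inversions, namely $(\A^{\not\sim}\circ\U)^{\not\sim}\equiv_w\A^{\not\sim}$, and then apply Lemma \ref{lem:jump-inv}(\ref{part: sim 1-1}) to conclude. To invoke that part of the lemma we need the non-self-duality hypothesis, which is supplied by Lemma \ref{lem:init-join}: the initializability of $\A$ gives that $\A^{\not\sim}$ is non-self-dual.

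The key technical step is therefore to establish $(\A^{\not\sim}\circ\U)^{\not\sim}\equiv_w\A^{\not\sim}$. A central observation that unlocks the argument is that $\A^{\not\sim}\circ\U$ is conciliatory: by the explicit construction of $\U$ in Section \ref{sec:conciliatory}, the value $\U(X)$ is determined solely by $X\mnskip$ (its output lies in $\om^{\leq\om}$, with no passes at all), so $\A^{\not\sim}(\U(X))$ depends only on $X\mnskip$. Combined with the factorization $\A^{\not\sim}=\A\circ\xi$ where $\xi=I^{-1}\circ\J^{-1,D}\circ\rho_D$ is continuous on the relevant domain, this lets us adapt the proof of Corollary \ref{cor:jump-cancel}. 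For the $\leq_w$ direction we unfold $(\A^{\not\sim}\circ\U)^{\not\sim}=\A^{\not\sim}\circ\U\circ I^{-1}\circ\J^{-1,C}$ and apply Lemma \ref{lem:UJ-1-continuous}, which gives that $\U\circ I^{-1}\circ\J^{-1,C}$ is $\equiv_{\sf p}$-equivalent to a continuous map; the composition $\xi\circ\U$ is $\Sigma^0_2$, so by the $\Sigma^0_2$-universality of $\U$ it can be absorbed as $\U\circ\phi$ for a continuous $\phi$, and the conciliatoriness of the underlying $\A$ then lets us reassemble the reduction continuously. For the $\geq_w$ direction we use the continuous $\theta$ from Lemma \ref{lem:UJ-1-continuous} with $\U\circ I^{-1}\circ\J^{-1,C}\circ\theta\equiv_{\sf p}\mathrm{id}$, and read off the reduction by composing with $\xi$ and invoking conciliatoriness of $\A$ as in the corresponding direction of Corollary \ref{cor:jump-cancel}.

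Once both directions are in hand, Lemma \ref{lem:jump-inv}(\ref{part: sim 1-1}), applied to $\A^{\not\sim}\circ\U$ and $\A$ using the non-self-duality of $\A^{\not\sim}$ from Step~1, immediately yields $\A^{\not\sim}\circ\U\equiv_w\A$. The hard part is the key computation: since $\A^{\not\sim}$ itself is \emph{not} conciliatory, Corollary \ref{cor:jump-cancel} cannot be applied directly to it; one must descend through the factorization $\A^{\not\sim}=\A\circ\xi$ to the underlying conciliatory $\A$ and use the universality of $\U$ to swallow the intermediate $\Sigma^0_2$ and continuous machinery back into a single application of $\U$, in order for the conciliatoriness of $\A$ to do the required work.
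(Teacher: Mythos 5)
Your three-step skeleton is exactly the paper's: establish $(\A^{\not\sim}\circ\U)^{\not\sim}\equiv_w\A^{\not\sim}$, obtain non-self-duality of $\A^{\not\sim}$ from Lemma \ref{lem:init-join}, and conclude with Lemma \ref{lem:jump-inv}(\ref{part: sim 1-1}). The paper disposes of the first step by citing Corollary \ref{cor:jump-cancel} ``applied to $\A^{\not\sim}$'', and you are right that this is not a literal application: Remark \ref{remark:conciliatory-inversion} warns that $\A^{\not\sim}$ need not be conciliatory, and both directions of the proof of Corollary \ref{cor:jump-cancel} use conciliatoriness of the outer function. Your observation that $\A^{\not\sim}\circ\U$ is conciliatory, because $\U(X)$ is a canonical representative determined solely by $X\mnskip$, is correct and relevant.

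The repair you sketch does not, however, close the gap. For the direction $(\A^{\not\sim}\circ\U)^{\not\sim}\leq_w\A^{\not\sim}$: writing $\A^{\not\sim}=\A\circ\xi$ and absorbing the conciliatory $\mathbf{\Sigma}^0_2$ map $\xi\circ\U$ into $\U\circ\phi$ via universality gives, by conciliatoriness of $\A$, the identity $\A^{\not\sim}\circ\U=(\A\circ\U)\circ\phi$; applying $\not\sim$ and Corollary \ref{cor:jump-cancel} to the conciliatory $\A$ then lands you at $(\A^{\not\sim}\circ\U)^{\not\sim}\leq_w(\A\circ\U)^{\not\sim}\equiv_w\A$, which is one jump level too high, since for the functions at issue $\A^{\not\sim}<_w\A$ (compare the measurability drop in Lemma \ref{lem:jump-inv}(1)). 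To land in $\A^{\not\sim}$ you must exhibit a \emph{continuous} map into $\J^{D}[\om^\om]$, and nothing in your absorption argument produces one: the natural candidate $\J^{D}\circ(\mbox{the absorbed map})$ is $\mathbf{\Sigma}^0_3$, beyond what the universality of $\J^{C}$ from the right can convert into a continuous reduction. Symmetrically, in the direction $\A^{\not\sim}\leq_w(\A^{\not\sim}\circ\U)^{\not\sim}$, the map $\theta$ of Lemma \ref{lem:UJ-1-continuous} only guarantees $\U(I^{-1}(\J^{-1,C}(\theta(X))))\mnskip=X\mnskip$; pushing this through $\xi$ so as to invoke conciliatoriness of $\A$ would require $\xi$ to send $\mnskip$-equivalent inputs to $\mnskip$-equivalent outputs, which fails because $\xi$ contains $\J^{-1,D}$ and the retraction, neither of which respects $\mnskip$. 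So both halves of your key step remain unproved as written; the repair has to interleave the universality of $\J^{D}$ from the right with the Friedberg-inversion computation at the level of the underlying conciliatory $\A$ and its oracle $D$, rather than quoting the conclusion of Lemma \ref{lem:UJ-1-continuous} as a black box.
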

\begin{proof}
By Corollary \ref{cor:jump-cancel} applied to $\A^{\not\sim}$, 
\[
(\A^{\not\sim}\circ \U)^{\not\sim}\equiv_w \A^{\not\sim}.
\] 
By the previous lemma, $\A^{\not\sim}$ is non-self-dual.
But then, by Lemma \ref{lem:jump-inv}(\ref{part: sim 1-1}) applied to the equivalence above, $\A^{\not\sim}\circ \U\equiv_w \A$.
\end{proof}


\subsection{Preservation of ordering}    \label{sec: proof or ordering embedding}
\label{sec:qo-nested-trees}

By a quite straightforward inductive proof, we are now ready to show Proposition \ref{prop:order-on-trees} for finite Borel rank.
Here, by Lemma \ref{lemma:omega-complete}, it suffices to show that $\Omega_S\leq_w\Omega_T$ if and only if $S\treeleq T$ for any $S,T\in\forest^n(\Q)$.
Recall that we have already proved the right-to-left direction in Lemma \ref{lem: ordering ->}.

\begin{proof}[Proof of Proposition \ref{prop:order-on-trees}]
We show the assertion by induction on the terms $S$ and $T$.
Assume $\Omega_S\leq_w\Omega_T$.

First, it is clear that $\Omega_p\leq_w\Omega_q$ if and only if $p\treeleq q$.
Suppose now  that $\Omega_{\langle U\rangle}\leq_w\Omega_{\langle V\rangle}$.
By Corollary \ref{cor:jump-cancel}, $\Omega^{\not\sim}_{\langle U\rangle}\equiv_w\Omega_U$.
We thus get $\Omega_U\leq_w\Omega_V$.
Hence, by the inductive hypothesis,  $U\treeleq V$ and ${\langle U\rangle}\treeleq {\langle V\rangle}$.

Now, consider $S=\langle U\rangle\mindchange\bigsqcup_iS_i$ and $T=\langle V\rangle\mindchange\bigsqcup_jT_j$.
Let us first consider the case that $\langle U\rangle\treeleq\langle V\rangle$.
In this case, by the definition of $\treeleq$, under the assumption that $\langle U\rangle\treeleq\langle V\rangle$, $S\treeleq T$ if and only if $S_i\treeleq T$ for any $i\in\om$.
We get this from the induction hypothesis, as clearly $\Omega_S\leq_w\Omega_T$ implies $\Omega_{S_i}\leq_w\Omega_T$ for any $i$.

Let us now assume that $\langle U\rangle\not\treeleq\langle V\rangle$.
By induction hypothesis, we have $\Omega_{\langle U\rangle}\not\leq_w\Omega_{\langle V\rangle}$.
In this case, by definition, $S\treeleq T$ if and only if $S\treeleq T_j$ for some $j\in\om$.
We thus need to show that $\Omega_S\leq_w\Omega_{T_j}$ for some $j$.

Assume that $\Omega_S\leq_w\Omega_T$ via a continuous function $\theta$.
There must be a sequence $X$ such that $\pi_1(X)$ is empty, but $\pi_1\circ\theta(X)$ is nonempty (that is, $X$ never changes his mind, whereas $\theta(X)$ changes her mind at some point):
This is because if not, $\pi_0\circ\theta$ would give a reduction from $\Omega_{\langle U\rangle}$ to $\Omega_{\langle V\rangle}$, which contradicts our assumption.
Now, let $n$ be the point where $\theta(X\upto n)$ changes her mind, and let $k$ be the first entry of $\pi_1\circ\theta(X\upto n)$.
Then $\pi_1\circ\theta\upto[X\upto n]$ gives a reduction from $\Omega_S\upto [X\upto n]$ to $T_k$.
We now need a reduction from $\Omega_S$ to $\Omega_S\upto [X\upto n]$.

Since $\Omega_{\langle U\rangle}$ is initializable, for any $\sigma$, there is a continuous function $\eta_\sigma$ witnessing $\Omega_{\langle U\rangle}\leq_w\Omega_{\langle U\rangle}\upto[\sigma]$.
It is not hard then, to use $\eta_\sigma$ to build a reduction from $\Omega_S$ to $\Omega_S\upto[\sigma\mindchange\emptyset]$.
This concludes the proof since $\pi_1(X)$ is empty, and thus $X\upto n$ is of the form $\sigma\mindchange\emptyset$.
\end{proof}

We have just proved that the map $T\mapsto \Omega_T$ is an order-preserving embedding of $\forest^n(\Q)$ into the $\Q$-Wadge degrees of $\Delta_{n+1}$-measurable functions.
What is left to do is to show this embedding is onto.

\subsection{Another construction of a $\mathbf{\Sigma}^0_2$-universal function}\label{sec:proof-universal}

We end this section by giving a second construction of a conciliatory $\mathbf{\Sigma}^0_2$-universal function.
The reason we prove this again is that the following proof can be easily extended to through the transfinite, once we define the transfinite jump operation in Section \ref{ss: transfinite jump}.
Recall that our first construction of a conciliatory $\mathbf{\Sigma}^0_2$-universal function in Section \ref{sec:conciliatory} was direct and did not use $\J$.

\begin{proof}[Second proof of Proposition \ref{prop:universal}]
Let $\{\Phi_e:e\in\om\}$ be a computable enumeration of all computable operators $\colon\om^\om\to \om^{\leq \om}$.
By the universality of $\J$ from the right, we know that for every $\bf\Sigma^0_2$ operator $G\colon\om^\om\to \om^{\leq \om}$, there is an $e\in\om$ and a $C\in\om^\om$ such that $G=\Phi_e\circ \J^C$, where $\Phi_e$ is the $e$-th partial computable function.
It is not hard to see that the map
\[
e\fr C\oplus X \mapsto \Phi_e\circ \J^C(X)
\]
is $\bf\Sigma^0_2$ universal (from the left).
However, we need $\U$ to also be initializable, so we need to tweak this definition a bit. 

For $Z\in\hat{\om}^\om$, let $Z^{+1}(n)=Z(n)+1$ if $Z(n)\not={\sf pass}$, and $Z^{+1}({\sf pass})={\sf pass}$.
We define $\U$ as follows
\[
\U(Y)= \begin{cases}  \Phi_e\circ \J^C(X) 	& \mbox{if } Y=\sigma\fr 0\fr Z^{+1} \mbox{ and } Z=e\fr C\oplus X  \\
				\emptyset			& \mbox{if $Y$ has infinitely many $0$'s and is not of the form $\sigma\fr 0\fr Z^{+1}$} \end{cases}
\]

It is clear that $\U$ is still $\bf\Sigma^0_2$ universal.
It is also easily seen to be $\Sigma^0_2$ itself, as deciding in which case we are and where to split $\si$ and $Z$ is $\Sigma^0_2$, but then recovering $e$, $C$ and $X$ is computable. 
It is clearly initializable as $\U(X)=\U(\si\fr 0\concat X)$ for every $\si$ and $X$.
\end{proof}


\section{Proof of ontoness (finite Borel rank)}\label{sec:proof-1-finite}

Recall that we divided Theorem \ref{thm:main-finite-Borel-rank} into Propositions \ref{prop: measurability of Sigma T}, \ref{prop:order-on-trees}, \ref{prop:sublemma1} and \ref{prop:sublemma1(2)->(1)}, and the only one that is left to prove is the latter one.
This whole section is dedicated to proving Proposition \ref{prop:sublemma1(2)->(1)} for finite rank, that is, that given a $\mathbf{\Delta}^0_{1+n}$-measurable function $\A\colon \om^\om\to\Q$, we need to show that there is $T\in\forest^n(\Q)$ such that $\A\equiv_w\Omega_T$.
Furthermore, we will show that if $\A$ is non-self-dual, then $T$ will be a tree, while if $T$ is self dual, $T$ will be a $\sqcup$-type term.

The proof is by induction on $\leq_w$, which we know is well-founded (even bqo). 
We divide the proof in various cases depending on the properties of $\A$.

{\bf Case 1,} {\em Constant}. 
If $\A$ is Wadge equivalent to a constant function, then it is clearly equivalent to $\Omega_{\langle q\rangle}$ for some $q\in\Q$.

{\bf Case 2,} {\em Self Dual}. 
If $\A$ is self dual, we proved in Proposition \ref{prop:sji-deg-pres} that $\A \equiv_w\bigoplus_{i\in\om}\A_i$ where each $\A_i$ is non-self-dual and $\A_i<_w\A$.
By the induction hypothesis, there exists trees $T_i\in\tree^n(\Q)$ such that $\A_i\equiv_w\Omega_{T_i}$.
It then follows that $\A\equiv_w\Omega_{\sqcup_iT_i}$.

These covers the continuous case, as any continuous function a clopen sum of constant functions.

{\bf Case 3,} {\em Initializable}. 
Suppose now that $\A$ is initializable.
We first claim that $\A^{\not\sim}<_w \A$.
That  $\A^{\not\sim}\leq_w \A$ follows from the fact that $\J^{-1,C}$ is continuous. 
Suppose $n$ is the least such that $\A$ is $\Q$-Wadge equivalent to a $\Delta^0_{n+1}$ function.
Since $\A^{\not\sim}$ is $\Delta^0_n$ by Lemma \ref{lem:jump-inv}, it is not $\Q$-Wadge equivalent to $\A$.

Also recall from Lemma \ref{lem:init-join} that $\A^{\not\sim}$ is non-self-dual.
By the induction hypothesis, we then have that there is $T\in \tree^{n-1}(\Q)$ such that $\A^{\not\sim}\equiv_w \Omega_T$.
Moreover, by Corollary \ref{cor: U and sim inverses}, $\Omega_T\equiv_w\Omega_{\langle T\rangle}^{\not\sim}$, and thus $\A^{\not\sim}\equiv_w\Omega_{\langle T\rangle}^{\not\sim}$.
Therefore, by Lemma \ref{lem:jump-inv} (3), we obtain $\A\equiv_w\Omega_{\langle T\rangle}$.

\begin{obs}
Let us comment on the case when the domain of $\A$ is closed subset $\F\subseteq \om^\om$.
In this case we say that $\A$ is initializable if for every $\si\in \om^\om$ extendible in $\F$, $\A\leq_w \A\upto [\si]$.
In Observation \ref{obs:total-wadge}, we notice we could view such a map as a map defined on $\om^\om$ by composing with a retraction $\rho_\F$.
One can show that for the retraction defined there, the map we get is also initializable.
\end{obs}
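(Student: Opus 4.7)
The plan is to show that for every $\si\in\om^{<\om}$, $\widehat{\A}\leq_w\widehat{\A}\upto[\si]$, where $\widehat{\A}=\A\circ\rho_\F$. The argument rests on one structural fact about the retraction $\rho_\F$ built in Observation \ref{obs:total-wadge}: since $\rho_\F$ is defined inductively by the clause $\rho_\F(\si'\fr n)=\rho_\F(\si')\fr m$, a direct induction on $|\beta|$ (extended by continuity) yields, for every $\si\in\om^{<\om}$ with $\tau:=\rho_\F(\si)$,
\[
\rho_\F(\si\fr\beta)=\tau\fr\rho_\F^\tau(\beta)\qquad\text{for all }\beta\in\om^\om,
\]
where $\rho_\F^\tau$ is the analogous retraction associated to $T_\tau=\{\gamma:\tau\fr\gamma\in T\}$; in particular, $\rho_\F^\tau$ is the identity on $[T_\tau]$.

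Given $\si\in\om^{<\om}$, set $\tau=\rho_\F(\si)\in T$, which is extendible in $\F$. Unpacking the initializability of $\A$ via Observation \ref{obs:total-wadge}, I would extract a continuous map $\theta\colon\om^\om\to\om^\om$ whose image on $\F$ lies in $[\tau]\cap\F$ and which satisfies $\A(Y)\leq_\Q\A(\theta(Y))$ for every $Y\in\F$. I would then define the continuous reduction $\Theta\colon\om^\om\to[\si]$ as follows: given $X\in\om^\om$, compute $Y=\rho_\F(X)\in\F$, write $\theta(Y)=\tau\fr W$, and output $\Theta(X)=\si\fr W$. Continuity of $\Theta$ is immediate from continuity of $\rho_\F$ and $\theta$.

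To verify that $\Theta$ witnesses $\widehat{\A}\leq_w\widehat{\A}\upto[\si]$, I would apply the factorization above to obtain $\rho_\F(\Theta(X))=\rho_\F(\si\fr W)=\tau\fr\rho_\F^\tau(W)$. Since $\theta(Y)=\tau\fr W\in\F$, we have $W\in[T_\tau]$, so $\rho_\F^\tau(W)=W$ and thus $\rho_\F(\Theta(X))=\tau\fr W=\theta(\rho_\F(X))$. It follows that $\widehat{\A}(\Theta(X))=\A(\theta(\rho_\F(X)))\geq_\Q\A(\rho_\F(X))=\widehat{\A}(X)$, as required.

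The only subtle point is the first step of the second paragraph, where one must translate the abstract Wadge reduction $\A\leq_w\A\upto[\tau]$ between functions on closed subsets of $\om^\om$ into a concrete continuous $\theta$ whose image on $\F$ actually lands in $[\tau]\cap\F$, rather than somewhere else in $\F$. This is accomplished by composing the Wadge reduction between the canonical $\om^\om$-extensions with the retraction onto $[\tau]\cap\F$ provided by Observation \ref{obs:total-wadge}, and verifying that the resulting composition preserves the inequality on $\F$. Everything else is routine once the factorization of $\rho_\F$ is in hand.
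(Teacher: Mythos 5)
Your argument is correct; the paper itself offers no proof of this observation (it is asserted with ``one can show''), and what you supply is precisely the intended verification. The key point --- that the inductively defined retraction factors as $\rho_\F(\si\fr\beta)=\rho_\F(\si)\fr\rho_\F^{\tau}(\beta)$ with $\tau=\rho_\F(\si)$, so that $\rho_\F$ carries $[\si]$ onto $[\tau]\cap\F$ compatibly with the reduction $\A\leq_w\A\upto[\tau]$ --- is exactly the property of this particular retraction that the observation relies on, and your handling of the passage from the abstract Wadge reduction to a concrete $\theta$ landing in $[\tau]\cap\F$ is sound.
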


{\bf Case 4,} {\em Non-self-dual and not initializable}.
Suppose now that $\A$ is non-self-dual and not Wadge equivalent to any initializable function.

\begin{lemma}\label{lem:strategy-remains}
If $\F(\A)\neq \emptyset$, then $\A\upto\F(\A)$ is initializable.

Furthermore, $\A$ is Wadge equivalent to a initializable function, then $\A\equiv_w A\upto \F(A)$.
\end{lemma}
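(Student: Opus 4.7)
The plan is to derive both parts of the lemma from a single invariance principle: whenever $\A \equiv_w \A'$ and $\F(\A) \neq \emptyset$, then $\F(\A') \neq \emptyset$ and $\A \upto \F(\A) \equiv_w \A' \upto \F(\A')$ (viewing each side as a function on a closed subset of $\om^\om$, or equivalently as a function on $\om^\om$ via the retraction of Observation \ref{obs:total-wadge}). The core of this invariance is that any continuous $\theta$ witnessing $\A \leq_w \A'$ automatically sends $\F(\A)$ into $\F(\A')$, so that $\theta$ restricted to $\F(\A)$ itself witnesses $\A \upto \F(\A) \leq_w \A' \upto \F(\A')$; the reverse inequality follows by symmetry.

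To prove $\theta(\F(\A)) \subseteq \F(\A')$, fix $X \in \F(\A)$, write $Y = \theta(X)$, and verify $\A' \leq_w \A' \upto [\tau]$ for each prefix $\tau$ of $Y$. By continuity of $\theta$, pick $m$ with $\theta(X \upto m) \supseteq \tau$, so $\theta$ maps $[X \upto m]$ into $[\tau]$; this yields $\A \upto [X \upto m] \leq_w \A' \upto [\tau]$. Since $X \in \F(\A)$, we have $\A \upto [X \upto m] \equiv_w \A \equiv_w \A'$, and composing with any reduction $\A' \leq_w \A$ (available from $\A \equiv_w \A'$) delivers $\A' \leq_w \A' \upto [\tau]$. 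Hence $Y \in \F(\A')$, as required.

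With the invariance in hand, both parts of the lemma follow. For the first part, fix $\sigma \in \om^{<\om}$ extendible in $\F(\A)$ and set $\A' = \A \upto [\sigma]$; extendibility forces $\A \upto [\sigma] \equiv_w \A$ (since every prefix of every element of $\F(\A)$ belongs to the tree $\{\tau : \A \upto [\tau] \equiv_w \A\}$), and a direct unwinding of definitions gives $\F(\A \upto [\sigma]) = \{Y : \sigma \fr Y \in \F(\A)\}$, which is Wadge equivalent via $Y \mapsto \sigma \fr Y$ to $\A \upto (\F(\A) \cap [\sigma])$. The invariance then gives $\A \upto \F(\A) \equiv_w \A \upto (\F(\A) \cap [\sigma]) = (\A \upto \F(\A)) \upto [\sigma]$, so $\A \upto \F(\A)$ is initializable. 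For the second part, when $\A \equiv_w \B$ with $\B$ initializable we have $\F(\B) = \om^\om$, and the invariance immediately yields $\A \upto \F(\A) \equiv_w \B \upto \F(\B) = \B \equiv_w \A$.

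The main obstacle is the invariance argument, where the delicate point is threading the identity $\A \upto [X \upto m] \equiv_w \A$ (which holds because $X \in \F(\A)$) through the continuity of $\theta$ to reach the analogous statement for $\A'$ along $\theta(X)$. Once this is established cleanly, the rest of the proof is a matter of selecting the right instance of the invariance.
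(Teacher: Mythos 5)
Your proof is correct and follows essentially the same route as the paper's: the key step in both is that a continuous reduction between Wadge-equivalent functions must map $\F(\A)$ into $\F(\A')$, established by the chain $\A'\leq_w\A\equiv_w\A\upto[X\upto m]\leq_w\A'\upto[\tau]$. The only difference is organizational: you package this as a single invariance principle and derive part (1) by applying it to $\A'=\A\upto[\sigma]$, whereas the paper runs the same argument directly on a reduction $\A\leq_w\A\upto[\si]$ for part (1) and states the invariance only when proving part (2).
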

\begin{proof}
Let 
\[
F=\{\si\in\om^{<\om}: \A\leq_w \A\upto[\si]\}.
\]
Then $F$ is a tree without dead ends whose paths are exactly $\F(\A)$.
(We will sometimes write $\si\in \F(\A)$ to mean $\si\in F$.)
We need to show that for each $\si\in F$, there is a continuous reduction $\A\upto \F(\A)\leq_w \A\upto\F(\A)\cap[\si]$.
Since $\si\in F$, there is a continuous reduction $\theta\colon \A\leq_w \A\upto [\si]$.
Think of $\theta$ as a function $\colon\om^{<\om}\to\om^{<\om}$.
We claim that $\theta[F]\subseteq F$, hence obtaining a reduction $\A\upto \F(\A)\leq_w \A\upto\F(\A)\cap[\si]$ as wanted.
Suppose not, and that for some $\tau\in F$, $\theta(\tau)\not\in F$.
We then get a continuous reduction of $\A\upto [\tau]$ to $\A\upto[\theta(\tau)]$.
But $\A\upto [\tau]\equiv_w \A$ while $\A\not\leq_w \A\upto[\theta(\tau)]$, getting the desired contradiction. 

For the second part of the lemma, one needs to observe that if $\A\equiv_w\B$, then $\A\upto \F(\A)\equiv_w\B\upto \F(B)$.
The reason is that if $\theta$ is a reduction $\A\leq_w\B$, then for every $\si\in \F(\A)$, $\theta(\si)$ must be in $\F(\B)$ as $\B\leq_w\A\leq_w\A\upto[\sigma]\leq_w \B\upto [\theta(\sigma)]$.
Now, if $\B$ is initializable, $\F(\B)=\om^\om$, and hence $\A\upto \F(\A)\equiv_w\B\upto \F(B)=\B\equiv \A$.
\end{proof}

Let $V$ be the set of all minimal strings leaving from $\F(\A)$, and then let $\{\sigma_n\}_{n\in\om}$ be an enumeration of $V$.
Then consider $\B=\A\upto\F(\A)$ and $\C=\bigoplus_n(\A\upto[\sigma_n])$.
One can easily check that $\B$ and $\C$ are $\mathbf{\Delta}^0_{1+n}$-measurable whenever $\A$ is.

We first see that $\B,\C<_w\A$.
It is easy to check that $\B,\C\leq_w\A$.
By Lemma \ref{lem:strategy-remains}, $\B$ is initializable, and therefore not Wadge equivalent to $\A$.
To see $\C<_w\A$, we note that $\A\upto[\sigma]<_w\A$ for all $\sigma\in V$.
Therefore, if $\C\equiv_w\A$ then it would imply that $\A$ is $\sigma$-join-reducible, which contradicts our assumption that $\A$ is non-self-dual by Proposition \ref{prop:sji-deg-pres}.
Thus, we get that $\B,\C<_w\A$.

By the induction hypothesis, we get $U\in \tree^{n-1}(\Q)$ and $S\in \forest^n(\Q)$ such that $\B\equiv_w\Omega_{\langle U\rangle}$ and $\C\equiv_w\Omega_S$.

Finally, we claim that $\A\equiv_w \B^\rightarrow\C$.
This would give us that $\A\equiv_w\Omega_{\langle U\rangle\mindchange S}$.
It is straightforward to see that $\A\leq_w\B^\rightarrow\C$.
To see $\B^\rightarrow\C\leq_w\A$, construct a continuous reduction as follows:
Take $X\in\om^\om$, which we write as $\pi_0(X)\mindchange \pi_1(X)$.
While we are reading $\pi_0(X)$ we stay inside $\F(\A)$ using the continuous embedding $\rho_B\colon \B\to\A\upto \F(\A)$.
If we ever change our mind, $\B\mindchange\C(X)=\C(\pi_1(X))$.
We still have that $\rho_B(\pi_0(X))$ is a finite string in $\F(\A)$, and we so can use the reductions $\C\leq_w\A\leq_w \A\upto[\rho_B(\pi_0(X))]$.

More formally: For each $\tau\in\F(\A)$, let $\eta_\tau$ be a Wadge reduction $\A\leq_w \A\upto \tau$.
Let  $\theta_C\colon\C\to\A$.
Then
\[
X\mapsto \left(  \rho_B(\pi_0(X)) \fr \eta_{\rho_B(\pi_0(X))}(\theta_C(\pi_1(X)))\right)
\]
is a reduction $\B^\rightarrow\C\leq_w\A$.


\section{Infinite Borel rank}\label{sec:Omega-infinite-Borel-rank}

We now start to deal with functions of infinite Borel rank.

\subsection{Transfinite jump operator}   \label{ss: transfinite jump}

We now need to consider $\alpha$-th Turing jump for transfinite $\alpha$.
We again use the machinery developed in \cite{MMVeblen,Mon14}.
There, the $\om$-th Turing jump is defined by taking the first bit of each of the finite jumps:
\[
\mathcal{J}^\om(X)=\langle\mathcal{J}(X)(0),\mathcal{J}^2(X)(0),\mathcal{J}^3(X)(0),\dots\rangle.
\]
Because of the way these operators are defined in \cite{MMVeblen,Mon14} taking one bit from each jump is enough to code the whole sequence $\{\J^n(X):n\in\om\}$.
The reason is that $\J^n(X)(0)$ codes a least two bits of $J^{n-1}(X)$, and at least three bits of $J^{n-2}(X)$,..., and at least $n$ bits of $\J(X)$.
See \cite{MMVeblen,Mon14} for more details.

The definition of the $\om^\alpha$-th jump is similar, taking one bit form a sequence of jumps that converges to $\om^\a$.
To make this precise, we need to assign to each countable ordinal $\a$ {\em a fundamental sequence} $(\alpha[n])_{n\in\om}$ which is a non-decreasing sequence with $\a=\sup_n\alpha[n]+1$.
When $\alpha=\beta+1$, we just define $\alpha[n]=\b$, and when $\alpha$ is a limit, $(\alpha[n])_{n\in\om}$ is any increasing sequences towards $\alpha$.
Notice that in either case $\om^\a=\sum_{n}\om^{\alpha[n]}$.
In \cite{MMVeblen,Mon14}, they needed fundamental sequences with particular properties, but we do not need to get into that now.

Something we need in this paper but that was not necessary in \cite{MMVeblen,Mon14} is {\em relative} transfinite Turing jump operators.
If all we needed to do is relativize the whole construction, that would not be any harder.
The problem is that we need to consider transfinite jumps, which are built by iterating jump operators that use different oracles.
For instance, If we deal with the $\om$-th Turing jump, we shall consider a sequence $\C=(C_n)_{n\in\om}$ of oracles to compute the different values $\J^{n,C_n}(X)(0)$.
We call such a sequence an {\em $\om$-oracle}.

In general, we define an $\om^\alpha$-oracle as a sequence $\C=(C_n)_{n\in\om}$ of $\om^{\alpha[n]}$-oracles.
To define the notion of a $\xi$-oracle for ordinals that are not of the form $\om^\a$, note that every ordinal $\xi$ can be written as $\xi=\om^\alpha+\beta$ for some $\beta<\om^{\alpha+1}$ (consider the Cantor normal form).
Then we define a $\xi$-oracle as a pair $\C=(C_0,C_1)$ of an $\om^\alpha$-oracle and a $\beta$-oracle.
Note that for each countable ordinal $\xi$, there is a well-founded tree $\Lambda_\xi\subseteq\om^{<\om}$ such that each $\xi$-oracle $\C$ can be thought of as a $\Lambda_\xi$-indexed collection $(C_\sigma)_{\sigma\in \Lambda_\xi}$.
Given $\xi$-oracles $\C$ and $\D$, we write $\C\leqt\D$ if If $C_\sigma\leq_TD_\sigma$ uniformly holds for any $\sigma\in\Lambda_\xi$.

To introduce the transfinite jump operator, for an $\om^\alpha$-oracle $\mathcal{C}=(C_n)_{n\in\om}$, we use the following abbreviations:
\begin{align*}
\mathcal{J}^{\om^\alpha,\mathcal{C}}_{[0,n)}&=\mathcal{J}^{\om^{\alpha[n-1]},C_{n-1}}\circ\mathcal{J}^{\om^{\alpha[n-2]},C_{n-2}}\circ\dots\circ\mathcal{J}^{\om^{\alpha[0]},C_{0}},\\
J^{\om^\alpha,\mathcal{C}}_{[0,n)}&=J^{\om^{\alpha[n-1]},C_{n-1}}\circ J^{\om^{\alpha[n-2]},C_{n-2}}\circ\dots\circ J^{\om^{\alpha[0]},C_{0}}.
\end{align*}

\begin{definition}[Montalb\'an \cite{Mon14}]
For any countable ordinal $\alpha$ and $\om^\alpha$-oracle $\mathcal{C}$, define the $\om^\alpha$-th jump operation $\mathcal{J}^{\om^\alpha,\mathcal{C}}$ and its approximation $J^{\om^\alpha,\mathcal{C}}$ as follows:
\begin{align*}
\mathcal{J}^{\om^\alpha,\mathcal{C}}(Z)(n)&=\mathcal{J}^{\om^\alpha,\mathcal{C}}_{[0,n+1)}(Z)(0),\\
J^{\om^\alpha,\mathcal{C}}(\sigma)(n)&=J^{\om^\alpha,\mathcal{C}}_{[0,n+1)}(\sigma)(0).
\end{align*}
If a countable ordinal $\xi$ is of the form $\om^\alpha+\beta$ for some $\beta<\om^{\alpha+1}$, for a $\xi$-oracle $\mathcal{C}=({C}_0,{C}_1)$, we define $\mathcal{J}^{\xi,\mathcal{C}}=\mathcal{J}^{\beta,{C}_1}\circ\mathcal{J}^{\om^\alpha,{C}_0}$ and $J^{\xi,\mathcal{C}}=J^{\beta,{C}_1}\circ J^{\om^\alpha,{C}_0}$.
We also use $\J^{-\xi,\C}$ to denote $(\J^{\xi,\C})^{-1}$.
\end{definition}

The property mentioned in the first paragraph in Section \ref{ss: transfinite jump} implies the following well-known fact which connects the transfinite jump operation and the Borel hierarchy.

\begin{fact}[Universality from the right]\label{lem:lim-universal}
If $\A\colon \om^\om\to\hat{\om}^\om$ is $\mathbf{\Sigma}^0_{1+\xi}$-measurable relative to $C$, then there is a computable function $\theta$ such that $\A\equiv_{\sf p}\theta\circ\mathcal{J}^{\xi,C}$.
\end{fact}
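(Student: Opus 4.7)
The plan is to prove this fact by transfinite induction on $\xi$, taking property (1) of $\J$ from Section \ref{sec:Turing-jump} (the $\mathbf{\Sigma}^0_2$-universality of $\J^C$ from the right) as the base case $\xi = 1$. The induction will exploit the compositional structure of the transfinite jump operator together with the ``shift" behavior of the Borel hierarchy under jumps, as well as the first-bit coding property used to define $\mathcal{J}^{\om^\alpha,\mathcal{C}}$.

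For the inductive step, I would write $\xi = \om^\alpha + \beta$ with $\beta < \om^{\alpha+1}$, split the $\xi$-oracle accordingly as $\mathcal{C} = (C_0, C_1)$, and use the defining identity $\mathcal{J}^{\xi, \mathcal{C}} = \mathcal{J}^{\beta, C_1} \circ \mathcal{J}^{\om^\alpha, C_0}$. The two natural subcases are $\beta > 0$ and $\beta = 0$.

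In the subcase $\beta > 0$ (so $\beta < \xi$), the key is a shift lemma asserting that a map which is $\mathbf{\Sigma}^0_{1+\xi}$ relative to $C$ can be rewritten as a map which is $\mathbf{\Sigma}^0_{1+\beta}$ relative to $C_1$ precomposed with $\mathcal{J}^{\om^\alpha, C_0}$; that is, $\A = \A' \circ \mathcal{J}^{\om^\alpha, C_0}$ with $\A'$ of complexity $\mathbf{\Sigma}^0_{1+\beta}$. Applying the inductive hypothesis to $\A'$ gives a computable $\theta$ with $\A' \equiv_{\sf p} \theta \circ \mathcal{J}^{\beta, C_1}$, and composing yields $\A \equiv_{\sf p} \theta \circ \mathcal{J}^{\xi, \mathcal{C}}$. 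In the subcase $\beta = 0$ (so $\xi = \om^\alpha$ is a limit with $\om^\alpha = \sum_n \om^{\alpha[n]}$), I would unfold the definition of $\mathbf{\Sigma}^0_{1+\om^\alpha}$ to express $\A^{-1}[U]$ as a countable union of $\mathbf{\Pi}^0_{1+\gamma_n}$ sets with $\gamma_n < \om^\alpha$. Each $\gamma_n$ is bounded by some partial sum $\om^{\alpha[0]} + \cdots + \om^{\alpha[k_n]}$, so by the inductive hypothesis the $n$-th membership condition is computable from $\mathcal{J}^{\om^\alpha, \mathcal{C}}_{[0, k_n+1)}(X)$. The crucial coding property recalled right after the definition of $\mathcal{J}^{\om^\alpha, \mathcal{C}}$ --- that the single column $\mathcal{J}^{\om^\alpha, \mathcal{C}}(Z)(n) = \mathcal{J}^{\om^\alpha, \mathcal{C}}_{[0,n+1)}(Z)(0)$ is enough to recover each entire intermediate jump $\mathcal{J}^{\om^\alpha, \mathcal{C}}_{[0,n+1)}(Z)$ computably --- then guarantees that all of these conditions are uniformly computable from $\mathcal{J}^{\om^\alpha, \mathcal{C}}(X)$. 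One final appeal to property (1) assembles the $\mathbf{\Sigma}^0_2$ combination of these conditions into the required computable $\theta$.

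The main obstacle will be the shift lemma needed in the $\beta > 0$ subcase: one must match the iterated jump structure of $\mathcal{J}^{\xi, \mathcal{C}}$ with the Borel complexity calculus, taking particular care with the uniformity of oracles across levels and with the fact that $\mathcal{J}^{\om^\alpha, C_0}$ has a closed image (so that $\A'$ is well defined on it and can be extended by Observation \ref{obs:total-wadge} without changing complexity). This is essentially a technical verification within the framework of \cite{MMVeblen, Mon14}, where the relevant properties of the transfinite jump and its first-bit encoding were developed; the novelty here is only that the fact is stated in the stronger $\equiv_{\sf p}$ form appropriate for conciliatory codomains, which is immediate since all the equalities above are exact on outputs.
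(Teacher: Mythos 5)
The first thing to say is that the paper does not actually prove this statement: it is labeled a \emph{Fact}, and the sentence immediately preceding it says only that it is a ``well-known fact'' implied by the first-bit coding property of the transfinite jump developed in \cite{MMVeblen,Mon14}. So there is no official proof to compare against. Your transfinite induction on $\xi$ --- property (1) of $\J$ as the base case, the decomposition $\mathcal{J}^{\xi,\mathcal{C}}=\mathcal{J}^{\beta,C_1}\circ\mathcal{J}^{\om^\alpha,C_0}$ driving the successor-type step, and the diagonal coding property driving the limit step --- is exactly the standard argument the authors are implicitly invoking, and the ``shift lemma'' you isolate in the $\beta>0$ case is indeed the substantive content that they, too, outsource to \cite{MMVeblen,Mon14}. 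Deferring it is therefore consistent with the paper's own level of rigor here.

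One step of your sketch is wrong as literally stated, although the argument survives. In the limit case $\xi=\om^\alpha$ there is no additional application of $\J$ sitting on top of $\mathcal{J}^{\om^\alpha,\mathcal{C}}$ --- by definition it is just the diagonal sequence of first bits of the $\mathcal{J}^{\om^\alpha,\mathcal{C}}_{[0,n+1)}(X)$ --- so there is nothing available to absorb a genuinely $\mathbf{\Sigma}^0_2$ condition, and a ``final appeal to property (1)'' would replace $\mathcal{J}^{\om^\alpha,\mathcal{C}}$ by $\J\circ\mathcal{J}^{\om^\alpha,\mathcal{C}}$, i.e., by the $(\om^\alpha+1)$-st jump, which is not what the statement asserts. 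What saves the argument is that the condition $\tau\subseteq\A(X)\mnskip$ is in fact only $\Sigma^0_1$ relative to $\mathcal{J}^{\om^\alpha,\mathcal{C}}(X)$: by the paper's definition of $\mathbf{\Sigma}^0_{\om^\alpha}$ it is a countable union of conditions each of which is, by your own inductive argument plus the coding property, uniformly \emph{computable} from $\mathcal{J}^{\om^\alpha,\mathcal{C}}(X)$. Since the strings $\tau$ so enumerated are pairwise compatible (all being initial segments of $\A(X)\mnskip$), a computable $\theta$ can emit them as they are found, inserting ${\sf pass}$es while it waits; this is precisely where the $\equiv_{\sf p}$ formulation and the codomain $\hat{\om}^\om$ do real work, since passes permit delay but not retraction, and a genuinely $\Sigma^0_2$ (limit-type) condition could not be converted into a computable $\theta$ in this way. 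With that correction, and granting the level-by-level shift lemma from \cite{MMVeblen,Mon14}, your sketch is sound.
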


(Recall that we write $\A\equiv_{\sf p}\B$ if $\A(X)\mnskip=\B(X)\mnskip$ for all $X\in\mathcal{X}$.)
By using Fact \ref{lem:lim-universal}, it is not hard to construct a $\mathbf{\Sigma}^0_{1+\xi}$-universal initializable conciliatory function (Proposition \ref{prop:universal-inf}) by a similar argument as in Section \ref{sec:proof-universal}.

\begin{proof}[Proof of Proposition \ref{prop:universal-inf}]
By $\Sigma^0_{1+\xi}$-universality of $\mathcal{J}^\xi$ from the right (Fact \ref{lem:lim-universal}), it is obvious that $e\fr C\oplus X\mapsto\Phi_e\circ\mathcal{J}^{\xi,C}(X)$ is $\mathbf{\Sigma}^0_{1+\xi}$-universal from the left.
As in the proof of Proposition \ref{prop:universal} (see Section \ref{sec:proof-universal}), by modifying this function, one can obtain a $\mathbf{\Sigma}^0_{1+\xi}$-universal initializable conciliatory function.
\end{proof}

We now introduce the transfinite version of the jump inversion operator.

\begin{definition}
For any $\A\colon \om^\om\to\mathcal{Q}$ and any oracle $\mathcal{C}$ we introduce the {\em $\om^\alpha$-th jump inversion} of $\A$, $\A^{\not\sim \om^\alpha,\mathcal{C}}\colon \mathcal{J}^{\om^\alpha,\mathcal{C}}[\om^\om]\to\mathcal{Q}$ as follows:
\[\A^{\not\sim\om^\alpha,\mathcal{C}}(X)=\A\circ\mathcal{J}^{-\om^\alpha,\mathcal{C}}(X).\]
\end{definition}

As before, the domain of $\A^{\not\sim \om^\alpha,\mathcal{C}}$ is closed.
If the domain of $\A$ is $\hat{\om}^{\om}$, then $\A^{\not\sim\om^\alpha,\mathcal{C}}$ is defined by $\A\circ I^{-1}\circ\mathcal{J}^{-\om^\alpha,\mathcal{C}}$, where $I:\hat{\om}^\om\to\om^\om$ is a homeomorphism.
The following transfinite version of Observation \ref{lem:jump-UOP} is also straightforward.

\begin{observation}\label{lem:jump-UOP-transfinite}
Let $\A$ be any partial $\mathcal{Q}$-valued function, and $Y$ and $Z$ be $\xi$-oracles.
If $Z\geq_TY$, then $\A^{\not\sim \xi,Z}\leq_w\A^{\not\sim \xi,Y}$.
\end{observation}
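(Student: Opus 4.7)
The plan is to mimic the proof of Observation \ref{lem:jump-UOP} at the finite level, replacing $\Sigma^0_2$-universality from the right of $\mathcal{J}$ with $\Sigma^0_{1+\xi}$-universality from the right of $\mathcal{J}^{\xi,Z}$, which is the content of Fact \ref{lem:lim-universal}. The hypothesis $Z \geqt Y$ means that every component $Y_\sigma$ is computable from the corresponding component $Z_\sigma$ uniformly for $\sigma \in \Lambda_\xi$, so any operator computable relative to $Y$ is also computable relative to $Z$.

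The first step will be to observe that $X \mapsto \mathcal{J}^{\xi,Y}(X)$ is $\mathbf{\Sigma}^0_{1+\xi}$-measurable relative to the $\xi$-oracle $Y$. This is immediate from its definition: when $\xi = \om^\alpha$ we have $\mathcal{J}^{\om^\alpha,Y}(X)(n) = \mathcal{J}^{\om^\alpha,Y}_{[0,n+1)}(X)(0)$, and each factor $\mathcal{J}^{\om^{\alpha[k]},Y_k}$ is $\mathbf{\Sigma}^0_{1+\om^{\alpha[k]}}$-measurable relative to $Y_k$, so a routine induction using $\om^\alpha = \sum_n \om^{\alpha[n]}$ (and the corresponding Cantor-form decomposition for arbitrary $\xi$) gives the claimed measurability. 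Consequently $\mathcal{J}^{\xi,Y}$ is also $\mathbf{\Sigma}^0_{1+\xi}$-measurable relative to $Z$, so by Fact \ref{lem:lim-universal} there is a computable function $\Phi\colon \om^\om \to \om^\om$ with
\[
\Phi \circ \mathcal{J}^{\xi,Z} \equiv_{\sf p} \mathcal{J}^{\xi,Y},
\]
which (since both ranges lie in $\om^\om$ rather than $\hat{\om}^\om$) gives the genuine equality $\Phi(\mathcal{J}^{\xi,Z}(X)) = \mathcal{J}^{\xi,Y}(X)$ for every $X \in \om^\om$.

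I will then verify that $\Phi$ is a Wadge reduction $\A^{\not\sim \xi,Z} \leq_w \A^{\not\sim \xi,Y}$. For any $W$ in the domain $\mathcal{J}^{\xi,Z}[\om^\om]$ of $\A^{\not\sim \xi,Z}$, setting $X = \mathcal{J}^{-\xi,Z}(W)$ we have
\[
\A^{\not\sim \xi,Z}(W) = \A(X) = \A^{\not\sim \xi,Y}(\mathcal{J}^{\xi,Y}(X)) = \A^{\not\sim \xi,Y}(\Phi(W)),
\]
and the value $\Phi(W) = \mathcal{J}^{\xi,Y}(X)$ lies in $\mathcal{J}^{\xi,Y}[\om^\om]$, so it is in the domain of $\A^{\not\sim \xi,Y}$. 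If one prefers total functions on $\om^\om$, compose $\A^{\not\sim \xi,Y}$ with the retraction from Observation \ref{obs:total-wadge} onto the closed set $\mathcal{J}^{\xi,Y}[\om^\om]$; the argument is unaffected since $\Phi(W)$ already lies in that set.

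No essential difficulty remains once the measurability bookkeeping is in place. The only mildly delicate point is making sure the inductive measurability claim for $\mathcal{J}^{\xi,Y}$ is valid for arbitrary countable $\xi$ (including the $\om^\alpha + \beta$ decomposition) and that the relative computability $Z \geqt Y$, which is stipulated componentwise along $\Lambda_\xi$, is precisely enough to port the relativization; both follow directly from the setup in \cite{Mon14} recalled in Subsection \ref{ss: transfinite jump}.
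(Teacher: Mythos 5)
Your proposal is correct and is exactly the intended argument: the paper states this observation without proof (calling it "straightforward"), and the natural proof is precisely the transfinitization of the proof of Observation \ref{lem:jump-UOP} that you give — relativize the $\mathbf{\Sigma}^0_{1+\xi}$-measurability of $\mathcal{J}^{\xi,Y}$ from $Y$ to $Z$, invoke universality from the right (Fact \ref{lem:lim-universal}) to obtain the computable $\Phi$ with $\Phi\circ\mathcal{J}^{\xi,Z}=\mathcal{J}^{\xi,Y}$, and check the reduction on the closed domain $\mathcal{J}^{\xi,Z}[\om^\om]$. Your attention to the componentwise meaning of $Z\geq_T Y$ along $\Lambda_\xi$ and to the $\equiv_{\sf p}$ versus equality point is appropriate and matches the paper's conventions.
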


Thus, there is $\mathcal{C}$ such that $\A^{\not\sim \om^\alpha,\mathcal{C}}\leq_w\A^{\not\sim \om^\alpha,\mathcal{D}}$ for any $\mathcal{D}$ by well-foundedness of the Wadge degrees of $\Q$-valued functions (Theorem \ref{thm:Wadge-bqo}).
Thus we use $\A^{\not\sim\om^\alpha}$ to denote such $\A^{\not\sim \om^\alpha,\mathcal{C}}$.
By Observation \ref{lem:jump-UOP-transfinite}, $\C$ can be chosen as a constant sequence, that is, $C_\sigma=C$ for any $\sigma\in\Lambda_{\om^\alpha}$.
If $\C$ is a constant sequence consisting of $C\in\om^\om$, we simply write it as $C$ instead of $\C$.
We now see the transfinite version of  Lemma \ref{lem:jump-inv}.

\begin{lemma}\label{lem:basic-transfinite}
For any $\A,\B\colon \om^\om\to\mathcal{Q}$, the following holds.
\begin{enumerate}
\item If $\A$ is $\mathbf{\Sigma}^0_{1+\om^\alpha+\beta}$-measurable, then $\A^{\not\sim\om^\alpha}$ is $\mathbf{\Sigma}^0_{1+\beta}$-measurable.
\item If $\A\leq_w\B$ then $\A^{\not\sim\om^\alpha}\leq_w\B^{\not\sim\om^\alpha}$.
\item If either $\A^{\not\sim\om^\alpha}$ or $\B^{\not\sim\om^\alpha}$ is non-self-dual, then $\A^{\not\sim\om^\alpha}\leq_w\B^{\not\sim\om^\alpha}$ implies $\A\leq_w\B$.
\end{enumerate}
\end{lemma}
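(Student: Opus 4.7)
The three parts parallel the corresponding parts of Lemma~\ref{lem:jump-inv}, with $\mathcal{J}^{\om^\alpha,\mathcal{C}}$ replacing $\mathcal{J}^C$. The essential new tool is Fact~\ref{lem:lim-universal} (universality from the right of the transfinite jump), combined with the definitional decomposition $\mathcal{J}^{\om^\alpha+\beta,\mathcal{C}}=\mathcal{J}^{\beta,C_1}\circ\mathcal{J}^{\om^\alpha,C_0}$. Whenever necessary, I will choose a sufficiently powerful $\om^\alpha$-oracle $\mathcal{D}$ so that $\A^{\not\sim\om^\alpha}\equiv_w\A^{\not\sim\om^\alpha,\mathcal{D}}$ and $\B^{\not\sim\om^\alpha}\equiv_w\B^{\not\sim\om^\alpha,\mathcal{D}}$, which is possible by Observation~\ref{lem:jump-UOP-transfinite} together with the well-foundedness of $\mathcal{Q}$-Wadge degrees (Theorem~\ref{thm:Wadge-bqo}).

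For (1), let $\xi=\om^\alpha+\beta$ and fix, via Fact~\ref{lem:lim-universal}, a $\xi$-oracle $\mathcal{C}=(C_0,C_1)$ and a computable $\theta$ with $\A\equiv_{\sf p}\theta\circ\mathcal{J}^{\xi,\mathcal{C}}=\theta\circ\mathcal{J}^{\beta,C_1}\circ\mathcal{J}^{\om^\alpha,C_0}$. Then on the closed set $\mathcal{J}^{\om^\alpha,C_0}[\om^\om]$ we have $\A^{\not\sim\om^\alpha,C_0}\equiv_{\sf p}\theta\circ\mathcal{J}^{\beta,C_1}$, which is $\mathbf{\Sigma}^0_{1+\beta}$-measurable; passing through a continuous retraction as in Observation~\ref{obs:total-wadge} gives the conclusion. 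For (2), if $\A\leq_w\B$ via continuous $\theta$, choose $\mathcal{D}$ also computing $\theta$; then $\mathcal{J}^{\om^\alpha,\mathcal{D}}\circ\theta$ is $\mathbf{\Sigma}^0_{1+\om^\alpha}$ relative to $\mathcal{D}$, so universality yields a continuous $\theta'$ with $\theta'\circ\mathcal{J}^{\om^\alpha,\mathcal{D}}\equiv_{\sf p}\mathcal{J}^{\om^\alpha,\mathcal{D}}\circ\theta$, and the identical diagram-chase as in Lemma~\ref{lem:jump-inv}(2) shows $\theta'$ witnesses $\A^{\not\sim\om^\alpha,\mathcal{D}}\leq_w\B^{\not\sim\om^\alpha,\mathcal{D}}$. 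For (3), use Wadge determinacy for $\mathcal{Q}$-valued Borel functions: if $\A\not\leq_w\B$, Player I wins $G_w(\A,\B)$ by a Lipschitz $\theta$ satisfying $\A(\theta(X))\not\leq_\mathcal{Q}\B(X)$; the same universality argument transports this to a continuous $\theta'$ with $\A^{\not\sim\om^\alpha,\mathcal{D}}(\theta'(Y))\not\leq_\mathcal{Q}\B^{\not\sim\om^\alpha,\mathcal{D}}(Y)$ on $\mathcal{J}^{\om^\alpha,\mathcal{D}}[\om^\om]$, and composing with a hypothetical reduction $\eta\colon\A^{\not\sim\om^\alpha}\leq_w\B^{\not\sim\om^\alpha}$ forces both jump inversions to be self-dual, as in Lemma~\ref{lem:jump-inv}(3).

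The only real technical nuisance is that $\mathcal{J}^{-\om^\alpha,\mathcal{C}}$ is defined only on the closed image $\mathcal{J}^{\om^\alpha,\mathcal{C}}[\om^\om]$, so $\A^{\not\sim\om^\alpha,\mathcal{C}}$ is naturally a partial function. Identifying it with a total function via Observation~\ref{obs:total-wadge} is harmless for Wadge-equivalence, but one must verify that the continuous reductions $\theta'$ produced by universality from the right land inside this image, so that the diagram chases go through. This is ensured by how the transfinite jump is defined in~\cite{MMVeblen,Mon14}: the structural properties of $\J$ used in the finite case (closed range, continuous inverse on the range, universality from the right) are inherited stage-by-stage through the recursive definition of $\mathcal{J}^{\om^\alpha,\mathcal{C}}$, so no genuinely new obstacle appears beyond the finite-rank argument.
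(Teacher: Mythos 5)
Your proposal is correct and follows essentially the same route as the paper: part (1) via Fact~\ref{lem:lim-universal} and the decomposition $\mathcal{J}^{\om^\alpha+\beta,\mathcal{C}}=\mathcal{J}^{\beta,C_1}\circ\mathcal{J}^{\om^\alpha,C_0}$, and parts (2) and (3) by transporting the argument of Lemma~\ref{lem:jump-inv} verbatim with $\mathcal{J}^{\om^\alpha,\mathcal{D}}$ in place of $\mathcal{J}^C$ for a sufficiently powerful oracle $\mathcal{D}$. The only cosmetic difference is that the paper, in part (1), first fixes $C$ realizing the minimal degree $\A^{\not\sim\om^\alpha}$ and then relativizes Fact~\ref{lem:lim-universal} to some $D\geq_T C$, a bookkeeping step your opening paragraph already covers.
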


\begin{proof}
For (1), let $C$ be such that $\A^{\not\sim\om^\alpha}\equiv_w\A^{\not\sim\om^\alpha,C}$.
If $\A$ is $\mathbf{\Delta}^0_{1+\om^\alpha+\beta}$-measurable, then there is $D\geq_TC$ such that $\A=\Phi_e\circ\mathcal{J}^{\beta,D}\circ\mathcal{J}^{\om^\alpha,D}$ by Fact \ref{lem:lim-universal}.
Then we also have $\A^{\not\sim\om^\alpha}\equiv_w\A^{\not\sim\om^\alpha,D}$, and moreover, $\A^{\not\sim\om^\alpha,D}=\Phi_e\circ\mathcal{J}^{\beta,D}$, which is clearly $\mathbf{\Delta}^0_{1+\beta}$-measurable.
It is straightforward to show (2) and (3) by using the same argument as in the proof of Lemma \ref{lem:jump-inv}.
\end{proof}

To get the transfinite version of Lemma \ref{lem:UJ-1-continuous}, we use the transfinite version of the Friedberg jump inversion theorem \cite{Mac77}:
There exists a $C^{(\om^\alpha)}$-computable function $\psi$ such that
\[X\oplus C^{(\om^\alpha)}\equiv_T\psi(X)\oplus C^{(\om^\alpha)}\equiv_T(\psi(X)\oplus C)^{(\om^\alpha)}\equiv_T\mathcal{J}^{\om^\alpha,C}(\psi(X))\]
holds uniformly in $X$.
Here, as usual, $Z^{(\xi)}$ denotes the $\xi$-th Turing jump of $Z$.
As in the proof of Lemma \ref{lem:UJ-1-continuous}, in particular, there are a computable operator $\theta_0:\om^\om\to\hat{\om}^\om$ and a $C^{(\om^\alpha)}$-computable operator $\theta_1:\om^\om\to\hat{\om}^\om$ such that for every $X\in\hat{\om}^\om$,
\[
\theta_0\circ\mathcal{J}^{\om^\alpha,C}\circ \psi (X) = X
\quad	\mbox{ and }  \quad
\theta_1 (X)=\mathcal{J}^{\om^\alpha,C}\circ \psi (X).
\]

\begin{cor}\label{lem:transfinite-jump-inversion}
$\Omega^{\not\sim{\om^\alpha}}_{\langle T\rangle^{\om^\alpha}}\equiv_w\Omega_T$.
\end{cor}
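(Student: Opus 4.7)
The plan is to lift the argument of Corollary \ref{cor:jump-cancel} into the transfinite setting. The key technical step is to establish the transfinite analogue of Lemma \ref{lem:UJ-1-continuous}: namely, that the map $\U_{\om^\alpha}\circ I^{-1}\circ \J^{-\om^\alpha,C}\colon \J^{\om^\alpha,C}[\om^\om]\to\hat{\om}^\om$ is $\equiv_{\sf p}$-equivalent to a continuous map, and that the identity on $\hat{\om}^\om$ continuously reduces to it. Once this is in place, the corollary follows from the same short computation used to prove Corollary \ref{cor:jump-cancel}.

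For the first direction of the lemma, I would use Fact \ref{lem:lim-universal} ($\mathbf{\Sigma}^0_{1+\om^\alpha}$-universality of $\J^{\om^\alpha,C}$ from the right). Since $\U_{\om^\alpha}\circ I^{-1}$ is $\mathbf{\Sigma}^0_{1+\om^\alpha}$-measurable, there exist $C$ and a computable $\Phi$ with $\U_{\om^\alpha}\circ I^{-1}\equiv_{\sf p}\Phi\circ\J^{\om^\alpha,C}$, so that $\U_{\om^\alpha}\circ I^{-1}\circ\J^{-\om^\alpha,C}\equiv_{\sf p}\Phi$ on its domain, giving continuity. For the reverse direction, I would invoke the transfinite Friedberg jump inversion theorem as already cited just before the statement: this yields a $C^{(\om^\alpha)}$-computable function $\psi\colon\hat{\om}^\om\to\om^\om$, a computable $\theta_0$, and a $C^{(\om^\alpha)}$-computable $\theta_1$ such that $\theta_0\circ\J^{\om^\alpha,C}\circ\psi = \mathrm{id}$ and $\theta_1=\J^{\om^\alpha,C}\circ\psi$. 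Feeding $\psi$ through the $\mathbf{\Sigma}^0_{1+\om^\alpha}$-universality of $\U_{\om^\alpha}$ produces a continuous $\Phi_2$ with $\theta_0\equiv_{\sf p}\U_{\om^\alpha}\circ\Phi_2$, and feeding $\Phi_2\circ\psi$ through the right-universality of $\J^{\om^\alpha,C}$ produces a continuous $\theta_2$ with $\J^{\om^\alpha,C}\circ I\circ\Phi_2\circ\psi = \theta_2\circ\J^{\om^\alpha,C}\circ\psi = \theta_2\circ\theta_1$. Then exactly the chain of equalities in the proof of Lemma \ref{lem:UJ-1-continuous} yields $\mathrm{id}\equiv_{\sf p}\U_{\om^\alpha}\circ I^{-1}\circ\J^{-\om^\alpha,C}\circ\theta_2\circ\theta_1$, with $\theta_2\circ\theta_1$ a continuous map into $\J^{\om^\alpha,C}[\om^\om]$.

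Having this transfinite analogue of Lemma \ref{lem:UJ-1-continuous}, the corollary follows as in Corollary \ref{cor:jump-cancel}. Unpacking,
\[
\Omega^{\not\sim\om^\alpha}_{\langle T\rangle^{\om^\alpha}} = \Omega_T\circ\U_{\om^\alpha}\circ I^{-1}\circ\J^{-\om^\alpha,C}.
\]
Since $\langle T\rangle^{\om^\alpha}$ is a tree-type term, $T$ itself is a tree, so $\Omega_T$ is conciliatory by the transfinite version of Observation \ref{obs:tree-is-conciliatory}. Composing $\Omega_T$ with the continuous representative of $\U_{\om^\alpha}\circ I^{-1}\circ\J^{-\om^\alpha,C}$ on the right therefore gives $\Omega^{\not\sim\om^\alpha}_{\langle T\rangle^{\om^\alpha}}\leq_w\Omega_T$; and precomposing with the continuous $\theta = \theta_2\circ\theta_1$ from the reverse direction gives $\Omega_T\leq_w\Omega^{\not\sim\om^\alpha}_{\langle T\rangle^{\om^\alpha}}$.

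The main obstacle is checking that the transfinite Friedberg jump inversion indeed delivers the required \emph{uniform} continuous data $\theta_0,\theta_1$ so that the derivation of Lemma \ref{lem:UJ-1-continuous} transfers verbatim; however the excerpt has already asserted precisely this uniform form of the theorem, so what remains is essentially bookkeeping. A secondary point to be careful about is the choice of oracle: one must fix $C$ (or more generally an $\om^\alpha$-oracle $\C$) large enough to simultaneously realize $\Omega^{\not\sim\om^\alpha}_{\langle T\rangle^{\om^\alpha}}\equiv_w\Omega^{\not\sim\om^\alpha,\C}_{\langle T\rangle^{\om^\alpha}}$ and to witness the continuity assertions above, which is possible by Observation \ref{lem:jump-UOP-transfinite}.
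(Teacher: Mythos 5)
Your proposal is correct and follows essentially the same route as the paper: the paper's proof of this corollary simply states that it follows from the transfinite Friedberg jump inversion formula displayed just before it, "as in the proofs of Lemma \ref{lem:UJ-1-continuous} and Corollary \ref{cor:jump-cancel}," which is exactly the lifting you carry out. Your write-up just makes explicit the bookkeeping (the transfinite analogue of Lemma \ref{lem:UJ-1-continuous} and the oracle choice) that the paper leaves implicit.
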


\begin{proof}
As in the proofs of Lemma \ref{lem:UJ-1-continuous} and Corollary \ref{cor:jump-cancel}, it follows from the above formula.
\end{proof}


\subsection{Generalization of initializability}

\begin{definition}
For a countable ordinal $\alpha$, we say that $\A$ is {\em $\alpha$-stable} if $\A$ is Wadge equivalent to an initializable function, and $\A^{\not\sim\om^\beta}\equiv_w\A$ holds for any $\beta<\alpha$.
\end{definition}

By Lemma \ref{lem:basic-transfinite} (2), if $\B$ is Wadge equivalent to $\A$ and if $\A$ is $\alpha$-stable, then so is $\B$.

\begin{lemma}\label{lem:alpha-stable}
For any $T\in\tree^{\om_1}(\Q)$ and countable ordinal $\alpha$, $\Omega_{\langle T\rangle^{\om^\alpha}}$ is $\alpha$-stable.
\end{lemma}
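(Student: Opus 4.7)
The plan is to verify the two defining conditions of $\alpha$-stability for $\Omega_{\langle T\rangle^{\om^\alpha}}=\Omega_T\circ\U_{\om^\alpha}$: namely, Wadge-equivalence to an initializable function, and $\Omega_{\langle T\rangle^{\om^\alpha}}^{\not\sim\om^\beta}\equiv_w\Omega_{\langle T\rangle^{\om^\alpha}}$ for every $\beta<\alpha$. Both are transfinite analogs of results already established in Section \ref{sec:jump-inversion}.

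Initializability will be immediate from the initializability of $\U_{\om^\alpha}$ (Proposition \ref{prop:universal-inf}) combined with the conciliatoriness of $\Omega_T$ (the transfinite analog of Observation \ref{obs:tree-is-conciliatory}): for each $\tau$, the continuous $\theta_\tau$ into $[\tau]$ witnessing $\U_{\om^\alpha}\circ\theta_\tau\equiv_{\sf p}\U_{\om^\alpha}$ also witnesses $\Omega_{\langle T\rangle^{\om^\alpha}}\leq_w\Omega_{\langle T\rangle^{\om^\alpha}}\upto[\tau]$, so $\Omega_{\langle T\rangle^{\om^\alpha}}$ is itself initializable.

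For the core identity $\Omega_{\langle T\rangle^{\om^\alpha}}^{\not\sim\om^\beta}\equiv_w\Omega_{\langle T\rangle^{\om^\alpha}}$, fix an oracle $D$ realizing the Wadge-minimum in the definition of $\Omega^{\not\sim\om^\beta}$ (guaranteed by Observation \ref{lem:jump-UOP-transfinite}). The $\leq_w$ direction is routine: the extended function $\U_{\om^\alpha}\circ I^{-1}\circ\mathcal{J}^{-\om^\beta,D}\circ\eta$, where $\eta$ is a continuous retraction onto the closed set $\mathcal{J}^{\om^\beta,D}[\om^\om]$, factors as $\U_{\om^\alpha}$ precomposed with a continuous map and so is $\mathbf{\Sigma}^0_{1+\om^\alpha}$-measurable; by the $\mathbf{\Sigma}^0_{1+\om^\alpha}$-universality of $\U_{\om^\alpha}$, there is a continuous $\Phi$ with $\U_{\om^\alpha}\circ I^{-1}\circ\mathcal{J}^{-\om^\beta,D}\circ\eta\equiv_{\sf p}\U_{\om^\alpha}\circ\Phi$, and composing with the conciliatory $\Omega_T$ yields $\Omega_{\langle T\rangle^{\om^\alpha}}^{\not\sim\om^\beta}\leq_w\Omega_{\langle T\rangle^{\om^\alpha}}\circ\Phi\leq_w\Omega_{\langle T\rangle^{\om^\alpha}}$.

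The $\geq_w$ direction is the transfinite version of Lemma \ref{lem:UJ-1-continuous}, and is where the real work lies. I will invoke the $\om^\beta$-th Friedberg jump inversion theorem of MacIntyre \cite{Mac77}, relativized to $D$, to produce continuous maps $\psi,\theta_0,\theta_1$ with $\theta_0\circ\mathcal{J}^{\om^\beta,D}\circ\psi=\mathrm{id}$ and $\theta_1=\mathcal{J}^{\om^\beta,D}\circ\psi$. Universality of $\U_{\om^\alpha}$ applied to the continuous $\theta_0$ yields a continuous $\Phi_2$ with $\theta_0\equiv_{\sf p}\U_{\om^\alpha}\circ\Phi_2$, and universality of $\mathcal{J}^{\om^\beta,D}$ from the right (Fact \ref{lem:lim-universal}) applied to the $\mathbf{\Sigma}^0_{1+\om^\beta}$-measurable function $\mathcal{J}^{\om^\beta,D}\circ I\circ\Phi_2$ yields a continuous $\theta_2$ with $\mathcal{J}^{\om^\beta,D}\circ I\circ\Phi_2\equiv_{\sf p}\theta_2\circ\mathcal{J}^{\om^\beta,D}$. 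Chaining these exactly as in Lemma \ref{lem:UJ-1-continuous} gives a continuous $\theta=\theta_2\circ\theta_1$ mapping into $\mathcal{J}^{\om^\beta,D}[\om^\om]$ with $\U_{\om^\alpha}\equiv_{\sf p}\U_{\om^\alpha}\circ I^{-1}\circ\mathcal{J}^{-\om^\beta,D}\circ\theta$, and applying $\Omega_T$ delivers $\Omega_{\langle T\rangle^{\om^\alpha}}\leq_w\Omega_{\langle T\rangle^{\om^\alpha}}^{\not\sim\om^\beta,D}\equiv_w\Omega_{\langle T\rangle^{\om^\alpha}}^{\not\sim\om^\beta}$. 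The main obstacle is transfinite bookkeeping: one must verify that the fundamental-sequence construction of $\mathcal{J}^{\om^\beta,D}$ from Section \ref{ss: transfinite jump} admits Friedberg inversion in the uniform form used here, and that the strict inequality $\om^\beta<\om^\alpha$ really lets $\U_{\om^\alpha}$ absorb $\mathbf{\Sigma}^0_{1+\om^\beta}$ precompositions without raising the overall Borel level past $1+\om^\alpha$. Once these technicalities are discharged, the proof is a transparent one-level lift of the finite argument.
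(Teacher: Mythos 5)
Your handling of initializability and of the easy direction $\Omega_{\langle T\rangle^{\om^\alpha}}^{\not\sim\om^\beta}\leq_w\Omega_{\langle T\rangle^{\om^\alpha}}$ matches the paper (the latter in fact needs no universality: $I^{-1}\circ\J^{-\om^\beta,D}\circ\eta$ is already continuous). The gap is in the hard direction, and it stems from identifying this lemma with ``the transfinite version of Lemma \ref{lem:UJ-1-continuous}.'' That lemma's chain starts from $id=\theta_0\circ\J^{C}\circ\psi$ and ends with $id\equiv_{\sf p}\U\circ I^{-1}\circ\J^{-1,C}\circ\theta$: it is a statement about the \emph{identity} reducing to the jump-inverted universal function, and its genuine transfinite analogue is Corollary \ref{lem:transfinite-jump-inversion} (same exponent on the nesting and on the inversion). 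Running that chain with $\J^{\om^\beta,D}$ and $\U_{\om^\alpha}$ yields at best $id\equiv_{\sf p}\U_{\om^\alpha}\circ I^{-1}\circ\J^{-\om^\beta,D}\circ\theta$, and composing with the conciliatory $\Omega_T$ then gives only $\Omega_T\leq_w\Omega_{\langle T\rangle^{\om^\alpha}}^{\not\sim\om^\beta,D}$ --- in general strictly weaker than the required $\Omega_{\langle T\rangle^{\om^\alpha}}\leq_w\Omega_{\langle T\rangle^{\om^\alpha}}^{\not\sim\om^\beta,D}$, since $\Omega_T$ sits strictly below $\Omega_{\langle T\rangle^{\om^\alpha}}$. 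What you actually need is $\U_{\om^\alpha}\equiv_{\sf p}\U_{\om^\alpha}\circ I^{-1}\circ\J^{-\om^\beta,D}\circ\theta$ with $\U_{\om^\alpha}$ itself on the left, and the steps you list do not produce it: from $\theta_0\equiv_{\sf p}\U_{\om^\alpha}\circ\Phi_2$ and $\J^{\om^\beta,D}\circ I\circ\Phi_2\equiv_{\sf p}\theta_2\circ\J^{\om^\beta,D}$ one computes $\U_{\om^\alpha}\circ I^{-1}\circ\J^{-\om^\beta,D}\circ\theta_2\circ\theta_1\equiv_{\sf p}\U_{\om^\alpha}\circ\Phi_2\circ\psi\equiv_{\sf p}\theta_0\circ\psi$, which is neither $id$ nor $\U_{\om^\alpha}$.

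The missing idea --- and the only place where $\beta<\alpha$ is genuinely used --- is to keep $\U_{\om^\alpha}$ attached on the left \emph{before} invoking universality. Write $\U_{\om^\alpha}=\U_{\om^\alpha}\circ\theta_0\circ\J^{\om^\beta,C}\circ\psi$ and note that $\U_{\om^\alpha}\circ\theta_0\circ\J^{\om^\beta,C}$ is still conciliatory and $\mathbf{\Sigma}^0_{1+\om^\alpha}$-measurable because $\om^\beta+\om^\alpha=\om^\alpha$; apply the left-universality of $\U_{\om^\alpha}$ to this composite (not to $\theta_0$ alone) to get a continuous $\Phi'$ with $\U_{\om^\alpha}\equiv_{\sf p}\U_{\om^\alpha}\circ\Phi'\circ\psi$, and only then convert $\Phi'\circ\psi$ into the form $I^{-1}\circ\J^{-\om^\beta,C}\circ(\mathrm{continuous})$ via the right-universality of $\J^{\om^\beta}$ (Fact \ref{lem:lim-universal}) applied to a $C^{(\om^\beta)}$-computable map, as you do at the end. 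The paper packages the same absorption by first factoring $\U_{\om^\alpha}=I^{-1}\circ\Phi\circ\J^{\om^\alpha,C}\circ I$ and proving $\J^{\om^\alpha,C}=\theta_2\circ\J^{\om^\alpha,C}\circ\psi$, then reapplying left-universality of $\U_{\om^\alpha}$; either way, the absorption of the $\om^\beta$-jump into the $\om^\alpha$-level is the heart of the lemma, not the ``transfinite bookkeeping'' you defer, and your outline omits it.
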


\begin{proof}
First note that, for any countable ordinal $\alpha$, $\Omega_{\langle T\rangle^{\om^\alpha}}$ is initializable since $\Omega_{\langle T\rangle^{\om^\alpha}}=\Omega_T\circ\U_{\om^\alpha}$ for an initializable function $\U_{\om^\alpha}$.
Recall that $I\colon\hat{\om}^\om\to\om^\om$ is a computable homeomorphism.
Since $I\circ\U_{\om^\alpha}\circ I^{-1}$ is $\mathbf{\Sigma}^0_{1+\om^\alpha}$-measurable, by universality of $\J^{\om^\alpha}$ from the right (Fact \ref{lem:lim-universal}), $\U_{\om^\alpha}=I^{-1}\circ\Phi\circ\mathcal{J}^{\om^\alpha,C}\circ I$ for some $\Phi$ and $C$.
Fix $\beta<\alpha$.
We show that $\Omega_{\langle T\rangle^{\om^\alpha}}$ is Wadge reducible to $\Omega_{\langle T\rangle^{\om^\alpha}}^{\not\sim\om^\beta,C}$.
Let $\psi:\om^\om\to\om^\om$ be the $\om^\beta$-th jump inversion map relative to $C$, that is, there are a computable operator $\theta_0:\om^\om\to\hat{\om}^\om$ and a $C^{(\om^\beta)}$-computable operator $\theta_1:\om^\om\to\hat{\om}^\om$ such that for every $X\in{\om}^\om$,
\[
\theta_0\circ\mathcal{J}^{\om^\beta,C}\circ \psi (X) = X
\quad	\mbox{ and }  \quad
\theta_1 (X)=\mathcal{J}^{\om^\beta,C}\circ \psi (X).
\] 
Then, $\mathcal{J}^{\om^\alpha,C}(Z)=\mathcal{J}^{\om^\alpha,C}\circ\theta_0\circ\mathcal{J}^{\om^\beta,C}\circ \psi (Z)$.
Since $\beta<\alpha$, the map $\mathcal{J}^{\om^\alpha,C}\circ\theta_0\circ\mathcal{J}^{\om^\beta,C}$ is still $\Sigma^0_{1+\om^\alpha}$ relative to $C$.
By universality of $\mathcal{J}^{\om^\alpha}$ from the right (Fact \ref{lem:lim-universal}), there is a computable function $\theta_2$ such that $\theta_2\circ\mathcal{J}^{\om^\alpha}=\mathcal{J}^{\om^\alpha,C}\circ\theta_0\circ\mathcal{J}^{\om^\beta,C}$.
Hence,
\[\mathcal{J}^{\om^\alpha,C}=\theta_2\circ\mathcal{J}^{\om^\alpha,C}\circ\psi.\]

Then,
\[\U_{\om^\alpha}=I^{-1}\circ\Phi\circ\mathcal{J}^{\om^\alpha,C}\circ I=I^{-1}\circ\Phi\circ\theta_2\circ\mathcal{J}^{\om^\alpha,C}\circ\psi\circ I.\]

Clearly, $I^{-1}\circ\Phi\circ\theta_2\circ\mathcal{J}^{\om^\alpha,C}$ (hence, its restriction up to $\om^\om$) is $\Sigma^0_{1+\alpha}$ relative to $C$, and therefore, by universality of $\U_{\om^\alpha}$ from the left, there is a continuous function $\theta_3:\om^\om\to\hat{\om}^\om$ such that
\[\mathcal{U}_{\om^\alpha}=\mathcal{U}_{\om^\alpha}\circ\theta_3\circ\psi\circ I.\]

By our explicit construction of $\mathcal{U}_{\om^\alpha}$ and $C^{(\om^\beta)}$-computability of $\psi$, one can assume that $\theta_3$ is $C^{(\om^{\beta})}$-computable.
By universality of $\mathcal{J}^{\om^{\beta}}$ from the right, there is a computable function $\theta_4$ such that $\mathcal{J}^{\om^{\beta}}\circ I\circ\theta_3=\theta_4\circ\mathcal{J}^{\om^{\beta}}$, and therefore,
\[\mathcal{J}^{\om^{\beta}}\circ I\circ\theta_3\circ\psi\circ I=\theta_4\circ\mathcal{J}^{\om^{\beta}}\circ\psi\circ I=\theta_4\circ\theta_1\circ I.\]

Then, by combining the above formulas, we get
\begin{multline*}
\Omega_{\langle T\rangle^{\om^\alpha}}=\Omega_{T}\circ\mathcal{U}_{\om^\alpha}
=\Omega_{T}\circ\mathcal{U}_{\om^\alpha}\circ\theta_3\circ\psi\circ I
=\Omega_{\langle T\rangle^{\om^\alpha}}\circ\theta_3\circ\psi\circ I\\
=\Omega_{\langle T\rangle^{\om^\alpha}}^{\not\sim\om^\beta,C}\circ\mathcal{J}^{\om^\beta,C}\circ I\circ\theta_3\circ\psi\circ I
=\Omega_{\langle T\rangle^{\om^\alpha}}^{\not\sim\om^\beta,C}\circ\theta_4\circ\theta_1\circ I.
\end{multline*}

Consequently, the continuous function $\theta_4\circ\theta_1\circ I$ gives a Wadge reduction from $\Omega_{\langle T\rangle^{\om^\alpha}}$ to $\Omega_{\langle T\rangle^{\om^\alpha}}^{\not\sim\om^\beta,C}$.
\end{proof}

The purpose of this section is to prove the following transfinite version of Lemma \ref{lem:init-join}.

\begin{lemma}\label{lem:stable-NSD}
If $\A$ is $\alpha$-stable, then $\A^{\not\sim\om^\alpha}$ is $\sigma$-join-irreducible.
\end{lemma}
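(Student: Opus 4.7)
The plan is to adapt the proof of Lemma~\ref{lem:init-join} (which is the case $\alpha=0$) to arbitrary countable $\alpha$, substituting Marcone--Montalb\'an's $\om^\alpha$-th jump operator $\J^{\om^\alpha,\mathcal{C}}$ for the ordinary Turing jump. By Proposition~\ref{prop:sji-deg-pres}, showing $\A^{\not\sim\om^\alpha}$ is $\sigma$-join-irreducible amounts to producing a point in $\F(\A^{\not\sim\om^\alpha})$, so the task is to construct a path in $\F(\A^{\not\sim\om^\alpha,\mathcal{C}})$ for a suitable $\om^\alpha$-oracle $\mathcal{C}$. Replacing $\A$ by a Wadge-equivalent initializable function, I first pick $\mathcal{C}$ satisfying: (i) $\A^{\not\sim\om^\alpha,\mathcal{C}}\equiv_w\A^{\not\sim\om^\alpha}$ (available by Observation~\ref{lem:jump-UOP-transfinite} together with well-foundedness of the $\Q$-Wadge degrees from Theorem~\ref{thm:Wadge-bqo}); and (ii) the components of $\mathcal{C}$ uniformly compute Wadge reductions $\theta_\tau$ witnessing $\A\leq_w\A\upto[\tau]$ for every $\tau\in\om^{<\om}$.

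Say that $\sigma\in\om^{<\om}$ \emph{forces its $\om^\alpha$-jump relative to $\mathcal{C}$} if $J^{\om^\alpha,\mathcal{C}}(\tau)\supseteq J^{\om^\alpha,\mathcal{C}}(\sigma)$ for every $\tau\supseteq\sigma$. The key claim is that for such $\sigma$,
\[
\A^{\not\sim\om^\alpha,\mathcal{C}}\leq_w\A^{\not\sim\om^\alpha,\mathcal{C}}\upto[J^{\om^\alpha,\mathcal{C}}(\sigma)].
\]
Indeed, letting $\theta=\theta_\sigma$ witness $\A\leq_w\A\upto[\sigma]$, the map $X\mapsto\J^{\om^\alpha,\mathcal{C}}(\sigma\fr\theta(X))$ is $\mathbf{\Sigma}^0_{1+\om^\alpha}$ relative to $\mathcal{C}$, so by universality of $\J^{\om^\alpha,\mathcal{C}}$ from the right (Fact~\ref{lem:lim-universal}) there is a continuous $\theta'$ with $\theta'\circ\J^{\om^\alpha,\mathcal{C}}\equiv_{\sf p}\J^{\om^\alpha,\mathcal{C}}\circ(\sigma\fr\theta(\cdot))$. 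The forcing condition guarantees that the right-hand side, after deleting passes, extends $J^{\om^\alpha,\mathcal{C}}(\sigma)$, and for $Y=\J^{\om^\alpha,\mathcal{C}}(X)$ the chain
\[
\A^{\not\sim\om^\alpha,\mathcal{C}}(Y)=\A(X)\leq_\Q\A(\sigma\fr\theta(X))=\A^{\not\sim\om^\alpha,\mathcal{C}}(\J^{\om^\alpha,\mathcal{C}}(\sigma\fr\theta(X)))=\A^{\not\sim\om^\alpha,\mathcal{C}}(\theta'(Y))
\]
exhibits $\theta'$ as the desired reduction.

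Next, I will prove transfinite density of forcing: every $\gamma\in\om^{<\om}$ extends to some $\sigma$ forcing its $\om^\alpha$-jump relative to $\mathcal{C}$. This is established by induction on $\alpha$, unpacking the construction of $\J^{\om^\alpha,\mathcal{C}}$ as a diagonal extraction of single bits from an iterated sequence of jumps of smaller rank; density at each lower level, together with the finite-approximation character of the construction, can be invoked finitely many times to secure any prescribed initial segment of the $\om^\alpha$-jump. Iterating density, I then construct $X\in\om^\om$ whose initial segments include infinitely many jump-forcers $\sigma_0\subsetneq\sigma_1\subsetneq\cdots$. Since every initial segment of $\J^{\om^\alpha,\mathcal{C}}(X)$ is contained in $J^{\om^\alpha,\mathcal{C}}(\sigma_n)$ for some $n$, the key claim yields $\J^{\om^\alpha,\mathcal{C}}(X)\in\F(\A^{\not\sim\om^\alpha,\mathcal{C}})$, and hence $\A^{\not\sim\om^\alpha}$ is non-self-dual.

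The hardest step will be the transfinite density of forcing, which requires a careful induction through the Marcone--Montalb\'an machinery. I also expect the $\alpha$-stability hypothesis $\A^{\not\sim\om^\beta}\equiv_w\A$ for $\beta<\alpha$ to be essential here: it ensures that the reductions $\theta_\tau$ retain their uniform quality after being filtered through the partial $\om^\beta$-jumps used to build $\J^{\om^\alpha,\mathcal{C}}$, so that the intermediate jump inversions do not silently collapse $\A^{\not\sim\om^\alpha}$ to $\A^{\not\sim\om^\beta}$ and thereby either break the key claim or derail the inductive forcing construction before it converges.
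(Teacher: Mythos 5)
Your ``key claim'' is a faithful transcription of the computation in the paper's Lemma \ref{lem:init-join} and is fine as far as it goes, but the overall strategy cannot work, for the following reason: apart from the vague remark at the end, your argument only ever uses that $\A$ is (Wadge equivalent to) an initializable function, i.e.\ $0$-stability, and the conclusion is false under that weaker hypothesis. Take $\alpha=1$ and $\A=\Omega_{\langle\langle 0\rangle\mindchange\langle 1\rangle\rangle}$, the $\mathbf{\Sigma}^0_2$-complete characteristic function; it is initializable but not $1$-stable, since $\A^{\not\sim}\equiv_w\Omega_{\langle 0\rangle\mindchange\langle 1\rangle}<_w\A$. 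Writing $\A=\Phi\circ\J^{2,C}$ and choosing the $\om$-oracle $\C$ powerful enough, $\J^{2,C}(X)$ is uniformly computable from $\J^{\om,\C}(X)$, so $\A^{\not\sim\om}$ is a \emph{continuous} non-constant $2$-valued function, hence self-dual. So any proof must use the clause $\A^{\not\sim\om^\beta}\equiv_w\A$ for $\beta<\alpha$ in an essential, locatable way, and yours does not.

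This pins down where the argument breaks: it is the ``transfinite density of forcing'' step, in the strong form you actually need, namely forcers $\sigma_0\ssegment\sigma_1\ssegment\cdots$ with union $X$ and $\bigcup_n J^{\om^\alpha,\C}(\sigma_n)=\J^{\om^\alpha,\C}(X)$. Since your key claim is correct, the existence of such a sequence would put $\J^{\om^\alpha,\C}(X)$ into $\F(\A^{\not\sim\om^\alpha,\C})$ for \emph{every} initializable $\A$, contradicting the example above; so no such sequence exists. The obstruction is that coordinate $n$ of $J^{\om^\alpha,\C}(\sigma)$ is a guess at $\J^{\om^\alpha,\C}_{[0,n+1)}(Z)(0)$, a level-$(n+1)$ arithmetical fact about $Z$; for $\sigma$ to force it is to decide that fact for all $Z\supseteq\sigma$ simultaneously, and a finite condition cannot do this for unboundedly many levels (already a $\mathbf{\Pi}^0_2$ fact such as ``$Z$ has infinitely many $0$'s'' is undecidable by any finite condition). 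Density of forcing survives at each fixed level but does not diagonalize. The paper avoids forcing the $\om^\alpha$-jump altogether: it builds the tower $\A_0=\A$, $\A_{n+1}=(\A_n\upto\F(\A_n))^{\not\sim_n}$, uses $\alpha$-stability precisely to guarantee $\A_n\equiv_w\A$ (Observation \ref{obs:stable-1}), so that each $\A_n$ is again initializable and the inductive hypothesis at level $\om^{\alpha[n]}$ gives density of $\F_{n,n+1}$ in $\F_{n,n}$; a fusion argument (Lemma \ref{lem:omega-generic-1}) then produces $X$ with $\J_{[0,n)}(X)\in\F(\A_n)$ for all $n$, and membership in the sets $\F(\A_n)$ --- which depend on $\A$ and collapse exactly when stability fails --- replaces the role you wanted jump-forcing to play. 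You would need to restructure your proof along these lines rather than patch the density claim.
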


Before proving Lemma \ref{lem:stable-NSD} we first check that this Lemma immediately implies our main theorems.

\begin{proof}[Proof of Proposition \ref{prop:order-on-trees} from Lemma \ref{lem:stable-NSD}]
We only show that $\Omega_{\langle U\rangle^{\om^\alpha}}\leq_w\Omega_{\langle V\rangle^{\om^\beta}}$ if and only if $\langle U\rangle^{\om^\alpha}\treeleq\langle V\rangle^{\om^\beta}$.
For the other cases, we can use a similar argument as in the proof of Proposition \ref{prop:order-on-trees} for finite Borel rank.
To verify the above equivalence, by Lemma \ref{lem:transfinite-jump-inversion}, we have $\Omega^{\not\sim\om^\alpha}_{\langle U\rangle^{\om^\alpha}}\equiv_w\Omega_U$.
Since $\Omega_{\langle U\rangle^{\om^\alpha}}$ is $\alpha$-stable by Lemma \ref{lem:alpha-stable}, $\Omega^{\not\sim\om^\alpha}_{\langle U\rangle^{\om^\alpha}}$ is $\sigma$-join-irreducible by Lemma \ref{lem:stable-NSD}, and thus non-self-dual by Proposition \ref{prop:sji-deg-pres}.
Thus, if $\alpha=\beta$, by Lemma \ref{lem:basic-transfinite},
\[\Omega_{\langle U\rangle^{\om^\alpha}}\leq_w\Omega_{\langle V\rangle^{\om^\beta}}\iff \Omega_U\equiv_w\Omega^{\not\sim\om^\alpha}_{\langle U\rangle^{\om^\alpha}}\leq_w\Omega^{\not\sim\om^\alpha}_{\langle V\rangle^{\om^\beta}}\equiv_w\Omega_V.\]
This ensures the desired assertion by induction hypothesis.
If $\alpha>\beta$, then, since $\Omega_{\langle U\rangle^{\om^\alpha}}$ is $\alpha$-stable by Lemma \ref{lem:alpha-stable}, we have $\Omega^{\not\sim\om^\beta}_{\langle U\rangle^{\om^\alpha}}\equiv_w\Omega_{\langle U\rangle^{\om^\alpha}}$, and therefore,
\[\Omega_{\langle U\rangle^{\om^\alpha}}\leq_w\Omega_{\langle V\rangle^{\om^\beta}}\iff \Omega_{\langle U\rangle^{\om^\alpha}}\equiv_w\Omega^{\not\sim\om^\beta}_{\langle U\rangle^{\om^\alpha}}\leq_w\Omega^{\not\sim\om^\beta}_{\langle V\rangle^{\om^\beta}}\equiv_w\Omega_V.\]
This ensures the desired assertion by induction hypothesis.
The same argument works in the case $\alpha<\beta$.
\end{proof}

\begin{proof}[Proof of Proposition \ref{prop:sublemma1(2)->(1)} from Lemma \ref{lem:stable-NSD}]
Fix a $\mathbf{\Delta}^0_\xi$-measurable function $\A$.
Let $\delta$ be the smallest ordinal such that $\A$ is not $(\gamma+1)$-stable for some $\gamma<\delta$.
If $\delta=0$, then $\delta$ is not Wadge-initializable, and we can use the same argument as in the proof of Proposition \ref{prop:sublemma1(2)->(1)} for functions of finite Borel rank.

Suppose that $\delta>0$.
Then, note that $\delta$ must be a successor ordinal, say $\delta=\alpha+1$, and thus $\A$ is $\alpha$-stable.
Let $\beta$ be a unique ordinal that $\xi=\om^\alpha+\beta$.
By Lemma \ref{lem:basic-transfinite} (1), $\A^{\not\sim\om^\alpha}$ is $\mathbf{\Delta}^0_\beta$-measurable.
Moreover, by minimality of $\alpha$, we have $\A^{\not\sim\om^\alpha}<_w\A$ and $\A^{\not\sim\om^\alpha}$ is non-self-dual by Lemma \ref{lem:stable-NSD} and Proposition \ref{prop:sji-deg-pres}.
By induction hypothesis, $\A^{\not\sim\om^\alpha}\equiv_w\Omega_T$ for some tree $T\in\tree^\beta(\mathcal{Q})$.
Then we have $\A^{\not\sim\om^\alpha}\equiv_w\Omega_{\langle T\rangle^{\om^\alpha}}^{\not\sim\om^\alpha}$ by Lemma \ref{lem:transfinite-jump-inversion}.
Note that $\Omega_{T}$ is $\sigma$-join-irreducible by Observations \ref{obs:tree-is-conciliatory} and \ref{obs:skip-legal} since $T$ is a tree, and therefore non-self-dual by Proposition \ref{prop:sji-deg-pres}.
Therefore we get $\A\equiv_w\Omega_{\langle T\rangle^{\om^\alpha}}$ by Lemma \ref{lem:basic-transfinite} (3).
Claim that 
\[\langle T\rangle^{\om^\alpha}\in\langle\tree^\beta(\mathcal{Q})\rangle^{\om^\alpha}\subseteq\tree^{\xi}(\mathcal{Q}).\]

It is clear if $\beta<\om^{\alpha+1}$ by definition.
If $\beta\geq\om^{\alpha+1}$, then we must have $\beta=\xi$.
Recall that $\xi$ is of the form $\om^\gamma+\delta$ for some $\gamma<\om_1$ and $\delta<\om^{\gamma+1}$.
We then have $\alpha<\gamma$.
Thus, if $T\in\tree^\xi(\Q)=\tree^\beta(\Q)$, then $\langle T\rangle^{\om^\alpha}\in\tree^\xi(\Q)$.
This concludes the proof.
\end{proof}


\subsection{Proof of Lemma \ref{lem:stable-NSD}}
\label{sec:proof-main-infinite}

It remains to show Lemma \ref{lem:stable-NSD}.
The statement of this lemma resembles Lemma \ref{lem:init-join}, that is, it looks like a transfinite version of Lemma \ref{lem:init-join}.
Nevertheless, our proof requires a very different argument.
The notation for the proof get a bit more complicated than in Lemma \ref{lem:init-join}, as one needs to keep track of $\om$-iterations of the jump.
However, it still is much simpler than Duparc's \cite{Dupxx} proof for $\Q=2$.


\subsubsection{Proof of Lemma \ref{lem:stable-NSD} (for $\alpha\leq 1$)}

Throughout this subsection, for notational simplicity, we always assume that $\B^{\not\sim}\equiv_w\B^{\not\sim\emptyset}$ for any function $\B$.
We will deal with nonempty oracles in the next Section \ref{sec:proof-infinite-general-alpha}.

We first consider the base case $\alpha=0$.
Recall that $\A$ is $0$-stable if and only if it is Wadge equivalent to an initializable function.
Lemma \ref{lem:init-join} states that if $\A$ is $0$-stable, then $\A^{\not\sim}$ is $\sigma$-join-irreducible.
In the proof of Lemma \ref{lem:init-join} we showed that if $\B$ is actually initializable $\si\in \om^{<\om}$ forces its jump, then $\B^{\not\sim}\leq_w \B^{\not\sim}\upto J(\sigma)$, and that if $X\in \om^\om$ forces its jump, then $\J(X)\in \F(\B^{\not\sim})$.
Since every string can be extended to one that forces its jump, the set of $X\in \om^\om$ which force their jump is dense.
We thus get that the set of $X$ such that $\J(X)\in \F(\B^{\not\sim})$ is dense.
In the case when $\A$ is not actually initializable, but Wadge equivalent to an initializable, recall from Lemma \ref{lem:strategy-remains} that $\A\equiv_w\A\upto \F(\A)$.
(Recall Observation \ref{obs:total-wadge} for how to deal with functions whose domain is a closed set as if their domain was all of $\om^\om$.)

Then, the proof of Lemma \ref{lem:init-join} actually implies that
\begin{align}\label{equ:stable-dense0}
\mbox{if $\A$ is $0$-stable, then $\{X: \J(X)\in \F((\A\upto\F(\A))^{\not\sim})$ is dense in $\F(\A)$.}
\end{align}

In the rest of this section, we give a proof of  Lemma \ref{lem:stable-NSD} for $\alpha=1$, that is, if $\A$ is $1$-stable, then $\A^{\not\sim\om}$ is $\sigma$-join-irreducible.
By definition, $\A$ is $1$-stable if and only if $\A$ is Wadge equivalent to an initializable function and $\A^{\not\sim}\equiv_w\A$.
The latter condition is equivalent to that $\A^{\not\sim n}\equiv_w\A$ for any $n\in\om$.
Given a $1$-stable function $\A$, we inductively define a $\mathcal{Q}$-valued function $\A_n$ by 
\[
\A_0=\A
\quad \mbox{ and }\quad
\A_{n+1}=(\A_n\upto\F(\A_n))^{\not\sim}.
\]

\begin{observation}\label{obs:stable-1}
If $\A$ is $1$-stable, then $\A\equiv_w\A_n$ for any $n\in\om$.
\end{observation}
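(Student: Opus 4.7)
The plan is to prove the observation by induction on $n$, using that $1$-stability is a Wadge-invariant property together with Lemmas \ref{lem:strategy-remains} and \ref{lem:jump-inv}(2).

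For the base case $n=0$ we have $\A_0=\A$ by definition, so there is nothing to show. For the inductive step, suppose $\A_n\equiv_w\A$. Since being $1$-stable is Wadge-invariant (as remarked right after the definition of $\alpha$-stability), $\A_n$ is itself $1$-stable; in particular it is Wadge equivalent to an initializable function. Then the ``furthermore'' clause of Lemma \ref{lem:strategy-remains} applies to give
\[
\A_n\upto\F(\A_n)\equiv_w\A_n.
\]
Applying Lemma \ref{lem:jump-inv}(2) in both directions to this equivalence yields
\[
\A_{n+1}=(\A_n\upto\F(\A_n))^{\not\sim}\equiv_w \A_n^{\not\sim}.
\]
Again by Lemma \ref{lem:jump-inv}(2) and the inductive hypothesis $\A_n\equiv_w\A$, we have $\A_n^{\not\sim}\equiv_w\A^{\not\sim}$. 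Finally, $1$-stability of $\A$ gives $\A^{\not\sim}\equiv_w\A$. Chaining these three equivalences produces $\A_{n+1}\equiv_w\A$, completing the induction.

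The argument is essentially mechanical; the only point worth flagging is that we must know $\A_n$ itself (not just $\A$) falls under the hypothesis of Lemma \ref{lem:strategy-remains}. This is precisely the reason to invoke Wadge-invariance of ``being Wadge equivalent to an initializable function'' at each step, so that the operator $\B\mapsto(\B\upto\F(\B))^{\not\sim}$ can legitimately be iterated without losing this property. No further obstacle is anticipated.
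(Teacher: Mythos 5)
Your proof is correct and follows essentially the same route as the paper's: an induction using the Wadge-invariance of $1$-stability, the ``furthermore'' clause of Lemma \ref{lem:strategy-remains} to get $\A_n\upto\F(\A_n)\equiv_w\A_n$, and the $\not\sim$-condition in the definition of $1$-stability to cancel the jump inversion. The only difference is cosmetic — the paper notes that $\A_n\upto\F(\A_n)$ is itself $1$-stable and applies $\not\sim$ there, while you chain the equivalences through $\A_n^{\not\sim}\equiv_w\A^{\not\sim}\equiv_w\A$ — but the ingredients and structure are identical.
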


\begin{proof}
Recall that by Lemma \ref{lem:strategy-remains}, a function $\B$ is Wadge equivalent to an initalizable function if and only if $\B\equiv_w\B\upto\F(\B)$.
Fix $n$, and inductively assume that $\A\equiv_w\A_n$.
Therefore, we have $\A\upto\F(\A)\equiv_w\A_n\upto\F(\A_n)$.
Since $\A$ is $1$-stable, in particular, $\A$ is Wadge equivalent to an initializable function.
Then, $\A\equiv_w\A\upto\F(\A)$, and therefore, $\A_n\upto\F(\A_n)$ is $1$-stable.
Thus, $\A_{n+1}\equiv_w\A_n\upto\F(\A_n)$, and therefore $\A_{n+1}\equiv_w\A$.
\end{proof}

In particular, $\A_n$ is initializable for any $n\in\om$.
Thus, the property (\ref{equ:stable-dense0}) implies that
\begin{align}\label{equ:stable-dense1}
\mbox{if $\A$ is $1$-stable, then $\mathcal{J}^{-1}[\F(\A_{n+1})^{\not\sim})]$ is dense in $\F(\A_n)$ for any $n\in\om$.}
\end{align}

For notational convenience, we define 
\[
\F_{n,n}=\F(\A_n)
\quad  \mbox{and}\quad 
\F_{n+1,n}=dom(\A_{n+1}) = \mathcal{J}[\F_{n,n}].
\]
Note that $\F_{n+1,n}$ is the domain of $\A_{n+1,n+1}$, and $\F_{n,n}$ and $\F_{n+1,n}$ are closed sets (since $\mathcal{J}^{-1}$ is continuous).
In the following diagram, the arrow $\hookrightarrow$ indicates the inclusion map.

\[
\xymatrix{
	&	& \F_{0,0} \ar@{^{(}->}[r]\ar[d]^{\mathcal{J}}	& \om^\om  \ar[rddd]^{\A_0}		\\
	& \F_{1,1} \ar@{^{(}->}[r]\ar[d]^{\mathcal{J}}	& \F_{1,0}  \ar[rrdd]^{\A_1}		\\
 \F_{2,2} \ar@{^{(}->}[r]\ar[d]^{\mathcal{J}}	& \F_{2,1}  \ar[rrrd]^{\A_2}		\\
\iddots &	&	&		&	\Q
}\]

We also define $\F_{0,1}=\mathcal{J}^{-1}[\F_{1,1}]$ which is included in $\F_{0,0}$, and in general $\F_{n,n+1}=\mathcal{J}^{-1}[\F_{n+1,n+1}]$, which is not necessarily closed.
By using these notations, the property (\ref{equ:stable-dense1}) can be rephrased as:
If $\A$ is $1$-stable, then $\F_{n,n+1}$ is dense in $\F_{n,n}$.
Therefore, we have the following commutative diagram.

\[
\xymatrix{
 & \F_{n,n+1} \;\ar@{^{(}->}[r]^{{\rm dense}} & \F_{n,n} \ar[d]^{\mathcal{J}} \;\ar@{^{(}->}[r] & \F_{n,n-1} \;\ar[rr]^{\A_n} & & \Q\\
\F_{n+1,n+2}  \;\ar@{^{(}->}[r]^{{\rm dense}} & \F_{n+1,n+1} \ar[u]^{\mathcal{J}^{-1}} \;\ar@{^{(}->}[r] & \F_{n+1,n} \;\ar[rrru]_{\A_{n+1}} & & 
}
\]

Generally, for $m<n$, we define $\F_{m,n}=\mathcal{J}^{-1}[\F_{m+1,n}]$.
In particular, $\F_{0,n}=\mathcal{J}^{-n}[\F_{n,n}]$.
This gives a decreasing sequence $(\F_{0,n})_{n\in\om}$.
Define $\F_{0,\om}=\bigcap_{n\in\om}\F_{0,n}$.
In other words,
\[\F_{0,\om}=\{X\in\om^\om:(\forall n\in\om)\;\mathcal{J}^n(X)\in\F_{n,n}\}.\]

Then we define $\F_{\om,\om}=\mathcal{J}^\om[\F_{0,\om}]$.

\[
\xymatrix{
\F_{\om,0} \ar@{^{(}->}[r] \ar[dddd]^{\J^\om}	& \cdots\cdots\cdots\ar@{^{(}->}[r]^{\ \ \ {\rm dense}} 		&\F_{2,0} \ar@{^{(}->}[r]^{{\rm dense}}& \F_{1,0}\ar@{^{(}->}[r]^{{\rm dense}}	& \F_{0,0} \ar@{^{(}->}[r]\ar[d]^{\J}	& \om^\om  \ar[rddd]^{\A_0}		\\
&\cdots\cdots\cdots \ar@{^{(}->}[r]^{\ \ \ {\rm dense}} & \F_{2,1} \ar@{^{(}->}[r]^{{\rm dense}} \ar[u]_{\J^{-1}}	& \F_{1,1} \ar@{^{(}->}[r]\ar[d]^{\J}\ar[u]_{\J^{-1}}	& \F_{1,0}  \ar[rrdd]^{\A_1}		\\
&\cdots\cdots\cdots \ar@{^{(}->}[r]^{\ \ \ {\rm dense}} & \F_{2,2} \ar@{^{(}->}[r]\ar[d]^{\J}	\ar[u]_{\J^{-1}}	& \F_{2,1}  \ar[rrrd]^{\A_2}		\\
&\iddots\iddots&\iddots  \ar@{.>} [rrrr] &	&	&		&	\Q & \\ 
\F_{\om,\om} \ar@{^{(}->}[r]  & \F(\A^{\not\sim\om}) \ar@{.>}^{\A^{\not\sim\om}} [rrrrru] 
}\]

Hereafter, we identify the closed set $\F_{n,n}$ with the pruned tree whose infinite paths are exactly the elements of $\F_{n,n}$.

We devote the rest of this section to prove the following claim:
\begin{align}\label{equ:stable-dense2}
\mbox{If $\A$ is $1$-stable, then $\{X: \mathcal{J}^\om(X)\in  \F(\A^{\not\sim\om})\}$ is dense in $\F(\A)$.}
\end{align}
(Recall that $\F(\A)=\F_{0,0}$.)
Clearly, the claim (\ref{equ:stable-dense2}) entails Lemma \ref{lem:stable-NSD} for $\alpha=1$ as it implies that $\F(\A^{\not\sim\om})\neq\emptyset$.
Our strategy has two steps: 
First to to prove that $\F_{0,\om}$ is dense in $\F(\A)$.
Second to prove that $(\mathcal{J}^\om)^{-1}[\F(\A^{\not\sim\om})]  \supseteq \F_{0,\om} $ by showing that $\F_{\om,\om}\subseteq\F(\A^{\not\sim\om})$ and using that $(\mathcal{J}^\om)^{-1}[\F_{\om,\om}] = \F_{0,\om}$.

\begin{lemma}\label{lem:omega-generic-1}
If $\A$ is $1$-stable, then $\mathcal{F}_{0,\om}$ is dense in $\F_{0,0}$.
\end{lemma}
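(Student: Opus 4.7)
The plan is to first upgrade equation (\ref{equ:stable-dense1}) — namely, $\F_{n,n+1}$ is dense in $\F_{n,n}$ for every $n$ — to the pulled-back statement that $\F_{0,n+1}$ is dense in $\F_{0,n}$ for every $n$, and then to produce the required point of $\F_{0,\om}\cap[\tau_0]$ by a standard nested-extension construction.

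For the first step, recall from property (2) of $\J$ in Section \ref{sec:Turing-jump} that $\J\colon\om^\om\to\J[\om^\om]$ is a homeomorphism onto a closed image; iterating, $\J^n\colon\om^\om\to\J^n[\om^\om]$ is a homeomorphism as well. The defining identities $\F_{0,n}=\J^{-n}[\F_{n,n}]$ and
\[
\F_{0,n+1}=\J^{-(n+1)}[\F_{n+1,n+1}]=\J^{-n}\bigl[\J^{-1}[\F_{n+1,n+1}]\bigr]=\J^{-n}[\F_{n,n+1}]
\]
show that $\J^n$ restricts to a homeomorphism of $\F_{0,n}$ onto $\F_{n,n}$ that carries $\F_{0,n+1}$ onto $\F_{n,n+1}$. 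Since homeomorphisms preserve density, the input density of $\F_{n,n+1}$ in $\F_{n,n}$ pulls back to density of $\F_{0,n+1}$ in $\F_{0,n}$.

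For the second step, fix any $\tau_0\in\om^{<\om}$ with $[\tau_0]\cap\F_{0,0}\neq\emptyset$. The plan is to build inductively a nested sequence $\tau_0\subseteq\tau_1\subseteq\tau_2\subseteq\cdots$ with $|\tau_n|>|\tau_{n-1}|$ and $[\tau_n]\cap\F_{0,n}\neq\emptyset$. Given $\tau_{n-1}$ witnessing $[\tau_{n-1}]\cap\F_{0,n-1}\neq\emptyset$, the density just established yields a $Y\in[\tau_{n-1}]\cap\F_{0,n}$; set $\tau_n=Y\upto(|\tau_{n-1}|+1)$. Let $X=\bigcup_n\tau_n\in\om^\om$; clearly $\tau_0\subseteq X$. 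For each fixed $n$ and every $m\geq n$, the inclusion $\F_{0,m}\subseteq\F_{0,n}$ gives $[\tau_m]\cap\F_{0,n}\neq\emptyset$, so $X$ lies in the topological closure of $\F_{0,n}$. But $\F_{0,n}=\J^{-n}[\F_{n,n}]$ is closed in $\om^\om$ as the preimage of the closed set $\F_{n,n}$ under the continuous map $\J^n$, whence $X\in\F_{0,n}$. Since this holds for every $n$, $X\in\F_{0,\om}\cap[\tau_0]$, proving density.

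The main obstacle, such as it is, will be just the index bookkeeping — verifying that $\J^n$ delivers the claimed homeomorphism between $\F_{0,n}$ and $\F_{n,n}$, and that it also carries $\F_{0,n+1}$ onto $\F_{n,n+1}$. Once that is sorted out, the rest is a routine fusion argument exploiting the fact that each $\F_{0,n}$ is closed in $\om^\om$.
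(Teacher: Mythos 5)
There is a genuine gap, and it sits exactly where the paper itself posts a warning sign. Your fusion argument hinges on the claim that $\F_{0,n}=\J^{-n}[\F_{n,n}]$ is ``closed in $\om^\om$ as the preimage of the closed set $\F_{n,n}$ under the continuous map $\J^n$.'' But $\J$ is the jump operator: it is only $\mathbf{\Sigma}^0_2$-measurable, not continuous --- only its inverse $\J^{-1}$ on the closed image $\J[\om^\om]$ is continuous. Consequently $\F_{0,n}=\{X:\J^n(X)\in\F_{n,n}\}$ is the preimage of a closed set under a highly discontinuous map, and the paper explicitly remarks that already $\F_{n,n+1}=\J^{-1}[\F_{n+1,n+1}]$ ``is not necessarily closed.'' With closedness gone, your final step collapses: knowing that $X=\bigcup_n\tau_n$ lies in the topological closure of each $\F_{0,n}$ gives nothing, because $\J^n$ does not commute with limits --- the witnesses $Y_m\in[\tau_m]\cap\F_{0,n}$ converge to $X$, but $\J^n(Y_m)$ need not converge to $\J^n(X)$, so there is no way to conclude $\J^n(X)\in\F_{n,n}$. (Your first step, by contrast, is essentially fine: although $\J^n$ is not a homeomorphism, the continuity of $\J^{-n}$ alone pushes the density of $\F_{n,n+1}$ in $\F_{n,n}$ forward to density of $\F_{0,n+1}$ in $\F_{0,n}$.)

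The paper's proof is built precisely to dodge this obstacle. Instead of choosing independent witnesses at level $0$, it constructs finite strings $\sigma_n\in\F_{n,n}$ \emph{at level $n$} and forces coherence across all levels simultaneously: $(\J^{n-m})^{-1}(\sigma_n)\segment(\J^{n-m+1})^{-1}(\sigma_{n+1})$ for every $m\leq n$. This makes the pullbacks at each level $m$ an increasing chain of strings extendible in $\F_{m,m}$, and the continuity and injectivity of $\J^{-m}$ yield the identity $\J^m(X)=\bigcup_{n\geq m}(\J^{n-m})^{-1}(\sigma_n)$ for $X=\bigcup_n(\J^n)^{-1}(\sigma_n)$. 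Closedness is then invoked only for the sets $\F_{m,m}$ --- which genuinely are closed --- to conclude $\J^m(X)\in\F_{m,m}$ for all $m$, i.e.\ $X\in\F_{0,\om}$. To repair your argument you would have to add exactly this cross-level bookkeeping; the ``routine fusion'' shortcut is not available here.
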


\begin{proof}
Fix $\sigma\in\F_{0,0}$ and put $\sigma_{0}=\sigma$.
We will construct a sequence $(\sigma_n)_{n\in\om}$ of finite strings such that $\sigma_n\in\F_{n,n}$, and
\[(\mathcal{J}^{n-m})^{-1}(\sigma_n)\segment(\mathcal{J}^{n-m+1})^{-1}(\sigma_{n+1})\in\F_{m,m}\]
for any $m\leq n$.
Then we will define $X:=\bigcup_n(\mathcal{J}^n)^{-1}(\sigma_n)$ and ensure that $X\in\F_{0,\om}$, that is, $\mathcal{J}^n(X)\in\F_{n,n}$.
Given $n$, inductively assume that $\sigma_n\in\F_{n,n}$.
Now, by the property (\ref{equ:stable-dense1}), $\F_{n,n+1}$ is dense in $\F_{n,n}$ for any $n\in\om$.
Since $\F_{n,n+1}=\mathcal{J}^{-1}[\F_{n+1,n+1}]$, there is $Y\in\F_{n+1,n+1}$ such that 
\[\sigma_n\ssegment\mathcal{J}^{-1}(Y)\in\F_{n,n}.\]
Since $\mathcal{J}^{-1}$ is continuous, we can find an initial segment $\sigma_{n+1}\ssegment Y$ such that $\sigma_n\segment\mathcal{J}^{-1}(\sigma_{n+1})$.
Clearly $\sigma_{n+1}\in\F_{n+1,n+1}$.
For every $m\leq n$, by continuity of $(\mathcal{J}^{n-m})^{-1}$, we also have
\[(\mathcal{J}^{n-m})^{-1}(\sigma_n)\segment(\mathcal{J}^{n-m})^{-1}\circ\mathcal{J}^{-1}(\sigma_{n+1})=(\mathcal{J}^{n-m+1})^{-1}(\sigma_{n+1})\]
and $(\mathcal{J}^{n-m})^{-1}(\sigma_n)$ is extendible in $(\mathcal{J}^{n-m})^{-1}[\F_{n,n}]=\F_{m,n}\subseteq\F_{m,m}$, that is, $(\mathcal{J}^{n-m})^{-1}(\sigma_n)\in\F_{m,m}$ as wanted.

For $X=\bigcup_n(\mathcal{J}^n)^{-1}(\sigma_n)$, we claim that $\mathcal{J}^m(X)=Y_m:=\bigcup_{n\geq m}(\mathcal{J}^{n-m})^{-1}(\sigma_n)$.
This is because we have 
\[(\mathcal{J}^m)^{-1}(Y_m)=\bigcup_n(\mathcal{J}^m)^{-1}\circ(\mathcal{J}^{n-m})^{-1}(\sigma_n)=\bigcup_n(\mathcal{J}^{n})^{-1}(\sigma_n)=X.\]
The first equality is due to continuity of $(\mathcal{J}^{m})^{-1}$ and the property that $((\mathcal{J}^{n-m})^{-1}(\sigma_n))_{n\geq m}$ is increasing.
Therefore $\mathcal{J}^m(X)=Y_m$.
Since $(\mathcal{J}^{n-m})^{-1}(\sigma_n)\in\F_{m,m}$, and $\F_{m,m}$ is closed, we have $\mathcal{J}^m(X)\in \F_{m,m}$ for all $m\in\om$, and therefore $\sigma\segment X\in\F_{0,\om}$.
This shows that $\mathcal{F}_{0,\om}$ is dense in $\F_{0,0}$.
\end{proof}

\begin{lemma}\label{lem:omega-initializable-1}
If $\A$ is $1$-stable, then $\F_{\om,\om}\subseteq\F(\A^{\not\sim\om})$.
\end{lemma}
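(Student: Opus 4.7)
The plan is to transfinitely iterate the jump-inversion argument of Lemma \ref{lem:init-join}. Fix $Y\in\F_{\om,\om}$ and write $Y=\J^\om(X)$ with $X\in\F_{0,\om}$. To show $Y\in\F(\A^{\not\sim\om})$ it suffices, for every prefix $\tau\sqsubseteq Y$ of some length $k$, to build a continuous Wadge reduction $\A^{\not\sim\om}\leq_w\A^{\not\sim\om}\upto[\tau]$ (the converse direction is the identity).

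The first step is to decode $\tau$: by the formula $\J^\om(X)(n)=\J^{n+1}(X)(0)$ together with the way lower jumps are coded inside the higher ones, the single prefix $\tau$ effectively determines finite initial segments $\sigma_n\sqsubseteq\J^n(X)$ for $1\le n\le k$ having the property that, for any $X'$, $\tau\sqsubseteq\J^\om(X')$ if and only if $\sigma_n\sqsubseteq\J^n(X')$ for every $n\le k$. Since $\J^n(X)\in\F_{n,n}$, each $\sigma_n$ is extendible in $\F_{n,n}$, and by enlarging (using the denseness of forcing for $\J$) we may assume that each $\sigma_n$ also forces its $\J$-jump relative to a fixed oracle $C$.

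Now by Observation \ref{obs:stable-1} and Lemma \ref{lem:strategy-remains}, every $\A_n\upto\F_{n,n}$ is Wadge equivalent to an initializable function; hence, exactly as in the proof of Lemma \ref{lem:init-join}, for each $n\le k$ we obtain a continuous reduction $\theta_n$ witnessing $\A_n\upto\F_{n,n}\leq_w\A_n\upto(\F_{n,n}\cap[\sigma_n])$. These reductions are combined levelwise into a single reduction at the top of the tower. Starting at level $k$ with $\theta_k$, the fact that $\sigma_k$ forces its jump converts $\theta_k$, via the $\mathbf{\Sigma}^0_2$-universality of $\J$ from the right, into a continuous witness of $\A_{k-1}\upto\F_{k-1,k-1}\leq_w\A_{k-1}\upto(\F_{k-1,k-1}\cap[\J(\sigma_k)])$; composing with $\theta_{k-1}$ gives the next layer, and iterating $k$ times produces a continuous reduction at the $\A_0=\A$ level. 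Pulling back through the continuous map $\J^{-\om}\colon\J^\om[\om^\om]\to\om^\om$ then yields the desired $\Phi$ mapping $\J^\om[\om^\om]$ into $\J^\om[\om^\om]\cap[\tau]$ with $\A^{\not\sim\om}\leq_w\A^{\not\sim\om}\circ\Phi$.

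The main obstacle will be the bookkeeping of this iteration: at each level one has to check that the reduction produced at level $n$, when pushed down by the universality-from-the-right trick, actually lands in the cone $[\sigma_{n-1}]$ needed at the next stage, and that its image stays inside $\F_{n-1,n}$ so that $\J$ remains applicable further down. This is precisely where the hypothesis of $1$-stability enters: the equivalences $\A_n\equiv_w\A$ coming from Observation \ref{obs:stable-1} let one transport reductions freely between consecutive levels, and the density of $\F_{n,n+1}$ in $\F_{n,n}$ (established inside Lemma \ref{lem:omega-generic-1}) lets the construction thread through the entire tower of finite stages down to $\A=\A_0$.
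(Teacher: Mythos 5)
Your overall target is right (for each prefix $\tau$ of $Y=\J^\om(X)$, build a continuous reduction $\A^{\not\sim\om}\leq_w\A^{\not\sim\om}\upto[\tau]$, after decoding $\tau$ into initial segments $\sigma_n\sqsubseteq\J^n(X)$), but the assembly step has a genuine gap --- in fact two. First, your ``conversion'' runs the universality of $\J$ backwards. In Lemma \ref{lem:init-join} a continuous reduction on the \emph{domain} side of $\J$ is pushed \emph{up} to a continuous reduction on $\J[\om^\om]$; in the present tower, $\A_k=(\A_{k-1}\upto\F_{k-1,k-1})^{\not\sim}$ lives on the \emph{image} side, so transporting $\theta_k$ to level $k-1$ means conjugating to $\J^{-1}\circ\theta_k\circ\J$, which is only $\mathbf{\Sigma}^0_2$-measurable, not continuous (note also that the target cone at level $k-1$ would involve $\J^{-1}(\sigma_k)$; the string $\J(\sigma_k)$ lives at level $k+1$). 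Iterating downward therefore yields only a $\mathbf{\Sigma}^0_{k+1}$-measurable map at level $0$, and the final ``pull back through $\J^{-\om}$'' is a conjugation by the non-continuous map $\J^\om$; the continuity of the resulting map on $\J^\om[\om^\om]$ --- which is the whole content of the lemma --- is exactly what remains unproved. Second, you cannot ``enlarge $\sigma_n$ so that it forces its jump'': the $\sigma_n$ must remain initial segments of the fixed reals $\J^n(X)$, and a point of $\F_{0,\om}$ need not have jump-forcing initial segments at any level. Forcing was needed in Lemma \ref{lem:init-join} to \emph{find} points of $\F(\A^{\not\sim})$; here the point is already given, and no forcing is needed.

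The paper's proof avoids both problems by using a \emph{single} reduction at level $n=|\tau|$. Since $\J^n(X)(0)$ codes the first bits of all lower jumps, $\J^\om(X)\upto n$ is determined by $\J^n(X)\upto 1$, so it suffices to take one continuous $\theta_n$ witnessing $\A^{\not\sim n}\leq_w\A_n\upto[\J^n(X)\upto 1]$ (which exists simply because $\J^n(X)\in\F(\A_n)$ and $\A^{\not\sim n}\leq_w\A\equiv_w\A_n$ by Observation \ref{obs:stable-1}) and to set $\eta(\J^\om(Y))=\J^\om(\J^{-n}\circ\theta_n\circ\J^n(Y))$. Continuity of $\eta$ on $\J^\om[\om^\om]$ then follows because each bit $\J^k(\J^{-n}\circ\theta_n\circ\J^n(Y))(0)$ is arithmetic in $Y$ and hence uniformly computable from $\J^\om(Y)$. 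To salvage your levelwise composition you would have to carry out precisely this computability argument for the composite map anyway, at which point the intermediate downward conversions buy you nothing.
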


\begin{proof}
Take $Z\in \F_{\om,\om}$; We want to show that $Z\in \F(\A^{\not\sim\om})$.
Given $n\in\om$, we will define a continuous function $\eta:\mathcal{J}^\om[\om^\om]\to\mathcal{J}^\om[\om^\om]\cap[\mathcal{J}^{\om}(X)\upto n]$ witnessing that $\A^{\not\sim\om}\leq_w\A^{\not\sim\om}\upto[Z\upto n]$.

Let $X\in\om^\om=\mathcal{J}^{-\om}(Z)\in \F_{\om,0}$.
Note that $\mathcal{J}^{\om}(X)\upto n=\langle\mathcal{J}(X)\upto 1,\dots,\mathcal{J}^n(X)\upto 1\rangle$.
Recall that $\mathcal{J}^{\om}(X)\in\F_{\om,\om}$ if and only if $\mathcal{J}^n(X)\in\F_{n,n}=\F(\A_n)$ for all $n\in\om$.
Therefore, there is a continuous reduction 
\[
\theta_n\colon \A^{\not\sim n} \leq_w \A_n\upto[\mathcal{J}^n(X)\upto 1])
\] 
since $\A^{\not\sim n}\leq_w \A\leq_w A_n\leq_w A_n\upto [\mathcal{J}^n(X)\upto 1]$,using Observation \ref{obs:stable-1}.
The objective now is to define a continuous function $\eta$ so that it mimics $\theta_n$ when the input is the an $\om$-jump instead of an $n$-jump.
That is, we want $\eta$ so that 
\[
\eta(\mathcal{J}^\om(Y))(k)=\mathcal{J}^k(\mathcal{J}^{-n}\circ\theta_n\circ\mathcal{J}^n(Y))\upto 1.
\]
Since $\mathcal{J}^k(\mathcal{J}^{-n}\circ\theta_n\circ\mathcal{J}^n(Y))$ is clearly uniformly computable from $\mathcal{J}^\om(Y)$, it is not hard to define such function $\eta$. 
In other words, we have that $\eta(\mathcal{J}^\om(Y))=\mathcal{J}^\om(\mathcal{J}^{-n}\circ\theta_n\circ\mathcal{J}^n(Y))$.
Consequently,
\begin{multline*}
\A^{\not\sim\om}(\mathcal{J}^\om(Y))=\A(Y)=\A^{\not\sim n}(\mathcal{J}^n(Y))\leq_\Q\A^{\not\sim n}(\theta_n\circ\mathcal{J}^n(Y))\\
=\A(\mathcal{J}^{-n}\circ\theta_n\circ\mathcal{J}^n(Y))=\A^{\not\sim\om}(\mathcal{J}^\om(\mathcal{J}^{-n}\circ\theta_n\circ\mathcal{J}^n(Y)))=\A^{\not\sim\om}(\eta(\mathcal{J}^\om(Y))).
\end{multline*}
Since $\eta(\mathcal{J}^\om(Y))$ extends $\mathcal{J}^{\om}(X)\upto n$, this witnesses that $\A^{\not\sim\om}\leq_w\A^{\not\sim\om}\upto[\mathcal{J}^{\om}(X)\upto n]$.
\end{proof}

This concludes the proof of Lemma \ref{lem:stable-NSD} for $\alpha=1$.


\subsubsection{Proof of Lemma \ref{lem:stable-NSD} (for general $\alpha$)}\label{sec:proof-infinite-general-alpha}

In this section, we describe the proof of Lemma \ref{lem:stable-NSD} for general $\alpha$, which will be almost no different from the proof for $\alpha=1$.
This section is just for the sake of completeness.
We also explicitly describe how to deal with oracles.

Now, fix a countable ordinal $\alpha$.
By induction, we assume that we have already shown the following claim for any $\beta<\alpha$, if $\A$ is $\beta$-stable, then for any oracle $D$, there is an $\om^\beta$-oracle $C\geq_TD$ such that
\begin{align}\label{equ:stable-dense4}
\mbox{    $\{X: \mathcal{J}^{\om^\beta,C}(X) \in \F((\A\upto\F(\A))^{\not\sim\om^\beta,C})\}$ is dense in $\F(\A)$.}
\end{align}

We now fix an $\alpha$-stable function $\A$.
We will define oracles $(C_n)_{n\in\om}$.
Then, for notational simplicity, we will use the following notations:
\[\J_n=\J^{\om^{\alpha[n],C_n}},\qquad \B^{\not\sim_n}=\B^{\not\sim\om^{\alpha[n],C_n}}.\]

As in the precious section, we inductively define a $\mathcal{Q}$-valued function $\A_n$ and a closed set $\F_{n,n}$ as follows:
\begin{align*}
&\A_0=\A, & &\F_{0,0}=\F(\A).\\
&\A_{n+1}=(\A_n\upto\F_{n,n})^{\not\sim_n},& &\F_{n+1,n+1}=\F(\A_{n+1}).
\end{align*}

To define $\A_{n+1}$, we need to specify oracles $(C_m)_{m\leq n}$.
Before defining these oracles, we introduce several notations.
We define
\begin{align*}
\J_{[m,n)}&=\J_{n-1}\circ \J_{n-2}\circ\dots\circ\J_{m+1}\circ\J_m,\\
\B^{\not\sim}_{[m,n)}&=((\dots(\B^{\not\sim_m})^{\not\sim_{m+1}}\dots)^{\not\sim_{n-2}})^{\not\sim_{n-1}}.
\end{align*}

Note that the sequences defined in the previous section satisfy $\A_n=\A^{\not\sim n}\upto\F_{n,n-1}$.
Now, in our new definition, $\A^{\not\sim n}$ is replaced with $\A^{\not\sim}_{[0,n)}$, that is,
\[\A_n=\A^{\not\sim}_{[0,n)}\upto\F_{n,n-1}\mbox{, where }\F_{n,n-1}=\J[\F_{n-1,n-1}].\]

We now start to define a sequence $(C_n)_{n\in\om}$ of oracles.
Let $C_{-1}$ be an oracle such that $\A^{\not\sim\om^\alpha}\equiv_w\A^{\not\sim\om^\alpha,C_{-1}}$.
Define $\A_0=\A$, and assume that $(C_m)_{m<n}$ are defined, and $\A\equiv_w\A_n$ as in Observation \ref{obs:stable-1}.
In particular, $\A_n$ is $\alpha$-stable.
Then, by induction hypothesis (\ref{equ:stable-dense4}), and initializability of $\A_n$, there is an oracle $C\geq_TC_{n-1}$ such that
\begin{enumerate}
\item[(a)] $(\A_n\upto\F_{n,n})^{\not\sim\om^{\beta[n]},C}\equiv_w(\A_n\upto\F_{n,n})^{\not\sim\om^{\beta[n]}}$.
\item[(b)] $(\mathcal{J}^{\om^{\beta[n]},C})^{-1}[\F((\A_n\upto\F_{n,n})^{\not\sim\om^{\beta[n]},C})]$ is dense in $\F_{n,n}$.
\item[(c)] For any $\sigma\in\F_{n,n}$, there is a $C$-computable Wadge-reduction  $\A^{\not\sim}_{[0,n)} \leq_w \A_n\upto\F_{n,n}\cap[\sigma]$ (recall our proof of Lemma \ref{lem:omega-initializable-1}).
\end{enumerate}

Define $C_n=C$ for such $C$, and then define $\C=(C_n)_{n\in\om}$.
We also define $(\F_{m,n})_{m,n\in\om}$ as in the previous section.
Then, for instance, the above condition (b) can be rephrased as:
$\F_{n,n+1}$ is dense in $\F_{n,n}$.
We then get the following commutative diagram:
\[
{\footnotesize \xymatrix{
\cdots\;\ar@{^{(}->}[r] & \F_{n,n+1} \;\ar@{^{(}->}[r]^{{\rm dense}} & \F_{n,n} \ar[d]^{\mathcal{J}_n} \;\ar@{^{(}->}[r] & \F_{n,n-1} \ar `u_r[rrrr] `_d[rrrr] ^{\A_n} [rrrr] 
\ar[d]^{\mathcal{J}_n} \;\ar@{^{(}->}[r] & \;\cdots \;\;\ar@{^{(}->}[r] & \mathcal{J}_{[0,n)}[\om^\om] \ar[d]^{\mathcal{J}_n}\;\ar[rr]^{\A^{\not\sim}_{[0,n)}} & & \om^\om\\
\cdots\;\ar@{^{(}->}[r]^{\!\!\!\!\!\!\!\!{\rm dense}} & \F_{n+1,n+1} \ar[u]^{\mathcal{J}_n^{-1}} \;\ar@{^{(}->}[r] & \F_{n+1,n} \ar `d^r[rrrrru] `^u[rrrrru] _{\A_{n+1}} [rrrrru]  \;\ar@{^{(}->}[r] & \F_{n+1,n-1} \;\ar@{^{(}->}[r] & \;\cdots \;\;\ar@{^{(}->}[r] & \mathcal{J}_{[0,n+1)}[\om^\om]\;\ar[rru]_{\A^{\not\sim}_{[0,n+1)}} & & 
}}
\]

Define $\F_{0,\om}=\bigcap_{n\in\om}\F_{0,n}$.
In other words,
\[\F_{0,\om}=\{X\in\om^\om:(\forall n\in\om)\;\mathcal{J}^{[0,n)}(X)\in\F_{n,n}\}.\]

Then we define $\F_{\om,\om}=\mathcal{J}^\om[\F_{0,\om}]$.
As in the previous section, we will show the following claim:
\begin{align}\label{equ:stable-dense5}
\mbox{$(\mathcal{J}^{\om^\alpha,\C})^{-1}[\F(\A^{\not\sim\om^\alpha,\C})]$ is dense in $\F(\A)$.}
\end{align}
The claim (\ref{equ:stable-dense5}) entails that $\F(\A^{\not\sim\om^\alpha,\C})$ is nonempty, and therefore $\A^{\not\sim\om^\alpha,\C}$ is $\sigma$-join-irreducible by Proposition \ref{prop:sji-deg-pres}.
Here, since $\C\geq_TC_{-1}$, we have that $\A^{\not\sim\om^\alpha,\C}\equiv_w\A^{\not\sim\om^\alpha}$.
Therefore, the claim (\ref{equ:stable-dense5}) implies that $\A^{\not\sim\om^\alpha}$ is $\sigma$-join-irreducible as desired.
Hence, it suffices to show the claim (\ref{equ:stable-dense5}) to prove Lemma \ref{lem:stable-NSD}.
We will use almost the same strategy as in the previous section.

\begin{lemma}\label{lem:omega-generic-1-general}
$\mathcal{F}_{0,\om}$ is dense in $\F_{0,0}$.
\end{lemma}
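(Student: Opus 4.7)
The plan is to adapt the proof of Lemma \ref{lem:omega-generic-1} essentially verbatim, replacing the finite iterate $\mathcal{J}^n$ by the composed operator $\mathcal{J}_{[0,n)} = \mathcal{J}_{n-1}\circ\cdots\circ\mathcal{J}_0$. The density property (b) built into our choice of oracles $C_n$ states exactly that $\F_{n,n+1}$ is dense in $\F_{n,n}$ for every $n$, which is the transfinite analogue of property (\ref{equ:stable-dense1}) that drove the proof for $\alpha=1$. Beyond this substitution, the argument is identical, so the only thing to check is that the compatibility bookkeeping for the iterated jumps still goes through.

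First, I would fix $\sigma\in\F_{0,0}$ and set $\sigma_0=\sigma$. Inductively I construct a sequence $(\sigma_n)_{n\in\om}$ of finite strings such that $\sigma_n\in\F_{n,n}$ and, for every $m\le n$,
\[
\mathcal{J}_{[m,n)}^{-1}(\sigma_n)\segment\mathcal{J}_{[m,n+1)}^{-1}(\sigma_{n+1})\in\F_{m,m}.
\]
At stage $n+1$, density of $\F_{n,n+1}=\mathcal{J}_n^{-1}[\F_{n+1,n+1}]$ in $\F_{n,n}$ gives some $Y\in\F_{n+1,n+1}$ with $\sigma_n\ssegment\mathcal{J}_n^{-1}(Y)\in\F_{n,n}$. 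Since $\mathcal{J}_n^{-1}$ is continuous, I can take a long enough initial segment $\sigma_{n+1}\ssegment Y$ so that $\sigma_n\segment\mathcal{J}_n^{-1}(\sigma_{n+1})$; then $\sigma_{n+1}\in\F_{n+1,n+1}$, and applying continuity of $\mathcal{J}_{[m,n)}^{-1}$ transports the containment down to every level $m\le n$ and places $\mathcal{J}_{[m,n)}^{-1}(\sigma_n)$ inside $\F_{m,n}\subseteq\F_{m,m}$, as desired.

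Next I set $X=\bigcup_n \mathcal{J}_{[0,n)}^{-1}(\sigma_n)$. The compatibility established above makes this a legitimate real extending $\sigma$. To verify $X\in\F_{0,\om}$, I would check that $\mathcal{J}_{[0,m)}(X)=Y_m:=\bigcup_{n\ge m}\mathcal{J}_{[m,n)}^{-1}(\sigma_n)$, exactly as in Lemma \ref{lem:omega-generic-1}: apply $\mathcal{J}_{[0,m)}^{-1}$ to the right-hand side, use continuity of $\mathcal{J}_{[0,m)}^{-1}$ together with the increasing nature of the sequence, and recover $X$. Since each $\mathcal{J}_{[m,n)}^{-1}(\sigma_n)$ lies in the closed set $\F_{m,m}$ and $Y_m$ is a limit of these, $\mathcal{J}_{[0,m)}(X)\in\F_{m,m}$ for every $m$, so $X\in\F_{0,\om}$ and it extends $\sigma$. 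This proves density.

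I do not expect any genuine obstacle: the substitution of composed jump operators for iterated single jumps preserves continuity, invertibility on the relevant images, and the commuting diagram that the oracle choice was designed to guarantee. The one subtlety is purely notational — one must be careful that the coherence condition at stage $n$ is strong enough to ensure that $\mathcal{J}_{[0,m)}(X)$ agrees with $Y_m$ for all $m$ simultaneously, and this is secured by the uniform continuity argument above.
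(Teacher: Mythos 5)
Your proposal is correct and follows exactly the same route as the paper: the paper's own proof of Lemma \ref{lem:omega-generic-1-general} is precisely the proof of Lemma \ref{lem:omega-generic-1} with $\mathcal{J}^{n-m}$ replaced by $\mathcal{J}_{[m,n)}$, driven by the density property (b) and concluded via the identity $\mathcal{J}_{[0,m)}(X)=\bigcup_{n\geq m}\mathcal{J}_{[m,n)}^{-1}(\sigma_n)$ together with closedness of $\F_{m,m}$. No gaps.
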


\begin{proof}
Fix $\sigma\in\F_{0,0}$ and put $\sigma_{0}=\sigma$.
We will construct a sequence $(\sigma_n)_{n\in\om}$ of finite strings such that $\sigma_n\in\F_{n,n}$, and
\[\mathcal{J}_{[m,n)}^{-1}(\sigma_n)\segment\mathcal{J}_{[m,n+1)}^{-1}(\sigma_{n+1})\in\F_{m,m}\]
for any $m\leq n$.
Then we will define $X:=\bigcup_n\mathcal{J}_{[0,n)}^{-1}(\sigma_n)$ and ensure that $X\in\F_{0,\om}$, that is, $\mathcal{J}_{[0,n)}(X)\in\F_{n,n}$.
Given $n$, inductively assume that $\sigma_n\in\F_{n,n}$.
Now, by the property (b), $\F_{n,n+1}$ is dense in $\F_{n,n}$ for any $n\in\om$.
Since $\F_{n,n+1}=\mathcal{J}_n^{-1}[\F_{n+1,n+1}]$, there is $Y\in\F_{n+1,n+1}$ such that 
\[\sigma_n\ssegment\mathcal{J}_n^{-1}(Y)\in\F_{n,n}.\]
Since $\mathcal{J}_n^{-1}$ is continuous, we can find an initial segment $\sigma_{n+1}\ssegment Y$ such that $\sigma_n\segment\mathcal{J}_n^{-1}(\sigma_{n+1})$.
Clearly $\sigma_{n+1}\in\F_{n+1,n+1}$.
For every $m\leq n$, by continuity of $\mathcal{J}_{[m,n)}^{-1}$, we also have
\[\mathcal{J}_{[m,n)}^{-1}(\sigma_n)\segment\mathcal{J}_{[m,n)}^{-1}\circ\mathcal{J}_n^{-1}(\sigma_{n+1})=\mathcal{J}_{[m,n+1)}^{-1}(\sigma_{n+1})\]
and $\mathcal{J}_{[m,n)}^{-1}(\sigma_n)$ is extendible in $\mathcal{J}_{[m,n)}^{-1}[\F_{n,n}]=\F_{m,n}\subseteq\F_{m,m}$, that is, $\mathcal{J}_{[m,n)}^{-1}(\sigma_n)\in\F_{m,m}$ as wanted.

For $X=\bigcup_n\mathcal{J}_{[0,n)}^{-1}(\sigma_n)$, we claim that $\mathcal{J}_{[0,m)}(X)=Y_m:=\bigcup_{n\geq m}\mathcal{J}_{[m,n)}^{-1}(\sigma_n)$.
This is because we have 
\[\mathcal{J}_{[0,m)}^{-1}(Y_m)=\bigcup_{n\geq m}\mathcal{J}_{[0,m)}^{-1}\circ\mathcal{J}_{[m,n)}^{-1}(\sigma_n)=\bigcup_n\mathcal{J}_{[0,n)}^{-1}(\sigma_n)=X.\]
The first equality is due to continuity of $\mathcal{J}_{[0,m)}^{-1}$ and the property that $(\mathcal{J}_{[m,n)}^{-1}(\sigma_n))_{n\geq m}$ is increasing.
Therefore $\mathcal{J}_{[0,m)}(X)=Y_m$.
Since $\mathcal{J}_{[m,n)}^{-1}(\sigma_n)\in\F_{m,m}$, and $\F_{m,m}$ is closed, we have $\mathcal{J}_{[0,m)}(X)\in \F_{m,m}$ for all $m\in\om$, and therefore $\sigma\ssegment X\in\F_{0,\om}$.
This shows that $\mathcal{F}_{0,\om}$ is dense in $\F_{0,0}$.
\end{proof}

For notational simplicity, we use the following notations:
\[\J_\om=\J^{\om^\alpha,\C},\qquad\A^{\not\sim}_\om=\A^{\not\sim\om^\alpha,\C}.\]

\begin{lemma}\label{lem:omega-initializable-1-general}
If $\A$ is $\alpha$-stable, then $\F_{\om,\om}\subseteq\F(\A^{\not\sim}_{\om})$.
\end{lemma}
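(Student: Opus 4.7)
The plan is to mirror the proof of Lemma \ref{lem:omega-initializable-1} almost verbatim, replacing the finite iterates $\mathcal{J}^n$ with the composed transfinite jumps $\J_{[0,n)} = \J_{n-1}\circ\cdots\circ\J_0$ and the one-step inversions $\A^{\not\sim n}$ with $\A^{\not\sim}_{[0,n)}$. The only real novelty is that we now have property (c) built into the choice of the oracles $C_n$, which plays the role of initializability at every finite stage.

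First I would fix $Z\in\F_{\om,\om}$, write $X=\J_\om^{-1}(Z)\in\F_{\om,0}$, and observe that $\J_{[0,n)}(X)\in\F_{n,n}$ for every $n$. To show $Z\in\F(\A^{\not\sim}_\om)$, it suffices to produce, for each $n$, a continuous reduction $\eta\colon\J_\om[\om^\om]\to\J_\om[\om^\om]\cap[Z\upto n]$ witnessing $\A^{\not\sim}_\om\leq_w\A^{\not\sim}_\om\upto[Z\upto n]$. Apply property (c) to the string $\sigma=\J_{[0,n)}(X)\upto 1$ to obtain a $C_{n-1}$-computable reduction $\theta\colon\A^{\not\sim}_{[0,n)}\leq_w\A_n\upto\F_{n,n}\cap[\sigma]$. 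Then, exactly as in Lemma \ref{lem:omega-initializable-1}, define
\[
\eta(\J_\om(Y))=\J_\om(Y'),\qquad Y':=\J_{[0,n)}^{-1}\circ\theta\circ\J_{[0,n)}(Y),
\]
so that $\J_\om(Y')(k)=\J_{[0,k+1)}(Y')(0)$ is uniformly continuously computable from $\J_\om(Y)$ via universality of the transfinite jumps from the right (Fact \ref{lem:lim-universal}). The reduction inequality $\A^{\not\sim}_\om(\J_\om(Y))=\A(Y)\leq_\Q\A_n(\theta(\J_{[0,n)}(Y)))=\A(Y')=\A^{\not\sim}_\om(\J_\om(Y'))$ then follows from the definition of $\theta$, using that $\theta(\J_{[0,n)}(Y))\in\F_{n,n}\subseteq\F_{n,n-1}$ so that $\A_n$ and $\A^{\not\sim}_{[0,n)}$ agree at this point.

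The main obstacle, and the only place where one has to be genuinely careful, is verifying that $\eta(\J_\om(Y))$ actually extends the full string $Z\upto n$, not merely agrees with $Z$ at position $n-1$. We only know a priori that $\J_{[0,n)}(Y')=\theta(\J_{[0,n)}(Y))$ extends $\J_{[0,n)}(X)\upto 1$. To propagate this agreement downward to positions $k<n-1$ (so that $\J_{[0,k+1)}(Y')(0)=\J_{[0,k+1)}(X)(0)=Z(k)$), I would invoke the coding property of the Marcone--Montalb\'an transfinite jump recalled at the start of Section \ref{ss: transfinite jump}: knowing $\J^{\om^{\alpha[n-1]},C_{n-1}}(W)(0)$ already determines sufficiently many initial bits of $W$ itself, so that agreement of $\J_{[0,n)}(Y')$ with $\J_{[0,n)}(X)$ in the first coordinate forces agreement of each $\J_{[0,k)}(Y')$ with $\J_{[0,k)}(X)$ in the first coordinate for $k\leq n$. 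If the one-bit margin $\sigma=\J_{[0,n)}(X)\upto 1$ is too tight to achieve this, one simply enlarges $\sigma$ to a longer initial segment of $\J_{[0,n)}(X)$ — property (c) is stated for arbitrary $\sigma\in\F_{n,n}$, so this costs nothing.

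Once density of $(\mathcal{J}_\om)^{-1}[\F(\A^{\not\sim}_\om)]\supseteq\F_{0,\om}$ in $\F(\A)$ is in hand (combining this lemma with Lemma \ref{lem:omega-generic-1-general}), claim \eqref{equ:stable-dense5} follows, and hence Lemma \ref{lem:stable-NSD} in the general case, completing the induction on $\alpha$ and thereby the proofs of Propositions \ref{prop:order-on-trees} and \ref{prop:sublemma1(2)->(1)} for infinite Borel rank.
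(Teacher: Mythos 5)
Your proposal is correct and follows essentially the same route as the paper: fix $n$, use property (c) of the oracle construction to get the reduction $\theta_n\colon\A^{\not\sim}_{[0,n)}\leq_w\A_n\upto[\J_{[0,n)}(X)\upto 1]$, and conjugate it by $\J_{[0,n)}$ to obtain the continuous map $\eta(\J_\om(Y))=\J_\om(\J_{[0,n)}^{-1}\circ\theta_n\circ\J_{[0,n)}(Y))$ landing in $[Z\upto n]$. The point you flag about propagating the one-bit agreement at level $n$ down to all $k<n$ is exactly the coding property of the true-stage jump that the paper uses implicitly (one bit of $\J_{[0,n)}$ already determines the first bit of each $\J_{[0,k)}$ for $k\leq n$), so your argument closes that step correctly and no enlargement of $\sigma$ is actually needed.
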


\begin{proof}
Fix $X\in\om^\om$ such that $\mathcal{J}_\om(X)\in\F_{\om,\om}$.
Given $n\in\om$, we will define a continuous function $\eta:\mathcal{J}_\om[\om^\om]\to\mathcal{J}_\om[\om^\om]\cap[\mathcal{J}_{\om}(X)\upto n]$ witnessing that $\A^{\not\sim}_\om\leq_w\A^{\not\sim}_\om\upto[\mathcal{J}_{\om}(X)\upto n]$.
Note that $\mathcal{J}_{\om}(X)\upto n=\langle\mathcal{J}_{[0,1)}(X)\upto 1,\dots,\mathcal{J}_{[0,n)}(X)\upto 1\rangle$.
Note also that $\mathcal{J}_{\om}(X)\in\F_{\om,\om}$ if and only if $\mathcal{J}_{[0,n)}(X)\in\F_{n,n}$ for all $n\in\om$.
By the condition (c), Player II has a $C_n$-computable Wadge reduction $\theta_n\colon\A^{\not\sim}_{[0,n)} \leq_w \A_n\upto[\mathcal{J}_{[0,n)}(X)\upto 1]$.
We let $\eta$ by a continuous function such that  for any $k$,
\[
\eta(\mathcal{J}_\om(Y))(k)=\mathcal{J}_{[0,k)}(\mathcal{J}_{[0,n)}^{-1}\circ\theta_n\circ\mathcal{J}_{[0,n)}(Y))\upto 1.
\]
In other words,  $\eta(\mathcal{J}^\om(Y))=\mathcal{J}_\om(\mathcal{J}_{[0,n)}^{-1}\circ\theta_n\circ\mathcal{J}_{[0,n)}(Y))$.
Consequently,
\begin{multline*}
\A^{\not\sim}_\om(\mathcal{J}_\om(Y))=\A(Y)=\A^{\not\sim}_{[0,n)}(\mathcal{J}_{[0,n)}(Y))\leq_\Q\A^{\not\sim}_{[0,n)}(\theta_n\circ\mathcal{J}_{[0,n)}(Y))\\
=\A(\mathcal{J}_{[0,n)}^{-1}\circ\theta_n\circ\mathcal{J}_{[0,n)}(Y))=\A^{\not\sim}_\om(\mathcal{J}_\om(\mathcal{J}_{[0,n)}^{-1}\circ\theta_n\circ\mathcal{J}_{[0,n)}(Y)))=\A^{\not\sim}_\om(\eta(\mathcal{J}_\om(Y))).
\end{multline*}
Since $\eta(\mathcal{J}_\om(Y))$ extends $\mathcal{J}_{\om}(X)\upto n$, this witnesses that $\A^{\not\sim}_\om\leq_w\A^{\not\sim}_\om\upto[\mathcal{J}_{\om}(X)\upto n]$.
\end{proof}

This concludes the proof of Lemma \ref{lem:stable-NSD}.




\bibliography{Wadge_degrees}
\bibliographystyle{alpha}

\end{document}